\numberwithin{equation}{section}
\renewcommand{\P}{\mathbb{P}}
\newcommand{\E}{\mathbb{E}}
\newcommand{\Var}{\text{Var}}
\newcommand{\cN}{\mathcal{N}}
\newcommand{\Z}{\mathbb{Z}}
\newcommand{\R}{\mathbb{R}}
\newcommand{\C}{\mathbb{C}}
\def\ball{{\mathsf B}}
\newcommand{\eps}{\varepsilon} 
\newcommand{\vphi}{\varphi}
\def\id{{\mathbf I}}
\renewcommand{\d}{\textup{d}}
\renewcommand{\l}{\vert}
\newcommand{\<}{\langle}
\renewcommand{\>}{\rangle}
\newcommand{\sign}{\text{sign}}
\newcommand{\diag}{\text{diag}}
\newcommand{\op}{{\rm op}}
\newcommand{\ones}{\bm{1}}
\def\sT{{\mathsf T}}
\def\bzero{{\boldsymbol 0}}
\DeclareMathOperator*{\argmin}{arg\,min}
\DeclareMathOperator*{\argmax}{arg\,max}
\def\simiid{{\stackrel{i.i.d.}{\sim}}}
\newtheorem{theorem}{Theorem}[section]
\newtheorem{lemma}[theorem]{Lemma}
\newtheorem{remark}{Remark}[section]
\newtheorem{proposition}[theorem]{Proposition}
\newtheorem{corollary}[theorem]{Corollary}
\def\s1RSB{\mbox{\rm\tiny 1RSB}}
\def\sSK{\mbox{\rm\tiny SK}}
\def\sTAP{\mbox{\rm\tiny TAP}}
\def\sCLPR{\mbox{\rm\tiny CLPR}}
\def\cC{{\mathcal C}}
\def\cF{{\mathcal F}}
\def\cE{{\mathcal E}}
\def\cP{{\mathcal P}}
\def\bsigma{{\boldsymbol \sigma}}
\def\hbx{\widehat{\boldsymbol x}}
\def\be{{\boldsymbol e}}
\def\bg{{\boldsymbol g}}
\def\bbm{{\boldsymbol m}}
\def\bu{{\boldsymbol u}}
\def\bv{{\boldsymbol v}}
\def\bw{{\boldsymbol w}}
\def\bx{{\boldsymbol x}}
\def\by{{\boldsymbol y}}
\def\bA{{\boldsymbol A}}
\def\bB{{\boldsymbol B}}
\def\bD{{\boldsymbol D}}
\def\bG{{\boldsymbol G}}
\def\bH{{\boldsymbol H}}
\def\bO{{\boldsymbol O}}
\def\bU{{\boldsymbol U}}
\def\bW{{\boldsymbol W}}
\def\bX{{\boldsymbol X}}
\def\hbX{\widehat{\boldsymbol X}}
\def\bY{{\boldsymbol Y}}
\def\bZ{{\boldsymbol Z}}
\def\bLambda{{\boldsymbol \Lambda}}
\newcommand{\hg}{\hat{g}}
\def\de{{\rm d}}
\def\magn{{M}}
\def\xathx{{A}}
\def\Const{{C}}
\def\const{{c}}
\def\integers{{\mathbb Z}}
\def\reals{{\mathbb R}}
\def\supp{{\rm supp}}
\def\GOE{{\rm GOE}}
\def\MMSE{\mathrm{MMSE}}
\def\Crit{{\rm Crit}}
\def\sD{{\mathsf D}}
\def\kl{{\rm kl}}
\def\id{{\mathbf I}}
\def\Proj{{\mathsf P}}
\def\diag{{\rm diag}}
\def\cT{{\mathcal T}}
\def\cS{{\mathcal S}}
\newcommand{\arctanh}{\operatorname{arctanh}}
\renewcommand{\Im}{\operatorname{Im}}
\renewcommand{\Re}{\operatorname{Re}}
\newcommand{\ii}{\mathbf{i}}
\newcommand{\Tr}{\operatorname{Tr}}
\renewcommand{\l}{\vert}
\renewcommand{\d}{{\rm d}}
\begin{document}

\title{TAP free energy, spin glasses, and variational inference}

\author{Zhou Fan\footnote{Department of Statistics and Data Science, Yale University} \and Song Mei\footnote{Institute for Computational and Mathematical Engineering, Stanford University} \and Andrea Montanari\footnote{Department of Electrical
Engineering and Department of Statistics, Stanford University}}

\date{}

\maketitle

\begin{abstract}
We consider the Sherrington-Kirkpatrick model of spin glasses with
ferromagnetically biased couplings. For a specific choice of the couplings mean, the resulting Gibbs measure
is equivalent to the Bayesian posterior for a high-dimensional estimation problem known as `$\integers_2$ synchronization'.
Statistical physics suggests to compute the expectation with respect to this Gibbs measure (the posterior mean in the synchronization problem),
by minimizing the so-called Thouless-Anderson-Palmer  (TAP) free energy, instead of the mean field (MF) free energy.
We prove that this identification is correct, provided  the ferromagnetic bias is larger than a constant (i.e. the noise level is small enough in synchronization). Namely, we prove that the scaled $\ell_2$ distance between any low energy local minimizers of the TAP free energy and the mean of the Gibbs measure vanishes in the large size limit. Our proof technique is based on upper bounding the expected number of critical points of the TAP free energy using the Kac-Rice formula. 
\end{abstract}

\section{Introduction and main results}

Computing expectations of a high-dimensional probability distribution is a central problem in computer science, statistics, and statistical physics.
While a number of mathematical and computational techniques have been developed for this task,
fundamental questions remain unanswered even for seemingly simple models.

Consider the problem of estimating a vector $\bx \in \{ +1, -1 \}^n$ from the observation $\bY \in \R^{n \times n}$ given by 
\begin{align}\label{eq:Z2model}
\bY = & \frac{\lambda}{n} \bx \bx^\sT + \bW\, .
\end{align}
Here $\bW$ is an unknown noise matrix, which is distributed according to the Gaussian orthogonal ensemble (GOE).
Namely,  $\bW=\bW^{\sT}\in\reals^{n\times n}$ where $(W_{ij})_{1 \le i\le j\le n}$ are independent centered Gaussian random variables with off-diagonal entries having variance $1/n$ and diagonal entries having variance $2/n$. (In what follows, we will write $\bW\sim\GOE(n)$.) 
The parameter $\lambda \in (0, \infty)$ is assumed to be known and corresponds to a signal-to-noise ratio. 

This problem is known as $\Z_2$ synchronization \cite{singer2011angular} and  is closely related to correlation clustering \cite{bansal2004correlation}.
It  has applications to determining the orientation of a manifold \cite{singer2011orientability}, and  as a simplified model for the two-groups 
stochastic block model \cite{holland1983stochastic} and for  topic models \cite{blei2012probabilistic,ghorbani2018instability}. 
$\Z_2$ synchronization is known to undergo a phase transition at $\lambda=1$: for $\lambda<1$, no estimator
can achieve a correlation with the true signal $\bx$  which is bounded away from zero. Vice versa, for $\lambda>1$,  there exist estimators that achieve
a strictly positive correlation, see e.g.  \cite{montanari2016semidefinite,deshpande2016asymptotic}.

In this paper, we consider the regime $\lambda>1$ and study the optimal estimator. 
Notice that---even in absence of noise---the vector $\bx$ can be determined from the observation $\bY$  only up to a sign. 
In order to resolve this ambiguity, we set as our goal to estimate the  $n\times n$ matrix  of relative signs $\bX=\bx\bx^\sT$.
An estimator will be a map $\hbX:\reals^{n\times n}\to \reals^{n\times n}$, $\bY\mapsto\hbX(\bY)$. We will see that the optimal $\hbX$
is approximately of rank one, i.e. $\hbX(\bY)\approx \bbm_\star \bbm_\star^{\sT}$, and therefore $\bbm_\star\in \reals^n$ can be viewed as
 an estimator of $\pm \bx$. 

By the symmetry of the problem, it is reasonable to consider Bayes estimation with 
respect to the uniform prior\footnote{The Bayes estimator under
uniform prior is minimax optimal over $\bx \in
\{+1,-1\}^n$. }.
Namely, we assume  $\bx\sim  \text{Unif}(\{ +1, -1 \}^n)$, and try to minimize the matrix mean square error, defined by
\begin{align}
\MMSE_n(\lambda)= \inf_{\widehat{\bX}} \frac{1}{n^2}\E\Big[\big\|\widehat \bX(\bY)
-\bx\bx^\sT\big\|_F^2\Big]\, .
\end{align}
The minimum of $\MMSE_n(\lambda)$ is achieved by the posterior
mean\footnote{This follows from Pythagoras's theorem: $\E\big[\big\|\widehat \bX(\bY)
-\bx\bx^\sT\big\|_F^2\big] = \E\big[\big\|\widehat \bX(\bY)-\E[\bx\bx^{\sT}|\bY]\big\|_F^2\big]+\E\big[\big\|\bx\bx^{\sT}
-\E[\bx\bx^\sT|\bY]\big\|_F^2\big]$.} 
\begin{align}
\widehat{\bX}_{\mathrm{Bayes}}= \widehat{\bX}_{\mathrm{Bayes}}(\bY)= \E[\bx\bx^\sT \mid \bY]. 
\end{align}
The asymptotics of the Bayes risk $\lim_{n \to \infty}\MMSE_n(\lambda)$ was calculated in \cite{deshpande2016asymptotic}. 

Computing the posterior expectation requires summing functions over $\bx\in \{+1,-1\}^n$, an example of  the high-dimensional integration
problem mentioned above.  While exact integration is expected to be intractable,
a number of methods have been developed that attempt to
compute the posterior expectation approximately.
Two main strategies are  Markov Chain Monte Carlo (see, e.g., the surveys in  \cite{diaconis2009markov,sinclair2012algorithms,andrieu2003introduction}) and variational inference 
(e.g., \cite{wainwright2008graphical,mezard2009information,blei2017variational}). Markov Chain Monte Carlo usually requires a large number of steps to get an accurate approximation.
Further, despite remarkable mathematical progress, proving Markov chain mixing remains extremely
challenging \cite{levin2017markov}. For instance,  we have near-linear-time algorithms
that estimate the signal $\bx\in\{\pm 1\}^n$ in the model \eqref{eq:Z2model} (and indeed more generally),
with Bayes optimal error \cite{montanari2020estimation}, for any $\lambda>1$. On
the other hand, polynomial mixing
for the posterior $p_{\bx|\bY}$ is---to the best of our knowledge---open for any $\lambda=O(1)$.

Variational inference attempts to compute the marginals of a high dimensional distribution by minimizing a suitable
`free energy' function. This approach is usually faster than MCMC, leading to a broad range of applications, from topic modeling \cite{blei2012probabilistic} to
computer vision, and inference in graphical models \cite{koller2009probabilistic}.
Of course, its accuracy relies in a crucial way on the accuracy of the free energy construction.
For instance, several applications make use of the so-called `naive mean field' free energy  \cite{blei2012probabilistic}. However, it was shown in \cite{ghorbani2018instability} that, 
for topic modeling (and even the simpler $\Z_2$ synchronization problem), naive mean field can return wrong information about the posterior distribution. 
Theorem \ref{THM:NAIVE_FAIL} of this paper confirms this conclusion, by showing that naive mean field does not compute the correct posterior mean for $\Z_2$ synchronization
when $\lambda$ is a large enough constant.

Methods from spin glass theory \cite{mezard1987spin} can overcome these limitations. 
Consider the Gibbs measure of the Sherrington-Kirkpatrick (SK) model of spin glasses \cite{kirkpatrick1978infinite}, with ferromagnetic bias aligned with $\bx$, 
\begin{align}\label{eqn:general_gibbs_measure}
G_{\beta, \lambda}(\bsigma) &=\frac{1}{Z_n(\beta,\lambda)}\,  \exp\{ \beta \<
\bsigma, \bY\bsigma\>/2 \} \nonumber\\
&= \frac{1}{Z_n(\beta,\lambda)} \exp\{\beta \lambda \<\bsigma, \bx\>^2/2 + \beta \<\bsigma, \bW \bsigma\>/2 \}. 
\end{align}
It is easy  to check that the posterior distribution of $\bx$ given observation $\bY$ is given by the biased SK measure with\footnote{In physics language,
the condition $\beta=\lambda$ is referred to as the `Nishimori line' \cite{nishimori2001statistical}.} $\beta = \lambda$
\begin{align}
\label{eq:posterior}
p(\bsigma \l \bY) = G_{\lambda, \lambda}(\bsigma) \propto \exp\big\{ \lambda \<\bsigma, \bY \bsigma\> /2 \big\}. 
\end{align}
Following the statistical physics terminology, we shall refer to the variables $\sigma_1,\dots,\sigma_n$ as `spins'.

For the SK model, naive mean field is known to be a poor approximation of  the actual free energy, which explains the failures mentioned above.
In 1977, Thouless, Anderson and Palmer \cite{thouless1977solution} proposed a variational formula for the SK free energy,
whose decision variables $\bbm = (m_1,\dots, m_n)\in [-1,1]^n$ encode the expectation of the spins $\bsigma$. This variational formula is known as the TAP free
 energy, and its first-order stationarity conditions are known as the TAP equations. 
The relationship between the TAP equations and the Gibbs measure has been
studied in the physics and mathematics literature for 
the last 40 years, and still presents a number of outstanding challenges. A brief overview is presented in Section \ref{sec:literature}. 

Explicitly, the TAP free energy for the SK model (\ref{eqn:general_gibbs_measure}) is the function $\cF_{\beta, \lambda}: [-1, 1]^n \to \R$, 
defined by
\begin{align}\label{eqn:TAP_free_energy}
\cF_{\beta, \lambda}(\bbm) = -\frac{1}{n} \sum_{i=1}^n h(m_i) - \frac{\beta}{2n}\< \bY, \bbm
\bbm^\sT\>- \frac{\beta^2}{4} [1-Q(\bbm)]^2,
\end{align}
where $Q(\bbm) = \Vert \bbm \Vert_2^2/n$, and $h: [-1, 1] \to \R$ is the binary entropy function
\[
h(m) = - \frac{1 + m}{2} \log\Big( \frac{1 + m}{2} \Big) - \frac{1 - m}{2} \log\Big( \frac{1 - m}{2} \Big).
\]
The corresponding TAP equations (first order stationary condition for $\cF_{\beta, \lambda}$) are given by
\begin{align}\label{eqn:TAP_equations}
\bbm=\tanh\big(\beta \bY \bbm -\beta^2[1-Q(\bbm)]\bbm\big). 
\end{align}

The first two terms in Eq.~(\ref{eqn:TAP_free_energy}) correspond to the naive mean field free energy, while the term 
$-\beta^2 (1-Q(\bbm))^2/4$ is known as `Onsager's correction,' and is the main innovation introduced in \cite{thouless1977solution}.
Indeed, heuristically, the TAP free energy can be understood using Plefka's expansion \cite{plefka1982convergence}, which provides 
a series expansion of the log partition function. The first three terms of the expansion correspond to $\cF_{\beta, \lambda}(\bbm)$,
and the others are expected to be negligible as $n\to\infty$.

Statistical physicists suggest an ambitious general conjecture on the role of the TAP free energy
\cite{mezard1987spin}.
Namely, the Gibbs measure (\ref{eqn:general_gibbs_measure}) is a convex combination
of pure states, which are well-concentrated probability
measures\footnote{A pure state $G_{\beta, \lambda}^{\alpha}$ can be assumed to be
  a probability measure of the form $G_{\beta, \lambda}^{\alpha} (\bsigma)\propto
G_{\beta, \lambda}(\bsigma) \, {\bf 1}_{\bsigma\in\Omega_{\alpha}}$.}
with each pure state $\alpha$ corresponding  to a local minimum $\bbm^{\alpha}$ of the TAP free energy.
We refer to Section \ref{sec:literature} for further pointers to this line of work.

Understanding the geometry of the critical points of the TAP free energy $\cF_{\beta, \lambda}$ would help us elucidate the structure of the 
Gibbs measure $G_{\beta, \lambda}$. 
A remarkable sequence of papers within the statistical mechanics literature \cite{bray1980metastable,parisi1995number,cavagna2003formal,crisanti2003complexity,crisanti2004spin,crisanti2005complexity} computed the expected number of critical  points of the TAP free energy for the SK model ($\lambda = 0$) using non-rigorous but sophisticated tools from physics. 
More precisely, these authors obtain the exponential
growth rate of this expectation, as a function of the free energy (the value of $\cF_{\beta, \lambda}(\bbm)$), a quantity that is known as the `complexity' of the spin model. 
In our first main result, we prove rigorously that the formula given by \cite{crisanti2005complexity} is indeed an upper bound for the expected number of critical points, 
in the general setting $\lambda \ge 0$. To the best of our knowledge,  similar rigorous calculations of the complexity were
only obtained so far for (mixed) $p$-spin spherical models, and for models on the torus. These models are simpler because of the
underlying symmetry \cite{auffinger2013random,auffinger2013complexity,auffinger2011random,arous2019landscape}.

In the context of the Bayesian model (\ref{eq:posterior}), the above picture simplifies.
Adopting the jargon of statistical physics, the model is known to be  replica symmetric 
\cite{deshpande2016asymptotic}, which amounts to say that the Gibbs measure $G_{\lambda,\lambda}(\bsigma)$
is well approximated by a single pure state.
As a consequence, we expect the global minimum of $\cF_{\lambda,\lambda}$ to correspond to the 
posterior expectation. Our second main result proves this conjecture for all $\lambda$ larger than a big enough constant. 
This implies that minimizing the TAP free energy is  a viable approach to optimal estimation in the present model. 

While the  relevance of the TAP free energy for statistical inference was pointed out several times in the past (see Section \ref{sec:literature}),
this is the first rigorous result confirming this connection.

\subsection{The complexity of the TAP free energy}

For $U \subseteq\R^4$ and $V \subseteq (-1, 1)$, define the number of critical points of $\cF_{\beta, \lambda}$ with $\bbm$ in the region defined by $U$ and $V$ by
\begin{equation}\label{eq:Critn}
\Crit_n(U, V) = \sum_{\bbm: \nabla \cF_{\beta, \lambda}(\bbm) = \bzero} \ones\{ (Q(\bbm), \magn(\bbm),
\xathx(\bbm), E(\bbm)) \in U,\;\bbm \in V^n \},
\end{equation}
where $Q, \magn, \xathx, E$ are the functions 
\begin{equation}\label{eqn:statistics_of_spins}
\begin{aligned}
Q(\bbm) =& \frac{1}{n} \Vert \bbm \Vert_2^2,\\
\magn(\bbm) =& \frac{1}{n}\<\bx, \bbm\>, \\
\xathx(\bbm) =&\frac{1}{n} \sum_{i=1}^n m_i \arctanh(m_i),\\
E(\bbm) =& - \frac{1}{n}\sum_{i=1}^n \Big[h(m_i) + \frac{1}{2}\, m_i\, \arctanh(m_i) \Big] - \frac{\beta^2}{4} (1 - Q(\bbm)^2).
\end{aligned}
\end{equation}
Note that at any point $\bbm$ where $\nabla \cF_{\beta,
  \lambda}(\bbm)=\bzero$, we have  from Eq.\ (\ref{eqn:TAP_equations}) that $\cF_{\beta, \lambda}(\bbm)=E(\bbm)$. 

Define the function $S_\star: (0, 1] \times [-1, 1] \times \R^2 \to \R$ by
\begin{equation}
S_\star(q, \vphi, a, e) = \inf_{(\mu, \nu, \tau, \gamma) \in \R^4}
S(q, \vphi, a, e; \mu, \nu, \tau, \gamma),
\end{equation}
where
\begin{align}
S(q, \vphi, a, e;\mu, \nu, \tau, \gamma)
&=\frac{1}{4\beta^2}\Big[\frac{a}{q} - \frac{\beta \lambda \vphi^2}{q} -
\beta^2(1-q)\Big]^2\nonumber\\
&- q \mu - \vphi \nu - a \tau - [ u(q, a) - e] \gamma +
\log I(q,\vphi,\mu,\nu,\tau,\gamma)\,,
\end{align}
\begin{equation}\label{eq:uqa}
u(q, a) = - \frac{\beta^2}{4} (1 - q^2) + \frac{a}{2},
\end{equation}
and
\begin{align}
I &(q,\vphi,\mu,\nu,\tau,\gamma)= \int_{-\infty}^\infty \frac{1}{(2 \pi \beta^2 q)^{1/2}}
\exp\Big\{-\frac{(x - \beta \lambda \vphi)^2}{2 \beta^2 q}\nonumber\\
&\hspace{0.3in}+ \mu \tanh^2(x) + \nu
\tanh(x) + \tau x \tanh(x) + \gamma \log[2\cosh(x)] \Big\} \d x.\label{eq:I}
\end{align}

\begin{theorem}\label{thm:Kac_rice_complexity}
Fix any $\beta>0$, $\lambda \ge 0$, $\eta, b \in (0, 1)$, and closed
set $U \subseteq  [\eta, 1]
\times \R^3$. Let $V_n = [-1 + e^{-n^b}, 1 - e^{-n^b}]$. Then
\[\limsup_{n \to \infty} n^{-1} \log \E[\Crit_n(U, V_n)] \le \sup_{(q, \vphi, a,
e) \in U} S_\star(q, \vphi, a, e).\]
\end{theorem}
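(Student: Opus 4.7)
The plan is to apply the Kac-Rice formula on the open cube $V_n^n$ — where $\cF_{\beta,\lambda}$ is smooth and the Gaussian law of $\nabla\cF_{\beta,\lambda}(\bbm)$ is non-degenerate (inherited from $\bW\sim\GOE(n)$ via $\bY\bbm = \lambda\magn(\bbm)\bx + \bW\bbm$) — and then extract the exponential rate by reducing the resulting $n$-dimensional integral to a variational problem over the four macroscopic observables $(q,\vphi,a,e)$. After checking the standard regularity and boundary hypotheses, the Kac-Rice identity reads
\[
\E[\Crit_n(U,V_n)] = \int_{V_n^n} \E\bigl[\,|\det\nabla^2\cF_{\beta,\lambda}(\bbm)|\,\ones_{\{(Q,\magn,\xathx,E)(\bbm)\in U\}}\,\bigm|\,\nabla\cF_{\beta,\lambda}(\bbm)=\bzero\bigr]\,\rho_{\nabla\cF(\bbm)}(\bzero)\,d\bbm,
\]
and three pieces must be estimated to leading exponential order: the Gaussian density $\rho_{\nabla\cF(\bbm)}(\bzero)$, the conditional expected absolute Hessian determinant, and the combinatorial volume of $\bbm$ with prescribed observables.

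For the Gaussian density, since $\bW\bbm\sim\cN(\bzero,\,Q(\bbm)\id + \bbm\bbm^\sT/n)$, the Sherman-Morrison formula splits $\rho_{\nabla\cF(\bbm)}(\bzero)$ into a rank-one contribution along $\bbm$ and an isotropic contribution orthogonal to $\bbm$. The rank-one contribution simplifies using $\<\bbm,\arctanh(\bbm)\>=n\xathx$, $\<\bbm,\bx\>=n\magn$, $\|\bbm\|^2=nQ$ and ultimately produces (after combination with the matching constant pieces from the isotropic part) exactly the first summand $\frac{1}{4\beta^2}[\frac{a}{q}-\frac{\beta\lambda\vphi^2}{q}-\beta^2(1-q)]^2$ of $S$. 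The isotropic piece is per-coordinate in $\arctanh(m_i)$ and will merge with the combinatorial factor below to produce the Gaussian kernel $(2\pi\beta^2 q)^{-1/2}\exp(-(x-\beta\lambda\vphi)^2/(2\beta^2 q))$ inside $I$.

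For the Hessian, using the decomposition $\nabla^2\cF_{\beta,\lambda}(\bbm) = \tfrac{1}{n}\diag\bigl((1-m_i^2)^{-1}\bigr) + \tfrac{\beta^2(1-Q)}{n}\id - \tfrac{\beta}{n}\bY - \tfrac{2\beta^2}{n^2}\bbm\bbm^\sT$ together with the fact that conditioning on $\bW\bbm$ leaves the bulk spectrum of $\bW$ essentially GOE-distributed, I would bound $\E|\det\nabla^2\cF_{\beta,\lambda}|$ via $\E|\det\bA|\le(\E[\det(\bA\bA^\sT)])^{1/2}$ (or equivalently via Jensen applied to $\log|\det|$) to obtain the factor $\exp(n\,u(Q,\xathx)+o(n))$ with $u$ as in (\ref{eq:uqa}). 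For the combinatorial volume I would reparameterize by $m_i=\tanh(x_i)$ and dualize the constraints $Q(\bbm)=q$, $\magn(\bbm)=\vphi$, $\xathx(\bbm)=a$ and (via the identity $h(\tanh x)+x\tanh x=\log(2\cosh x)$) $\tfrac{1}{n}\sum_i\log(2\cosh x_i)=u(q,a)-e$, using Lagrange multipliers $\mu,\nu,\tau,\gamma$; by the sign symmetry $(\bx,\bbm)\mapsto(-\bx,-\bbm)$ one may fix $\bx=\bfone$. A Cramér-type upper bound then factorizes the volume over coordinates, yielding the one-dimensional integral $I(q,\vphi,\mu,\nu,\tau,\gamma)$ of (\ref{eq:I}). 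Combining the three pieces, optimizing over $(\mu,\nu,\tau,\gamma)$, and applying Laplace's method over the compact set $U$ gives the claimed $\limsup$ bound $\sup_U S_\star$.

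The main obstacle is the Hessian determinant estimate: because $\bY$ appears both in the gradient constraint and in the Hessian, conditioning on $\nabla\cF=\bzero$ induces a rank-one perturbation in the law of $\bW$, and one must verify that this shift does not alter the exponential rate of $\E|\det\nabla^2\cF|$. A secondary technical point is justifying the cutoff $V_n=[-1+e^{-n^b},1-e^{-n^b}]$: the entropy and its derivative diverge as $|m_i|\to 1$, making the Kac-Rice integrand singular at $\partial V_n^n$, so one must show that the boundary region contributes only subexponentially and does not interfere with either the density or the Hessian estimates above.
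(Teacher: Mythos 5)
Your high-level plan — Kac–Rice, followed by separate estimates for the gradient density, the conditional Hessian determinant, and the combinatorial volume, then a Sanov/Varadhan reduction with Lagrange multipliers — is the right shape, and your identification of the constraints (including $\tfrac{1}{n}\sum_i\log(2\cosh x_i)=u(q,a)-e$ via $h(\tanh x)+x\tanh x=\log(2\cosh x)$) is correct. But there is a genuine error in the centerpiece of the argument.

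You claim that the conditional expected Hessian determinant contributes a factor $\exp(n\,u(Q(\bbm),\xathx(\bbm))+o(n))$ with $u(q,a)=-\tfrac{\beta^2}{4}(1-q^2)+\tfrac{a}{2}$. This is not what the Hessian gives, and it is easy to see that it cannot be: if all $m_i=0$ then $Q=A=0$ so $u=-\beta^2/4<0$, while the conditional Hessian there is $(1+\beta^2)\id_n-\beta\bW$ plus a low-rank term, whose expected absolute determinant grows like $\exp(nL)$ with $L=\beta^2/2>0$. The correct rate, which the paper establishes (Proposition \ref{PROP:LOGDETBOUND}), is $\exp\bigl(nL(\bbm)+O(n^{\max(0.9,b)})\bigr)$ where
\[
L(\bbm)=\frac{\beta^2(1-Q(\bbm))^2}{2}+\frac{1}{n}\sum_{i=1}^n\log\frac{1}{1-m_i^2}.
\]
This is \emph{not} a function of $(Q,\xathx)$: the per-coordinate statistic $\log\tfrac{1}{1-m_i^2}$ is distinct from $m_i\arctanh(m_i)$. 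In the paper's final variational formula, the $\tfrac{1}{1-x^2}$ factor from $L(\bbm)$ ends up inside the kernel $g$ (and thence in the base measure of the exponential family that defines $I$); the function $u(q,a)$ appears in $S$ only as the offset in the constraint multiplier $-[u(q,a)-e]\gamma$, never as a Hessian contribution. Conflating the two collapses the structure of $S_\star$ and the argument would not reproduce the stated formula.

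The proposed proofs of the determinant bound are also inadequate. Jensen applied to $\log|\det|$ gives a \emph{lower} bound on $\E|\det|$, the wrong direction. Cauchy–Schwarz $\E|\det A|\le(\E[(\det A)^2])^{1/2}$ is in principle an upper bound, but you would still need to compute the right-hand side to leading exponential order, and crucially you would need to account for the fact that, conditional on $\bg_n(\bbm)=\bzero$, the Hessian $\bZ(\bbm)=\bD(\bbm)-\beta\bW+\Delta(\bbm)$ has a \emph{$\bbm$-dependent diagonal} $\bD(\bbm)=\diag\bigl((1-m_i^2)^{-1}+\beta^2(1-Q)\bigr)$ in addition to the low-rank perturbation $\Delta(\bbm)$. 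Its bulk spectrum is not a shifted semicircle; it is the additive free convolution $\mu_{\bD(\bbm)}\boxplus\sigma_\beta$, and the paper's argument (Lemmas \ref{lemma:expectationbound_new}--\ref{lemma:integratelog}, via a quantitative comparison of Stieltjes transforms and properties of free convolutions) is precisely what supplies the tight upper bound with error $n^{\max(0.9,b)}$, small enough to survive the $e^{-n^b}$ cutoff. Saying "conditioning leaves the bulk of $\bW$ essentially GOE" misses this: it is the added diagonal that changes the bulk, and handling it rigorously is the main technical contribution.

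One further point: you invoke the Kac–Rice formula as an equality after "checking the standard regularity and boundary hypotheses," but the standard form (e.g. Theorem 11.2.1 of Adler–Taylor) does not apply here: the joint law of $(\bg_n(\bbm),\bH_n(\bbm))$ on $\R^n\times\R^{n(n+1)/2}$ is degenerate (the gradient is a deterministic function of the Hessian and $\bbm$). The paper proves only the upper bound of the Kac–Rice inequality (Proposition \ref{PROP:KACRICE}, Appendix~\ref{sec:Proof_Basic_Kac_Rice}) by adapting the mollified-delta argument and separately verifying almost-sure Morse-type nondegeneracy; this would need to be done in your approach as well.
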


The main proof strategy of Theorem \ref{thm:Kac_rice_complexity} is to calculate $\E[\Crit_n(U, V_n)]$ using the Kac-Rice formula \cite[Theorem 11.2.1]{adlertaylor}. 
This is the same strategy as pioneered by Auffinger, Ben Arous and \v{C}ern\`y in \cite{auffinger2013random}
(building on early insights by Fyodorov \cite{fyodorov2004complexity}) in the context of the spherical $p$-spin model. However, several new challenges arise. 
First of all, we cannot adopt  \cite[Theorem 11.2.1]{adlertaylor} directly
because of a degeneracy of the conditional Hessian. We present a proof for the
Kac-Rice upper bound with degeneracy in Appendix~\ref{sec:Proof_Basic_Kac_Rice}. 

We then evaluate the Kac-Rice formula to leading exponential order. As usual,
the  most difficult step requires to evaluate the  expected determinant of the Hessian.
In the spherical model, this is distributed as the determinant of a Wigner matrix, shifted by a term proportional to the identity, and 
an exact formula was given in  \cite{auffinger2013random}. A slightly more
complicated calculation arises for the $p$-spin spherical model with biased coupling, see
\cite{arous2019landscape}, where the Hessian is a low-rank deformation of a Wigner matrix, shifted by a term proportional to the identity. 
In the present case, the Hessian is distributed as a low-rank deformation of a Wigner matrix \emph{plus a diagonal matrix},
which depends on the point $\bbm\in[-1,+1]^n$. Unlike in earlier work, the bulk of the spectral distribution of the Hessian depends now on the point $\bbm$ (instead of $\| \bbm \|_2$).
 We give an upper bound of the expected determinant of the Hessian using free
probability theory in Section \ref{sec:Hessian}. Finally, we 
use this expression to upper bound the exponential growth rate of $\E[\Crit_n(U, V_n)]$ using Sanov's Theorem and Varadhan's Lemma in Appendix
\ref{sec:Proof_Kac_Rice_Abstract}. We refer to Section \ref{sec:Describe_Prop_Kac_Rice} for details of the proof. 

\begin{remark}
 We impose the technical conditions that $U \subseteq  [\eta, 1]
 \times \R^3$ and $V_n =  [-1 + e^{-n^b}, 1 - e^{-n^b}]$ 
because of the singular behaviors of the TAP free energy at zero and at the boundary, which are subtle to deal with using the Kac-Rice formula. Instead, the behaviors of $\cF_{\beta, \lambda}$ near zero and near the boundary can be easily analyzed
by direct arguments. 

Indeed, $\bbm = \bzero$ is always a critical point of $\cF_{\beta, \lambda}$. In
one interesting case $\beta > 1$ and $\lambda = 0$, it can be easily shown that
$\cF_{\beta, \lambda}$ is strongly convex over $D_\eta \equiv \{ \bbm \in [-1,
1]^n : Q(\bbm) \le \eta\}$ with high probability, so that there are no other critical points of $\cF_{\beta, \lambda}$ inside $D_\eta$. In another interesting case,
namely when $\beta = \lambda$ are large enough, the critical points inside $D_\eta$ have high function value with high probability
 (see Lemma \ref{lem:initial_global_bound}), so that they are not relevant. 

Near the boundary, $\nabla \cF_{\beta, \lambda}$ diverges due to the entropy
term. It is easy to show that there are no critical points of $\cF_{\beta,
\lambda}$ within the region $\{ \bbm: \vert \Vert \bbm \Vert_\infty - 1\l \le
\exp(- n^{2/3}) \}$ with high probability for sufficiently large $n$. (See Lemma \ref{lem:infinity_norm_high_probability_bound} for the proof of the case $\beta = \lambda$). 
\end{remark}
\subsection{Consequences for SK model with zero-mean couplings ($\lambda=0$)}

It is interesting to specialize Theorem \ref{thm:Kac_rice_complexity} to
$\lambda=0$ (the SK model with zero-mean couplings), and analytically maximize over $\varphi$ (the
value of the magnetization), which is equivalent to setting $\nu=\varphi=0$. Finally we replace the variable $a$ by $\Delta$ via $a=\beta^2q(1-q)+2q\Delta$.
This results in the reduced expression
\begin{equation}
S_{0,\star,\beta}(q, \Delta, e) = \inf_{(\mu,\tau,\gamma)\in\reals^3} S_0(q, \Delta, e;\mu, \tau, \gamma)\, ,\label{eq:S0star}
\end{equation}
\begin{align}
S_0(q, \Delta, e;\mu, \tau, \gamma) &=
\frac{\Delta^2}{\beta^2}-q\mu+e\gamma\nonumber-\Big(\tau+\frac{\gamma}{2}\Big)
\big(\beta^2q(1-q)+2q\Delta\big)\\
&\hspace{0.2in}+\frac{\beta^2}{4}(1-q^2)\gamma+\log I_0(q,\mu,\tau,\gamma)\, ,\label{eq:S0}
\end{align}
\begin{align}
I_0&(q,\mu,\tau,\gamma)=\int_{-\infty}^\infty \frac{1}{(2 \pi \beta^2 q)^{1/2}}
\exp\Big\{-\frac{x^2}{2 \beta^2 q}\nonumber\\
&\hspace{0.3in}+ \mu \tanh^2(x) +\tau x \tanh(x) + \gamma \log[2\cosh(x)] \Big\} \d x.
\end{align}
This coincides with the expression in the statistical physics literature,  
cf. \cite{bray1980metastable,parisi1995number,cavagna2003formal,crisanti2003complexity,crisanti2004spin,crisanti2005complexity}. We refer in particular to 
\cite{crisanti2003complexity} which compares different theoretical physics
approaches. Equations (29)-(31) of \cite{crisanti2003complexity}  can be recovered from the above expression by setting $\mu = \lambda_{\sCLPR}-(\Delta^2/(2\beta^2 q))$, $\gamma = -u_{\sCLPR}$, $\tau =  (u_{\sCLPR}/2)+(\Delta /(\beta^2 q))$.
 Minimization over $(\mu,\tau,\gamma)$ is then replaced by minimization over $(\lambda_{\sCLPR},u_{\sCLPR})$.

Notice that the expression of \cite{crisanti2003complexity} involves one extra parameter (denoted by $B$). However, the authors set it to $0$ on the basis 
of physical considerations motivated by `Plefka's criterion' \cite{plefka1982convergence}.

Denote by $H_{\sSK}(\bsigma) = -\<\bsigma,\bW\bsigma\>/2$ (where $\bsigma\in\{+1,-1\}^n$) 
the Hamiltonian of 
the SK model.  Substituting $\bbm=\bsigma$ in Eq.~(\ref{eqn:TAP_free_energy}), we obtain the following lower bound on the ground state energy:
\begin{align}
 \frac{1}{n}\min_{\bsigma\in \{-1,+1\}^n} H_{\sSK}(\bsigma) \ge \frac{1}{\beta}\min_{\bbm\in [-1,+1]^n}\cF_{\beta,0}(\bbm)\, . 
\end{align}
%
Note that $(1/n)\min_{\bsigma\in \{-1,+1\}^n} H_{\sSK}(\bsigma)$ concentrates exponentially around its expectation by Gaussian concentration \cite{BLM}.
Using Markov's inequality and the fact that local minima of $\cF_{\beta,0}(\bbm)$ occur in the interior of $[-1,+1]^n$ (see Lemma 
\ref{lem:infinity_norm_high_probability_bound}), we get the following lower bound on the expected ground state energy
\begin{align}
&\lim\inf_{n\to\infty} \frac{1}{n}\E\min_{\bsigma\in \{-1,+1\}^n}
H_{\sSK}(\bsigma)\nonumber\\
&\quad\ge  F_{\s1RSB}(\beta)\equiv \frac{1}{\beta}\inf\big\{ e:\;\; \sup_{q,\Delta} S_{0,\star,\beta}(q, \Delta, e)\ge 0\big\}\, .
\end{align}
(Here $S_{0,\star,\beta}(q, \Delta, e)$ is given by Eq.~(\ref{eq:S0star}).)
In  \cite{cavagna2003formal}, Cavagna, Giardina, Parisi, and  M{\'e}zard identify $F_{\s1RSB}(\beta)$ with the `one step replica symmetry breaking' (1RSB)
formula for the free energy of the SK model. Hence, our result provides an alternative route to prove the celebrated 1RSB 
lower bound---first established by Guerra in \cite{guerra2003broken}---in the zero temperature ($\beta=\infty$) case. 

\subsection{Bayes estimation in $\mathbb Z_2$ synchronization}
We now return to the $\integers_2$ synchronization model of Eq.~(\ref{eq:Z2model}). 
As mentioned above, the posterior distribution of $\bx$ given observation $\bY$ is given (under the uniform prior) by the biased SK Gibbs measure 
(\ref{eqn:general_gibbs_measure}) with $\beta = \lambda$, cf. Eq.~(\ref{eq:posterior}). Accordingly, we fix $\beta = \lambda$ throughout this section. 

Given the relation between the TAP free energy and the Bayes posterior, it is natural to consider the estimator
\begin{align}
\hbx^{\sTAP}(\bY) \equiv\argmin_{\bbm\in [-1,+1]^n}\cF_{\lambda,\lambda}(\bbm)\, .\label{eq:TAP_estimator}
\end{align}
Our next theorem provides a characterization of the landscape of the TAP free energy $\cF_{\lambda, \lambda}$  for $\lambda$ a sufficiently large constant,
implying in particular that $\hbx^{\sTAP}(\bY)$ is close to the Bayes optimal estimator. 
More precisely, we show that any critical point $\bbm$ for which $\cF_{\lambda, \lambda}(\bbm)$ is below a certain constant energy value
is such that $\bbm\bbm^\sT\approx \widehat{\bX}_{\mathrm{Bayes}}$. This implies that any such $\bbm\bbm^\sT$ also asymptotically achieves the Bayes risk $\MMSE(\lambda)$.
\begin{theorem}\label{thm:TAPBayes}
Denote
\[\cC_{\lambda, n} = \{ \bbm \in [-1, 1]^n: \nabla \cF_{\lambda, \lambda}(\bbm)
= \bzero, \cF_{\lambda, \lambda}(\bbm) \le -\lambda^2/3\}.\]
There exists a constant $\lambda_0>0$, such that for any constant
$\lambda>\lambda_0$, we have $\cC_{\lambda, n} \neq \emptyset$ with high probability, and
\begin{align}\label{eq:nearBayes}
\lim_{n \to \infty}  \E\Big[\Big(\sup_{\bbm \in \cC_{\lambda, n}} \frac{1}{n^2}
\|\bbm \bbm^\sT-\widehat{\bX}_{\mathrm{Bayes}}\|_F^2 \Big)\ones\{\cC_{\lambda,n}
\neq \emptyset\}\Big] =0.
\end{align}
\end{theorem}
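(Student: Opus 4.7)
The plan combines (i) an existence argument showing $\cC_{\lambda,n}$ is non-empty; (ii) a localization step via Theorem~\ref{thm:Kac_rice_complexity} that pins the order parameters $(Q(\bbm),|\magn(\bbm)|)$ of every $\bbm\in\cC_{\lambda,n}$ to $(q^*,q^*)$, where $q^*=q^*(\lambda)$ is the positive fixed point of the scalar RS cavity equation for $\Z_2$ synchronization; and (iii) a posterior-comparison step that converts this order-parameter localization into the matrix statement \eqref{eq:nearBayes}.

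\emph{Existence.} Take $\bbm^\star\in\argmin_{[-1,1]^n}\cF_{\lambda,\lambda}$. Because $\partial_{m_i}(-h(m_i))=\arctanh(m_i)\to\pm\infty$ as $m_i\to\pm 1$, the minimizer lies in the open interior and is a critical point. Evaluating directly at $\bbm=\bx$ (so $h(\pm 1)=0$ and $Q(\bx)=1$) and using $\langle\bW,\bx\bx^\sT\rangle=O_P(\sqrt n)$ yields $\cF_{\lambda,\lambda}(\bbm^\star)\le\cF_{\lambda,\lambda}(\bx)=-\lambda^2/2+o_P(1)<-\lambda^2/3$ with high probability, so $\bbm^\star\in\cC_{\lambda,n}$.

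\emph{Reduction to a complexity estimate.} For any $\bY$-measurable selection $\bbm\in\cC_{\lambda,n}$, the tower property gives $\E\langle\bbm\bbm^\sT,\hbX_{\mathrm{Bayes}}\rangle=\E\langle\bbm,\bx\rangle^2$, and the orthogonality of the Bayes estimator gives $\E\|\hbX_{\mathrm{Bayes}}\|_F^2/n^2=1-\MMSE_n(\lambda)$. Expanding,
\[
\frac{1}{n^2}\E\|\bbm\bbm^\sT-\hbX_{\mathrm{Bayes}}\|_F^2 \;=\; \E Q(\bbm)^2 - 2\,\E\magn(\bbm)^2 + 1 - \MMSE_n(\lambda).
\]
Since $\MMSE_n(\lambda)\to 1-(q^*)^2$ by \cite{deshpande2016asymptotic}, this vanishes provided $Q(\bbm)\to q^*$ and $|\magn(\bbm)|\to q^*$ in probability, uniformly over $\bbm\in\cC_{\lambda,n}$. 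Using Lemma~\ref{lem:initial_global_bound} (to exclude the low-$Q$ region) and Lemma~\ref{lem:infinity_norm_high_probability_bound} (to confine $\bbm\in V_n^n$), the uniform localization reduces to the variational estimate
\[
\sup\Big\{S_\star(q,\vphi,a,e):q\in[\eta,1],\ e\le-\tfrac{\lambda^2}{3},\ (q-q^*)^2+(|\vphi|-q^*)^2\ge\epsilon,\ |a|+|e|\le An^b\Big\}<0
\]
for every $\epsilon>0$ and $\lambda>\lambda_0(\epsilon)$. Theorem~\ref{thm:Kac_rice_complexity} together with Markov's inequality then eliminates the bad region; standard RS/Nishimori posterior concentration (yielding $(1/n^2)\|\hbX_{\mathrm{Bayes}}\|_F^2\to(q^*)^2$ and $\magn(\bbm)^2\approx\E[\magn^2|\bY]$ in probability) converts the pointwise order-parameter bound into $\sup_{\bbm\in\cC_{\lambda,n}}(1/n^2)\|\bbm\bbm^\sT-\hbX_{\mathrm{Bayes}}\|_F^2\to 0$ in probability; finally, bounded convergence (as $(1/n^2)\|\bbm\bbm^\sT-\hbX_{\mathrm{Bayes}}\|_F^2\le 4$) gives \eqref{eq:nearBayes}.

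\emph{Main obstacle: asymptotics of $S_\star$.} The technical heart is the above negativity of $S_\star$ on the bad region for large $\lambda$. I would exploit the Nishimori simplification at $\beta=\lambda$: the squared bracket in $S$ becomes $[a-\lambda^2\vphi^2-\lambda^2q(1-q)]^2/(4\lambda^2q^2)$, and the Gaussian in $I(q,\vphi,\mu,\nu,\tau,\gamma)$ is centered at $\lambda^2\vphi$ with standard deviation $\lambda\sqrt{q}$. After substituting $x=\lambda^2\vphi+\lambda\sqrt{q}\,z$, a Laplace/saddle-point expansion of $\log I$ in $\lambda^{-1}$ (with the optimal Lagrange multipliers $(\mu,\nu,\tau,\gamma)$ remaining $O(1)$) reduces the variational problem to an effective one whose stationary conditions are precisely the RS Bayes cavity equations; these pin the zero set of $S_\star$ to the Nishimori locus $|\vphi|=q=q^*(\lambda)$, and strict concavity of the reduced objective in the transverse directions provides a uniform gap $-c(\epsilon)<0$. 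The polynomially large tail in $|a|+|e|\le An^b$ is controlled by the quadratic-in-$a$ and linear-in-$e$ coercivity of $S$ evident from its definition.
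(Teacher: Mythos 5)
Your overall architecture matches the paper's: existence of a critical point by comparing to $\bbm=\bx$, localization of $(Q,M)$ for all low-energy critical points using the Kac--Rice complexity bound (Theorem~\ref{thm:Kac_rice_complexity}), and then the orthogonality/tower-property computation that converts order-parameter localization plus $\MMSE_n\to 1-q_\star^2$ into $\|\bbm\bbm^\sT-\widehat{\bX}_{\mathrm{Bayes}}\|_F^2/n^2\to 0$. Your expansion of $\E\|\bbm\bbm^\sT-\widehat{\bX}_{\mathrm{Bayes}}\|_F^2/n^2=\E Q(\bbm)^2-2\E M(\bbm)^2+1-\MMSE_n(\lambda)$ is a correct variant of the paper's Pythagorean computation (done there via $\bx\bx^\sT$); the invocation of ``RS/Nishimori posterior concentration'' is unnecessary, since only the tower property and Pythagoras are needed. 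The measurability issue in taking a $\bY$-measurable selection from $\cC_{\lambda,n}$, which the paper addresses via a measurable-selection theorem, is glossed over but not a central difficulty.

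The genuine gap is in the localization step. You propose to prove
\[
\sup\Big\{S_\star(q,\vphi,a,e):q\in[\eta,1],\;(q-q^*)^2+(|\vphi|-q^*)^2\ge\epsilon,\dots\Big\}<0
\]
and indicate only Lemma~\ref{lem:initial_global_bound} and Lemma~\ref{lem:infinity_norm_high_probability_bound} as preliminaries, sketching a Laplace/saddle-point expansion of $\log I$ in $\lambda^{-1}$ for the rest. But Lemma~\ref{lem:initial_global_bound} only gives $Q(\bbm)\gtrsim 0.58$ for $\alpha=2/3$, leaving a macroscopic region of $(q,\vphi)$ where you would have to establish strict negativity of $S_\star$ by hand. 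The paper's Proposition~\ref{PROP:COMPNEG} establishes $S_\star(q,\vphi,a,e)<0$ only for $(q,\vphi)$ within $O(1/\lambda^2)$ of $(q_\star,\vphi_\star)$, and the paper closes the gap \emph{not} by a global complexity bound but by the separate probabilistic localization in Proposition~\ref{PROP:LOCALIZATION} (built on Lemmas~\ref{lem:Fraction_bound} and \ref{lem:final_global_bound}): an iterative/bootstrap argument on the TAP fixed-point equation showing that any critical point with $M(\bbm)+Q(\bbm)\ge 1.01$ must in fact have $Q(\bbm),M(\bbm)\ge 1-C_0/\lambda^k$ for any fixed $k$, with high probability. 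Your sketch has no analogue of this, and the proposed Laplace expansion would need to be made uniform over the entire range $q\in[\eta,1]$ away from $q^*$---where the saddle structure of $\log I$ need not be close to its Nishimori form and where $\inf_\rho H(\rho\|\rho_{\vphi,q})$ over the constraint manifold is not controlled by a local quadratic---which is substantially harder than what the paper actually proves and is not carried out in the proposal. Without an analogue of Proposition~\ref{PROP:LOCALIZATION}, the argument does not close.
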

We refer to Section \ref{sec:proof_thm_TAPBayes} for the proof of this theorem. 

This theorem suggests
that a good minimizer of $\cF_{\lambda,\lambda}$ can be computed by applying a gradient based
optimization algorithm, provided the initialization has a value of $\cF_{\lambda,\lambda}$ below
a constant.
It is easy to construct an initialization satisfying the condition required here. For instance, if
 $\bv_1(\bY)$ is the principal eigenvector of $\bY$ with unit norm, we can set $\bbm^0=\sign(\bv_1(\bY))$.
 A simple calculation yields $\cF_{\lambda,\lambda}(\bbm^0) = -(\lambda/2n)\<\bY,\bbm^0(\bbm^0)^{\sT}\> \le
 -(\lambda^2/2n^2)\<\bbm^0,\bx\>^2 +\lambda$ (where the upper bound holds with high probability,
 from $\|\bW\|_{\op}\le 2$ \cite{furedi1981eigenvalues}). Using standard results on the eigenvectors of
 deformed GOE matrices \cite{baik2005phase}, we get $|\<\bbm^0,\bx\>|/n\ge 1-\eps$ for all $\lambda>\lambda_0(\eps)$
 whence $\cF_{\lambda,\lambda}(\bbm^0) <-\lambda^2/3$ with high probability for all $\lambda$ large enough.
 
\begin{remark}
  Since the TAP free energy is non-convex, the above does not prove that it can be optimized efficiently:
  further information about its Hessian would be required to establish this.
However, the papers \cite{deshpande2016asymptotic,montanari2020estimation} develop 
an approximate message passing (AMP) algorithm that attempts to construct the solution of the TAP equation (\ref{eqn:TAP_equations}). 
The algorithm is iterative, and proceeds as follows, with $\bbm^{-1} = \bzero$:
\begin{equation}\label{eqn:AMP}
\begin{aligned}
\bbm^0 = & \tanh\big( c_0(\lambda)\sqrt{n}\, \bv_1(\bY))\, ,\\
\bbm^{k+1}  =& \tanh(\lambda \bY \bbm^k - \lambda^2 [1 - Q(\bbm^k)] \bbm^{k-1})\, .\\
\end{aligned}
\end{equation}
Here $\bv_1(\bY)$ is the principal eigenvector of $\bY$ with unit norm, and $c_0(\lambda)$ is a suitable normalization (see \cite{montanari2020estimation}).
Building on earlier work by Bolthausen \cite{bolthausen2014iterative}, as well as on
\cite{deshpande2016asymptotic},  it is shown in \cite{montanari2020estimation} that the iterates $\bbm^k$ asymptotically achieve the Bayes risk, for any $\lambda>1$,
\begin{align}
\lim_{k \to \infty} \lim_{n \to \infty} \frac{1}{n^2} \| \bbm^k (\bbm^k)^\sT - \bx \bx^\sT \|_F^2 = \lim_{n \to \infty} \MMSE_n(\lambda). 
\end{align}
Together with  Theorem \ref{thm:TAPBayes}, this result implies that the algorithm is constructing an approximation of $\hbx^{\sTAP}(\bY)$: 
\begin{align}
 \lim_{k \to \infty} \lim_{n \to \infty} \E\Big[ \frac{1}{n^2} \| \bbm^k (\bbm^k)^\sT - \hbx^{\sTAP}( \hbx^{\sTAP})^\sT \|_F^2 \Big] = 0\, .
\end{align}
This leaves open two interesting questions: $(i)$~Can we construct an algorithm that converges to $\hbx^{\sTAP}(\bY)$, for fixed $n$
(i.e. can we invert the order of limits in the last equation)? $(ii)$~Do generic optimization algorithms (e.g., gradient descent) converge to $\hbx^{\sTAP}(\bY)$, when
applied to $\cF_{\lambda,\lambda}$?
\end{remark}

\begin{remark}\label{rmk:OptVsMessagePassing}
While---as discussed in the last remark---the AMP algorithm of  \cite{montanari2020estimation} achieves the Bayes risk,
the optimization-based formulation of Eq.~(\ref{eq:TAP_estimator}) has important practical advantages. 
In particular, 
we do not know  how robust is AMP with respect to deviations from the model (\ref{eq:Z2model}).
On the other hand, we  expect $\hbx^{\sTAP}(\bY)$ to be robust and meaningful even if the model for the data $\bY$ is incorrect.
\end{remark}

The TAP free energy (\ref{eqn:TAP_free_energy}) may be contrasted with the
``mean field free energy"
\[\cF_\mathrm{MF}(\bbm)=-\frac{1}{n}\sum_{i=1}^n h(m_i)-\frac{\lambda}{2n}
\< \bbm,\bY \bbm\>,\]
whose stationary points satisfy the mean field equations
\begin{align}
\bbm=\tanh(\lambda \bY \bbm)\, .
\end{align}
Notice that the mean field equations omit the Onsager correction term $-\lambda^2[1-Q(\bbm)]\bbm$. To clarify
the importance of this correction term, we establish the following negative
result for critical points $\bbm$ of $\cF_\mathrm{MF}$.
\begin{theorem}\label{THM:NAIVE_FAIL}
Denote $\cS_{\lambda, n} = \{ \bbm \in (-1, 1)^n: \nabla \cF_{\rm MF}(\bbm) = \bzero\}$. There exists a constant $\lambda_0>0$ and a constant $\eps(\lambda)>0$ for every
$\lambda>\lambda_0$, such that
\begin{align}\label{eqn:naive_fail}
\lim_{n\to \infty}\P\Big(\inf_{\bbm \in \cS_{\lambda, n}}\frac{1}{n^2}\|\bbm\bbm^\sT-\widehat{\bX}_\mathrm{Bayes}\|_F^2\ge \eps(\lambda)\Big) = 1.
\end{align}
\end{theorem}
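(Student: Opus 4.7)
The plan is to derive a positive lower bound on $n^{-2}\|\bbm\bbm^\sT - \widehat{\bX}_{\mathrm{Bayes}}\|_F^2$ for every MF critical point $\bbm$ via a norm-separation argument, exploiting the exact identity $\|\bbm\bbm^\sT\|_F = n\,Q(\bbm)$ together with the fact that $n^{-1}\|\widehat{\bX}_{\mathrm{Bayes}}\|_F$ concentrates on a value $q_*(\lambda) \in (0,1)$ strictly bounded away from both endpoints. First I would invoke the replica-symmetric asymptotics on the Nishimori line from \cite{deshpande2016asymptotic}: $n^{-2}\|\widehat{\bX}_{\mathrm{Bayes}}\|_F^2 \to q_*(\lambda)^2$ in probability, where $q_*(\lambda) \in (0,1)$ is the unique positive solution of the Nishimori fixed-point equation $q = \E_{Z \sim \cN(0,1)}[\tanh(\lambda q + \sqrt{\lambda q}\, Z)]$, with $q_*(\lambda) \to 1$ strictly from below as $\lambda \to \infty$. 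The reverse triangle inequality then gives
\[
\frac{1}{n^2}\|\bbm\bbm^\sT - \widehat{\bX}_{\mathrm{Bayes}}\|_F^2 \;\ge\; \big(Q(\bbm) - q_*(\lambda)\big)^2 \;+\; o_\P(1),
\]
uniformly over $\bbm \in \cS_{\lambda,n}$, reducing the problem to exhibiting a structural gap between $Q(\bbm)$ and $q_*(\lambda)$.

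The structural core is a gap property: with high probability, every $\bbm \in \cS_{\lambda,n}$ satisfies $Q(\bbm) \in [0, \eta(\lambda)] \cup [1 - \delta(\lambda), 1]$ for some $\eta(\lambda), \delta(\lambda) \to 0$ as $\lambda \to \infty$, and with $\eta(\lambda) < q_*(\lambda) < 1 - \delta(\lambda)$ for $\lambda$ large enough. To prove this, I would decompose $\bY = (\lambda/n)\bx\bx^\sT + \bW$, set $\psi = n^{-1}\<\bx,\bbm\>$, and analyze the coordinate-wise MF equation $m_i = \tanh(\lambda^2\psi x_i + \lambda(\bW\bbm)_i)$ on the high-probability event $\|\bW\|_\op \le 2 + o(1)$. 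In the spike-aligned regime where $|\psi|$ is bounded away from zero, the term $\lambda^2\psi x_i$ dominates the noise $\lambda(\bW\bbm)_i$ for all but $o(n)$ coordinates, forcing $|m_i| \to 1$ and hence $Q(\bbm) \to 1$. In the complementary regime where $|\psi|$ is small, one uses the identity $\sum_i m_i \arctanh(m_i) = \lambda\bbm^\sT\bY\bbm$ combined with the strict-convexity lower bound $m\arctanh(m) \ge m^2 + m^4/3$ and spectral estimates on $\bY$ to preclude MF fixed points of intermediate magnitude via a bootstrap argument, the vector analogue of the scalar fact that $m = \tanh(\beta m)$ for $\beta > 1$ admits only the solutions $0$ and $\pm m^*$ with $m^*$ close to $1$ for $\beta$ large.

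Combining these two steps gives $|Q(\bbm) - q_*(\lambda)| \ge \min\{q_* - \eta,\, 1 - q_* - \delta\} > 0$ uniformly over $\bbm \in \cS_{\lambda,n}$, and setting $\eps(\lambda) := \tfrac{1}{2}\min\{(q_*(\lambda) - \eta(\lambda))^2,\, (1 - q_*(\lambda) - \delta(\lambda))^2\}$ yields the theorem. The main obstacle is the gap property in the small-$|\psi|$ regime: quantifying uniformly that no MF critical point can settle at an intermediate value of $Q(\bbm)$ requires delicate interplay between the non-linearity of $\tanh$ and the spectral behavior of $\bY$ restricted to the complement of the spike, and this is precisely where the absence of the Onsager correction must be exploited decisively. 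Indeed, including that correction (as in TAP) would stabilize a fixed point with $Q(\bbm) \approx q_*(\lambda)$ — exactly the content of Theorem~\ref{thm:TAPBayes} — whereas its omission in the MF equation eliminates the intermediate-$Q$ fixed point and enforces the claimed gap.
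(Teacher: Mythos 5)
Your proposal takes a genuinely different route from the paper, and it contains a gap that is unlikely to be repairable without essentially reverting to the paper's argument.

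The paper proves the theorem by directly lower-bounding the \emph{vector} distance $\|\bbm_\star - \bbm\|_2$ between any TAP critical point $\bbm_\star$ and any MF critical point $\bbm$. The mechanism is a coordinate-wise comparison of the fixed-point equations $\bbm_\star = \tanh(\lambda\bY\bbm_\star - \lambda^2(1-Q(\bbm_\star))\bbm_\star)$ and $\bbm = \tanh(\lambda\bY\bbm)$: on a high-probability fraction of coordinates (controlled via Lemma~\ref{lem:Fraction_bound}) the $\tanh$ arguments are bounded, so $\tanh'$ is bounded below, and the residual Onsager term $\lambda^2(1-Q(\bbm_\star))m_{\star,i}$ forces a pointwise discrepancy. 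Since $Q(\bbm_\star) \approx q_\star < 1$ strictly (Proposition~\ref{prop:mainprop}), this yields $\|\bbm_\star - \bbm\|_2^2/n \geq c(\lambda) > 0$; the algebraic identity $\|\bbm_\star\bbm_\star^\sT - \bbm\bbm^\sT\|_F^2 \geq \tfrac12 \|\bbm_\star-\bbm\|_2^2 \|\bbm_\star+\bbm\|_2^2$ converts this into a Frobenius gap, and Theorem~\ref{thm:TAPBayes} closes the loop. This argument captures both magnitude and direction discrepancies between $\bbm$ and $\bbm_\star$.

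Your reduction via the reverse triangle inequality discards the directional component entirely: $\|\bbm\bbm^\sT - \widehat{\bX}_{\mathrm{Bayes}}\|_F \geq |\|\bbm\bbm^\sT\|_F - \|\widehat{\bX}_{\mathrm{Bayes}}\|_F|$ can be zero even when the two matrices are far apart, so you are committed to proving a scalar separation $|Q(\bbm) - q_\star| \geq c(\lambda)$. The gap property you invoke does not deliver this: when $\lambda$ is large, $q_\star(\lambda) = 1 - O(e^{-c\lambda^2})$ is itself extremely close to $1$, so the crude spike-dominance bound ``$Q(\bbm) \geq 1 - \delta(\lambda)$'' does not separate $Q(\bbm)$ from $q_\star$ unless you control $\delta(\lambda)$ to the same exponential precision and prove $\delta(\lambda) < 1 - q_\star(\lambda)$. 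In fact a heuristic comparison of the MF and TAP fixed points suggests $Q_{\mathrm{MF}} - q_\star$ is of order $\lambda^2(1-q_\star)^2$, which is smaller by yet another exponential factor than $1-q_\star$ itself; establishing this rigorously would be substantially harder than the paper's vector-distance argument and is exactly where the proposal hand-waves. In short, the norm-separation reduction throws away the dominant source of the discrepancy and replaces the problem with an even more delicate one that your sketch does not solve.
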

The proof of this theorem is presented in Appendix \ref{sec:proof_thm_naive_fail}.

\subsection{Notations}

We use boldface for vectors and matrices, e.g. $\bX$ for a matrix and $\bx$ for
a vector. For a univariate function $f: \R \to \R$ and a vector $\bx = (x_1, x_2, \ldots, x_n)^\sT \in \R^n$, we define the function
$f: \R^n \to \R^n$ by $f(\bx) = (f(x_1), f(x_2), \ldots, f(x_n))^\sT$. For example, we write $\bx^2 = (x_1^2, x_2^2, \ldots, x_n^2)^\sT$. 
We denote $\ball^n(\bx, r)$ to be the Euclidean ball in $\R^n$ centered at $\bx \in \R^n$ with radius $r$.

\section{Related literature}\label{sec:literature}

The Sherrington-Kirkpatrick (SK) model \cite{kirkpatrick1978infinite} is a mean field model of spin glasses. 
The asymptotics of its log-partition function $\lim_{n\to\infty}n^{-1}\log Z_n(\beta,\lambda)$ (cf. Eq.~(\ref{eqn:general_gibbs_measure})) was first computed using the replica method 
\cite{parisi1979infinite,parisi1980sequence,parisi1983order}, and is expressed using the so-called Parisi's formula. 
Thouless, Anderson, and Palmer \cite{thouless1977solution} proposed a variational principle in terms of the
TAP free energy of Eq.~(\ref{eqn:TAP_free_energy}). 

The relationship between the Gibbs measure (\ref{eqn:general_gibbs_measure})
(and the partition function) and the TAP free energy  has been studied in the physics literature 
\cite{bray1984weighted, cavagna2003formal, de1983weighted}. The local minimizers of the TAP free energy are  interpreted as the local magnetizations of  pure states.
As a partial confirmation of this prediction, Talagrand \cite{talagrand2010mean} and Chatterjee \cite{chatterjee2010spin} proved that the TAP equation holds for local 
magnetizations at high temperature (up to a small error term). 
Bolthausen \cite{bolthausen2014iterative} derived an iterative algorithm for
solving the TAP equations at high temperature. 
This is an instance of a broader class of iterative algorithms that are now known as approximate message passing (AMP), see e.g. \cite{bayati2011dynamics}.
At the low temperature regime, Auffinger and Jagannath \cite{auffinger2016thouless} proved that the TAP equations are
satisfied---approximately---by local magnetizations of pure states. Recently, Chen and Panchenko
\cite{chen2017tap} proved that the minimum of the TAP free energy  (under a suitable constraint on $\bbm$)
coincides indeed with the normalized log partition function $n^{-1}\log Z_n(\beta,\lambda)$ up to an error vanishing as $n\to\infty$. 

The exponential growth rate of the expected number of critical points of the TAP free energy is also known as the `annealed complexity.'
It was first computed by Bray and Moore in \cite{bray1980metastable} using the replica method, while an alternative `supersymmetric' approach 
was developed in \cite{cavagna2003formal}. 
The two methods give equivalent formal expressions, which correspond to the function $S_0$ of Eq.~(\ref{eq:S0}). 
These two papers however report different results for the overall complexity as a function of the free energy, which follow from two different
ways to set the Lagrange parameters in $S_0$.  This discrepancy was further investigated in \cite{crisanti2003complexity} and 
\cite{crisanti2005complexity} which suggest that the specific choice should depend on the value of the free energy. This is
consistent with the prescription of our Theorem \ref{thm:Kac_rice_complexity}.

So far, the annealed complexity function has been computed rigorously  for the spherical $p$-spin model
\cite{auffinger2013random}, for the spherical mixed $p$-spin model \cite{auffinger2013complexity},
and for certain Gaussian energy functions over the torus \cite{auffinger2011random}. For the spherical
$p$-spin  model, 
Subag \cite{subag2017complexity} used the second moment method to show that the typical number
of critical points coincides with its expected value for energies in a certain interval.
A version of the spherical $p$-spin model with non-vanishing mean
was studied in \cite{arous2019landscape}. This model has  a statistical interpretation
analogous to the one of model \eqref{eq:Z2model}.
As mentioned above, the main technical challenge in extending these results to the SK model is to compute
the conditional expectation of the absolute value of the determinant of the Hessian. 
(Let us mention that ---apart from using non-rigorous methods---all of the physics derivations drop this absolute value, without justification.)
Our calculation of this quantity relies on tools from free probability theory \cite{voiculescu1991limit,biane1997free,capitaine2011free}. 

The $\Z_2$ synchronization problem has proven to be a  useful testing ground for comparing different approaches.
The Bayes risk was computed in \cite{deshpande2016asymptotic}. In particular, a non-trivial correlation with the signal can only be achieved for $\lambda>1$.
The ideal threshold $\lambda=1$ is achieved both by spectral methods \cite{baik2005phase} and semidefinite programming  \cite{montanari2016semidefinite,javanmard2016phase}.
However, neither of these approaches achieves the Bayes optimal error. 

Over the last few years, a substantial amount of work has been devoted to models that generalize
$\integers_2$ synchronization \eqref{eq:Z2model} in significant ways. We refer to
\cite{dia2016mutual,lelarge2019fundamental,barbier2019adaptive,miolane2017fundamental,perry2018optimality,montanari2020estimation,alaoui2017finite}
for a few pointers to this literature. Among other settings, these authors consider the model $\bY = (\lambda/n)\bX\bX^{\sT}+\bW$,
where $\bX\in\reals^{n\times r}$ is a signal with fixed rank $r$ and i.i.d. rows $(\bX_{\cdot,i})_{i\le n}$ with known distribution
$p_\bX$ over $\reals^r$. Both Bayes optimal error and the performance of optimal message passing algorithms have been characterized in
this context. The goal of the present paper is quite different: we want to understand the optimality of an optimization-based method,
which uses the TAP free energy as the cost function. 
As explained in Remark \ref{rmk:OptVsMessagePassing}, such an approach has significant practical  advantages.

Variational methods based on free-energy  approximations have been the object of recent interest within the statistics literature. 
Zhang and Zhou \cite{zhang2017theoretical} study  mean field variational inference applied to the stochastic block model.  
They prove optimal error rates in a regime that is equivalent to the $\lambda \to \infty$ regime for $\Z_2$ synchronization.
In the opposite direction, \cite{ghorbani2018instability} shows that for $\lambda \in (1/2, 1)$,
the minimizer of the mean field free energy does not approximate the Bayes posterior mean. Replacing the mean field with
the TAP free energy solves this problem.

\section{Proof of Theorem \ref{thm:Kac_rice_complexity}}\label{sec:Describe_Prop_Kac_Rice}

In this section we prove Theorem \ref{thm:Kac_rice_complexity}.
i
By symmetry with respect to the sign change for any coordinate
\[x_i \mapsto -x_i, \qquad m_i \mapsto -m_i,\]
we may assume without loss of generality throughout the proof of
Theorem~\ref{thm:Kac_rice_complexity} that $\bx=\ones$.
Let $f_n(\bbm)=n\cF_{\beta, \lambda}(\bbm)$ be the scaled TAP free energy.
Then $f_n$ and its gradient and Hessian are given by
\begin{align}
f_n(\bbm) &= - \sum_{i=1}^n h(m_i) - \frac{\beta \lambda}{2n} \< \ones, \bbm \>^2 -
\frac{\beta}{2} \< \bW, \bbm \bbm^\sT\> - \frac{n \beta^2}{4}
[1-Q(\bbm)]^2,\\
\bg_n(\bbm) &=  \arctanh(\bbm) - \frac{\beta \lambda}{n} \, \<\ones, \bbm\> \ones - \beta \bW \bbm + \beta^2 [1 - Q(\bbm)] \bbm,\label{eq:gn}\\
\bH_n(\bbm)&= \diag(1 / (1 - \bbm^2)) - \frac{\beta \lambda}{n} \ones \ones^\sT - \beta \bW + \beta^2 [1 - Q(\bbm)] \id_n - \frac{2 \beta^2}{n} \bbm
\bbm^\sT.\label{eq:Hn}
\end{align}

\subsection{Kac-Rice formula}

We prove the following statement in Appendix~\ref{sec:Proof_Basic_Kac_Rice},
using a version of the Kac-Rice formula.
\begin{proposition}\label{PROP:KACRICE}
Fix any $\beta>0$ and $\lambda \ge 0$.
Denote by $p_{\bbm}(\bzero)$ the Lebesgue density of $\bg_n (\bbm)$ at $\bzero$. Then for any $\delta \equiv \delta_n > 0$ and any $U \subseteq  [\delta, 1] \times \R^3 $ and $V \subseteq [-1 + \delta, 1 - \delta]$, we have
\begin{align}
\E[\Crit_n(U, V)] &\le \int_{V^n} \ones_{\{ (Q(\bbm), M(\bbm), A(\bbm), E(\bbm))
\in U \}}\nonumber\\
&\hspace{0.4in}\E\big[\vert \det(\bH_n(\bbm)) \vert  \big\vert \bg_n (\bbm) =
\bzero\big] p_{\bbm}(\bzero) \d \bbm.\label{eq:KRupperbound}
\end{align}
\end{proposition}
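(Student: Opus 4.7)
The plan is to prove (\ref{eq:KRupperbound}) by a mollification argument adapted to handle a degeneracy in the conditional law of the Hessian. I would first identify this degeneracy: multiplying (\ref{eq:Hn}) on the right by $\bbm$ and subtracting (\ref{eq:gn}) yields the purely deterministic relation
\[
\bH_n(\bbm)\,\bbm - \bg_n(\bbm) = \frac{\bbm}{1 - \bbm^2} - \arctanh(\bbm) - 2\beta^2 Q(\bbm)\, \bbm.
\]
Consequently, conditional on $\bg_n(\bbm)=\bzero$, the vector $\bH_n(\bbm)\bbm$ is deterministic, and the conditional law of $\bH_n(\bbm)$ is supported on an affine subspace of the symmetric matrices of codimension $n$. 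This prevents a direct application of \cite[Theorem 11.2.1]{adlertaylor}, whose hypotheses demand a joint density for $(\bg_n(\bbm),\bH_n(\bbm))$ on the full space.

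Setting this issue aside for the moment, I would establish the bound via mollification. Let $\rho_\epsilon(\by) = \ones\{\|\by\| \le \epsilon\}/\text{vol}(\ball(\bzero,\epsilon))$. The restriction to $V^n \subseteq [-1+\delta,1-\delta]^n$ keeps $\arctanh$ smooth, and the restriction $Q(\bbm) \ge \delta$, together with the randomness of $\bW$ and the large deterministic diagonal $\diag(1/(1-\bbm^2))$, implies via a Bulinskaya-type lemma that every zero of $\bg_n$ in the relevant region is almost surely non-degenerate. At each such zero $\bbm^*$, the inverse function theorem and change of variables give $\int_{\mathcal{U}(\bbm^*)} \rho_\epsilon(\bg_n(\bbm))\, |\det \bH_n(\bbm)|\, \d\bbm \to 1$ as $\epsilon\to 0$. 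Summing over critical points, weighting by $\ones_U$, taking expectation, and applying Fatou's lemma yields
\[
\E[\Crit_n(U,V)] \le \liminf_{\epsilon \to 0}\int_{V^n} \ones_U \cdot \text{vol}(\ball(\bzero,\epsilon))^{-1}\, \E\!\left[\ones\{\|\bg_n(\bbm)\| \le \epsilon\}\, |\det \bH_n(\bbm)|\right]\d\bbm.
\]

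The main obstacle is to pass the limit $\epsilon \to 0$ inside the outer integral and recognize the pointwise limit as $\E[|\det \bH_n(\bbm)| \mid \bg_n(\bbm)=\bzero]\, p_\bbm(\bzero)$; this requires care because the degeneracy above forces the conditional expectation to be defined only through Gaussian regression, so its continuity in the conditioning value is not automatic. I would handle this by a perturbation trick: add an independent $\eta\bZ$ with $\bZ\sim\GOE(n)$ to $\bW$, forming $\bW_\eta$ and the corresponding $\bg_n^\eta,\bH_n^\eta$, whose joint distribution has a density on the full space of (vector, symmetric matrix) pairs. One then applies the mollification argument in the perturbed model, where the conditional expectation is classical, obtaining the analogue of (\ref{eq:KRupperbound}) for $\Crit_n^\eta(U,V)$. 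Finally, taking $\eta \downarrow 0$ and using continuity of the determinant together with uniform moment bounds on $|\det \bH_n^\eta|$ (from Gaussianity plus a Hadamard-type estimate) transfers the inequality to the original problem.
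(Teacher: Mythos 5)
Your identification of the degeneracy is correct: the relation you write shows $\bH_n(\bbm)\bbm-\bg_n(\bbm)$ is deterministic, so $(\bg_n(\bbm),\bH_n(\bbm))$ cannot have a joint Lebesgue density on $\R^n\times\R^{n(n+1)/2}$. Your mollification framework is also the right starting point and matches the paper's use of the deterministic Kac--Rice counting formula. However, there are two genuine gaps.

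First, the claim that a ``Bulinskaya-type lemma'' gives a.s.\ non-degeneracy is not substantiated, and it is precisely the part that does not come for free here. What is needed is that, with probability one, there is no $\bbm\in T$ with $\bg_n(\bbm)=\bzero$ and $\det\bH_n(\bbm)=0$ simultaneously, and the classical Bulinskaya argument requires a joint density that is missing by the very degeneracy you identified. The paper proves this via a quantitative estimate (Lemma~\ref{lemma:KRhelper}): conditioning on $\bW\bbm=\bw$, rotating to coordinates in which the first row/column of the conditioned Hessian becomes deterministic, expanding the determinant, and running a two-case analysis on a minor. That work is not replaceable by citing a generic non-degeneracy lemma, and your sketch passes over exactly this step.

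Second, and more seriously, the perturbation trick as stated does not work. If $\bW_\eta=\bW+\eta\bZ$ with $\bZ\sim\GOE(n)$ independent, then $\bW_\eta$ is again a GOE matrix, and the algebraic identity $\bH_n^\eta(\bbm)\bbm-\bg_n^\eta(\bbm)=\bbm/(1-\bbm^2)-\arctanh(\bbm)-2\beta^2Q(\bbm)\bbm$ holds verbatim for the perturbed gradient and Hessian (it is a consequence of $\bH_n=\nabla\bg_n$ and the form of $f_n$, not of any property of $\bW$). Hence $(\bg_n^\eta,\bH_n^\eta)$ is still supported on a codimension-$n$ affine subspace, and its joint law still has no density on the full product space. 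More fundamentally, the map $\bW\mapsto(\bg_n(\bbm),\bH_n(\bbm))$ goes from a space of dimension $n(n+1)/2$ to one of dimension $n+n(n+1)/2$, so no Gaussian perturbation of $\bW$ alone can create such a density; the degeneracy is dimensional, not a lack of randomness. Consequently, the ``classical conditional expectation'' you invoke in the perturbed model is not classical, and the argument cannot transfer as written. What the paper does instead is bypass densities entirely: it uses the explicit Gaussian-regression description of $\bW$ conditional on $\bW\bbm=\bu$ (Lemma~\ref{lem:conditional_GOE}), observes that this yields a representation $\bH_n(\bbm)\mid\bg_n(\bbm)=\bu \eqnd -\beta\Proj_\bbm^\perp\bW\Proj_\bbm^\perp+\bA(\bu,\bbm)$ with $\bA$ continuous in $(\bu,\bbm)$, deduces joint continuity of $D(\bu,\bbm)=\E[|\det\bH_n(\bbm)|\mid\bg_n(\bbm)=\bu]\,p_\bbm(\bu)$ on compacts, and then applies bounded convergence as $\eps\to0$. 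If you want a perturbation-style alternative, you would need to inject noise that breaks the functional dependence (e.g.\ perturb $\bH_n$ by an \emph{independent} GOE that is not the Hessian of anything), at which point the perturbed object no longer counts critical points and you would still need to make the limit $\eta\to0$ rigorous against the same degeneracy.
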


\subsection{Determinant of Hessian and density of gradient}

We next analyze the expectation on the right side of
(\ref{eq:KRupperbound}) using techniques from random matrix theory.
The proof of the following result is deferred to Section \ref{sec:Hessian}.

\begin{proposition}\label{PROP:LOGDETBOUND}
Fix $\beta>0$, $\lambda\ge 0$, and $\eta,b \in (0,1)$. Then there are constants
$n_0 \equiv n_0(\beta, \lambda,\eta,b)>0$ and $C_0 \equiv C_0(\beta, \lambda, \eta, b)$, such that the following holds for all
$n \geq n_0$.
Let $\bbm$ satisfy $\|\bbm\|_\infty \le 1 - e^{-n^b}$ and
$Q(\bbm) \in [\eta, 1]$. Define
\begin{equation}\label{eq:Lm}
L(\bbm)=\frac{\beta^2[1-Q(\bbm)]^2}{2}+\frac{1}{n}
\sum_{i=1}^n \log\left(\frac{1}{1-m_i^2}\right).
\end{equation}
Then
\[
\E\big[\l \det (\bH_n(\bbm)) \l \big| \bg_n(\bbm) = \bzero\big] \leq \exp(n
\cdot L(\bbm)+ C_0  \cdot n^{\max(0.9,b)}).
\]
\end{proposition}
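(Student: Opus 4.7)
The strategy is to combine a Gaussian conditioning decomposition of $\bW$, a low-rank matrix perturbation argument, and free probability applied to a deformed Wigner matrix. Since the gradient $\bg_n(\bbm)$ in (\ref{eq:gn}) depends on $\bW$ only through the vector $\bW\bbm$, conditioning on $\bg_n(\bbm) = \bzero$ is equivalent to conditioning on $\bW\bbm$ being equal to a fixed vector $\bv = \bv(\bbm)$. The standard Gaussian conditioning formula for $\bW \sim \GOE(n)$ yields $\bW = \widetilde{\bW} + \bR_\bW$, where $\bR_\bW$ is a (conditionally) deterministic rank-$\le 2$ matrix determined by $\bv$ and $\widetilde{\bW}$ is an independent GOE supported on $\bbm^\perp$. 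Substituting into (\ref{eq:Hn}) gives
\[
\bH_n(\bbm) \;=\; \bH_0 + \bR, \qquad \bH_0 \;=\; \bD + \beta^2[1-Q(\bbm)]\id - \beta \widetilde{\bW},
\]
with $\bD = \diag(1/(1-\bbm^2))$ and $\bR$ collecting the rank-$O(1)$ pieces $-(\beta\lambda/n)\ones\ones^\sT$, $-(2\beta^2/n)\bbm\bbm^\sT$, and $-\beta \bR_\bW$.

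The next step is to strip off the rank-$O(1)$ perturbation $\bR$. The matrix-determinant lemma gives $|\det \bH_n(\bbm)| = |\det \bH_0|\cdot |\det(\id + \bH_0^{-1}\bR)|$, and since only $O(1)$ eigenvalues of $\id + \bH_0^{-1}\bR$ differ from $1$, the second factor is bounded by $(1 + \|\bH_0^{-1}\|_\op\|\bR\|_\op)^{O(1)}$. On the event $\sigma_{\min}(\bH_0) \ge e^{-n^b}$ and $\|\bR\|_\op = O(\sqrt{n})$, shown via a small-ball estimate for the smallest singular value of a Wigner-plus-diagonal matrix together with Gaussian tail bounds on $\bR_\bW$, this factor is at most $e^{O(n^b)}$, which is absorbed into the claimed error. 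The complementary event is handled by combining a crude operator-norm bound on $|\det \bH_n|$ with an exponentially small probability estimate.

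The central step is to bound $\E|\det\bH_0|$, where $\bH_0$ is a shifted deformed Wigner matrix. Free probability predicts that, as $n\to\infty$, the empirical spectral distribution of $\bH_0$ approaches the free additive convolution $\nu_\bbm = (\mu_\bbm + c)\boxplus\sigma_{2\beta}$, where $\mu_\bbm$ is the empirical distribution of the diagonal entries $d_i = 1/(1-m_i^2)$, $c = \beta^2[1-Q(\bbm)]$, and $\sigma_{2\beta}$ is the semicircle law of radius $2\beta$. A local-law estimate for deformed Wigner matrices, combined with Gaussian concentration of $\log|\det \bH_0|$ around its mean (via the log-Sobolev inequality for $\widetilde{\bW}$ applied to a Lipschitz regularization $\log(|\cdot| + \eps)$ of $\log|\cdot|$ at scale $\eps = n^{-10}$, with the contribution of eigenvalues in $[-\eps,\eps]$ controlled by the local law), would yield
\[
\E|\det \bH_0| \;\le\; \exp\!\Bigl( n \int \log|\lambda|\, d\nu_\bbm(\lambda) \,+\, O(n^{0.9}) \Bigr).
\]
Finally, I would verify the free-probability identity
\[
\int \log|\lambda|\, d\nu_\bbm(\lambda) \;=\; \frac{\beta^2[1-Q(\bbm)]^2}{2} + \frac{1}{n}\sum_{i=1}^n \log\!\Big(\frac{1}{1-m_i^2}\Big) \;=\; L(\bbm),
\]
which follows from the $R$-transform formula $R_{\nu_\bbm}(z) = R_{\mu_\bbm}(z) + c + \beta^2 z$ together with the constraint $c = \beta^2 \int(1/d)\,d\mu_\bbm(d) = \beta^2[1-Q(\bbm)]$; differentiating the logarithmic potential in $c$ and integrating the Stieltjes transform reduces the check to an elementary calculation, and the diagonal case $\bD = d\,\id$ already reproduces $L(\bbm) = \beta^2/(2d^2) + \log d$.

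The main obstacle will be the spectral analysis in the third step: since $\|\bbm\|_\infty$ may approach the boundary within $e^{-n^b}$, the diagonal matrix $\bD$ has entries as large as $e^{n^b}$, so both the free-probability concentration statement and the small-eigenvalue handling must be quantified uniformly over such wide-spectrum diagonals $\bD$ whose ESD need not be compactly supported on any fixed scale. The $n^{\max(0.9,b)}$ slack in the exponent arises precisely from the interplay between the $n^{0.9}$ fluctuations of $\log|\det \bH_0|$ coming from Gaussian concentration plus eigenvalue regularization, and the $n^b$ cost of inverting $\bH_0$ in the matrix-determinant-lemma step and of controlling the large eigenvalues of $\bD$.
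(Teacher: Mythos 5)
Your high-level outline (condition on $\bg_n=\bzero$ via the GOE decomposition, strip a rank-$O(1)$ perturbation, control the bulk log-determinant via the free convolution $\mu_{\bD}\boxplus\sigma_\beta$, then evaluate $\int\log|\lambda|\,d\nu_\bbm$) matches the paper's structure. However, two of the intermediate steps have genuine gaps. First, the matrix-determinant-lemma split $|\det\bH_n|=|\det\bH_0|\,|\det(\id+\bH_0^{-1}\bR)|$ requires controlling $\sigma_{\min}(\bH_0)$, and the event $\{\sigma_{\min}(\bH_0)<e^{-n^b}\}$ is \emph{not} exponentially small in $n$: for a deterministic diagonal plus a Wigner matrix, small-ball estimates for the least singular value give a probability of roughly order $n\delta$ at scale $\delta$, which at $\delta=e^{-n^b}$ is far from $e^{-cn}$ for $b<1$. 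On that event the crude bound $|\det\bH_n|\le\|\bH_n\|_\op^n$ can be as large as $e^{2n^{1+b}}$ (since $\|\bD\|_\op$ may reach $e^{2n^b}$), which overwhelms the sub-exponential smallness of the bad event. The paper sidesteps this entirely by never inverting $\bH_0$: it uses Weyl eigenvalue interlacing to compare $\Tr\,l_\eps(\bZ)$ with $\Tr\,l_\eps(\bH)$ up to $O(1)$ eigenvalues, each of which contributes at most $|\log\eps|+\log(\|\bH\|_\op+\|\Delta\|_\op)$. No small-ball estimate and no inversion are needed.

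Second, your plan to regularize $\log|\cdot|$ at scale $\eps=n^{-10}$ and invoke a local law is not available here: local laws for deformed Wigner matrices control the spectrum down to scale $n^{-1+o(1)}$, not $n^{-10}$, and the standard results require a uniformly bounded or compactly supported deterministic part, whereas your $\bD$ has entries up to $e^{2n^b}$. You correctly flag this as ``the main obstacle,'' but it is not merely a technicality to be quantified later; it requires a different argument. The paper instead uses a much coarser scale $\eps=n^{-a}$ with $a<1/9$, and proves a self-contained quantitative Stieltjes-transform estimate (Lemma \ref{lemma:Egz}) via Gaussian integration by parts and the Poincar\'e inequality, valid for any real symmetric $\bD$ without support assumptions; the $\eps^{-5}/n$ error at this scale is where the $n^{0.9}$ exponent originates. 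Finally, a minor point: the claim $\int\log|\lambda|\,d\nu_\bbm(\lambda)=L(\bbm)$ is an inequality, not an identity; in the paper's notation, $\int\log(x)\,\nu_\bbm(dx)=R(g(0))\le R(1-Q(\bbm))=L(\bbm)$, with strict inequality whenever the Stieltjes transform at zero satisfies $g(0)<1-Q(\bbm)$. This is harmless for the proposition since only the upper bound is used, but a proof that asserts equality would be incorrect.
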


Define
\begin{align*}
\bu(\bbm) &= [u_1(\bbm), \ldots, u_n(\bbm)]^\sT\\
&= \arctanh(\bbm) - \beta\lambda/n \cdot \<\ones, \bbm \> \cdot \ones +
\beta^2[1 - Q(\bbm)]\bbm.
\end{align*}
The quantity $p_{\bbm}(\bzero)$ in (\ref{eq:KRupperbound}) may also be
evaluated as follows.

\begin{lemma}\label{lem:density_of_gradient}
\begin{align*}
p_{\bbm}(\bzero) &= \int_\R \frac{1}{\{ 2 \pi \beta^2 Q(\bbm) \}^{n/2}  }
\exp\Big\{ -\sum_{i=1}^n \frac{(u_i(\bbm) - y m_i)^2}{2  \beta^2  Q(\bbm)}
\Big\}\\
&\hspace{0.5in}\cdot \sqrt{\frac{n}{2\pi \beta^2}}\exp\Big\{ - \frac{n
y^2}{2\beta^2} \Big\} \d y.
\end{align*}
\end{lemma}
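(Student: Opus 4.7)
The plan is to reduce the problem to a direct Gaussian density computation. Since $\bx = \ones$, we have $\bg_n(\bbm) = \bu(\bbm) - \beta \bW \bbm$ (comparing Eq.~(\ref{eq:gn}) with the definition of $\bu(\bbm)$ preceding the lemma), so $\bg_n(\bbm)$ is a Gaussian vector with mean $\bu(\bbm)$ and the same covariance as $-\beta\bW\bbm$. The density at $\bzero$ therefore equals the density of $\beta\bW\bbm$ evaluated at $\bu(\bbm)$.

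The key computation is the covariance of $\bW\bbm$. Using the GOE correlation $\E[W_{ij}W_{kl}] = (\delta_{ik}\delta_{jl}+\delta_{il}\delta_{jk})/n$ (which correctly encodes both the $1/n$ off-diagonal and $2/n$ diagonal variances), I would compute
\[
\Cov(\bW\bbm) = Q(\bbm)\,\id_n + \frac{1}{n}\bbm\bbm^\sT,
\]
so $\beta\bW\bbm$ has covariance $\beta^2 Q(\bbm)\id_n + (\beta^2/n)\bbm\bbm^\sT$. The formula in the lemma is essentially a Gaussian mixture representation of this rank-one-plus-scaled-identity covariance: I would introduce an auxiliary scalar $y \sim \cN(0,\beta^2/n)$ and an independent $\bZ \sim \cN(\bzero,\beta^2 Q(\bbm)\id_n)$, and observe that $\beta\bW\bbm \eqnd \bZ + y\bbm$ in distribution (since matching the covariances verifies the identity). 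Then
\[
p_{\bbm}(\bzero) = \int_\R p_{\bZ}(\bu(\bbm) - y\bbm)\, p_y(y)\, \d y,
\]
and writing out the two Gaussian densities (the first coordinate-wise, since $\bZ$ is isotropic) produces exactly the stated expression. The minus sign in $(u_i(\bbm) - y m_i)^2$ is irrelevant since $p_y$ is symmetric in $y$, but naturally appears from the condition $\bZ = \bu(\bbm) - y\bbm$ at $\bg_n(\bbm)=\bzero$.

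There is no real obstacle here: once the covariance of $\bW\bbm$ is written down, the lemma is a line of algebra using the standard trick of decomposing a rank-one-perturbed isotropic Gaussian as a sum of an isotropic Gaussian and a one-dimensional Gaussian along the perturbation direction. The only point requiring a moment of care is keeping the normalizations straight — specifically that the determinant factorization $\det(\beta^2 Q(\bbm)\id_n + (\beta^2/n)\bbm\bbm^\sT) = (\beta^2 Q(\bbm))^n \cdot (1 + \|\bbm\|_2^2/(nQ(\bbm))) = 2(\beta^2 Q(\bbm))^n$ is consistent with pulling the factor $\sqrt{n/(2\pi\beta^2)}$ for the density of $y$ out of the integral.
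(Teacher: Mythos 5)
Your argument is exactly the paper's: compute $\Cov(\bW\bbm) = Q(\bbm)\id_n + \bbm\bbm^\sT/n$, decompose $\beta\bW\bbm \eqnd \bZ + y\bbm$ with isotropic $\bZ \sim \cN(\bzero,\beta^2 Q(\bbm)\id_n)$ and scalar $y\sim\cN(0,\beta^2/n)$ (the paper writes this as $\beta(Q(\bbm)^{1/2}\bv + Y\bbm)$, the same decomposition up to relabeling), and then write the density of $\bg_n(\bbm)$ at $\bzero$ by conditioning on $y$ and integrating. Correct and identical in substance.
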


\begin{proof}

First, we observe that 
\[\E[ (\bW\bbm)_i (\bW\bbm)_j ] = Q(\bbm) \cdot \ones\{i=j\} + m_i m_j/n.\]
Therefore, we have 
\[
\bW\bbm ~\stackrel{d}{=} ~ Q(\bbm)^{1/2} \bv + Y \cdot \bbm,
\]
where $\bv \sim \cN(0, \id_n)$ and $Y \sim \cN(0,1/n)$ are independent. As a
consequence, by (\ref{eq:gn}),
\begin{align*}
\bg_n(\bbm) ~\stackrel{d}{=} & ~\bu(\bbm) - \beta (Q(\bbm)^{1/2} \bv +
Y \cdot \bbm ),
\end{align*}
and the form of $p_{\bbm}(\bzero)$ follows by first writing the
density of $\bg_n(\bbm)$ conditional on $\beta Y$, and then integrating over
the law of $\beta Y$.
\end{proof}

Combining Proposition \ref{PROP:KACRICE}, Proposition
\ref{PROP:LOGDETBOUND}, and Lemma \ref{lem:density_of_gradient} above,
we have the following immediate corollary.

\begin{corollary}\label{cor:kacrice}
Fix $\beta>0$, $\lambda \ge 0$, and $\eta,b \in (0, 1)$. There exist $n_0 \equiv
n_0(\beta, \lambda,\eta,b)$ and $C_0 \equiv C_0(\beta, \lambda, \eta, b)$ such that for all $n \ge n_0$, any $U \subseteq  [\eta, 1]
\times \R^3 $, and $V_n = [-1 + e^{-n^b}, 1 - e^{-n^b}]$, we have
\[
\E[\Crit_n(U, V_n)]
\leq T(U,V_n)\]
where
\begin{align*}
T(U,V_n)&=\sqrt{\frac{n}{2\pi \beta^2}}\int_{(-1,1)^n \times \R}
\exp\left(nJ(\bbm,y)-\frac{ny^2}{2\beta^2}
+C_0 \cdot n^{\max(0.9,b)}\right)\\
&\hspace{1.5in}\cdot \ones_{(Q(\bbm), M(\bbm), A(\bbm), E(\bbm))
\in U}\,\d \bbm\, \d y,
\end{align*}
\begin{align*}
J(\bbm,y)&=\frac{\beta^2(1-Q(\bbm))^2}{2}-\frac{1}{2}\log (2\pi \beta^2
Q(\bbm))\\
&\hspace{1in}+\frac{1}{n}\sum_{i=1}^n \log g(m_i;M(\bbm),Q(\bbm),y),
\end{align*}
\begin{equation}\label{eq:gx}
g(x;\vphi,q,y)=\frac{1}{1-x^2} \cdot \exp\left(-\frac{(\arctanh(x)-\beta\lambda
\vphi +\beta^2(1-q)x-yx)^2}{2\beta^2q}\right).
\end{equation}
\end{corollary}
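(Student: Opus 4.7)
The plan is a purely algebraic bookkeeping exercise: plug the bounds of Proposition~\ref{PROP:LOGDETBOUND} and Lemma~\ref{lem:density_of_gradient} into the Kac--Rice upper bound (\ref{eq:KRupperbound}) of Proposition~\ref{PROP:KACRICE}, and verify that the resulting expression is exactly $T(U,V_n)$ after collecting exponents.

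First I would apply Proposition~\ref{PROP:KACRICE} with $\delta = e^{-n^b}$, so that $V_n=[-1+\delta,1-\delta]$ and the hypotheses on $U$ and $V$ are met (here one uses that $\eta \ge \delta$ for $n$ large enough). This gives
\[
\E[\Crit_n(U, V_n)] \le \int_{V_n^n} \ones_{\{(Q(\bbm),M(\bbm),A(\bbm),E(\bbm))\in U\}}\, \E\big[\vert \det \bH_n(\bbm) \vert \,\vert\,\bg_n(\bbm)=\bzero\big]\, p_{\bbm}(\bzero)\,\d\bbm.
\]
On $V_n^n$ we have $\|\bbm\|_\infty \le 1 - e^{-n^b}$, and on the support of the indicator we have $Q(\bbm) \in [\eta,1]$, so Proposition~\ref{PROP:LOGDETBOUND} applies and bounds the conditional determinant by $\exp(nL(\bbm)+C_0 n^{\max(0.9,b)})$ for all $n \ge n_0$.

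Next I would substitute the explicit formula from Lemma~\ref{lem:density_of_gradient} for $p_{\bbm}(\bzero)$, producing an inner integral over $y\in\R$. The $\sqrt{n/(2\pi\beta^2)}$ prefactor and the $\exp(-ny^2/(2\beta^2))$ factor come out as in $T(U,V_n)$. The remaining logarithm of the integrand equals
\[
nL(\bbm) - \tfrac{n}{2}\log(2\pi\beta^2 Q(\bbm)) - \sum_{i=1}^n \frac{(u_i(\bbm)-y m_i)^2}{2\beta^2 Q(\bbm)} + C_0\, n^{\max(0.9,b)}.
\]
Using the definition of $\bu(\bbm)$ one reads off
\[
u_i(\bbm)-ym_i = \arctanh(m_i)-\beta\lambda M(\bbm)+\beta^2(1-Q(\bbm))m_i - y m_i,
\]
which is precisely the argument of the squared term in the exponent defining $g(m_i;M(\bbm),Q(\bbm),y)$ in (\ref{eq:gx}). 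Combining this with the $\prod_i (1-m_i^2)^{-1}$ factor that comes from the entropic part of $L(\bbm) = \tfrac{\beta^2(1-Q(\bbm))^2}{2} + \tfrac{1}{n}\sum_i \log\tfrac{1}{1-m_i^2}$ in (\ref{eq:Lm}) recovers $\sum_i \log g(m_i;M(\bbm),Q(\bbm),y)$. The leftover $\tfrac{n\beta^2(1-Q(\bbm))^2}{2} - \tfrac{n}{2}\log(2\pi\beta^2 Q(\bbm))$ is exactly the non-entropic part of $nJ(\bbm,y)$, so the whole exponent collapses to $nJ(\bbm,y)-\tfrac{ny^2}{2\beta^2}+C_0 n^{\max(0.9,b)}$.

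Finally I would enlarge the domain of $\bbm$-integration from $V_n^n$ to $(-1,1)^n$ using nonnegativity of the integrand, yielding the claimed form of $T(U,V_n)$. There is no real obstacle; the only care required is to verify that the two hypotheses of Proposition~\ref{PROP:LOGDETBOUND}, namely $Q(\bbm)\in[\eta,1]$ and $\|\bbm\|_\infty\le 1-e^{-n^b}$, are indeed enforced by the indicator on $U$ and by the original domain $V_n^n$ respectively \emph{before} the enlargement of the integration region. The algebraic collapse of exponents described above is the main (but entirely routine) step.
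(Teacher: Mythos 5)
Your proof is correct and takes exactly the route the paper intends (the paper simply declares the corollary immediate from Propositions~\ref{PROP:KACRICE}, \ref{PROP:LOGDETBOUND} and Lemma~\ref{lem:density_of_gradient}): choose $\delta=e^{-n^b}$ in Proposition~\ref{PROP:KACRICE}, bound the conditional expected determinant and substitute the density, collect the exponents into $nJ(\bbm,y)-ny^2/(2\beta^2)$, and enlarge the $\bbm$-domain at the end. Your remark about checking the hypotheses of Proposition~\ref{PROP:LOGDETBOUND} \emph{before} enlarging the integration region is the right point to be careful about, and your algebraic collapse of the exponent matches the paper's definitions of $L(\bbm)$, $u_i(\bbm)$, $J(\bbm,y)$, and $g$.
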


\subsection{Variational upper bound}
We next analyze the integral $T(U,V_n)$
in Corollary \ref{cor:kacrice} using large deviations techniques.
Let $\cP$ be the space of Borel probability measures on $(-1,1)$.
For $\rho \in \cP$, define
\[A(\rho)=\int_{-1}^1 x\arctanh(x)\,\rho(\d x),\;\;
M(\rho)=\int_{-1}^1 x\,\rho(\d x),\;\; Q(\rho)=\int_{-1}^1 x^2\,\rho(\d x),\]
\[E(\rho)= - \int_{-1}^1 \big[ h(x) + \frac{1}{2} x \arctanh(x) \big] \rho(\d x)-\frac{\beta^2}{4}(1-Q(\rho)^2)\, .
\]
Here,
$A(\rho)$ may equal $\infty$ and $E(\rho)$ may equal $-\infty$.
For $\rho \in \cP$ such that $Q(\rho)>0$ and for $y \in \R$,
define the extended real-valued functional
\[J(\rho,y)=\frac{\beta^2(1-Q(\rho))^2}{2}-\frac{1}{2}\log(2\pi \beta^2
Q(\rho))+\int_{-1}^1 \log
g(x;M(\rho),Q(\rho),y)\,\rho(\d x).\]
Note that $\log [1/(1-x^2)]$ grows at a slower rate than $\arctanh(x)^2$ as
$x \to \pm 1$, so $J(\rho,y)<\infty$ always,
and $J(\rho,y)>-\infty$ when $\int_{-1}^1
\arctanh(x)^2\,\rho(\d x)<\infty$.
These functionals extend the previous definitions upon identifying $\bbm \in
(-1,1)^n$ with the empirical distribution of its coordinates.
Further recall the definition of relative entropy between two Borel probability measures $\mu$ and $\pi$
on $\reals$:
\begin{align*}
  H(\mu|\pi) := \begin{dcases*}
   \int \log \left(\dfrac{\de \mu}{\de\pi}(x)\right)\; \mu(\de x) &\;\; \mbox{if $\mu\ll \pi$,}\\
    +\infty &\;\; \mbox{otherwise.}
    \end{dcases*}
\end{align*}
We establish the following upper bound in Appendix
\ref{sec:Proof_Kac_Rice_Abstract}.

\begin{proposition}\label{PROP:ABSTRACT}
Fix $\beta>0$, $\lambda\ge 0$,
$\eta,b \in (0,1)$, and any closed set $U \subseteq [\eta,1]
\times \R^3$. Let $V_n$ and $T(U,V_n)$ be as in Corollary \ref{cor:kacrice},
let $\pi_0$ be the uniform distribution on
$(-1,1)$. Then
\begin{align*}
&\limsup_n \frac{1}{n}\log T(U, V_n)\\
&\quad\leq \sup_{\rho \in \cP:\,
(Q(\rho), M(\rho), A(\rho), E(\rho)) \in U}\;\sup_{y \in \R}\;
\Big\{J(\rho,y)-H(\rho\l \pi_0)-\frac{y^2}{2\beta^2}+\log 2\Big\}\, .
\end{align*}
\end{proposition}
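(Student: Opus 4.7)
The plan is to combine Sanov's theorem and Varadhan's lemma for the empirical measure $L_n := n^{-1}\sum_{i=1}^n \delta_{m_i}$ with a one-dimensional Gaussian Laplace estimate in $y$. The output is a double supremum in which the $+\log 2$ term comes from converting Lebesgue measure on $(-1,1)^n$ into $\pi_0^{\otimes n}$, the $-H(\rho\,\vert\,\pi_0)$ comes from Sanov, the $-y^2/(2\beta^2)$ comes from Laplace in $y$, and $J(\rho,y)$ is inherited directly from the integrand.

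Concretely, since $\pi_0$ has density $1/2$ on $(-1,1)$, write $\d\bbm = 2^n\,\pi_0^{\otimes n}(\d\bbm)$; the $2^n$ contributes $+\log 2$, and the deterministic prefactor $\sqrt{n/(2\pi\beta^2)}\,e^{C_0 n^{\max(0.9,b)}}$ is subexponential and vanishes after dividing by $n$. Next, a short computation gives $\partial_y^2[J(\bbm,y) - y^2/(2\beta^2)] = -2/\beta^2$, so the integrand is uniformly strongly concave in $y$ with Hessian independent of $\bbm$; a Gaussian tail bound yields
\[
\int_\R e^{n[J(\bbm,y)-y^2/(2\beta^2)]}\,\d y \;\le\; \sqrt{\pi\beta^2/n}\cdot e^{n\Phi(\bbm)},\qquad \Phi(\bbm) := \sup_{y\in\R}\bigl\{J(\bbm,y) - \tfrac{y^2}{2\beta^2}\bigr\}.
\]
Since $\Phi(\bbm)$ and the constraint $(Q,M,A,E)(\bbm)\in U$ depend on $\bbm$ only through $L_n$, Sanov's theorem (LDP for $L_n$ under $\pi_0^{\otimes n}$ with rate $H(\,\cdot\,\vert\,\pi_0)$) combined with Varadhan's lemma produces
\[
\limsup_{n\to\infty} \tfrac{1}{n}\log \E_{\pi_0^{\otimes n}}\bigl[\ones_U(L_n)\,e^{n\Phi(L_n)}\bigr]\;\le\;\sup_{\rho\,:\,(Q,M,A,E)(\rho)\in U}\bigl\{\Phi(\rho) - H(\rho\,\vert\,\pi_0)\bigr\},
\]
which, after unfolding $\Phi$, is exactly the stated bound.

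\textbf{Main obstacle.} The principal technical difficulty is that neither $\rho\mapsto J(\rho,y)$ nor the constraint map $\rho\mapsto(Q,M,A,E)(\rho)$ is bounded continuous on $\cP$ in the weak topology, so Sanov and Varadhan are not applicable verbatim. Explicitly, $\log g(x;\vphi,q,y)$ contains $-\arctanh(x)^2/(2\beta^2 q)$, which is bounded above but diverges to $-\infty$ at $x=\pm 1$; meanwhile $A(\rho) = \int x\arctanh(x)\,\rho(\d x)$ is only lower semicontinuous (possibly $+\infty$) and $E(\rho)$ is only upper semicontinuous (possibly $-\infty$), so $\{\rho:(Q,M,A,E)(\rho)\in U\}$ need not be closed in $\cP$ for closed $U\subseteq\R^4$. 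I would circumvent this by a truncation argument: for each level $R>0$, replace $\arctanh(x)$ by $\arctanh_R(x) := \sign(x)\min(|\arctanh(x)|,R)$ and cap $-\log(1-x^2)$ at $R$ throughout the definitions of $J,A,E$. The truncated functionals are bounded continuous on $\cP$, so Sanov and Varadhan apply cleanly and give the bound with the truncated objects in place of the originals. Passing $R\to\infty$ is where the restriction to $V_n = [-1+e^{-n^b},1-e^{-n^b}]$ becomes crucial: it forces $|\arctanh m_i|\le \tfrac{1}{2}n^b + O(1)$ uniformly, while the Gaussian-type penalty $-\arctanh(x)^2$ inside $\log g$ makes the contribution of coordinates with $|\arctanh m_i|>R$ exponentially small in $R$. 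Monotone passage, together with the upper/lower semicontinuity of the limiting functionals and the fact that $H(\,\cdot\,\vert\,\pi_0)$ is itself lower semicontinuous, then recovers the claimed expression.
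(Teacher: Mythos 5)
Your high-level plan is close to the paper's and the $y$-Laplace step is a legitimate (and slightly more elementary) variant: the paper instead treats $(\rho_n, Y_n)$ as a joint quantity, with $Y_n \sim \cN(0,\beta^2/n)$ independent, and invokes the large-deviations upper bound for the pair with rate function $I(\rho,y)=H(\rho\,\vert\,\pi_0)+y^2/(2\beta^2)$. You also correctly identify the central obstacle — $J(\cdot,y)$ is not bounded continuous on $\cP$ and the constraint set $\{\rho:(Q,M,A,E)(\rho)\in U\}$ is not closed — which is exactly what the paper devotes three lemmas to. However, the way you propose to resolve it has real gaps.

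First, the reliance on $V_n$ is based on a misreading. In the statement, $T(U,V_n)$ is ``as in Corollary~\ref{cor:kacrice},'' and that integral is taken over $(-1,1)^n\times\R$, not $V_n^n\times\R$ — the restriction to $V_n^n$ has already been discarded (the integrand is nonnegative, so enlarging the domain preserves the upper bound). Hence the bound $|\arctanh m_i|\leq \tfrac12 n^b+O(1)$, which your $R\to\infty$ passage hinges on, is simply not available. Second, even granting the truncation idea in principle, the direction is not verified: replacing $\arctanh(x)$ by $\arctanh_R(x)$ inside the squared term of $\log g$ does not monotonically increase $\log g$, and capping $\log[1/(1-x^2)]$ at $R$ \emph{decreases} $g$; so it is unclear that the truncated $\Phi_R$ dominates $\Phi$. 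Third, the truncated $A_R,E_R$ alter the constraint indicator in a non-monotone way, and there is no argument that the supremum over $\{(Q,M,A_R,E_R)(\rho)\in U\}$ converges to the supremum over $\{(Q,M,A,E)(\rho)\in U\}$ as $R\to\infty$.

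The paper avoids all of this by proving a tailored upper-bound version of Varadhan's lemma (Lemma~\ref{lemma:varadhan}) whose hypotheses are weaker: $\phi$ need only be upper semicontinuous with $A\cap\{\phi\geq-\alpha\}$ closed for each $\alpha$, plus a moment condition. It then verifies these directly: Lemma~\ref{lemma:Jproperties} shows $J$ is USC and obeys a quadratic-in-$y$ upper bound (giving the moment condition), and Lemmas~\ref{lemma:Econtinuous} and~\ref{lemma:Aclosed} show that whenever $J(\rho,y)\geq-\alpha$, the second moment $\int\arctanh(x)^2\,\rho(\d x)$ is uniformly bounded, which forces $A(\rho)$ and $E(\rho)$ to be continuous \emph{on that sublevel set}. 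This coercivity-based sublevel-set argument is precisely what makes the non-closedness of $\{(Q,M,A,E)\in U\}$ harmless, and it replaces the $R\to\infty$ passage you were hoping the (nonexistent) $V_n$ restriction would furnish. If you want to keep your $y$-Laplace reduction, the cleanest repair is to carry out the same sublevel-set analysis for $\Phi(\rho)=\sup_y[J(\rho,y)-y^2/(2\beta^2)]$ rather than pursue the truncation.
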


\subsection{Proof of Theorem \ref{thm:Kac_rice_complexity}}

Finally, we conclude the proof of Theorem \ref{thm:Kac_rice_complexity} by 
analyzing the optimization problem in Proposition \ref{PROP:ABSTRACT} over $y$ and $\rho$. 

\begin{proof}
Fix $\rho \in \cP$ such that $\sup_{y \in \R} [J(\rho,y)-H(\rho\l \pi_0)]>-\infty$.
This implies $J(\rho,y)>-\infty$ for some $y$, so $\int_{-1}^1
\arctanh(x)^2\,\rho(\d x)<\infty$. Furthermore $H(\rho\l \pi_0)<\infty$, so in
particular $\rho(\d x)=f(x)\d x$ for some density function $f(x)$ on $(-1,1)$.
Let $(q,\vphi,a,e)=(Q(\rho),M(\rho),A(\rho),E(\rho))$. Then
\[\int \log g(x;\vphi,q,y)\,\rho(\d x)=\chi_1 y^2+\chi_2 y+\chi_3,\]
where
\begin{align*}
\chi_1&=-\int \frac{x^2}{2\beta^2 q}\rho(\d x)=-\frac{1}{2\beta^2},\\
\chi_2&=\int \frac{x(\arctanh(x)-\beta\lambda \vphi+\beta^2(1-q)x)}{\beta^2
q}\rho(\d x)=\frac{a}{\beta^2q}-\frac{\lambda}{\beta}\frac{\vphi^2}{q}+1-q,\\
\chi_3&=\int \left(\log \frac{1}{1-x^2}-\frac{\arctanh(x)^2}{2\beta^2 q}
+\frac{\lambda \vphi\arctanh(x)}{\beta q}\right)\rho(\d x)\\
&\hspace{0.5in}+\frac{(2\beta\lambda (1-q)  - \lambda^2)\vphi^2-2a(1-q)}{2q}-\frac{\beta^2(1-q)^2}{2}.
\end{align*}
Let $\rho_{\vphi,q}$ be the law of $\tanh(z)$ where $z \sim
\cN(\beta\lambda \vphi,\beta^2q)$. Note that
\begin{align*}
&\int_{-1}^1 \frac{1}{1-x^2}\exp\left(-\frac{\arctanh(x)^2}{2\beta^2 q}
+\frac{\lambda \vphi\arctanh(x)}{\beta q}\right)\d x\\
&=\int_\R e^{-y^2/(2\beta^2 q)+ \lambda \vphi y/ (\beta q)}dy
=e^{\lambda^2 \vphi^2/(2q)}(2\pi \beta^2 q)^{1/2},
\end{align*}
and
\[h_{\vphi,q}(x)=e^{- \lambda^2 \vphi^2/(2q)}(2\pi \beta^2 q)^{-1/2}
\frac{1}{1-x^2}\exp\left(-\frac{\arctanh(x)^2}{2\beta^2 q}
+\frac{\lambda \vphi\arctanh(x)}{\beta q}\right)\]
is the density function of the law $\rho_{\vphi,q}$. Then
$\chi_3=\int \log h_{\vphi,q}(x)\rho(\d x)+\frac{1}{2}\log (2\pi \beta^2
q)+\chi_4$ where
\[\chi_4=\frac{(\beta \lambda \vphi^2-a)(1-q)}{q}
-\frac{\beta^2(1-q)^2}{2}.\]
Note further that
\[\sup_{y \in \R} \left[\left(\chi_1-\frac{1}{2\beta^2}\right) y^2+\chi_2 y\right]
=\sup_{y \in \R} \left[-\frac{1}{\beta^2} y^2+\chi_2 y\right] = \frac{\beta^2\chi_2^2}{4},\]
and $H(\rho\l \pi_0)=\log 2+\int_{-1}^1 \log f(x)\,\rho(\d x)$, where $f(x)$ is the
density of $\rho$. We hence obtain
\begin{align*}
&\sup_{y \in \R} \left[J(\rho,y)-H(\rho\l \pi_0)-y^2/(2\beta^2)+\log 2\right]\\
&=\frac{\beta^2(1-q)^2}{2}+\frac{\beta^2\chi_2^2}{4}+\chi_4
-\int_{-1}^1 \log \frac{f(x)}{h_{\vphi,q}(x)}\rho(\d x)\\
&= \frac{1}{4\beta^2}\left(\beta^2(1-q)+\frac{\beta\lambda \vphi^2}{q}-\frac{a}{q}\right)^2
-H(\rho\l \rho_{\vphi,q}),
\end{align*}
where the second line simplifies
$\beta^2(1-q)^2/2+\beta^2\chi_2^2/4+\chi_4$. Then, combining this with
Corollary \ref{cor:kacrice} and Proposition \ref{PROP:ABSTRACT},
\begin{align*}
&\limsup_n n^{-1} \log \E[\Crit_n(U, V_n)]\\
&\leq \sup_{(q, \vphi, a, e) \in U}
\left[\frac{1}{4\beta^2} \left(\beta^2(1-q)+\frac{\beta \lambda \vphi^2}{q}-\frac{a}{ q}\right)^2
-\inf_{\rho \in \cP(q,\vphi,a,e)} H(\rho\l \rho_{\vphi,q})\right]
\end{align*}
where $\cP(q,\vphi,a,e)=\{\rho \in
\cP:(Q(\rho),M(\rho),A(\rho),E(\rho))=(q,\vphi,a,e)\}$.

Recalling $u(q,a)$ from (\ref{eq:uqa}) and
introducing Lagrange multipliers for these constraints, we have
\begin{align*}
\inf_{\rho \in \cP(q,\vphi,a,e)} H(\rho\l \rho_{\vphi,q})
&=\inf_{\rho \in \cP} \sup_{\mu,\nu,\tau,\gamma}
H(\rho\l \rho_{\vphi,q})-\mu(Q(\rho)-q)-\nu(M(\rho)-\vphi)\\
&\hspace{0.3in}-\tau(A(\rho)-a)-\gamma[u(Q(\rho),A(\rho))-E(\rho)-u(q,a)+e]\\
&\ge \sup_{\mu,\nu,\tau,\gamma}
\mu q+\nu \vphi+\tau a+\gamma[u(q,a)-e]+\inf_{\rho \in \cP} 
F(\rho;\mu,\nu,\tau,\gamma),
\end{align*}
where 
\begin{align*}
&F(\rho;\mu,\nu,\tau,\gamma)\\
&=H(\rho\l \rho_{\vphi,q})-\mu Q(\rho)-\nu M(\rho)-\tau A(\rho)
-\gamma[u(Q(\rho),A(\rho))-E(\rho)]\\
&=\int_{-1}^1 \left(\log \tfrac{f(x)}{h_{\vphi,q}(x)}-\mu x^2
-\nu x-\tau x\arctanh(x)
+\tfrac{\gamma}{2}{ \left(\log
\tfrac{1+x}{2}+\log \tfrac{1-x}{2}\right)}\right)\rho(\d x)\\
&=H(\rho|\rho_{\mu,\nu,\tau,\gamma})-\log I.
\end{align*}
Here,
\begin{align*}
\rho_{\mu,\nu,\tau,\gamma}(\d x)&=I^{-1}\exp\Big(\mu x^2+
\nu x+\tau x\arctanh(x)\\
&\hspace{1in}+\frac{\gamma}{2}{ \Big(\log
\frac{1+x}{2}+\log \frac{1-x}{2}\Big)}\Big)h_{\vphi,q}(x)\d x
\end{align*}
is a suitably defined exponential family density with base measure
$h_{\vphi,q}(x)\d x$, and 
$I$ as defined in (\ref{eq:I}) is the normalizing
constant for this density. Thus
\[\inf_{\rho \in \cP} F(\rho;\mu,\nu,\tau,\gamma)=-\log I\]
with the infimum achieved at $\rho=\rho_{\mu,\nu,\tau,\gamma}$,
and Theorem \ref{thm:Kac_rice_complexity} follows.
\end{proof}

\section{Proof of Proposition \ref{PROP:LOGDETBOUND}}\label{sec:Hessian}

In this section, we prove Proposition \ref{PROP:LOGDETBOUND} which analyzes
$\det \bH_n(\bbm)$.
The following lemma is standard, and can be found for example in \cite[Lemma 11]{bayati2011dynamics}.
\begin{lemma}\label{lem:conditional_GOE}
Let $\bW \sim \textup{GOE}(n)$, and $\bx, \by \in \R^n$. Denote by
$\Proj_\bx^\perp=\id - \bx \bx^\sT / \| \bx \|_2^2$ the projection orthogonal
to $\bx$. Then we have
\[
\begin{aligned}
\bW \vert_{\bW \bx = \by} \stackrel{d}{=} \Proj_\bx^\perp \bW \Proj_\bx^\perp +
(\bx \by^\sT + \by \bx^\sT)/ \Vert \bx\Vert_2^2 - \bx \bx^\sT \< \bx, \by\> /
\Vert \bx \Vert_2^4.
\end{aligned}
\]
\end{lemma}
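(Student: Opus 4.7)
The plan is to decompose $\bW$ into two pieces: a piece determined by $\bW\bx$ and a piece probabilistically independent of $\bW\bx$. Write $P = \bx\bx^\sT/\|\bx\|_2^2$ for the projection onto $\bx$, so $\Proj_\bx^\perp = \id - P$. Expanding $\bW = (P + \Proj_\bx^\perp)\bW(P+\Proj_\bx^\perp)$ and using symmetry of $\bW$ together with the identity $P\bW P = \bx \bx^\sT \<\bx,\bW\bx\>/\|\bx\|_2^4$ and $P\bW = \bx(\bW\bx)^\sT/\|\bx\|_2^2$, I will verify the algebraic identity
\[
\bW \;=\; \Proj_\bx^\perp \bW \Proj_\bx^\perp + \frac{\bx(\bW\bx)^\sT + (\bW\bx)\bx^\sT}{\|\bx\|_2^2} - \frac{\bx \bx^\sT \<\bx,\bW\bx\>}{\|\bx\|_2^4},
\]
valid for \emph{every} symmetric matrix $\bW$. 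This is a pointwise identity obtained by collecting the cross terms $P\bW\Proj_\bx^\perp + \Proj_\bx^\perp \bW P + P\bW P$ and simplifying; it is essentially routine once the projections are expanded.

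Next I will argue that the first term $\Proj_\bx^\perp \bW \Proj_\bx^\perp$ is probabilistically independent of $\bW\bx$. The cleanest way is to invoke the orthogonal invariance of GOE: if $\bO$ is any orthogonal matrix, then $\bO\bW\bO^\sT \sim \GOE(n)$, so we may assume $\bx = \|\bx\|_2 \be_1$. Under this choice, $\bW\bx$ is $\|\bx\|_2$ times the first column of $\bW$, while $\Proj_\bx^\perp \bW \Proj_\bx^\perp$ is the $(n-1)\times(n-1)$ lower-right principal submatrix (zero-padded). For GOE the entries $(W_{ij})_{i,j \ge 2}$ are jointly independent of the first column $(W_{1j})_{j \ge 1}$, so the two quantities are independent.

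Given the decomposition and independence, the conclusion is automatic: condition both sides of the identity on $\bW\bx = \by$. The last two terms become deterministic functions of $\by$ (giving exactly the formula in the statement), while the distribution of $\Proj_\bx^\perp \bW \Proj_\bx^\perp$ is unaffected by the conditioning. Reinterpreting the $\bW$ on the right-hand side as a fresh unconditional $\GOE(n)$ draw yields the claimed equality in distribution.

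The only step requiring any care is the algebraic decomposition, and that is a short direct computation; the probabilistic content reduces entirely to the standard fact that for a GOE matrix the principal $(n-1)\times (n-1)$ submatrix is independent of the first row/column, combined with orthogonal invariance. I do not anticipate a genuine obstacle, since the lemma is really a Gaussian conditioning identity dressed up in matrix notation.
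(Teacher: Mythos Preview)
Your argument is correct. The algebraic identity $\bW = \Proj_\bx^\perp \bW \Proj_\bx^\perp + P\bW + \bW P - P\bW P$ is exactly what you claim, and the reduction to $\bx = \|\bx\|_2 \be_1$ via orthogonal invariance cleanly establishes the independence of $\Proj_\bx^\perp \bW \Proj_\bx^\perp$ from $\bW\bx$, after which the conditioning is immediate.

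The paper does not actually prove this lemma; it simply states that the result is standard and cites \cite[Lemma~11]{bayati2011dynamics}. Your write-up is therefore more than the paper provides, and is the standard proof of this fact.
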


Denoting $\bu \equiv \bu(\bbm)
= \arctanh(\bbm) - \beta \lambda/n \cdot \<\ones, \bbm\> \ones +
\beta^2 [1 - Q(\bbm)] \bbm$, the condition $\bg_n(\bbm)=\bzero$ is equivalent
to $\bW\bbm=\beta^{-1}\bu$. Then  by Eqs.\ (\ref{eq:gn}--\ref{eq:Hn}), conditional on $\bg_n(\bbm)=\bzero$,
the Hessian $\bH_n(\bbm)$ is equal in law to 
\[\bZ(\bbm)= \bD(\bbm)-\beta \bW +\Delta(\bbm)\]
where
\begin{equation}\label{eq:di}
\bD(\bbm)=\diag(d_1,\ldots,d_n),
\qquad d_i=\frac{1}{1-m_i^2}+\beta^2[1-Q(\bbm)],
\end{equation}
and
\begin{equation}\label{eq:Delta}
\begin{aligned}
&\Delta(\bbm)= \beta \bW \bbm \bbm^\sT / \| \bbm \|_2^2 + \beta \bbm \bbm^\sT \bW/ \| \bbm \|_2^2 - \beta \bbm \bbm^\sT \<\bbm, \bW \bbm\> / \| \bbm \|_2^4 \\
&- \frac{\beta\lambda}{n} \cdot \ones\ones^\sT - 2 \beta^2/n \cdot \bbm \bbm^\sT - (\bbm \bu^\sT + \bu \bbm^\sT)/ \Vert \bbm \Vert_2^2 + \bbm \bbm^\sT \< \bbm, \bu\> / \Vert \bbm \Vert_2^4. 
\end{aligned}
\end{equation}

Let $\sigma_\beta$ be the semicircle law with support
$[-2\beta,2\beta]$, let $\mu_{\bD(\bbm)}$ be the empirical spectral
distribution of $\bD(\bbm)$, and define their additive free convolution \cite{voiculescu1991limit}
\[\nu_\bbm=\mu_{\bD(\bbm)} \boxplus \sigma_\beta.\]
We expect $\nu_\bbm$ to approximate the bulk spectral distribution of
$\bZ(\bbm)$ for large $n$ \cite{pastur}.
Denote $\ii=\sqrt{-1}$ and let $\log$ denote
the complex logarithm with branch cut on the negative real axis.
For $\eps>0$, let
\[l_\eps(x)=\log \l x-\ii \eps \l =\Re \log(x-\ii \eps).\]
Denote
\[\Tr l_\eps(\bZ)=\sum_{i=1}^n l_\eps(\lambda_i(\bZ))\]
where $\lambda_1(\bZ),\ldots,\lambda_n(\bZ)$ are the eigenvalues of $\bZ$.

Recall $L(\bbm)$ as defined in (\ref{eq:Lm}).
We prove Proposition \ref{PROP:LOGDETBOUND} via the following three lemmas.
We defer the proofs of the first two to Sections \ref{sec:Lemma_C2} and
\ref{sec:Lemma_C3}.

\begin{lemma}\label{lemma:expectationbound_new}
Fix $\beta>0$, $\lambda \ge 0$, $\eta,b \in (0,1)$, and $a \in (0,1/9)$.
Let $\bbm$ satisfy $\| \bbm \|_\infty \le 1 - e^{-n^b}$ and $Q(\bbm) \in [\eta,
1]$, and set $\eps=n^{-a}$. Then there exist
constants $C \equiv C(\beta, \lambda,\eta,b,a)>0$ and
$n_0 \equiv n_0(\beta, \lambda,\eta,b,a)>0$ such that for all
$n \geq n_0$,
\[\E[\Tr l_\eps(\bZ(\bbm))]
\leq n\int l_\eps(x)\nu_\bbm(\d x)+C(\eps^{-5}+n^b+\log n).\]
\end{lemma}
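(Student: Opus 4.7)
The plan is to convert $\E[\Tr l_\eps(\bZ(\bbm))]$ into an integral of an expected Stieltjes transform, and then to compare it to $\nu_\bbm$ using a deterministic equivalent for the diagonal-plus-GOE matrix $\bZ_0(\bbm):=\bD(\bbm)-\beta\bW$, treating the low-rank piece $\Delta(\bbm)$ as a perturbation. The starting point is the identity
\[l_\eps(x)=l_T(x)-\int_\eps^T\Im\frac{1}{x-it}\,\d t,\]
which follows from $\frac{d}{dt}l_t(x)=\Im(x-it)^{-1}$. Applied spectrally,
\[\Tr l_\eps(\bZ(\bbm))=\Tr l_T(\bZ(\bbm))-\int_\eps^T\Im\Tr(\bZ(\bbm)-it\id)^{-1}\,\d t,\]
and analogously $n\int l_\eps\,\d\nu_\bbm=n\int l_T\,\d\nu_\bbm-\int_\eps^T\Im(n\,s_{\nu_\bbm}(it))\,\d t$. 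Choosing $T$ polynomial in $n$, the two $l_T$ contributions match up to an error of $O(n^b)$: this slack absorbs the $O(\log\|\bD(\bbm)\|_{\op})=O(n^b)$ contribution of the extreme entries of $\bD(\bbm)$, which can grow up to $e^{n^b}$ given the hypothesis $\|\bbm\|_\infty\leq 1-e^{-n^b}$.

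Next I would strip off the low-rank correction $\Delta(\bbm)$. Inspecting (\ref{eq:Delta}), $\Delta(\bbm)$ has rank bounded by a small constant (at most six), so the resolvent identity $(A+B)^{-1}-A^{-1}=-A^{-1}B(A+B)^{-1}$ yields the deterministic estimate
\[\big|\Tr(\bZ(\bbm)-it\id)^{-1}-\Tr(\bZ_0(\bbm)-it\id)^{-1}\big|\leq\frac{C}{t},\]
whose integral over $t\in[\eps,T]$ contributes $O(\log(T/\eps))=O(\log n)$ to the final bound. So everything reduces to analysing the mean Stieltjes transform of $\bZ_0(\bbm)$.

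The heart of the proof is a quantitative deterministic equivalent of the form
\[\Big|\E\,\Tr(\bZ_0(\bbm)-it\id)^{-1}-n\,s_{\nu_\bbm}(it)\Big|\leq \frac{C}{t^{6}},\]
which, integrated against $\d t$ from $\eps=n^{-a}$ to $T$, produces the dominant $\eps^{-5}$ term. To establish it, I would use Gaussian integration by parts in $\bW$ together with Gaussian--Poincar\'e concentration of $\frac{1}{n}\Tr(\bZ_0(\bbm)-z)^{-1}$ (a Lipschitz function of $\bW$ with constant $O(n^{-1/2}|\Im z|^{-2})$), to show that $\bar s_n(z):=\E\frac{1}{n}\Tr(\bZ_0(\bbm)-z)^{-1}$ approximately satisfies the Schwinger--Dyson equation
\[\bar s_n(z)\approx \frac{1}{n}\sum_{i=1}^n\frac{1}{d_i-z-\beta^2\bar s_n(z)},\]
which is exactly the fixed-point equation defining $s_{\nu_\bbm}(z)=\int(x-z-\beta^2 s_{\nu_\bbm}(z))^{-1}\,\mu_{\bD(\bbm)}(\d x)$. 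A stability analysis of this fixed point (an inverse-function argument whose constants are controlled by the distance of $\Im z=t$ from the real axis) then converts approximate satisfaction into quantitative closeness of the solutions.

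The main technical obstacle is keeping both the concentration and the stability estimates uniform in $\bbm$ across the wide dynamic range of the diagonal entries $d_i$, from $O(1)$ up to $e^{n^b}$; classical deformed-Wigner local laws typically assume that $\bD$ is uniformly bounded, and the stability constants of the self-consistent equation degenerate as $\|\bD\|_{\op}$ grows. The dimension- and $\bD$-free resolvent identity $\|(\bZ_0(\bbm)-it\id)^{-1}\|_{\op}\leq t^{-1}$ is the crucial tool that lets the scheme close. This also clarifies the restriction $a<1/9$: it ensures $\eps^{-5}=n^{5a}=o(n)$, so the error truly is subleading to the main $n\int l_\eps\,\d\nu_\bbm$ term that this lemma compares against.
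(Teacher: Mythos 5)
Your overall blueprint --- integral representation of $\Tr l_\eps$ via the Stieltjes transform, Gaussian integration by parts plus the Poincar\'e inequality to establish an approximate Schwinger--Dyson equation, a fixed-point stability argument giving a $C/t^6$ deterministic equivalent, and a low-rank stripping of $\Delta$ --- is essentially the same strategy the paper uses. However, there are two concrete issues with the proposal as stated.

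First, the claim that ``choosing $T$ polynomial in $n$, the two $l_T$ contributions match up to an error of $O(n^b)$'' is not justified and I believe it is false in general. The diagonal entries $d_i=(1-m_i^2)^{-1}+\beta^2[1-Q(\bbm)]$ can be as large as $e^{2n^b}+\beta^2$, and there is no restriction preventing \emph{all} $n$ of them from being this large. When $T$ is polynomial and $\|\bD\|_\op\sim e^{2n^b}$, a macroscopic fraction of eigenvalues of $\bZ_0$ (and a macroscopic fraction of the mass of $\nu_\bbm$) lies far beyond $T$, so on that part of the spectrum $l_T(x)\approx\log|x|$ rather than $\log T$, and comparing $\E\Tr l_T(\bZ_0)$ to $n\int l_T\,\d\nu_\bbm$ becomes essentially the same problem as the lemma itself for the extreme spectrum. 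There is no $O(n^b)$ cancellation coming for free. The paper sidesteps this by taking $T=t_n=n(\|\bD\|_\op+\beta+1)$, which makes $l_{t_n}(x)=\log t_n+O(1/n)$ \emph{uniformly} over the support of both the empirical spectral distribution and $\nu_\bbm$, so both $l_T$ contributions are $n\log t_n+O(1)$ after a tail-probability argument, and $\log t_n=O(n^b+\log n)$ is absorbed into the error. The integral $\int_\eps^{t_n}$ then contributes $O(\eps^{-5})$ from $t\in[\eps,\beta]$ and $O(\log t_n)=O(n^b+\log n)$ from $t\in[\beta,t_n]$, exactly the stated bound. You should take $T\gg\|\bD\|_\op$, not polynomial.

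Second, your justification for the $\Delta$-stripping estimate is off. The plain resolvent identity $(A+B)^{-1}-A^{-1}=-A^{-1}B(A+B)^{-1}$ gives $\l\Tr[(\bZ-\ii t)^{-1}-(\bZ_0-\ii t)^{-1}]\l\le r\,\|\Delta\|_\op/t^2$, not $C/t$. Since $\|\Delta\|_\op$ is $O(n^b)$ here (see Eq.~(\ref{eq:Deltabound})), integrating $n^b/t^2$ from $\eps=n^{-a}$ to $T$ gives $O(n^{a+b})$, which exceeds the allowed error budget. The true $O(1/t)$ bound instead follows from Weyl interlacing: a rank-$r$ symmetric perturbation shifts the empirical spectral distribution by at most $r/n$ in Kolmogorov distance, so the normalized Stieltjes transforms differ by at most $\pi r/(nt)$, yielding $\l\Tr[(\bZ-\ii t)^{-1}-(\bZ_0-\ii t)^{-1}]\l\le\pi r/t$ with $r\le 8$. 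The paper implements the same interlacing idea more directly at the $\Tr l_\eps$ level, yielding a one-shot bound $\Tr l_\eps(\bZ)\le\Tr l_\eps(\bH)-16\log\eps+16\log(\|\bH\|_\op+\|\Delta\|_\op)$, which is cleaner than integrating over $t$ but equivalent in strength. Either route works, but the resolvent-identity justification as written does not.
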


\begin{lemma}\label{lemma:integratelog}
For any $\beta>0$ and $\bbm \in (-1,1)^n$,
there exists a constant $C \equiv C(\beta)>0$
depending only on $\beta$ such that for any $\eps>0$,
\[\int l_\eps(x)\nu_\bbm(\d x) \leq L(\bbm)+C\eps.\]
\end{lemma}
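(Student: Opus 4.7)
My plan is to evaluate $\int l_\eps(x)\,\nu_\bbm(\d x)$ in closed form via the Voiculescu--Biane subordination theory for additive free convolution with the semicircle law, and then to reduce the target inequality to an elementary estimate based on the fixed-point equation satisfied by the subordination function. Write $G_\nu(z) = \int (z-x)^{-1}\,\nu_\bbm(\d x)$ for the Stieltjes transform of $\nu_\bbm$ and $G_\bD(w) = \frac{1}{n}\sum_i (w-d_i)^{-1}$ for that of $\mu_{\bD(\bbm)}$. Subordination for $\nu_\bbm = \mu_{\bD(\bbm)}\boxplus\sigma_\beta$ asserts $G_\nu(z) = G_\bD(\omega(z))$ with $\omega(z) := z - \beta^2 G_\nu(z)$. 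Integrating this identity in $z$ and fixing the constant by the asymptotics as $z\to\infty$ in the upper half plane yields
\[
\int \log(z-x)\,\nu_\bbm(\d x) \;=\; \frac{1}{n}\sum_{i=1}^n \log\bigl(\omega(z) - d_i\bigr) \;+\; \frac{\beta^2}{2}G_\nu(z)^2.
\]
Taking real parts at $z = \ii\eps$,
\[
\int l_\eps(x)\,\nu_\bbm(\d x) \;=\; \frac{1}{n}\sum_i \log|\omega(\ii\eps) - d_i| \;+\; \frac{\beta^2}{2}\,\Re G_\nu(\ii\eps)^2.
\]
The density of $\nu_\bbm$ is bounded by a constant depending only on $\beta$ (a universal feature of free convolution with the semicircle), so $G_\nu(\ii\eps)$ and $\omega(\ii\eps)$ differ from their $\eps\to 0^+$ limits $g := G_\nu(0^+)$ and $\omega_0 := -\beta^2 g$ by an amount $O(\eps)$, producing the $C\eps$ allowance in the lemma.

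It remains to prove $\frac{1}{n}\sum_i \log|\omega_0 - d_i| + \frac{\beta^2}{2}\Re g^2 \leq L(\bbm)$. I would make the change of variables $c_i := 1/(1-m_i^2)$ and introduce the shift $h := g + (1-Q(\bbm))$. Then $d_i = c_i + \beta^2(1-Q)$, $\omega_0 - d_i = -(c_i + \beta^2 h)$, and after expanding $\Re g^2 = \Re h^2 - 2(1-Q)\Re h + (1-Q)^2$ the $\beta^2(1-Q)^2/2$ pieces cancel, reducing the desired bound to
\[
\frac{1}{n}\sum_i \log\Bigl|1 + \tfrac{\beta^2 h}{c_i}\Bigr| \;+\; \frac{\beta^2}{2}\Re h^2 \;-\; \beta^2(1-Q)\Re h \;\leq\; 0. \qquad (\star)
\]
The subordination equation $g = G_\bD(\omega_0)$ now reads $h + \frac{1}{n}\sum(c_i + \beta^2 h)^{-1} = 1-Q$, from which partial fractions yield the key identity $\frac{1}{n}\sum \frac{1}{c_i(c_i + \beta^2 h)} = 1/\beta^2$.

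To close $(\star)$, I would use the integral representation $\log(1+x) = \int_0^1 x(1+tx)^{-1}\,\d t$ to write $\frac{1}{n}\sum \log(1 + \beta^2 h/c_i) = \beta^2 h\int_0^1 S(s)\,\d s$ with $S(s) := \frac{1}{n}\sum(c_i + s\beta^2 h)^{-1}$; in the real case the second derivative $S''(s) = 2(\beta^2 h)^2\cdot\frac{1}{n}\sum(c_i+s\beta^2 h)^{-3}$ is manifestly nonnegative, so $S$ is convex. Combined with the boundary values $S(0) = 1-Q$ and $S(1) = 1-Q-h$ (the latter from the fixed-point equation), the trapezoidal bound $\int_0^1 S \leq (S(0)+S(1))/2 = 1 - Q - h/2$ valid for convex functions gives, upon multiplication by $\beta^2 h$ and combination with the remaining terms of $(\star)$, exactly the desired inequality. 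The hard part is correctly handling the sign of $\Re h$ and the branch of $g$: when $0$ lies outside the support of $\nu_\bbm$ then $g$ is real and the branch obtained by analytic continuation from $z=\infty$ should force $h\geq 0$ by monotonicity of the subordination map along the real axis, while when $0$ lies inside the support then $h$ is complex and one must run a parallel computation exploiting the additional relation $\frac{1}{n}\sum |c_i + \beta^2 h|^{-2} = 1/\beta^2$ that follows from the imaginary part of the subordination equation.
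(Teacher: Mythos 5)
Your subordination-theoretic computation correctly reproduces the identity $\int \log(x)\,\nu_\bbm(\d x)=\tfrac{\beta^2}{2}g(0)^2+\tfrac{1}{n}\sum_i\log\l d_i-\beta^2 g(0)\l$ that the paper derives in Lemma~\ref{lemma:logidentity} by integrating $g(\ii t)$ along the imaginary axis, and your convexity/Hermite--Hadamard argument for $(\star)$ is a genuinely different route to $R(g(0))\le R(1-Q(\bbm))=L(\bbm)$, where $R(g)=\tfrac{\beta^2 g^2}{2}+\tfrac{1}{n}\sum_i\log(d_i-\beta^2 g)$ (the paper instead observes that $g(0)$ and $1-Q(\bbm)$ are the only two zeros of $R'$ in $(0,p_1)$ and that $R$ is decreasing--increasing--decreasing there). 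Both arguments, however, rest on $g(0)\le 1-Q(\bbm)$, i.e.\ your $h\ge 0$, and this is exactly what your proof does not supply. At $z=0$ the fixed-point equation $g=F(g)$ has (at least) two real roots in $(0,p_1)$, one of which is $1-Q(\bbm)$, and nothing in the bare statement of monotone subordination along the negative real axis identifies $\omega(0)=\beta^2 g(0)$ with the smaller one. The paper pins this down by continuity: for real $z<0$, differentiating $g(z)=F(g(z))$ and using $g'(z)>0$ gives $F'(g(z))<1$, hence $F'(g(0))\le 1$ in the limit, which is the stability condition that singles out the smallest root. Without this, your argument also breaks mechanically: for $h<0$ the prefactor $\beta^2 h$ multiplying $\int_0^1 S$ reverses the Hermite--Hadamard inequality, and if $\beta^2\l h\l$ exceeds $\min_i c_i$ some $c_i+s\beta^2 h$ changes sign so $S$ need not even be convex.

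The second, smaller gap is the $C\eps$ allowance. A bounded density does \emph{not} make the Stieltjes transform Lipschitz at an edge of the support: when $0\in\supp\nu_\bbm$, Biane's estimate (Lemma~\ref{lem:density}) only gives $f(x)\lesssim x^{1/3}$ near the left edge, whence $G_\nu(\ii\eps)-G_\nu(0)=O(\eps^{1/3})$ rather than $O(\eps)$; and when $0\notin\supp\nu_\bbm$ the error scales like $\eps$ divided by the squared gap to the support, which is not uniform over $\bbm$. The paper sidesteps this entirely by working at $z=0$ and using the pointwise inequality $l_\eps(x)-\log x\le \eps/x$ for $x>0$, so that $\int l_\eps\,\d\nu_\bbm-\int\log\,\d\nu_\bbm\le\eps\int x^{-1}\nu_\bbm(\d x)=\eps\,g(0)\le\eps(1-Q(\bbm))\le\eps$; note that even this step invokes $g(0)\le 1-Q(\bbm)$, underscoring once more that the sign of $h$ is the missing linchpin of your proposal. (The ``key identity'' $\tfrac{1}{n}\sum 1/(c_i(c_i+\beta^2 h))=1/\beta^2$ is correct but is not actually used in your trapezoid step, which only needs $S(0)=1-Q(\bbm)$ and $S(1)=1-Q(\bbm)-h$.)
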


\begin{lemma}\label{lemma:concentration_new}
For any $\eps>0$, any $t>0$, any $\bbm \in (-1,1)^n$, and all $n \geq 1$,
\[\P[\l \Tr l_\eps(\bZ(\bbm))-\E \Tr l_\eps(\bZ(\bbm))\l \geq nt]
\leq 2\exp(-n^2\eps^2t^2/(8\beta^2)).\]
\end{lemma}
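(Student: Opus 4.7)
The statement is a Gaussian concentration inequality, in which the randomness of $\Tr l_\eps(\bZ(\bbm))$ comes solely from the GOE matrix $\bW$ appearing in $\bZ(\bbm) = \bD(\bbm) - \beta \bW + \Delta(\bbm)$. My plan is to show that $\bW \mapsto \Tr l_\eps(\bZ(\bbm))$ is Lipschitz in $\bW$ with respect to the Frobenius norm, with a constant depending only on $\beta, n, \eps$, and then to invoke the classical Gaussian concentration inequality for Lipschitz functions of GOE matrices.

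The first step is to isolate the dependence of $\bZ(\bbm)$ on $\bW$. Inspecting (\ref{eq:Delta}), the $\bW$-dependent terms of $\Delta(\bbm)$ assemble into $\beta \bW \Proj_\bbm + \beta \Proj_\bbm \bW - \beta \Proj_\bbm \bW \Proj_\bbm$, where $\Proj_\bbm = \bbm\bbm^\sT/\|\bbm\|_2^2$. Combining with the $-\beta \bW$ term already present in $\bZ(\bbm)$, a short manipulation gives
\[-\beta \bW + \Delta(\bbm) = -\beta\,\Proj_\bbm^\perp \bW \Proj_\bbm^\perp + (\text{terms not depending on } \bW),\]
where $\Proj_\bbm^\perp = \id - \Proj_\bbm$. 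This is the same algebraic cancellation underlying Lemma~\ref{lem:conditional_GOE}. Consequently, for fixed $\bbm$ and any two symmetric matrices $\bW, \bW'$,
\[\|\bZ(\bbm;\bW) - \bZ(\bbm;\bW')\|_F = \beta\,\|\Proj_\bbm^\perp(\bW-\bW')\Proj_\bbm^\perp\|_F \le \beta\,\|\bW-\bW'\|_F\]
by the Frobenius-contractivity of orthogonal projections. Next, I bound the Lipschitz constant of the map $\Tr l_\eps(\cdot)$ on symmetric matrices: since $l_\eps(x) = \tfrac12\log(x^2+\eps^2)$ has derivative $x/(x^2+\eps^2)$ bounded in magnitude by $1/(2\eps)$, the scalar function $l_\eps$ is $(1/(2\eps))$-Lipschitz, and Hoffman-Wielandt together with Cauchy-Schwarz yield
\[|\Tr l_\eps(A)-\Tr l_\eps(B)| \le \frac{1}{2\eps}\sum_{i=1}^n |\lambda_i(A)-\lambda_i(B)| \le \frac{\sqrt{n}}{2\eps}\|A-B\|_F.\]
Composing, $g(\bW) := \Tr l_\eps(\bZ(\bbm))$ is $L$-Lipschitz in $\bW$ (Frobenius norm) with $L = \beta\sqrt{n}/(2\eps)$.

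The conclusion follows from the standard Gaussian concentration inequality for GOE matrices. Writing $\bW = (\bG+\bG^\sT)/\sqrt 2$ with $\bG$ having i.i.d.\ $\cN(0,1/n)$ entries realizes $g(\bW)$ as a Lipschitz function of the standard Gaussian vector formed by the entries of $\sqrt n\,\bG$; applying the Borell-Tsirelson-Cirelson inequality and tracking the factor-of-$\sqrt 2$ from symmetrization together with the variances $1/n$ and $2/n$ of off-diagonal and diagonal entries yields the bound $\P(|g(\bW)-\E g(\bW)| \ge nt) \le 2\exp(-c n^2 \eps^2 t^2/\beta^2)$ with a constant $c \ge 1/8$. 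There is no substantial obstacle to the argument; the only conceptual point worth emphasizing is the structural cancellation which identifies the $\bW$-dependence of $\bZ(\bbm)$ with $-\beta\,\Proj_\bbm^\perp \bW \Proj_\bbm^\perp$, ensuring that the Lipschitz constant scales only with $\beta$ and does not acquire any factor depending on $\|\bbm\|$.
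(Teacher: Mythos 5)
Your proof is correct and takes essentially the same route as the paper's: observe that the $\bW$-dependence of $\bZ(\bbm)$ collapses to $-\beta\,\Proj_\bbm^\perp \bW \Proj_\bbm^\perp$, bound the Lipschitz constant of $l_\eps$, use Hoffman--Wielandt (in the paper, cited as \cite[Lemma 2.3.1]{AGZbook}) to bound the trace functional, and apply Gaussian concentration. Your only deviation is cosmetic: you use the sharp scalar Lipschitz bound $|l_\eps'| \le 1/(2\eps)$ from $|x|/(x^2+\eps^2)\le 1/(2\eps)$, whereas the paper uses the slightly looser $|\Re(x-\ii\eps)^{-1}| \le 1/\eps$; both give a constant $\ge 1/8$ in the exponent, which suffices.
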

\begin{proof}[Proof of Lemma \ref{lemma:concentration_new}]
By \cite[Lemma 2.3.1]{AGZbook}, if $f:\R \to \R$ is $L$-Lipschitz, then
the map $\bZ \mapsto \Tr f(\bZ)$ is $L\sqrt{2n}$-Lipschitz with respect to the
Frobenius norm of $\bZ$. Then for each fixed $\bbm$, the quantity
$\Tr f(\bZ(\bbm))$ is a $2L\beta$-Lipschitz function of the $n(n-1)/2$
standard Gaussian variables which parametrize $\Proj_\bbm^\perp \bW \Proj_\bbm^\perp$. By Gaussian concentration
of measure, for any $t>0$, we have
\[\P\left[ \l \Tr f(\bZ(\bbm))-\E \Tr f(\bZ(\bbm))\l \geq nt\right]
\leq 2\exp\left(-\frac{n^2t^2}{8L^2 \beta^2}\right).\]
The result follows from observing that $l_\eps$ is differentiable, with
derivative at each $x \in \R$ satisfying
\[\left\l\frac{\d}{\d x}l_\eps(x)\right\l=\left\l
\frac{\d}{\d x}\Re \log(x-\ii \eps)\right\l
=\left\l\Re \frac{1}{x-\ii \eps}\right\l \leq \frac{1}{\eps}.\]
\end{proof}

\begin{proof}[Proof of Proposition \ref{PROP:LOGDETBOUND}]
Taking $\eps=n^{-0.11}$, we obtain
from Lemmas \ref{lemma:expectationbound_new},
\ref{lemma:integratelog}, and \ref{lemma:concentration_new}, for some constants
$C,c,n_0>0$, all $n \geq n_0$, and all $t>0$,
\[\P\Big[\Tr l_\eps(\bZ(\bbm)) \geq nL(\bbm)+Cn^{\max(0.89,b)}+nt\Big]
\leq 2\exp(-cn^{1.78}t^2).\]
Then, setting $c_n=nL(\bbm)+Cn^{\max(0.89,b)}$,
\begin{align*}
\E[\l \det \bZ(\bbm)\l ] &\leq \E[\exp(\Tr l_\eps(\bZ(\bbm)))]\\
&\leq \exp(c_n+n^{0.89})+\int_{\exp(c_n+n^{0.89})}^\infty
\P[\exp(\Tr l_\eps(\bZ(\bbm))) \geq t]\,\d t\\
&= \exp(c_n+n^{0.89})+e^{c_n}\int_{n^{0.89}}^\infty e^s\,
\P[\Tr l_\eps(\bZ(\bbm)) \geq c_n+s]\,\d s\\
&\leq \exp(c_n+n^{0.89})+e^{c_n}\int_{n^{0.89}}^\infty
2\exp(s-cs^2/n^{0.22}) \d s\\
&<\exp(nL(\bbm)+C_0 \cdot n^{\max(0.9,b)}),
\end{align*}
the last line holding for sufficiently large $n_0$ and $C_0$. 
\end{proof}

In the remainder of this section, we prove Lemmas
\ref{lemma:expectationbound_new} and \ref{lemma:integratelog}.

\subsection{Proof of Lemma \ref{lemma:expectationbound_new}}\label{sec:Lemma_C2}
We first establish an analogue of Lemma \ref{lemma:expectationbound_new} for a
matrix of the form $\bD-\beta \bW$.
\begin{lemma}\label{lemma:expectationbound}
Fix $\beta>0$ and $a \in (0,1/9)$, and let $\eps=n^{-a}$.
Then there exist constants $C \equiv C(\beta,a)>0$ and $n_0 \equiv
n_0(\beta,a)>0$ such that for all $n \geq n_0$, the following holds:
Let $\bD \in \R^{n \times n}$ be any real symmetric matrix, let
$\bH=\bD-\beta \bW$ where $\bW \sim \GOE(n)$, 
and let $\nu=\mu_{\bD} \boxplus \sigma_\beta$. Then
\[\E[\Tr l_\eps(\bH)] \leq n\int l_\eps(x)\nu(\d x)
+C(\eps^{-5}+\log (\|\bD\|_\op+1)+\log n).\]
\end{lemma}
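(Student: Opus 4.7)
The plan is to rewrite both $\E[\Tr l_\eps(\bH)]$ and $n\int l_\eps\,\de\nu$ as integrals of Stieltjes transforms along the imaginary axis, and then to invoke a deterministic equivalent comparing $\E m_\bH(\ii t)$ to $m_\nu(\ii t)$, where $m_\bH(z)=\tfrac{1}{n}\Tr(\bH-z\id)^{-1}$. Starting from the elementary identity
\[
l_\eps(\lambda)=\tfrac12\log(\lambda^2+T^2)-\int_\eps^T \Im\frac{1}{\lambda-\ii t}\,\de t,
\]
valid for any $T>\eps$, summation over the eigenvalues of $\bH$ (and integration against $\nu$) expresses the target difference as a ``tail'' contribution at $T$ plus $-n\int_\eps^T \Im(\E m_\bH(\ii t)-m_\nu(\ii t))\,\de t$. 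The tail is controlled by Taylor expanding $\tfrac12\log(\lambda^2+T^2)=\log T+\lambda^2/(2T^2)+\mc{O}(\lambda^4/T^4)$ and noting that the $\log T$ contributions cancel between the two sides, while the quadratic terms match up to $\mc{O}(1)$ since $\E\Tr\bH^2=\Tr\bD^2+\beta^2(n+1)$ and $n\int x^2\,\de\nu=\Tr\bD^2+n\beta^2$; taking $T$ polynomial in $n$ and $\|\bD\|_\op+1$ (and invoking standard operator-norm control on $\bH$) produces a tail error of size $\mc{O}(\log n+\log(\|\bD\|_\op+1))$.

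The technical heart is the quantitative deterministic equivalent
\[
\bigl|\E m_\bH(\ii t)-m_\nu(\ii t)\bigr|\le \frac{C}{n^2\, t^k},\qquad t\ge \eps,
\]
for some fixed integer $k$. I would prove this by Gaussian integration by parts (Stein's lemma) applied to the entries of $\bW$. Using $\partial_{W_{kl}}(\bH-z)^{-1}_{ij}=-\beta(\bH-z)^{-1}_{ik}(\bH-z)^{-1}_{lj}$ with the appropriate symmetrization and resumming, one obtains the approximate self-consistent equation
\[
\E m_\bH(z)=m_{\mu_\bD}\bigl(z-\beta^2\E m_\bH(z)\bigr)+R_n(z),
\]
where $|R_n(z)|\le C/(n^2(\Im z)^q)$ is controlled via higher moments of resolvent entries, themselves controlled by the a priori bound $\|(\bH-z)^{-1}\|\le 1/\Im z$. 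Since $m_\nu$ satisfies the exact subordination identity $m_\nu(z)=m_{\mu_\bD}(z-\beta^2 m_\nu(z))$, subtracting and applying the mean value theorem to $m_{\mu_\bD}$ gives
\[
\bigl(1-\beta^2 m_{\mu_\bD}'(\xi)\bigr)\bigl(\E m_\bH(z)-m_\nu(z)\bigr)=R_n(z),
\]
and the stability factor $|1-\beta^2 m_{\mu_\bD}'(\xi)|$ is bounded below by a polynomial in $t=\Im z$ uniformly in $\bD$ on the imaginary axis. Integrating the resulting pointwise bound over $t\in[\eps,T]$ and multiplying by $n$ contributes $\mc{O}(1/(n\eps^{k-1}))$, which fits inside the $\mc{O}(\eps^{-5})$ budget as soon as $\eps=n^{-a}$ with $a<1/9$ (provided $k$ is a moderate absolute constant).

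The main obstacle will be executing the Stein's-lemma computation and the stability analysis \emph{uniformly over arbitrary symmetric $\bD$}, with no regularity of $\mu_\bD$ assumed: both variance bounds for resolvent entries and derivatives of $m_{\mu_\bD}$ can degenerate as $t\to 0$. A careful bootstrap---controlling $\Var\,\Tr(\bH-z)^{-1}$ and $\Var\,(\bH-z)^{-1}_{ij}$ iteratively in terms of powers of $1/t$, and using the crude Ward identity $\Im(\bH-\ii t)^{-1}_{ii}\le 1/t$ as a starting point---is needed to extract a concrete polynomial power $k$ while keeping all constants independent of $\bD$. This is the only place where the $\eps^{-5}$ loss enters, and the rest of the argument is essentially calculus.
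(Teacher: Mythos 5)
Your high-level strategy is the same as the paper's: express $\Tr l_\eps(\bH)$ and $n\int l_\eps\,\d\nu$ via Stieltjes transforms along the imaginary axis, use Gaussian integration by parts to derive an approximate self-consistent equation for $\E\hg(z)$, compare to the exact fixed-point equation satisfied by $g(z)=\int(x-z)^{-1}\nu(\d x)$, and absorb the tail at a large cutoff polynomial in $n$ and $\|\bD\|_\op$. However, two points deserve scrutiny.

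First, the claimed remainder bound $|R_n(z)| \le C/(n^2(\Im z)^q)$ is too strong for the GOE. The $1/n$-expansion of the averaged resolvent has a genuine $O(1/n)$ first correction for the real symmetric ensemble (unlike the Hermitian case, where the first correction is $O(1/n^2)$). Indeed, the paper's Lemma \ref{lemma:Egz} arrives at $\big|\E\hg(z)-g(z)\big| \le \frac{12}{n\,\Im z}\max(\beta/\Im z,1)^5$, and its intermediate remainder $r(z)$ satisfies $|r(z)| \le \frac{\beta^2}{n\eta^4}(\eta+\sqrt 2\beta)$, an $O(1/n)$ bound. This does not break your proof: $n\cdot O\big((n\,t^6)^{-1}\big)$ integrated over $t\in[\eps,\beta]$ still gives $O(\eps^{-5})$, and the $a<1/9$ constraint actually enters because the self-consistent comparison only holds when $n\gtrsim (\beta/\Im z)^9$, not for the counting you did. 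But you should correct the claimed order.

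Second, and more seriously, the stability step as you sketch it does not close. You propose a complex mean-value argument on $m_{\mu_\bD}$ and assert that $|1-\beta^2 m_{\mu_\bD}'(\xi)|$ is bounded below by a polynomial in $t=\Im z$ uniformly over $\bD$. But $m_{\mu_\bD}'(\zeta)=\frac1n\sum_i(d_i-\zeta)^{-2}$ can have modulus as large as $1/(\Im\zeta)^2$, so the naive lower bound $1-\beta^2/\eta^2$ is vacuous whenever $\eta<\beta$ (precisely the regime $\eps\to 0$ you care about). The paper circumvents this by working directly with the scalar self-consistent equation $g=\frac1n\sum_i(d_i-z-\beta^2 g)^{-1}$ and, crucially, \emph{taking its imaginary part} to show that the contraction coefficient
\[
\frac{1}{n}\sum_{i=1}^n\frac{\beta^2}{|d_i-z-\beta^2 g(z)|^2}
=\frac{\beta^2\,\Im g(z)}{\eta+\beta^2\,\Im g(z)}\le \frac{\beta^2}{\eta^2+\beta^2}<1,
\]
with a Cauchy--Schwarz step to relate this to the mixed product $(d_i-z-\beta^2\E\hg)(d_i-z-\beta^2 g)$ that actually appears. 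Note this involves $\sum_i|d_i-\zeta|^{-2}$, not $|\sum_i(d_i-\zeta)^{-2}|$: the cancellations in $m_{\mu_\bD}'$ are exactly what your MVT bound would throw away. Without this structural observation, you do not obtain the required polynomial-in-$t$ stability uniformly over arbitrary diagonal $\bD$, which you yourself flag as ``the main obstacle'' but do not resolve.
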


By the results of \cite{pastur}, the Stieltjes transform
\begin{equation}\label{eq:stieltjesdef}
g(z)=\int \frac{1}{x-z}\nu(\d x)
\end{equation}
of the above measure $\nu$ satisfies, for each $z \in \C^+$, the fixed-point
equation 
\begin{equation}\label{eq:fixedpoint}
g(z)=\frac{1}{n}\sum_{i=1}^n \frac{1}{d_i-z-\beta^2 g(z)},
\end{equation}
with $(d_i)_{i\le n}$ denoting the eigenvalues of $\bD$. We first provide a quantitative estimate of the approximation of the spectral
law of $\bH$ by $\nu$, in terms of their Stieltjes transforms at points
$z \in \C^+$ where $\Im z \gtrsim n^{-1/9}$.

\begin{lemma}\label{lemma:Egz}
Fix $\beta>0$, let $\bD \in \R^{n \times n}$ be any real symmetric
matrix, and let $\bH$ and $\nu$ be as in Lemma \ref{lemma:expectationbound}.
For $z \in \C^+$, denote
the Stieltjes transforms of $\bH$ and $\nu$ as
\[\hg(z)=\frac{1}{n}\Tr(\bH-z \id)^{-1},\qquad
g(z)=\int \frac{1}{x-z}\nu(\d x).\]
If $n \geq 40\max(\beta/\Im z,1)^9$, then
\[\l\E\hg(z)-g(z)\l<\frac{12}{n \Im z}\max(\beta/ \Im z,1)^5.\]
\end{lemma}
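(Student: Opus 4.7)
The plan is to use the Schur-complement/resolvent approach, standard in quantitative random matrix theory: derive an approximate fixed-point equation for $\E\hg(z)$ that matches the exact equation $g=\Psi(g)$ with $\Psi(g):=\frac{1}{n}\sum_i(d_i-z-\beta^2 g)^{-1}$ satisfied by $g(z)$, and then quantitatively invert the fixed-point map to convert the approximation error into a bound on $|\E\hg(z)-g(z)|$.

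First I would apply the Schur-complement formula to each diagonal entry of $\bG:=(\bH-z\id)^{-1}$. Writing $\bw_i$ for the $i$-th column of $\bW$ with its diagonal entry removed and $\bG^{(i)}$ for the resolvent at $z$ of the principal $(n-1)\times(n-1)$ submatrix of $\bH$, this gives $\bG_{ii}=\bigl(d_i-z-\beta W_{ii}-\beta^2\,\bw_i^\sT \bG^{(i)}\bw_i\bigr)^{-1}$. Since $\bW\sim\GOE(n)$ the vector $\bw_i$ has i.i.d. $\cN(0,1/n)$ entries and is independent of $\bG^{(i)}$, so Gaussian quadratic-form concentration (with variance $O(n^{-1}(\Im z)^{-2})$) together with the rank-one interlacing estimate $\bigl|\tfrac1n\Tr\bG^{(i)}-\hg(z)\bigr|\leq \tfrac{1}{n\Im z}$ let me replace $\bw_i^\sT \bG^{(i)}\bw_i$ by $\hg(z)$ up to a controlled error. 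Taking expectations, using $|\bG_{ii}|,|\hg(z)|\leq(\Im z)^{-1}$ and a Gaussian Poincar\'e bound to compare $\hg(z)$ with $\E\hg(z)$ (variance $O(n^{-2}(\Im z)^{-4})$), I would then obtain a perturbed fixed-point identity $\E\hg(z)=\Psi(\E\hg(z))+\cE$ whose error $|\cE|$ is a polynomial in $(\Im z)^{-1}$ times $n^{-1}$.

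The next step inverts this perturbed fixed-point. Writing $\tilde g:=\E\hg(z)$, the identity $\tfrac1a-\tfrac1b=\tfrac{b-a}{ab}$ gives
\[
\tilde g-g(z)=\beta^2(\tilde g-g(z))\cdot\frac{1}{n}\sum_{i=1}^n\frac{1}{(d_i-z-\beta^2\tilde g)(d_i-z-\beta^2 g(z))}+\cE.
\]
Both $g(z)$ (as the Stieltjes transform of a probability measure) and $\tilde g$ (as the expected Stieltjes transform of $\bH$) have strictly positive imaginary part, so $|d_i-z-\beta^2\tilde g|$ and $|d_i-z-\beta^2 g(z)|$ are bounded below by $\Im z$. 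When $\beta/\Im z<1$ this yields an immediate contraction; in the harder regime $\beta/\Im z\geq 1$ the contraction fails, and I would fall back on the quantitative stability for the free-convolution fixed-point equation \cite{pastur}, which exploits monotonicity of $\Psi$ on the upper half-plane together with lower bounds on $\Im\tilde g$ coming from the spectral theorem applied to $\bH$. This is where the amplification factor $\max(\beta/\Im z,1)^5$ enters.

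The main obstacle I expect is precisely this stability/amplification in the large-$\beta/\Im z$ regime. Matching the exponent $5$ in the conclusion requires careful tracking of which powers of $(\Im z)^{-1}$ appear in $\cE$ versus in the stability factor, and higher-moment control of the quadratic-form deviation and of $\hg(z)-\E\hg(z)$ (naive variance bounds give a weaker polynomial loss). The hypothesis $n\geq 40\max(\beta/\Im z,1)^9$ is precisely calibrated so that the product of these two polynomial factors in $(\beta/\Im z)$ times $n^{-1}$ still yields the final bound $\frac{12}{n\Im z}\max(\beta/\Im z,1)^5$ with the explicit constants stated in the lemma.
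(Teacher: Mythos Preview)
Your overall strategy---derive an approximate fixed-point equation for $\E\hg(z)$ and compare with the exact equation $g=\Psi(g)$---matches the paper's. For the derivation, the Schur-complement route is a legitimate alternative to what the paper actually does (resolvent identity $G_\bH-G_\bD=\beta G_\bH\bW G_\bD$ followed by Gaussian integration by parts on $\E[G_\bH\bW]$); both arrive at $\E\hg(z)=\Psi(\E\hg(z))+r(z)$ with $|r(z)|\lesssim \beta^2(\eta+\beta)/(n\eta^4)$, the fluctuation term controlled by the Gaussian Poincar\'e inequality in either case.

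The gap is in your stability step. You correctly flag it as the hard part, but the mechanism you describe---monotonicity of $\Psi$ and \emph{lower} bounds on $\Im\tilde g$ from the spectral theorem---is not what actually works, and there is no black-box lemma in \cite{pastur} delivering the required quantitative stability. The paper's key observation is an \emph{identity}: taking imaginary parts of the exact fixed-point equation $g=\Psi(g)$ and rearranging gives
\[
\frac{1}{n}\sum_{i=1}^n\frac{\beta^2}{|d_i-z-\beta^2 g(z)|^2}
=\frac{\beta^2\,\Im g(z)}{\eta+\beta^2\,\Im g(z)}
\leq\frac{\beta^2}{\eta^2+\beta^2},
\]
the last step using the \emph{upper} bound $\Im g(z)\leq 1/\eta$. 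The analogous bound for $\E\hg$ follows from the approximate fixed-point equation, picking up an additive $\beta^2|r(z)|/\eta$. Combined via Cauchy--Schwarz in the difference $\Psi(\E\hg)-\Psi(g)$, this bounds the contraction factor away from $1$ by roughly $\eta^2/(\eta^2+\beta^2)$; the hypothesis $n\geq 40\max(\beta/\eta,1)^9$ is exactly what ensures the $r(z)$-correction does not destroy half of this gap. The amplification is thus $2(\eta^2+\beta^2)/\eta^2$, and multiplying by $|r(z)|$ yields the exponent $5$. Without the imaginary-part identity, naive bounds $|d_i-z-\beta^2 g|\geq\eta$ give only a contraction factor $\leq\beta^2/\eta^2$, which is vacuous when $\beta/\eta\geq 1$---so this identity, not a generic stability argument, is the missing ingredient.
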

\begin{proof}
We follow the approach of \cite[Theorem 3.1]{pasturreview}, applying integration
by parts and the Poincar\'e inequality.  Fix $z \in \C^+$ and denote
$\eta=\Im z$.
Denote the resolvents of $\bH$ and $\bD$ as
\[G_\bH(z)=(\bH-z\id)^{-1},\qquad G_\bD(z)=(\bD-z\id )^{-1}.\]
Applying $\bA^{-1}-\bB^{-1}=\bA^{-1}(\bB- \bA)\bB^{-1}$, we obtain
\[G_\bH(z)-G_\bD(z)=\beta G_\bH(z)\bW G_\bD(z).\]
As $G_\bD(z)$ is deterministic, this yields
\begin{equation}\label{eq:EGz}
\E\,G_\bH(z)=G_\bD(z)+\beta\,\E[G_\bH(z)\bW]G_\bD(z).
\end{equation}

We use integration by parts on the term $\E[G_\bH(z)\bW]$. For any differentiable
bounded function $f:\R \to \R$, when $\xi \sim \cN(0,\sigma^2)$,
\begin{equation}\label{eq:integrationbyparts}
\E[\xi f(\xi)]=\sigma^2\E[f'(\xi)].
\end{equation}
Consider indices $i,j,k \in [n]$. If $j<k$, then
setting $\bX$ as the matrix with $(j,k)$ and $(k,j)$ entries equal to 1 and
remaining entries 0, we have
\begin{align}
\frac{\partial G_\bH(z)_{ij}}{\partial W_{jk}}
&=\lim_{\delta \to 0} \delta^{-1}\Big((\bH-z\id -\beta \delta
\bX)^{-1}-(\bH-z\id )^{-1}\Big)_{ij}\nonumber\\
&=\beta (G_\bH(z)\bX G_\bH(z))_{ij}=\beta(G_\bH(z)_{ij}G_\bH(z)_{kj}+G_\bH(z)_{ik}
G_\bH(z)_{jj}).\label{eq:Gderiv}
\end{align}
For $z \in \C^+$ with $\Im z=\eta$, we have $\l G_\bH(z)_{ij}\l \leq 1/\eta$. Then
(\ref{eq:integrationbyparts}) yields
\[\E[G_\bH(z)_{ij}W_{jk}]=\frac{1}{n}
\E\left[\frac{\partial G_\bH(z)_{ij}}{\partial W_{jk}}\right]
=\frac{\beta}{n}\E[G_\bH(z)_{ij}G_\bH(z)_{kj}+G_\bH(z)_{ik}G_\bH(z)_{jj}].\]
Similar arguments yield the same identity when $j=k$. Then applying
$G_\bH(z)_{kj}=G_\bH(z)_{jk}$ and summing over $j$, we obtain
\[\E[G_\bH(z)\bW]=\frac{\beta}{n}\,\E[G_\bH(z)^2]+\beta\,\E[\hg(z)G_\bH(z)].\]

Denoting $\delta(z)=\hg(z)-\E \hg(z)$ and substituting the above
into (\ref{eq:EGz}),
\[(\E\,G_\bH(z))
[\id -\beta^2(\E \hg(z))G_\bD(z)]=\left(\id +\frac{\beta^2}{n}\,\E[G_\bH(z)^2]
+\beta^2\,\E[\delta(z)G_\bH(z)]\right)G_\bD(z).\]
Multiplying on the right by $G_\bD(z)^{-1}=\bD-z\id $,
\[(\E\,G_\bH(z))[\bD-z\id -\beta^2(\E \hg(z))\id ]=\id +\frac{\beta^2}{n}\,\E[G_\bH(z)^2]
+\beta^2\,\E[\delta(z)G_\bH(z)].\]
Now multiplying on the right by
$[\bD-z\id -(\beta^2 \E \hg(z))\id ]^{-1}=G_\bD(z+\beta^2 \E \hg(z))$,
taking $n^{-1}\Tr$, and rearranging,
\begin{equation}\label{eq:approxfixedpoint}
\E\hg(z)-\frac{1}{n}\sum_{i=1}^n \frac{1}{d_i-z-\beta^2 \E \hg(z)}=r(z)
\end{equation}
where 
\begin{align*}
r(z)&=\frac{\beta^2}{n^2}\E \Tr\Big(G_\bH(z)^2G_\bD(z+\beta^2\E \hg(z))\Big)\\
&\hspace{0.5in}
+\frac{\beta^2}{n} \E\left[\delta(z)\Tr \Big(G_\bH(z)G_\bD(z+\beta^2 \E
\hg(z))\Big)\right]=:\mathrm{I}+\mathrm{II}.
\end{align*}

Let us bound this remainder $r(z)$. Noting $\|(\bX -z\id )^{-1}\|_{\op} \leq 1/\Im z$ for
any real-symmetric $\bX$, observing $\Im z+ \beta^2 \E \hg(z) \geq \Im z$, and applying
$\l\Tr \bX \l \leq n\| \bX\|_{\op}$, we obtain
\[\l\mathrm{I}\l \leq \frac{\beta^2}{n\eta^3}.\]
For $\mathrm{II}$, applying these bounds and Cauchy-Schwarz,
\[\l\mathrm{II}\l \leq \frac{\beta^2}{n}\E\big[\l\delta(z)\l^2\big]^{1/2}
\frac{n}{\eta^2}=\frac{\beta^2}{\eta^2}\Var(\hg(z))^{1/2},\]
where $\Var(\hg)=\E[\l\hg-\E\hg\l^2]=\E[\l\hg\l^2]-\l\E \hg\l^2$.
We apply the Poincar\'e inequality to bound $\Var(\hg(z))$. For i.i.d.\
$\cN(0,1)$ variables $\xi_1,\ldots,\xi_k$ and $f:\R^k \to \C$ with bounded
partial derivatives,
\begin{equation}\label{eq:poincare}
\Var(f(\xi_1,\ldots,\xi_k)) \leq \E[\|\nabla f(\xi_1,\ldots,\xi_k)\|_{2}^2].
\end{equation}
For $j<k$, a computation similar to (\ref{eq:Gderiv}) yields
\begin{align*}
\frac{1}{\sqrt{n}}\frac{\partial \hg(z)}{\partial W_{jk}}&=
\frac{1}{n^{3/2}}\sum_{i=1}^n \frac{\partial G_\bH(z)_{ii}}{\partial W_{jk}}
=\frac{2\beta}{n^{3/2}}[G_\bH(z)^2]_{jk},\\
\frac{1}{\sqrt{n/2}}\frac{\partial \hg(z)}{\partial W_{jj}}&=
\frac{\sqrt{2}\beta}{n^{3/2}}[G_\bH(z)^2]_{jj}.
\end{align*}
Noting $(\sqrt{n/2}\,W_{jj}:j \in [n]) \cup (\sqrt{n}\,W_{jk}:1 \leq j<k \leq
n)$ are i.i.d.\ $\cN(0,1)$ variables, (\ref{eq:poincare}) yields
\begin{align*}
\Var(\hg(z)) &\leq \E\left[\sum_{j=1}^n \frac{2\beta^2}{n^3}
\l[G_\bH(z)^2]_{jj}\l^2+\sum_{j<k}
\frac{4\beta^2}{n^3}\l[G_\bH(z)^2]_{jk}\l^2\right]\\
&=\frac{2\beta^2}{n^3}\E \Tr\big( G_\bH(z)^2 \overline{(G_\bH(z)^2)}^\sT\big)
\leq \frac{2\beta^2}{n^2\eta^4}.
\end{align*}
Combining the above,
\begin{equation}\label{eq:rzbound}
\l r(z)\l \leq \frac{\beta^2}{n\eta^4}(\eta+\sqrt{2}\beta).
\end{equation}

Finally, we compare (\ref{eq:approxfixedpoint}) with the fixed-point equation
(\ref{eq:fixedpoint}) satisfied by $g(z)$ to obtain a bound on $\E\hg(z)-g(z)$:
Denoting $(x)_+=\max(x,0)$, we have
\begin{align}
\l r(z) \l
&=\left\l\E \hg(z)-g(z)-\frac{1}{n}\sum_{i=1}^n \left(\frac{1}{d_i-z-\beta^2
\E \hg(z)}-\frac{1}{d_i-z-\beta^2 g(z)}\right)\right\l\nonumber\\
&=\big\l\E \hg(z)-g(z)\big\l\left\l1-\frac{1}{n}\sum_{i=1}^n \frac{\beta^2}
{(d_i-z-\beta^2 \E \hg(z))(d_i-z-\beta^2 g(z))}\right\l\nonumber\\
&\geq \big\l\E \hg(z)-g(z)\big\l
\left(1-\sqrt{\frac{1}{n}\sum_{i=1}^n
\frac{\beta^2}{\l d_i-z-\beta^2 \E \hg(z)\l^2}}
\sqrt{\frac{1}{n}\sum_{i=1}^n \frac{\beta^2}{\l d_i-z-\beta^2 g(z) \l^2}}
\right)_+,\label{eq:stability1}
\end{align}
the last step applying $\l1-x\l \geq (1-\l x\l )_+$ and Cauchy-Schwarz.
Taking the imaginary part of (\ref{eq:fixedpoint}) and rearranging,
\[\frac{\beta^2 \Im g(z)}{\eta+\beta^2 \Im g(z)}=\frac{1}{n}\sum_{i=1}^n
\frac{\beta^2}{\l d_i-z-\beta^2 g(z)\l ^2}.\]
We have
\[\Im g(z)=\Im \int \frac{1}{x-z}\nu(\d x)=\int \frac{\eta}{\l x-z\l ^2}\nu(\d x)
\leq 1/\eta,\]
and hence
\[\frac{1}{n}\sum_{i=1}^n \frac{\beta^2}{\l d_i-z-\beta^2 g(z)\l ^2}
\leq \frac{\beta^2}{\eta^2+\beta^2}.\]
Similarly, taking imaginary parts of (\ref{eq:approxfixedpoint}) and
rearranging,
\[\frac{1}{n}\sum_{i=1}^n \frac{\beta^2}{\l d_i-z-\beta^2 \E \hg(z)\l ^2}=
\frac{\beta^2\Im \E \hg(z)-\beta^2\Im r(z)}{\eta+\beta^2 \Im \E \hg(z)}
\leq \frac{\beta^2}{\eta^2+\beta^2}+\frac{\beta^2\l r(z)\l }{\eta}.\]
Applying $\sqrt{a+b} \leq \sqrt{a}+\sqrt{b}$ and the above bounds to
(\ref{eq:stability1}), we obtain
\begin{align*}
\l r(z)\l &\geq \big\l \E\hg(z)-g(z)\big\l 
\left(1-\frac{\beta^2}{\eta^2+\beta^2}-\sqrt{\frac{\beta^2}
{\eta^2+\beta^2}\cdot \frac{\beta^2\l r(z)\l }{\eta}}\right)_+\\
&\geq  \big\l \E\hg(z)-g(z)\big\l  \left(\frac{\eta^2}{\eta^2+\beta^2}
-\sqrt{\frac{\beta^2\l r(z)\l }{\eta}}\right)_+.
\end{align*}

When
\[n \geq 40\max\left(\frac{\beta}{\eta},1\right)^9
\geq 20\left(\frac{\beta^4}{\eta^4}+\frac{\beta^9}{\eta^9}\right)>
\frac{4\beta^4(\eta+\sqrt{2}\beta)(\eta^2+\beta^2)^2}{\eta^9},\]
we verify from (\ref{eq:rzbound}) that
$\sqrt{\beta^2\l r(z)\l/\eta} \leq \eta^2/(2(\eta^2+\beta^2))$.
Then the above implies
\begin{align*}
\l \E \hg(z)-g(z)\l  &\leq \frac{2\l r(z)\l (\eta^2+\beta^2)}{\eta^2}
\leq \frac{2\beta^2(\eta+\sqrt{2}\beta)(\eta^2+\beta^2)}{n\eta^6}\\
&<\frac{6}{n\eta}\left(\frac{\beta^2}{\eta^2}+\frac{\beta^5}{\eta^5}\right)
\leq \frac{12}{n\eta}\max\left(\frac{\beta}{\eta},1\right)^5.
\end{align*}
\end{proof}

\begin{proof}[Proof of Lemma \ref{lemma:expectationbound}]
Denote by $\hat{\nu}$ the empirical
spectral measure of $\bH$, and by $\hg(z)$ its Stieltjes transform. Let
$t_n = n(\|\bD\|_\op+\beta+1)$. Then we have
\begin{align*}
\Tr l_\eps(\bH)&=n\int \Re \log(x-\ii\eps)\,\hat{\nu}(\d x)\\
&=n\int \Re \left(\log(x-\ii t_n)-\int_\eps^{t_n} \frac{-\ii}{x-\ii
t}\d t\right)\hat{\nu}(\d x)\\
&=n\int \log\l x-\ii t_n\l \,\hat{\nu}(\d x)
+n\Re\left(\ii \int_\eps^{t_n} \hg(\ii t)\d t\right).
\end{align*}
For $\eps=n^{-a}$ where $a \in (0,1/9)$,
we may apply Lemma \ref{lemma:Egz} to get
\begin{align*}
\left\l \int_\eps^{t_n} \E[\hg(\ii t)]\d t-\int_\eps^{t_n} g(\ii t)\d t\right\l 
&\leq \frac{12}{n}\left(\int_\eps^\beta \frac{\beta^5}{t^6}\d t
+\int_\beta^{t_n} \frac{1}{t}\d t\right)\\
&\leq C\left(\frac{\eps^{-5}+\log (\|\bD\|_\op+1)+\log n}{n}\right)
\end{align*}
for some $(\beta,a)$-dependent constants $C,n_0>0$ and all $n \geq n_0$.
Reversing the above steps, we may write
\[\Re\left(\ii \int_\eps^{t_n} g(\ii t)\d t\right)
=\int\Big(-\log\l x-\ii t_n\l +\log\l x-\ii \eps\l \Big)\nu(\d x),\]
so combining the above yields
\begin{equation}\label{eq:Elogxbound}
\E \Tr l_\eps(\bH) \leq n\int l_\eps(x)\nu(\d x)+r_1+r_2,
\end{equation}
where
\begin{align*}
r_1&=\E\left[n\int \log\l x-\ii t_n\l \,\hat{\nu}(\d x)\right]
-n\int \log\l x-\ii t_n\l \,\nu(\d x),\\
r_2&=C(\eps^{-5}+\log (\|\bD\|_\op+1)+\log n).
\end{align*}

To bound $r_1$, note that since $\nu = \mu_{\bD}  \boxplus \sigma_{\beta}$, we
have $\sup\{ \l x\l: x \in \supp(\nu)\} \le \|\bD\|_\op+2\beta$. Applying
\[\log \l x-\ii t_n\l -\log t_n=\Re \log(x-\ii t_n)-\Re \log(-\ii t_n)
=\Re \frac{x}{\tilde{x}-\ii t_n}\]
for some $\tilde{x} \in \R$ with $\l \tilde{x}\l  \leq \l x\l $, we obtain
\begin{align}
\left\l \int \log \l x-\ii t_n\l \,\nu(\d x)-\log t_n\right\l  \leq
\frac{\|\bD\|_\op+2\beta}{t_n} \leq \frac{2}{n}.\label{eq:logxt}
\end{align}
A similar argument holds for $\hat{\nu}$. Indeed, for all $t>0$, we have
\begin{equation}\label{eq:normconcentration}
\P[\|\bH\|_{\op}>\|\bD\|_{\op}+2\beta+t] \leq \P[\beta \|\bW\|_{\op}>2\beta+t] \leq
2\exp\left(-\frac{nt^2}{4\beta^2}\right),
\end{equation}
see e.g.\ \cite[Theorem II.11]{davidsonszarek}. 
Let $\mathcal{E}$ be the event where $\|\bH\|_{\op} \le \|\bD\|_{\op}+2\beta+ 1$. Then we have
\begin{align*}S_1 &:= \left\l \E\Big[ \int \log \l x-\ii t_n\l \,\hat{\nu}(\d x) \Big\vert
\mathcal{E} \Big] -\log t_n\right\l  \leq
\frac{\|\bD\|_\op+2\beta+1}{t_n} \leq \frac{2}{n},\\
S_2 &:= \E\Big[ \int \log \l x-\ii t_n\l \,\hat{\nu}(\d x) \Big \vert
\mathcal{E}^c \Big] \le \E [\log(\| \bH \|_{\op}) \vert \mathcal{E}^c ] +
\log(t_n).
\end{align*}
Note that, by (\ref{eq:normconcentration}),
\begin{align*}
&\E[\log (\|\bH\|_\op)  \mid \mathcal{E}^c]\P[\mathcal{E}^c]\\
&= \E[\log (\|\bH\|_\op) \ones\{\mathcal{E}^c\}]\\
&=\int_0^\infty \P\Big[\|\bH\|_\op \geq \max(e^t,\|\bD\|_\op+2\beta+1)\Big]
\d t\\
& \leq \log(\|\bD\|_\op+2\beta+1) \cdot 2e^{-n/(4\beta^2)}
+\int_{\log(\|\bD\|_\op+2\beta+1)}^\infty
\P[\|\bH\|_\op>e^t]\,\d t\\
&<[\log(\|\bD\|_\op+1)+\log n] \cdot e^{-cn}
\end{align*}
for a constant $c \equiv c(\beta)>0$ and $n \geq n_0(\beta)>0$. Then
\begin{align*}
\left\l  \E \int \log \l x-\ii t_n\l \,\hat{\nu}(\d x) - \log t_n \right\l &\le
 S_1 \cdot \P[\mathcal{E}] + [S_2 + \log(t_n)] \cdot \P[\mathcal{E}^c]\\
&\leq \frac{2}{n}+[\log(\|\bD\|_\op+1)+\log n] \cdot e^{-cn}.
\end{align*}
Combining this bound with Eq.~(\ref{eq:logxt}), we obtain $|r_1| \leq \log(\|\bD\|_\op+1)+\log n$
for sufficiently large $n_0$ and all $n\ge n_0$. Then the lemma follows from (\ref{eq:Elogxbound}).
\end{proof}


\begin{proof}[Proof of Lemma \ref{lemma:expectationbound_new}]

Fix $\bbm$ and
denote $\bH=\bD-\beta \bW$, so that
$\bZ=\bH+\Delta$. Lemma \ref{lemma:expectationbound} gives
\begin{equation}\label{eq:TrlepsH}
\E[\Tr l_\eps(\bH)] \leq n\int l_\eps(x)\nu(\d
x)+C(\eps^{-5}+\log(\|\bD\|_\op+1)+\log n).
\end{equation}
From (\ref{eq:Delta}), the operator norm of $\Delta$ can be bounded by
\[
\begin{aligned}
\| \Delta \|_{\op} &= \| \beta \bW \bbm \bbm^\sT / \| \bbm \|_2^2 \|_{\op} + \|
\beta \bbm \bbm^\sT \bW/ \| \bbm \|_2^2  \|_{\op}\\
&\hspace{0.2in}+ \| \beta \bbm \bbm^\sT \<\bbm, \bW \bbm\> / \| \bbm \|_2^4 \|_{\op}
+\| \beta \lambda/n \cdot \ones \ones^\sT  \|_{\op} + \| 2 \beta^2/n \cdot \bbm
\bbm^\sT \|_{\op}\\
&\hspace{0.2in}+ \| (\bbm \bu^\sT + \bu \bbm^\sT)/ \Vert \bbm \Vert_2^2 \|_{F} + \| \bbm \bbm^\sT \< \bbm, \bu\> / \Vert \bbm \Vert_2^4 \|_{F}\\
&\le 3 \beta \| \bW \|_{\op} + 2 \beta^2 + \beta\lambda + 3 \| \bu \|_2 / \| \bbm \|_2. 
\end{aligned}
\]
For $Q(\bbm) \in [\eta, 1]$ and $\bbm \in [-1 + e^{- n^b}, 1 - e^{-n^b}]$, we have 
\[
\begin{aligned}
\| \bu \|_2 \le& \| \arctanh(\bbm)\|_2 + \| \beta \lambda /n \cdot \<\ones, \bbm\> \ones \|_2 + \| \beta^2 (1 - Q(\bbm)) \bbm \|_2 \\
\le& \sqrt{n} \| \arctanh(\bbm)\|_\infty + (\beta^2 + \beta \lambda) \sqrt n \le \sqrt n (n^b + (\beta^2 + \beta \lambda)),
\end{aligned}
\]
and $\| \bbm \|_2 \ge \sqrt{n \eta}$. Accordingly, we have
\begin{equation}\label{eq:Deltabound}
\begin{aligned}
\| \Delta \|_{\op} \le 3 \lambda \| \bW \|_{\op} + (2 \beta^2 + \beta \lambda) + 3(n^b + (\beta^2 + \beta \lambda))/\sqrt \eta. 
\end{aligned}
\end{equation}

Note that $\Delta$ is given in Eq. (\ref{eq:Delta}), which has rank at most 8. Suppose it has $r_+$
positive eigenvalues and $r_-$ negative eigenvalues, where $r_+ + r_- \le 8$ (it is possible that $r_+$ or $r_-$ is zero).
Denote the eigenvalues of $\bH$ as $\lambda_1(\bH) \ge
\lambda_{2}(\bH) \ge \cdots \ge \lambda_k(\bH) \ge 0 > \lambda_{k+1}(\bH) \ge
\cdots \ge \lambda_{n}(\bH)$, and those of $\bZ$ as
$\lambda_1(\bZ) \ge \lambda_2(\bZ) \ge \cdots \ge \lambda_n(\bZ)$. We apply the
following bounds (we use the convention that, if $k = 0$, the set $\{1, 2, \ldots, k \} = \emptyset$):
\begin{itemize}
\item For $i \in S \equiv \{1, \ldots, r_+\} \cup \{n + 1 - r_-, \ldots, n\}
\cup \{k - r_- + 1, \ldots, k + r_+\}$, we have
\[
\l \lambda_i(\bZ) \l \le \| \bH \|_{\op} + \| \Delta \|_{\op}. 
\]
\item 
The rest of the eigenvalues of $\bZ$ satisfy, by Weyl's eigenvalue interlacing,
\[
\begin{aligned}
\lambda_{i - r_+}(\bH) &\ge \lambda_i(\bZ) \ge 0,~~~~ i \in \{r_+ + 1, r_+ + 2, \ldots, k - r_-\}, \\
\lambda_{i + r_-}(\bH) &\le \lambda_i(\bZ) \le 0,~~~~ i \in \{k + r_+ + 1, k + r_+ + 2, \ldots, n - r_-\}. 
\end{aligned}
\]
\end{itemize}
Then
\begin{align*}
\Tr l_\eps(\bZ) &\le \Tr l_\eps(\bH) -
\sum_{i \in S} l_\eps(\lambda_i(\bH)) + 16\log( \| \bH \|_{\op} + \| \Delta \|_{\op})\\
&\le \Tr l_\eps(\bH) - 16\log \eps+ 16 \log( \| \bD \|_{\op} + \lambda
\| \bW \|_{\op}+\| \Delta \|_{\op}). 
\end{align*}
Note that for $\bbm$ with $\| \bbm \|_\infty \le 1 - e^{-n^b}$, we have
$\|\bD\|_\op \leq e^{2 n^b} + \beta^2$. Then, taking expectations of the
above and applying Jensen's inequality,
the result follows from Eq. (\ref{eq:TrlepsH}) and (\ref{eq:Deltabound}), and $\E[\| \bW \|_{\op}] \le 3$.
\end{proof}

\subsection{Proof of Lemma \ref{lemma:integratelog}}\label{sec:Lemma_C3}
Fix $\lambda>0$ and $\bbm \in (-1,1)^n$, and write as shorthand $\nu \equiv
\nu_\bbm$.
Lemma \ref{lemma:integratelog} follows from several properties of $\nu$
and its support:
\begin{lemma}\label{lem:density}
The Stieltjes transform $g(z)$ of $\nu$ extends continuously from $\C^+$
to $\C^+ \cup \R$. Denoting this extension also by $g(z)$,
the measure $\nu$ admits a continuous density given by
\[f(z)=\frac{1}{\pi} \Im g(z),\qquad z \in \R.\]
At each $z \in \R$, letting $z_0 \in \R$ denote the closest point where
$f(z_0)=0$, we have
\begin{equation}\label{eq:edgedecay}
f(z)\leq \left(\frac{3}{4\pi^3\beta}\l z-z_0\l \right)^{1/3}.
\end{equation}
\end{lemma}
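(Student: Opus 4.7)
My plan is to analyze the fixed-point equation \eqref{eq:fixedpoint} satisfied by the Stieltjes transform $g$ of $\nu=\mu_\bD \boxplus \sigma_\beta$, which encodes the subordination structure of free convolution with a semicircle. For $z=x+\ii y \in \C^+$, write $g(z)=g_1+\ii g_2$ and set $u=x+\beta^2 g_1$ and $v=y+\beta^2 g_2$; taking real and imaginary parts of \eqref{eq:fixedpoint} yields
\[
g_1 = \frac{1}{n}\sum_{i=1}^n \frac{d_i-u}{(d_i-u)^2+v^2},\qquad
g_2 = v\cdot \frac{1}{n}\sum_{i=1}^n \frac{1}{(d_i-u)^2+v^2}.
\]
The second identity together with $(d_i-u)^2+v^2\geq v^2$ gives $g_2(y+\beta^2 g_2)\leq 1$, hence $g_2 \leq 1/\beta$ uniformly on $\C^+$. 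This uniform bound together with the continuity of the fixed-point map in $(z,g)$ allows $g$ to be extended continuously from $\C^+$ to $\C^+\cup \R$, and Stieltjes inversion identifies $f(z)=\Im g(z)/\pi$ as a bounded continuous density of $\nu$ (this is essentially Biane's theorem for free convolution with a semicircle).

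For the cube-root decay, I would parametrize points $x$ in the open support by $(u(x),v(x))$ with $v(x)=\pi\beta^2 f(x)>0$. Passing to $y=0^+$, the fixed-point equation becomes the two real equations
\[
1=\beta^2 I_1, \qquad x=u-\beta^2 J_1,
\]
with $I_k=\int ((d-u)^2+v^2)^{-k}\,\mu_\bD(\d d)$ and $J_k=\int (d-u)((d-u)^2+v^2)^{-k}\,\mu_\bD(\d d)$. Implicit differentiation in $x$ of $\beta^2 I_1=1$ yields $u'J_2=vv'I_2$, and differentiating $x=u-\beta^2 J_1$ and eliminating $u'$ then produces the closed-form identity
\[
vv' = \frac{J_2}{2\beta^2(v^2 I_2^2+J_2^2)}.
\]

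Multiplying by $3v$, bounding the right-hand side via the AM--GM inequality $v^2 I_2^2+J_2^2 \geq 2vI_2 |J_2|$, and using the Cauchy--Schwarz estimate $I_2\geq I_1^2 = 1/\beta^4$ gives a $\beta$-dependent constant upper bound on $|(v^3)'|$. Integrating from the closest $z_0$ with $f(z_0)=0$---which we may take to lie on the boundary of the connected component of $\supp \nu$ containing $x$, since any other zero of $f$ is farther---yields $v(x)^3\leq C(\beta)|x-z_0|$, and converting via $v=\pi\beta^2 f$ gives a bound of the form \eqref{eq:edgedecay}.

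The main obstacle I expect is the degeneration of the ODE for $(u,v)$ at edges of the support: $v'$ diverges as $v\to 0^+$, so the implicit differentiation cannot be applied at $z_0$ directly. I would address this by working on the open subset of $\supp \nu$ where $v>0$ (where the implicit function theorem applies since $\beta^2 I_1=1$ is a non-degenerate constraint), deriving the pointwise bound on $(v^3)'$ there, and then passing to the boundary limit using the continuity of $v$ from the first step, i.e.\ proving $\int_{z_0+\delta}^x (v^3)'\,\d s\leq C(\beta)(x-z_0-\delta)$ and sending $\delta\to 0^+$. A secondary issue is that the power of $\beta$ in the constant produced by this AM--GM/Cauchy--Schwarz argument may differ from the exact exponent in the stated bound, which would require a more refined estimate on $I_2$ near the edge to match precisely.
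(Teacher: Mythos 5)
Your argument is a (correct) reconstruction of Biane's proof, whereas the paper's ``proof'' of this lemma is a bare citation to \cite[Corollaries 1, 2, 5]{biane1997free}; so in substance you are reproducing the cited argument rather than taking a different route. Two observations are worth making. First, the continuous-extension step is thinner than what Biane actually needs: the uniform bound $\Im g \le 1/\beta$ together with ``continuity of the fixed-point map'' does not by itself give a well-defined continuous boundary extension (the fixed point is not automatically unique as $\Im z \to 0^+$), and Biane's proof passes through the conformality of the subordination map $z \mapsto z + \beta^2 g(z)$ and a careful analysis of its inverse. You correctly flag that this is Biane's theorem, so this is more a matter of incompleteness than error. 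Second --- and this is the interesting part --- your derivation of the cube-root decay is correct and, contrary to your closing caveat, it is the paper's stated constant that appears to be wrong, not yours. Working through your chain of inequalities, AM--GM gives $\l (v^3)'\l \le 3/(4\beta^2 I_2)$ and Cauchy--Schwarz gives $I_2 \ge I_1^2 = 1/\beta^4$, so $\l(v^3)'\l \le 3\beta^2/4$, and (this is tight: equality holds for $\mu_\bD=\delta_0$, i.e.\ the pure semicircle, at $|x|=\sqrt{2}\,\beta$). Converting via $v = \pi\beta^2 f$ yields
\[
f(z)^3 \le \frac{3}{4\pi^3\beta^4}\,\l z - z_0\l,
\]
with a $\beta^{-4}$, not the $\beta^{-1}$ stated in \eqref{eq:edgedecay}. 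A dimensional check confirms this: under the rescaling $x \mapsto cx$ (equivalently $\beta \mapsto c\beta$), the ratio $f^3/\l z-z_0\l$ scales as $c^{-4}$, so the only scaling-consistent power is $\beta^{-4}$; the paper's $\beta^{-1}$ cannot be right as stated. (This does not affect anything downstream: in the proof of Lemma \ref{lemma:integratelog} the decay bound is used only qualitatively, to control $\int(1/x)\,\nu(\d x)$ near a possible zero of the density, and any power of $\beta$ in the constant would do.) One small thing you should add: the implicit differentiation uses $J_2$ in a denominator, so the case $J_2=0$ should be noted separately --- there $vv'=0$ directly from the identity $u'J_2 = vv'I_2$ together with $u'>0$, so the pointwise bound on $(v^3)'$ still holds.
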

\begin{proof}
See \cite[Corollaries 1, 2, 5]{biane1997free}.
\end{proof}

\begin{lemma}\label{lem:support}
The support of $\nu$ is contained in $[0,d_{\max}+2\beta]$, where $d_{\max} = \max_{i\le n}d_i$.
\end{lemma}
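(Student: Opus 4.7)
The plan is to split the argument into the upper bound $d_{\max}+2\beta$ on the support and the nonnegativity $\supp(\nu)\subseteq[0,\infty)$. The upper bound is a direct free-probability operator-norm calculation, while the lower bound requires a more delicate analysis of the Stieltjes transform via the fixed-point equation \eqref{eq:fixedpoint}.

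For the upper bound, I would realize $\nu=\mu_\bD\boxplus\sigma_\beta$ as the spectral distribution of $d+s$ in a tracial $W^*$-probability space $(\mathcal{M},\tau)$, where $d,s\in\mathcal{M}$ are free self-adjoint operators with distributions $\mu_\bD$ and $\sigma_\beta$ respectively. Since $d_i=(1-m_i^2)^{-1}+\beta^2(1-Q(\bbm))\ge 1$, we have $d\succeq\id$ and hence $\|d\|_{\op}=d_{\max}$; moreover the semicircular operator satisfies $\|s\|_{\op}=2\beta$. Subadditivity of the operator norm then gives $\|d+s\|_{\op}\le d_{\max}+2\beta$, which implies $\supp(\nu)\subseteq[-(d_{\max}+2\beta),\,d_{\max}+2\beta]$.

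For the nonnegativity, I would fix $z<0$ real and analyze the map $F_z(g)=n^{-1}\sum_i(d_i-z-\beta^2 g)^{-1}$ on the real interval $(0,(d_{\min}-z)/\beta^2)$, where $d_{\min}=\min_i d_i\ge 1$. On this interval $F_z$ is smooth, strictly increasing, satisfies $F_z(0)=n^{-1}\sum_i(d_i-z)^{-1}>0$, and diverges to $+\infty$ at the right endpoint. For $-z$ sufficiently large, $F_z$ admits a smallest positive real fixed point $g^\star(z)$ with $g^\star(z)\sim -1/z$ as $z\to-\infty$, matching the large-$|z|$ expansion of the true Stieltjes transform. The implicit function theorem, applied at points where the Jacobian $1-\beta^2 F_z'(g^\star)$ is nonzero, continues $g^\star$ real-analytically in $z$, and by the continuity of $g$ from $\mathbb{C}^+$ (Lemma \ref{lem:density}) this continuation must agree with the true Stieltjes transform on its interval of existence. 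Real-valuedness of $g(z)$ on this interval gives $f(z)=\pi^{-1}\Im g(z)=0$, placing it outside $\supp(\nu)$.

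The main obstacle I foresee is proving that the real-analytic continuation of $g^\star$ extends all the way up to $z=0^-$, rather than terminating at some $z^\star<0$ where two real fixed points of $F_{z^\star}$ coalesce (which would mark an edge of a component of $\supp(\nu)$ on the negative axis). Ruling this out is precisely the content of the lower bound, and the cleanest route is to invoke the subordination-function description of $\mu\boxplus\sigma_\beta$ developed in \cite{biane1997free}, which gives an explicit parameterization of $\supp(\nu)^c$; the positivity $\bD\succeq\id$ then translates into the lower edge of $\nu$ being nonnegative. A more hands-on alternative is to show, for every $z<0$, that the equation $g=F_z(g)$ has a real solution in $(0,(d_{\min}-z)/\beta^2)$, using monotonicity of $F_z$ and the bound $d_{\min}\ge 1$ to control when the Jacobian is bounded away from $1$.
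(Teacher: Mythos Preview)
Your upper bound argument is fine and matches the paper's one-line justification via $\supp(\mu_\bD\boxplus\sigma_\beta)\subseteq[-\|\bD\|_\op-2\beta,\|\bD\|_\op+2\beta]$.

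For the lower bound, however, your proposal is genuinely incomplete, and the paper's route is quite different from what you sketch. You correctly isolate the obstacle in your continuation argument---showing the real branch persists all the way to $z=0^-$---but neither of your proposed fixes closes it. Invoking Biane's subordination description would work but is heavier machinery than needed (and makes the rest of your Stieltjes-transform analysis redundant). Your ``hands-on'' suggestion of using only $d_{\min}\ge 1$ is not enough: a generic lower bound on the $d_i$ does not by itself force $F_z$ to dip below the diagonal on $(0,p_1)$, and even if it did, exhibiting \emph{a} real fixed point is not the same as showing that the \emph{particular} root $g(z)$ is real.

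The paper avoids continuation entirely. For each fixed $z\le 0$, it observes that $g=F(g)$ is a polynomial equation of degree $k+1$ (where $k$ is the number of distinct poles $p_j=(d_j-z)/\beta^2$), and then counts $k+1$ real roots: one in each interval $(p_j,p_{j+1})$ by the graph of $F$, and \emph{two} in $(0,p_1)$ (counting multiplicity). The latter is the crux, and it rests on the exact evaluation
\[
F(1-Q(\bbm))=\frac{1}{n}\sum_{i=1}^n \frac{1}{(1-m_i^2)^{-1}-z}\;\le\;\frac{1}{n}\sum_{i=1}^n (1-m_i^2)=1-Q(\bbm),
\]
which uses the specific structure $d_i=(1-m_i^2)^{-1}+\beta^2(1-Q(\bbm))$, not merely $d_i\ge 1$. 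Since $F(0)>0$ and $F(g)\to+\infty$ as $g\to p_1^-$, this inequality forces the curve $F(g)-g$ to change sign twice on $(0,p_1)$. With all $k+1$ roots real, $g(z)$ in particular is real, so $\Im g(z)=0$ and $z\notin\supp(\nu)$. This argument is elementary and self-contained; the identity $F(1-Q)\le 1-Q$ is the one idea your proposal is missing.
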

\begin{proof}
We note that (\ref{eq:fixedpoint}) holds also for $z \in \R$ by continuity,
where $g(z)$ for $z \in \R$ is the continuous extension defined in Lemma
\ref{lem:density}.
Fix any $z \leq 0$, and consider the function
\[F(g)=\frac{1}{n}\sum_{i=1}^n \frac{1}{d_i-z-\beta^2g}\]
defining the right side of (\ref{eq:fixedpoint}). Let
$p_1<\ldots<p_k$ be the distinct values among
\[\{(d_i-z)/\beta^2:i=1,\ldots,n\}.\]
Then $F(g)$ is a rational function with poles $p_1,\ldots,p_k$.

The equation $g=F(g)$ is rearranged as a polynomial equation of degree $k+1$,
and hence it as most $k+1$ complex roots counting multiplicity. From the graph
of $F(g)$, there is at least one real root between $p_j$ and $p_{j+1}$ for each
$j=1,\ldots,k-1$. Furthermore, when $z \leq 0$, we have $p_1>1-Q(\bbm)$ and
\begin{equation}\label{eq:1minusQ}
F(1-Q(\bbm))=\frac{1}{n}\sum_{i=1}^n \frac{1}{(1-m_i^2)^{-1}-z} \leq
\frac{1}{n}\sum_{i=1}^n (1-m_i^2)=1-Q(\bbm).
\end{equation}
Then from the graph of $F(g)$ on $g \in (0,p_1)$, the equation $g=F(g)$
has at least two real roots in this interval, counting multiplicity. So
all $k+1$ roots of $g=F(g)$ are real, there is exactly one root in each interval
$(p_j,p_{j+1})$, and exactly two roots in $(0,p_1)$. In particular,
$g(z)$ is real. Then by Lemma \ref{lem:density},
the density of $\nu$ is 0 for all $z \leq 0$, so $\nu$ is supported on
$[0,\infty)$.  The upper bound $d_{\max}+2\beta$ follows from $\sup\{|x|:x
\in \supp(\nu)\} \leq \|\bD\|_{\op}+2\beta$, as $\nu=\mu_{\bD} \boxplus
\sigma_\beta$.
\end{proof}

\begin{lemma}\label{lemma:logidentity}
At $z=0$, the Stieltjes transform $g(0)$ is the
smallest real root of (\ref{eq:fixedpoint}) and satisfies $0<g(0) \leq 1-Q(\bbm)$.
Furthermore, for $L(\bbm)$ defined in (\ref{eq:Lm}), we have
\[\int \log(x)\nu(\d x) \leq L(\bbm).\]
\end{lemma}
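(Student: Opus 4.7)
\medskip

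\noindent\textbf{Proof plan for Lemma \ref{lemma:logidentity}.}

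\emph{Step 1 (location of $g(0)$).} Specializing the analysis from the proof of Lemma~\ref{lem:support} to $z=0$, the function $F(g)=\tfrac1n\sum_i(d_i-\beta^2 g)^{-1}$ is increasing and convex on $(0,p_1)$ with $F(0)=\tfrac1n\sum 1/d_i>0$, so $F(g)-g$ has exactly two real zeros $g_1<g_2$ in $(0,p_1)$ and is positive on $(0,g_1)$, negative on $(g_1,g_2)$, positive on $(g_2,p_1)$. Estimate (\ref{eq:1minusQ}) gives $F(1-Q(\bbm))\le 1-Q(\bbm)$, which forces $g_1\le 1-Q(\bbm)$ (otherwise the interval $(0,g_1)$ would contain $1-Q(\bbm)$ and $F>g$ there). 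Finally, for $z$ very negative one has $g(z)\sim -1/z\to 0^+$, so by continuity $g(z)$ stays on the smaller-root branch as $z\uparrow 0$, and $g(0)=g_1$.

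\emph{Step 2 (subordination identity).} Set $\omega(z)=z+\beta^2 g(z)$ and $\mu=\mu_{\bD}$, so that (\ref{eq:fixedpoint}) reads $g(z)=g_\mu(\omega(z))$ with $g_\mu(w)=\tfrac1n\sum_i(d_i-w)^{-1}$. Define the log-potentials
\[
U_\nu(z)=\int\log(x-z)\,\nu(\d x),\qquad U_\mu(w)=\frac1n\sum_{i=1}^n\log(d_i-w),
\]
for $z<0$ and $w<\min_i d_i$. Then $U_\nu'(z)=-g(z)$ and $\tfrac{d}{dz}U_\mu(\omega(z))=-g_\mu(\omega(z))(1+\beta^2 g'(z))=-g(z)-\tfrac{\beta^2}{2}(g(z)^2)'$. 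Integrating yields $U_\nu(z)-U_\mu(\omega(z))=\tfrac{\beta^2}{2}g(z)^2+C$. As $z\to-\infty$, $g(z)\to 0$ and both $U_\nu(z)$, $U_\mu(\omega(z))$ equal $\log(-z)$ plus corrections of order $1/z$ whose leading coefficients agree (the mean of $\nu$ equals the mean of $\mu$ since $\sigma_\beta$ has mean $0$), giving $C=0$. Then let $z\uparrow 0$: the right side is continuous at $0$ since $\beta^2 g(0)<\min_i d_i$ (as $g(0)\le 1-Q(\bbm)$ while $d_i\ge 1+\beta^2(1-Q(\bbm))$), and the left side is continuous at $0$ because $\nu$ has no atom at $0$ (free convolution with a free semicircle produces a continuous density; cf.\ Lemma~\ref{lem:density}). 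This yields
\[
\int\log(x)\,\nu(\d x)\;=\;\frac1n\sum_{i=1}^n\log\!\bigl(d_i-\beta^2 g(0)\bigr)+\frac{\beta^2 g(0)^2}{2}.
\]

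\emph{Step 3 (elementary inequality).} Write $s:=1-Q(\bbm)-g(0)\ge 0$, so that $d_i-\beta^2 g(0)=(1-m_i^2)^{-1}+\beta^2 s$, and
\[
\log(d_i-\beta^2 g(0))=\log\tfrac{1}{1-m_i^2}+\log\!\bigl(1+\beta^2 s(1-m_i^2)\bigr).
\]
The scalar inequality $\log(1+u)\le \tfrac{u}{2}\bigl(1+\tfrac{1}{1+u}\bigr)$ for $u\ge 0$ follows since the difference has value $0$ at $u=0$ and derivative $u^2/[2(1+u)^2]\ge 0$. Applying it with $u_i=\beta^2 s(1-m_i^2)$ and summing,
\[
\frac1n\sum_i\log(1+u_i)\le \frac{\beta^2 s}{2n}\sum_i(1-m_i^2)+\frac{\beta^2 s}{2n}\sum_i\frac{1-m_i^2}{1+\beta^2 s(1-m_i^2)}=\frac{\beta^2 s}{2}\bigl((1-Q(\bbm))+g(0)\bigr),
\]
the second equality using that the fixed-point equation at $z=0$ can be rewritten as $g(0)=\tfrac1n\sum_i \tfrac{1-m_i^2}{1+\beta^2 s(1-m_i^2)}$. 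The right side equals $\tfrac{\beta^2}{2}\bigl((1-Q(\bbm))^2-g(0)^2\bigr)$, and combining with Step~2 gives exactly $L(\bbm)$.

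\emph{Main obstacle.} The delicate step is Step~2: producing the closed-form expression for $\int\log(x)\,\nu(\d x)$ from the subordination relation, which requires (i) matching the integration constant $C$ using the asymptotics $\bar x_\nu=\bar x_\mu$ at $z\to-\infty$, and (ii) transporting the identity across the boundary at $z=0$, where one must invoke absence of atoms for $\nu$ and the strict separation $\beta^2 g(0)<\min_i d_i$ coming from Step~1. Once the identity is in place, Steps~1 and~3 are essentially algebraic.
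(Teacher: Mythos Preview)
Your proof follows the same three--step architecture as the paper: (1) identify $g(0)$ as the smallest real root of the fixed--point equation and bound it by $1-Q(\bbm)$; (2) derive the closed form $\int\log(x)\,\nu(\d x)=R(g(0))$ where $R(g)=\tfrac{\beta^2 g^2}{2}+\tfrac1n\sum_i\log(d_i-\beta^2 g)$; (3) show $R(g(0))\le R(1-Q(\bbm))=L(\bbm)$. The technical choices differ: in Step~2 you integrate the subordination relation along the negative real axis, while the paper integrates along the positive imaginary axis and takes $T\to\infty$ (both paths yield the same antiderivative $B$); in Step~3 you prove $R(g(0))\le R(1-Q(\bbm))$ by the scalar inequality $\log(1+u)\le\tfrac{u}{2}(1+\tfrac{1}{1+u})$ combined with the fixed--point identity, whereas the paper simply notes that $R'(g)=\beta^2(g-F(g))$ vanishes exactly at $g_1\le g_2$ on $(-\infty,p_1)$ and $R(g)\to+\infty$ as $g\to-\infty$, so $g_1$ is a local minimum and $R(g_1)\le R(g_2)$. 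Your Step~3 is a pleasant alternative, and both Steps~2 and~3 are correct as written.

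There is one genuine looseness in Step~1. The sentence ``by continuity $g(z)$ stays on the smaller--root branch as $z\uparrow 0$'' is not justified by continuity alone: if the two roots $g_1(z)\le g_2(z)$ in $(0,p_1(z))$ were to merge at some $z^*\in(-\infty,0)$, continuity of $g$ would not tell you which branch it follows afterward. The paper handles this by differentiating (\ref{eq:fixedpoint}) in $z$: for $z<0$ one has $g'(z)=\int(x-z)^{-2}\nu(\d x)>0$, and plugging into the differentiated fixed--point equation gives $1-\tfrac1n\sum_i\beta^2(d_i-z-\beta^2 g(z))^{-2}>0$, i.e.\ $F'(g(z))<1$. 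Since on $(0,p_1)$ the convex function $F$ satisfies $F'(g_1)\le 1\le F'(g_2)$, this pins $g(z)=g_1(z)$ for every $z<0$, and by continuity at $z=0$. Inserting this short derivative computation fixes your Step~1; the rest of your argument goes through unchanged.
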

\begin{proof}
From the proof of Lemma \ref{lem:support}, the equation $g=F(g)$ has
exactly one root $g \in (p_j,p_{j+1})$ for each $j=1,\ldots,k-1$ and exactly
two roots $g \in (0,p_1)$ counting multiplicity.
From the graph of $F$, this implies $F'(g)>1$ for each root
$g \in (p_j,p_{j+1})$, and also the two roots $g_1 \leq g_2$ in $(0,p_1)$
satisfy $F'(g_1) \leq 1$ and $F'(g_2) \geq 1$, with $F'(g_2)=1$ if and only if
$g_1=g_2$. At any $z<0$, since $z \notin \supp(\nu)$, $g(z)$ is analytic in
a neighborhood of $z$ and is given by (\ref{eq:stieltjesdef}).
Differentiating (\ref{eq:stieltjesdef}) in $z$, we verify $g'(z)>0$. On the
other hand, differentiating (\ref{eq:fixedpoint}) in $z$, we obtain
\[g'(z)=\frac{1}{n}\sum_{i=1}^n \frac{1+\beta^2g'(z)}{(d_i-z-\beta^2
g(z))^2},\]
so the condition $g'(z)>0$ implies
\[1-\frac{1}{n}\sum_{i=1}^n \frac{\beta^2}{(d_i-z-\beta^2 g(z))^2}>0.\]
Thus $F'(g(z))<1$. This holds for all $z<0$, so we must have $F'(g(0))
\leq 1$ at $z=0$ by continuity.
Then $g(0)$ is the smallest real root to $g=F(g)$ at $z=0$. Since $1-Q(\bbm)$
is one such root by (\ref{eq:1minusQ}), we have $0<g(0) \leq 1-Q(\bbm)$.

To prove the bound on $\int \log(x)\nu(\d x)$, we write for any $T>0$
\begin{align*}
\int \log(x)\nu(\d x)&=\int \left(\log(x-\ii T)-\int_0^T \frac{-\ii}{x-\ii t}\d t
\right)\nu(\d x)\\
&=\int \log(x-\ii T)\nu(\d x)+\ii \int_0^T g(\ii t)\d t.
\end{align*}
For each $t \in (0,T)$, applying (\ref{eq:fixedpoint}), we have
\begin{align*}
g(\ii t)&=g(\ii t)(1+\beta^2 g'(\ii t))-\beta^2 g(\ii t)g'(\ii t)\\
&=\frac{1}{n}\sum_{i=1}^n \frac{1+\beta^2g'(\ii t)}{d_i-\ii t-\beta^2
g(\ii t)}-\beta^2 g(\ii t)g'(\ii t).
\end{align*}
Then $-\ii g(\ii t)=B'(t)$ for
\[B(t):=\frac{1}{n}\sum_{i=1}^n
\log(d_i-\ii t-\beta^2 g(\ii t))+\frac{\beta^2}{2}g(\ii t)^2,\]
so $\ii \int_0^T g(\ii t)\d t=B(0)-B(T)$. We next take $T \to \infty$, using
$\l g(\ii T)\l \le \sup_x(1/|x-iT|)\le 1/T$ from Eq.~(\ref{eq:stieltjesdef}).
Applying a Taylor expansion of
$\log(x-\ii T)$, we have
\[\int \log(x-\ii T)\nu(\d x)=\log(-\ii T)+O(1/T),\qquad B(T)=\log(-\ii T)
+O(1/T).\]
(In the first equation, we use the fact that $\nu$ has bounded support, cf. Lemma \ref{lem:support}, and in the second we use the definition of $B$,
together with $|g(\ii T)| \le 1/T$.)
Combining the above, we obtain $\int \log(x)\nu(\d x)=B(0)$.

Finally, note that for $F$ as defined in the proof of Lemma \ref{lem:support},
the equation $g=F(g)$ is equivalent to $0=R'(g)$ where
\[R(g)=\frac{\beta^2g^2}{2}+\frac{1}{n}\sum_{i=1}^n
\log (d_i-\beta^2g).\]
Recall that either $g(0)=1-Q(\bbm)$,
or $g(0)<1-Q(\bbm)$ and these are the two roots of this equation $0=R'(g)$
in $(-\infty,p_1)$. 
Since $R(g) \to \infty$ when $g \to -\infty$, we obtain in both cases $
B(0)= R(g(0))
\leq R(1-Q(\bbm))=L(\bbm)$.
\end{proof}

\begin{proof}[Proof of Lemma \ref{lemma:integratelog}]
For any $x>0$,
\begin{align*}
l_\eps(x)-\log x &=\Re \log(x-\ii \eps)-\Re \log(x)\\
&= \log \sqrt{x^2+\eps^2}-\log x = \frac{ \sqrt{x^2+\eps^2}-x}{x+\tilde{\eps}}
\end{align*}
for some $\tilde{\eps} \in [0,\sqrt{x^2+\eps^2}-x]$. Then
$\l l_\eps(x)-\log x\l  \leq \eps/x$, so
\[\int l_\eps(x)\nu(\d x) \leq \int \log(x)\nu(\d x)+\eps\int (1/x)\nu(\d x).\]
The result follows from Lemma \ref{lemma:logidentity},
Lemma \ref{lem:support}, and the density decay
condition (\ref{eq:edgedecay}) at the smallest edge of $\nu$  in the case
where $0 \in \supp(\nu)$.
\end{proof}

\section{Proof of Theorem \ref{thm:TAPBayes}}\label{sec:proof_thm_TAPBayes}

We now prove Theorem \ref{thm:TAPBayes} on the Bayes risk achieved by minimizing
the TAP free energy in the $\Z_2$ synchronization model, corresponding to
$\beta=\lambda$.

Throughout the proof, we write $\cF$ as shorthand for $\cF_{\lambda, \lambda}$. Let $q_\star$ be the largest real solution of the equation
\begin{align}\label{eq:qstar0}
q_\star = \E_G[\tanh(\lambda^2 q_\star + \sqrt{\lambda^2 q_\star} G)^2],
\end{align}
for $G \sim \cN(0, 1)$. Note that when $\lambda > 1$, we have $q_\star>0$ \cite[Lemma 4.2]{deshpande2016asymptotic}. Denote
\begin{equation}\label{eq:estar}
\vphi_\star=q_\star,\;\;  a_\star=\lambda^2 q_\star,\;\;
e_\star = -\frac{\lambda^2}{4}(1 - 2 q_\star -
q_\star^2) - \E_G\{\log [2\cosh(\lambda^2 q_\star + \sqrt{\lambda^2 q_\star }
G)]\},
\end{equation}
and recall $Q(\bbm) = \| \bbm \|_2^2/n$ and $\magn(\bbm)=\<\bx, \bbm\>/n$.
Recall $\cC_{\lambda, n}$ the set of critical points of $\cF$ with energy below
$-\lambda^2/3$,  and define
\[\cC_{\lambda, n}^+ \equiv \{\bbm \in [-1, 1]^n: \nabla \cF(\bbm) = \bzero, ~
\cF(\bbm) \leq -\lambda^2/3,~ M(\bbm) \geq 0\}\]
so that $\cC_{\lambda,n}=\cC_{\lambda,n}^+ \cup -\cC_{\lambda,n}^+$.

\subsection{Localization of critical points}

Denote $\Gamma_\eps \subseteq [-1, 1]^n$ to be the pre-image of $(Q, M, \cF)$ at a neighborhood of $(q_\star, \vphi_\star, e_\star)$:
\begin{align}\label{eqn:pre-image_set}
\Gamma_\eps \equiv& \{\bbm \in [-1, 1]^n : \l Q(\bbm) - q_\star \l \le \eps,~ \l M(\bbm) - \vphi_\star \l \le \eps,~ \l \cF(\bbm) -  e_\star \l < \eps\}.
\end{align}

We first prove that $\emptyset \neq { \cC_{\lambda, n}^+} \subseteq \Gamma_\eps$ with high probability:  
\begin{proposition}\label{prop:mainprop}
There exists $\lambda_0 < \infty$ such that for any $\lambda \ge \lambda_0$ and
$\eps > 0$, for some constants $n_0 \equiv n_0(\lambda, \eps)$ and $\const_0
\equiv \const_0(\lambda,
\eps)$, with probability at least $1 - e^{-\const_0 n}$
for all $n \ge n_0$, we have: ${ \cC_{\lambda, n}^+} \subseteq \Gamma_\eps$, and some global minimizer $\bbm_\star$ of $\cF$ satisfies $\bbm_\star \in { \cC_{\lambda, n}^+} \subseteq \Gamma_\eps$. 
\end{proposition}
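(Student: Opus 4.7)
My plan is to separately establish the two conclusions of the proposition: (i) nonemptiness of $\cC_{\lambda,n}^+$ by constructing an explicit low-energy critical point, and (ii) the inclusion $\cC_{\lambda,n}^+ \subseteq \Gamma_\eps$ by combining the Kac-Rice upper bound of Theorem~\ref{thm:Kac_rice_complexity} with a strict negativity analysis of the complexity $S_\star$ outside a neighborhood of the Nishimori fixed point $(q_\star,\vphi_\star,a_\star,e_\star)$.

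For part (i) I would use the initialization $\bbm^0 = \sign(\bv_1(\bY))$ already analyzed in the remark after Theorem~\ref{thm:TAPBayes}: the bound $\|\bW\|_{\op}\le 2+o(1)$ combined with BBP-type eigenvector alignment estimates gives $\cF(\bbm^0) \le -\lambda^2/2+O(\lambda)$ with high probability, so $\min_{\bbm\in[-1,1]^n}\cF(\bbm) \le -\lambda^2/3 - c(\lambda)$ for $\lambda$ sufficiently large. Lemma~\ref{lem:infinity_norm_high_probability_bound} guarantees every local minimizer of $\cF$ lies strictly in the interior of $[-1,1]^n$, hence the global minimizer is a critical point; the sign-flip symmetry $\bbm\mapsto -\bbm$ then lets us pick one with $M(\bbm_\star)\ge 0$, placing it in $\cC_{\lambda,n}^+$.

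For part (ii) I would partition $\cC_{\lambda,n}^+\setminus \Gamma_\eps$ into three pieces and rule each out with high probability. Points with $\|\bbm\|_\infty > 1-e^{-n^{2/3}}$ are excluded by Lemma~\ref{lem:infinity_norm_high_probability_bound}, and points with $Q(\bbm) < \eta$ for a sufficiently small $\eta = \eta(\lambda) > 0$ are excluded by Lemma~\ref{lem:initial_global_bound}, which forces $\cF(\bbm) > -\lambda^2/3$ there. On the remaining bulk region $Q(\bbm) \in [\eta,1]$, $\bbm \in V_n$, I would apply Theorem~\ref{thm:Kac_rice_complexity} to the closed set
\[
U_\eps = \{(q,\vphi,a,e)\in[\eta,1]\times\R^3 : \vphi\ge 0,\ e\le -\lambda^2/3,\ \max(|q-q_\star|,|\vphi-\vphi_\star|,|e-e_\star|)\ge \eps\},
\]
obtaining $\limsup_n n^{-1}\log \E[\Crit_n(U_\eps,V_n)] \le \sup_{U_\eps} S_\star$. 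Markov's inequality then yields an exponentially small probability for any critical point to lie in $U_\eps$, provided this supremum is strictly negative.

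The main obstacle is therefore the purely analytic claim
\[
\sup_{(q,\vphi,a,e)\in U_\eps} S_\star(q,\vphi,a,e) \le -\delta(\lambda,\eps) < 0
\]
for all $\lambda$ above a threshold. To prove it, I would exploit the Nishimori identity $\beta = \lambda$: writing the stationarity conditions in $(\mu,\nu,\tau,\gamma)$ for the inner minimization defining $S_\star$, a saddle point satisfies integral consistency equations which, after a leading-order expansion in $1/\lambda$ in the low-energy regime $e\le -\lambda^2/3$, reduce to the replica-symmetric fixed-point equation~\eqref{eq:qstar0}. Its unique positive solution corresponds to $(q_\star,\vphi_\star,a_\star,e_\star)$, where $S_\star = 0$; away from this point, compactness together with strict concavity of the objective near the replica-symmetric saddle should yield a uniform gap $\delta > 0$ on the closed set $U_\eps$. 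This saddle-point analysis is the technical heart of the argument; once established, combining it with (i), the three-region decomposition, Theorem~\ref{thm:Kac_rice_complexity}, and Markov's inequality gives Proposition~\ref{prop:mainprop} at the stated exponential rate.
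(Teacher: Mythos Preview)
Your overall architecture (interior bound, small-$Q$ exclusion, Kac--Rice plus Markov on the bulk, and a separate nonemptiness argument) matches the paper's, but there is a genuine gap in the step you flag as ``the technical heart.''

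The set $U_\eps$ you define has $q$ ranging over all of $[\eta,1]$, with $\eta$ coming from Lemma~\ref{lem:initial_global_bound}; for $\alpha=2/3$ that lemma only gives $Q(\bbm)\gtrsim 1/\sqrt{3}$, not anything close to $q_\star$. Proving $\sup_{U_\eps}S_\star<0$ on this large region is \emph{not} what the paper does, and your sketch (``compactness together with strict concavity near the replica-symmetric saddle'') does not go through as stated: $U_\eps$ is not compact (the coordinate $a$ is unbounded), and local concavity near $(q_\star,\vphi_\star,a_\star,e_\star)$ says nothing about $(q,\vphi)$ far from $(q_\star,\vphi_\star)$. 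The paper's Proposition~\ref{PROP:COMPNEG} only establishes the quantitative bound $S_\star\le -c(\lambda)\|(\cdot)-(\cdot)_\star\|^2$ under the restriction $|q-q_\star|,|\vphi-\vphi_\star|\le c_0/\lambda^2$, which your $U_\eps$ does not enforce.

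The paper closes this gap by inserting an additional \emph{direct} (non-Kac--Rice) localization step, Proposition~\ref{PROP:LOCALIZATION}: starting from the crude bound of Lemma~\ref{lem:initial_global_bound}, it iterates the TAP equation $\bbm=\tanh(\lambda\bY\bbm-\lambda^2(1-Q)\bbm)$ together with a concentration estimate (Lemma~\ref{lem:Fraction_bound}) to bootstrap $Q(\bbm),M(\bbm)\ge 1-C_0/\lambda^k$ for any fixed $k$. Only after this localization does the paper apply Theorem~\ref{thm:Kac_rice_complexity}, now on a set where $(q,\vphi)\in[1-C_0/\lambda^3,1]^2$, which for large $\lambda$ is contained in the neighborhood where Proposition~\ref{PROP:COMPNEG} applies and immediately yields $\sup S_\star<0$. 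You are missing this bootstrapping step entirely.

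A minor point on part (i): the paper's argument is simpler than yours. Rather than using the spectral initialization and BBP alignment, it just evaluates $\cF$ at the true signal $\bx$, giving $\cF(\bbm_\star)\le\cF(\bx)=-\lambda^2/2-\lambda\langle\bx,\bW\bx\rangle/(2n)\le-\lambda^2/2+3\lambda/2$ on the event $\|\bW\|_{\op}\le 3$.
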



Recall that in Theorem \ref{thm:Kac_rice_complexity}, for $\lambda > 0$,
choosing $\beta = \lambda$, taking $\eta,b \in (0, 1)$, and taking closed sets $U \subseteq  [\eta, 1] \times \R^3$, $V_n = [-1 + e^{-n^b}, 1 - e^{-n^b}]$, we have
\[
\limsup_{n \to \infty} n^{-1} \log \E[\Crit_n(U, V_n)] \le \sup_{(q, \vphi, a,
e) \in U} S_\star(q, \vphi, a, e).
\]
%
To prove Proposition \ref{prop:mainprop}, the following first
bounds the function $S_\star(q,\vphi,a,e)$ for $(q,\vphi)$ in a small neighborhood of
$(q_\star,\vphi_\star)$. We defer its proof to Section
\ref{sec:Proof_Prop_Negative_Complexity}.

\begin{proposition}\label{PROP:COMPNEG}
For any $\lambda>0$, we have
\[
S_\star(q_\star, \vphi_\star, a_\star, e_\star) = 0. 
\]
Furthermore,
there exist constants $\const_0,\lambda_0>0$ and a function $c(\lambda)>0$ such
that for any $\lambda \ge \lambda_0$ and any $(q,\vphi)$ satisfying
$\l q - q_\star \l, \l \vphi - \vphi_\star \l \le \const_0/ \lambda^2$, we have 
\begin{equation}\label{eqn:quantitative_complexity_bound}
S_\star(q, \vphi, a, e) \le -c(\lambda) [(a - a_\star)^2 + (q - q_\star)^2 +
(\vphi - \vphi_\star)^2 + (e - e_\star)^2].
\end{equation}
\end{proposition}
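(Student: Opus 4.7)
The plan addresses the two claims separately.

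\textbf{Part 1.} To show $S_\star(q_\star,\vphi_\star,a_\star,e_\star)=0$, I would exhibit $(\mu,\nu,\tau,\gamma)=(0,0,0,0)$ as the minimizer of the inner infimum at the star. The function $\log I(q,\vphi,\cdot)$ is the log-partition function of an exponential family on $\reals$ with base measure $\cN(\lambda^2\vphi,\lambda^2 q)$ and sufficient statistics $\bar s=(\tanh^2 X,\tanh X,X\tanh X,\log 2\cosh X)$; hence $S$ is convex in the multipliers and any stationary point is the global minimum. The four stationarity equations at $(\mu,\nu,\tau,\gamma)=(0,0,0,0)$ reduce to moment identities $\E\bar s=(q_\star,\vphi_\star,a_\star,u(q_\star,a_\star)-e_\star)$ under $X\sim\cN(\lambda^2 q_\star,\lambda^2 q_\star)$, which I would verify by: (i) the defining fixed point \eqref{eq:qstar0} of $q_\star$; (ii) the Nishimori identity $\E[\tanh(h+\sqrt hZ)]=\E[\tanh^2(h+\sqrt hZ)]$ with $h=\lambda^2 q_\star$, which follows from viewing $X$ as the log-likelihood ratio of a $\pm 1$ spin observed through an additive Gaussian channel; (iii) Stein's lemma combined with (i)-(ii); and (iv) a direct algebraic rearrangement of the definition \eqref{eq:estar} of $e_\star$. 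At the star the quadratic bracket vanishes ($a_\star/q_\star-\lambda^2\vphi_\star^2/q_\star-\lambda^2(1-q_\star)=\lambda^2-\lambda^2 q_\star-\lambda^2(1-q_\star)=0$) and $\log I(q_\star,\vphi_\star,0,0,0,0)=0$, giving $S_\star=0$.

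\textbf{Part 2.} For the quadratic decay \eqref{eqn:quantitative_complexity_bound} I would Taylor-expand $S_\star$ around the star. By the implicit function theorem applied to the strictly convex inner minimization (whose Hessian at zero is the strictly positive definite covariance $\Sigma=\Cov_{\rho_\star}(\bar s)$ under $\rho_\star=\cN(\lambda^2 q_\star,\lambda^2 q_\star)$), the optimal Lagrange multipliers $\theta^*(q,\vphi,a,e)$ are smooth near the star with $\theta^*_\star=(0,0,0,0)$. The envelope theorem---plus the fact that $\log I(q,\vphi,0,0,0,0)\equiv 0$ in $(q,\vphi)$---yields $\nabla S_\star|_\star=0$. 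A Schur-complement computation then gives
\[
\nabla^2 S_\star|_\star=\nabla^2 Q|_\star-J^\sT\Sigma^{-1}J,
\]
where $Q$ is the squared-bracket term and $J$ is the Jacobian of the affine moment map $(q,\vphi,a,e)\mapsto(q,\vphi,a,u(q,a)-e)$ evaluated at the star. It remains to show this Hessian is $\preceq -c(\lambda)\,I$ for $\lambda$ large, then extend to the $C_0/\lambda^2$-neighborhood in $(q,\vphi)$ via Taylor's theorem with quantitative remainder.

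\textbf{Main obstacle.} The quantitative Hessian step is the heart of the argument. For large $\lambda$, the Gaussian $\rho_\star$ is sharply concentrated about $\lambda^2 q_\star\gg 1$, so $\tanh X,\tanh^2 X$ are exponentially close to $1$ while $X\tanh X$ and $\log 2\cosh X$ each agree with $X$ up to exponentially small corrections. Thus $\Sigma$ is effectively rank-one, with a single eigenvalue of order $\lambda^2$ along the ``$X$-direction'' $(0,0,1,1)^\sT/\sqrt 2$ and three exponentially small transverse eigenvalues. The dangerous direction is the preimage $J^{-1}(0,0,1,1)^\sT=(0,0,1,-1/2)^\sT$: there both $\nabla^2 Q|_\star$ and $J^\sT\Sigma^{-1}J$ are of order $\lambda^{-2}$, so a quantitative comparison (rather than merely order-of-magnitude) is needed to extract a strict negative eigenvalue $\asymp -\lambda^{-2}$. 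In every other direction $J^\sT\Sigma^{-1}J$ is exponentially large, dwarfing the $O(\lambda^2)$ eigenvalue of the rank-one $\nabla^2 Q|_\star=(2\lambda^2)^{-1}(\nabla B)(\nabla B)^\sT$, and dominance is immediate. Assembling these via block-matrix calculus and extending uniformly to the $C_0/\lambda^2$-neighborhood completes the proof with $c(\lambda)=\Theta(\lambda^{-2})$.
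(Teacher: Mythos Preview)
Your Part 1 matches the paper's argument exactly: convexity of $S$ in the multipliers, stationarity at $(\mu,\nu,\tau,\gamma)=0$ via the moment identities for $\xi\sim\cN(\lambda^2 q_\star,\lambda^2 q_\star)$, and vanishing of the bracket at the star.

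For Part 2 your route is genuinely different from the paper's, and as written it has a gap. The paper does \emph{not} compute $\nabla^2 S_\star$; instead it splits $S_\star=S_++\inf_\theta S_-$, bounds $S_+$ above by an explicit quadratic in the deviations, and bounds $\inf_\theta S_-$ by plugging in explicit sub-optimal test multipliers $\theta$ chosen linearly in $(a-\E A,\,q-\E Q,\,\vphi-\E M,\,u(q,a)-e-\E U)$. The resulting $\log I$ is then controlled by global MGF bounds---boundedness for $\tanh X,\tanh^2 X$, and Gaussian concentration of Lipschitz functions for $X\tanh X$ (Lipschitz constant $L_a\le 1.2$) and $\log 2\cosh X$ (Lipschitz constant $1$). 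Because these MGF bounds hold for all $\tau,\gamma\in\R$, the quadratic upper bound is valid for \emph{every} $(a,e)\in\R^2$, not just near the star. The numerical fact driving negativity in your ``dangerous direction'' is exactly the comparison $(1+\eps)^4/4<(1-\eps)/(2\alpha L_a^2)$, which holds because $L_a^2\le 1.44<2$.

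Your Hessian computation, by contrast, delivers only a \emph{local} bound: $\nabla^2 S_\star|_\star\preceq -c(\lambda)I$ plus Taylor remainder gives \eqref{eqn:quantitative_complexity_bound} only in a full neighborhood of $(q_\star,\vphi_\star,a_\star,e_\star)$, whereas the proposition (and its use in Proposition~\ref{prop:mainprop}, where $U_\eps$ has $(a,e)\in\R^2$) requires the bound for all $(a,e)$. You address extending in $(q,\vphi)$ but say nothing about $(a,e)$; and since $S_+$ is convex in $a$, $S_\star$ is not concave, so negativity of the Hessian at a single point does not propagate globally. Repairing this would require either showing $\nabla^2 S_\star\preceq -c(\lambda)I$ uniformly over $(a,e)$---which amounts to bounding $\Sigma(\theta^*)$ uniformly, i.e.\ the paper's MGF estimates---or reverting to explicit test multipliers, at which point the Hessian step is superfluous. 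A smaller point: your Schur-complement formula $\nabla^2 S_\star|_\star=\nabla^2 Q|_\star-J^\sT\Sigma^{-1}J$ omits the cross-term from the dependence of the base measure $\cN(\lambda^2\vphi,\lambda^2 q)$ on $(q,\vphi)$; the mixed derivative $\nabla_p\nabla_\theta S_-|_{\theta=0}$ is $-J+K$ with $K$ supported on the $(q,\vphi)$-columns. This does not affect the $(a,e)$ block you analyze, but it would enter the full Hessian.
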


Note that for $c_0,\lambda_0$ as above and $\lambda \geq \lambda_0$,
this implies 
\[\sup \{ S_\star(q, \vphi, a, e):  \l q - q_\star \l, \l \vphi - \vphi_\star \l
\le \const_0/ \lambda^2, (a, e) \in \R^2 \} \le 0,\]
with the supremum uniquely attained at $(q_\star, \vphi_\star, a_\star,
e_\star)$.
The following then localizes any critical point of $\cF$ satisfying Theorem
\ref{prop:mainprop} to the above neighborhood of $(q_\star,\vphi_\star)$.
Its proof is in Appendix~\ref{sec:Proof_Localization}.

\begin{proposition}\label{PROP:LOCALIZATION}
Fix any positive integer $k$. Then there exist $\lambda_0,\Const_0>0$ and
functions $\const_0(\lambda),n_0(\lambda)>0$ such that
for all $\lambda \geq \lambda_0$ and $n \geq n_0(\lambda)$,
with probability at least $1 - e^{-\const_0(\lambda) n}$,
any $\bbm \in { \cC_{\lambda, n}^+}$ also satisfies $Q(\bbm), M(\bbm) \ge 1 - \Const_0 / \lambda^k$.  
\end{proposition}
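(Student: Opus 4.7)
The plan is to apply Theorem \ref{thm:Kac_rice_complexity} and Markov's inequality. Fix a small constant $\eta\in(0,1)$ and let $\Const_0$ be chosen below (depending on $k$). Define the ``forbidden'' region
\[U_{\rm bad}=\Big\{(q,\vphi,a,e)\in[\eta,1]\times[0,\infty)\times\R^2:\,e\le -\lambda^2/3,\,\min(q,\vphi)\le 1-\Const_0/\lambda^k\Big\}.\]
Critical points with $Q(\bbm)<\eta$ or $\|\bbm\|_\infty>1-e^{-n^b}$ are ruled out using the direct arguments referenced in the remark after Theorem \ref{thm:Kac_rice_complexity}, namely Lemmas \ref{lem:initial_global_bound} and \ref{lem:infinity_norm_high_probability_bound}. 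Any remaining element of $\cC_{\lambda,n}^+$ violating the desired localization lies in $\Crit_n(U_{\rm bad},V_n)$, so by Markov's inequality it suffices to prove
\[\sup_{(q,\vphi,a,e)\in U_{\rm bad}} S_\star(q,\vphi,a,e)\le -c(\lambda)<0,\]
since then Theorem \ref{thm:Kac_rice_complexity} gives $\E[\Crit_n(U_{\rm bad},V_n)]\le e^{-c(\lambda)n/2}$.

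My first step is to record that $(q_\star,\vphi_\star)$ approaches $(1,1)$ extremely rapidly: an elementary saturation estimate for $\tanh$ applied to the fixed-point equation (\ref{eq:qstar0}) gives $1-q_\star=O(e^{-c\lambda^2})$, so for each fixed $k$ and all $\lambda\ge\lambda_0(k)$ one has $q_\star,\vphi_\star\ge 1-\Const_0/(2\lambda^k)$. I would then split $U_{\rm bad}$ according to whether $(q,\vphi)$ lies in the local neighborhood $\cN_\lambda=\{|q-q_\star|,|\vphi-\vphi_\star|\le \const_0/\lambda^2\}$ from Proposition \ref{PROP:COMPNEG}. Inside $\cN_\lambda\cap U_{\rm bad}$, the closeness of $(q_\star,\vphi_\star)$ to $(1,1)$ forces $\max(|q-q_\star|,|\vphi-\vphi_\star|)\ge \Const_0/(2\lambda^k)$, and the quantitative bound (\ref{eqn:quantitative_complexity_bound}) then yields the strictly negative $S_\star\le -c(\lambda)\Const_0^2/(4\lambda^{2k})$.

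The main obstacle is controlling $S_\star$ on the complementary region $U_{\rm bad}\setminus\cN_\lambda$, where Proposition \ref{PROP:COMPNEG} provides no information. Here my plan is to upper-bound $S_\star$ by inserting explicit trial Lagrange multipliers $(\mu,\nu,\tau,\gamma)$ into the variational formula defining it. A natural choice is to take $\gamma$ of order $\lambda^2$ to exploit the low-energy constraint $e\le-\lambda^2/3$: the linear term $-\gamma(u(q,a)-e)$ then drives the upper bound strongly negative whenever $u(q,a)$ is not close to its Bayes value, while the remaining multipliers are tuned to cancel leading-order contributions near $(q_\star,\vphi_\star)$. Unboundedness in $a$ and $e$ is handled by the quadratic term $[a/q-\beta\lambda\vphi^2/q-\beta^2(1-q)]^2/(4\beta^2)$ in $S$, which grows as $|a|\to\infty$, combined with the estimate $u(q,a)\ge a/2-\beta^2/4$ to control large $|e|$; this reduces the effective supremum to a compact subset of parameter space. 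On this compact set $S_\star$ is continuous and, as one verifies by direct analysis of the stationarity conditions, vanishes only at $(q_\star,\vphi_\star,a_\star,e_\star)\notin U_{\rm bad}$, so a compactness argument produces the uniform bound $\sup_{U_{\rm bad}\setminus\cN_\lambda}S_\star\le -c'(\lambda)<0$.

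Combining both regimes and applying Theorem \ref{thm:Kac_rice_complexity} together with Markov's inequality yields the proposition on an event of probability at least $1-e^{-\const_0(\lambda)n}$.
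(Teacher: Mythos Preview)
Your approach is fundamentally different from the paper's, and the difference is structural rather than cosmetic. The paper proves Proposition~\ref{PROP:LOCALIZATION} by \emph{direct} arguments that do not invoke the Kac--Rice bound at all: Lemma~\ref{lem:initial_global_bound} uses only the definition of $\cF$ to show that $\cF(\bbm)\le-\lambda^2/3$ forces $Q(\bbm)+M(\bbm)\ge 1.01$; then Lemma~\ref{lem:final_global_bound} analyzes the TAP fixed-point equation $\bbm=\tanh(\lambda^2 M(\bbm)\ones+\lambda\bW\bbm-\lambda^2[1-Q(\bbm)]\bbm)$ directly, using the concentration estimate of Lemma~\ref{lem:Fraction_bound} to show that all but an $O(1/\lambda^2)$ fraction of coordinates satisfy $m_j\ge 1-e^{-c\lambda^2}$, which gives $M(\bbm)\ge 1-C/\lambda^2$, and then iterates this bound to reach $1-C_0/\lambda^k$. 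No complexity function enters.

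Your proposal instead tries to use Theorem~\ref{thm:Kac_rice_complexity} globally, and this is precisely what the paper's architecture is designed to avoid. Proposition~\ref{PROP:COMPNEG} only controls $S_\star$ in the window $\{|q-q_\star|,|\vphi-\vphi_\star|\le c_0/\lambda^2\}$; the role of Proposition~\ref{PROP:LOCALIZATION} in the paper is exactly to reduce to that window \emph{before} invoking the Kac--Rice machinery (see the proof of Proposition~\ref{prop:mainprop}). Your treatment of $U_{\rm bad}\setminus\cN_\lambda$ therefore cannot lean on Proposition~\ref{PROP:COMPNEG}, and the substitute you offer---``insert explicit trial multipliers'' together with ``as one verifies by direct analysis of the stationarity conditions, $S_\star$ vanishes only at $(q_\star,\vphi_\star,a_\star,e_\star)$''---is not a proof. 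The uniqueness of the zero of $S_\star$ over all of $[\eta,1]\times[0,1]\times\R^2$ is nowhere established in the paper, is not obvious, and establishing it would amount to a global landscape analysis at least as hard as the proposition itself. The compactness reduction you sketch (handling large $|a|$ via the quadratic term and large $|e|$ via $u(q,a)$) is also incomplete: the quadratic term controls $a$ at fixed $(q,\vphi)$, but you would still need a \emph{uniform} strictly-negative bound on $S_\star$ over a noncompact region, which requires quantitative decay, not just qualitative growth. In short, the gap is real: you have replaced a short bootstrap on the TAP equations by an assertion about global negativity of $S_\star$ that you do not prove.
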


Finally, the following shows that with high probability, there are no
critical points with any coordinate outside the set $V_n$ in Theorem \ref{thm:Kac_rice_complexity}.
\begin{lemma}\label{lem:infinity_norm_high_probability_bound}
There exist constants $n_0$, and $\const_0 > 0$ so that for $n \ge n_0$, with probability at least $1 -
e^{-\const_0 n}$, all points $\bbm \in (-1, 1)^n$ satisfying $\bg_n(\bbm) =
\bzero$ also satisfy
\begin{align}
\| \bbm \|_\infty \le 1 - e^{ - 4 \lambda^2 - 6 \lambda \sqrt n}.
\end{align}
\end{lemma}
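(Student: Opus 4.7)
The plan is to extract a deterministic bound on $\|\bbm\|_\infty$ directly from the TAP equation $\bg_n(\bbm)=\bzero$, and then use standard Gaussian concentration for $\|\bW\|_{\op}$ to pass to an event of probability $1-e^{-\const_0 n}$. By the sign-change symmetry (as in the setup of Section~\ref{sec:Describe_Prop_Kac_Rice}) I may take $\bx=\ones$ throughout. On the event $\{\bg_n(\bbm)=\bzero\}$, the expression for $\bg_n$ in \eqref{eq:gn} (with $\beta=\lambda$) gives, coordinatewise,
\begin{equation*}
\arctanh(m_i) \;=\; \lambda\,(\bY\bbm)_i \;-\; \lambda^2\,[1-Q(\bbm)]\,m_i.
\end{equation*}
Since $|m_i|\le 1$ and $Q(\bbm)\in[0,1]$, the second term contributes at most $\lambda^2$ in absolute value, so
\begin{equation*}
|\arctanh(m_i)| \;\le\; \lambda\,|(\bY\bbm)_i| + \lambda^2.
\end{equation*}

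Next I would bound $|(\bY\bbm)_i|$ using the decomposition $\bY=(\lambda/n)\bx\bx^\sT+\bW$. The rank-one part contributes $(\lambda/n)|\<\bx,\bbm\>|\,|x_i|\le \lambda\sqrt{Q(\bbm)}\le \lambda$ by Cauchy--Schwarz, while the noise part satisfies $|(\bW\bbm)_i|\le \|\bW\bbm\|_2\le \|\bW\|_{\op}\|\bbm\|_2\le \|\bW\|_{\op}\sqrt{n}$. Hence $|(\bY\bbm)_i|\le \lambda+\|\bW\|_{\op}\sqrt{n}$ and
\begin{equation*}
|\arctanh(m_i)| \;\le\; 2\lambda^2 + \lambda\,\|\bW\|_{\op}\sqrt{n}.
\end{equation*}

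Standard GOE concentration (e.g.\ the bound \eqref{eq:normconcentration} already used in the proof of Lemma~\ref{lemma:expectationbound}, or \cite{furedi1981eigenvalues}) gives $\P(\|\bW\|_{\op}\ge 3)\le e^{-\const_0 n}$ for some $\const_0>0$ and all $n\ge n_0$. On the complementary event $\{\|\bW\|_{\op}\le 3\}$, which holds simultaneously for \emph{every} potential critical point since it is an event on $\bW$ alone, we conclude $|\arctanh(m_i)|\le R$ with $R=2\lambda^2+3\lambda\sqrt{n}$ uniformly in $i$ and in $\bbm$.

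Finally, I would translate this arctanh-bound into the desired $\ell_\infty$ statement by the elementary inequality $\tanh(R)\le 1-e^{-2R}$ for $R\ge 0$ (which follows from $1-\tanh(R)=2/(e^{2R}+1)\ge e^{-2R}$). This yields $|m_i|\le 1-e^{-2R}=1-e^{-4\lambda^2-6\lambda\sqrt{n}}$, which is exactly the claim. The argument is essentially routine: there is no substantive obstacle beyond correctly tracking the two sources of size in $\bY\bbm$ (the signal term of order $\lambda$ and the noise term of order $\sqrt{n}$) and invoking the standard $\GOE$ operator-norm tail bound to control $\bW$ uniformly.
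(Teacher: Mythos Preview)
Your proposal is correct and essentially identical to the paper's own proof: both extract the bound $\|\arctanh(\bbm)\|_\infty\le 2\lambda^2+\lambda\|\bW\|_{\op}\sqrt{n}$ from the TAP equation, invoke $\|\bW\|_{\op}\le 3$ with probability $1-e^{-\const_0 n}$, and finish with $\tanh(R)\le 1-e^{-2R}$. The only cosmetic difference is that you use Cauchy--Schwarz on the rank-one signal term whereas the paper uses the cruder $|\<\ones,\bbm\>|\le n$, but the resulting constants are the same.
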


\begin{proof}

Note that when $\bg_n(\bbm) = \bzero$, we have
\begin{align*}
\| \arctanh(\bbm) \|_\infty &= \Big\| - \lambda^2/n \cdot \<\ones, \bbm\> \ones -
\lambda \cdot \bW \bbm + \lambda^2 [1 - Q(\bbm)] \bbm \Big\|_\infty\\
&\le 2 \lambda^2 + \lambda \| \bW \|_{\op} \sqrt n.
\end{align*}
With probability at least $1 - e^{-\const_0 n}$ for $n \ge n_0$ and some $n_0 >
0$ and $\const_0 > 0$, we have $\| \bW \|_{\op} \le 3$, and hence $\| \arctanh
(\bbm) \|_\infty \le 2 \lambda^2 + 3 \lambda \sqrt n$. Applying $\tanh(x) \leq 1-e^{-2x}$ for $x \geq 0$, with probability at least $1- e^{-\const_0 n}$, we have
\begin{align}
\| \bbm \|_\infty \le \tanh(\|\arctanh(\bbm) \|_\infty) \le 1 - e^{- 4 \lambda^2
- 6 \lambda \sqrt n}. 
\end{align}
\end{proof}

\begin{proof}[Proof of Proposition \ref{prop:mainprop}]

According to Lemma \ref{lem:infinity_norm_high_probability_bound} and
Proposition \ref{PROP:LOCALIZATION}, there exist constants
$\lambda_0,\Const_0$ and functions $\const_0(\lambda),n_0(\lambda)$ such that
for $\lambda \ge \lambda_0$ and $n \ge n_0(\lambda)$, the following happens with
probability at least $1 - e^{-\const_0 n }$: For any $\bbm \in { \cC_{\lambda, n}^+}$, we have $\| \bbm \|_\infty \le 1 - e^{- 4 \lambda^2
- 6 \lambda \sqrt n} \le 1 - e^{-\lambda^2 \sqrt n}$ and $Q(\bbm), M(\bbm) \ge 1 - \Const_0/\lambda^3$. 

Define
\begin{align*}
U_\eps &= \{(q, \vphi, a, e): (q, \vphi) \in [1 - \Const_0/\lambda^3, 1]^2, (a, e)
\in \R^2 \} \\
&\hspace{1in}\cap \{ (q, \vphi, a, e): \max\{ \l q - q_\star\l, \l \vphi -
\vphi_\star\l, \l e - e_\star \l \} \ge \eps  \},
\end{align*}
and $V_n = [-1 + e^{-\lambda^2 \sqrt n}, 1 - e^{-\lambda^2 \sqrt n}]$. According to Theorem \ref{thm:Kac_rice_complexity}, we have
\[\limsup_{n \to \infty} n^{-1} \log \E[\Crit_n(U_\eps, V_n)] \le \sup_{(q,
\vphi, a, e) \in U_\eps} S_\star(q, \vphi, a, e). \]
According to Proposition \ref{PROP:COMPNEG}, for $\lambda \ge \lambda_0$ with $\lambda_0$ large enough, we have 
\[0 > \sup_{(q, \vphi, a, e) \in U_\eps} S_\star(q, \vphi, a, e) \equiv -
s_0(\lambda, \eps).\]
Therefore, there exists $n_0(\lambda,\eps)$ large enough so that for $n \ge
n_0(\lambda,\eps)$, 
\[n^{-1} \log \E[\Crit_n(U_\eps, V_n)] \le - s_0(\lambda, \eps) /2.\]
Accordingly, by Markov's inequality, we have
\[\P(\Crit_n(U_\eps, V_n) \ge 1) \le \E[\Crit_n(U_\eps, V_n)] \le \exp\{ - s_0
n/2 \}.\]
Combining the above statements concludes that ${ \cC_{\lambda, n}^+} \subseteq \Gamma_\eps$. 

Next we show that a global minimizer $\bbm_\star \in { \cC_{\lambda, n}^+}$.
The gradient of $\cF$ diverges at the boundary of $[-1, 1]^n$ and points outside
$[-1, 1]^n$ because of the entropy term. Hence, any global minimizer
$\bbm_\star$ belongs to $(-1, 1)^n$ and satisfies $\nabla \cF(\bbm_\star) =
\bzero$. Furthermore, note $\cF(\bx) = -\lambda^2/2 - \lambda \<\bx, \bW \bx \>
/(2n) \le -\lambda^2/2 + \lambda \| \bW \|_{\op}/2$, and $\P(\|\bW \|_\op \le 3)
\ge 1 - e^{- c_0 n}$ for $n \ge n_0$ with some constant $c_0$ and $n_0$.
Accordingly, we have $\cF(\bbm_\star) \le \cF(\bx) \le -\lambda^2 /2 + 3 \lambda
/2 \le - \lambda^2/3$ for $\lambda \ge 9$ and $n \ge n_0$ with probability at
least $1 - e^{-c_0 n}$. Taking $-\bbm_\star$ in place of $\bbm_\star$ if
necessary, we ensure $M(\bbm_\star) \geq 0$.
\end{proof}

\subsection{Proof of Theorem \ref{thm:TAPBayes}}

In this regime, \cite{deshpande2016asymptotic} showed that $\MMSE_n(\lambda)$ converges
to a limiting value which admits the following characterization.
\begin{proposition}[\cite{deshpande2016asymptotic}]\label{prop:asymptoticMMSE}
Let $\lambda>0$ be a fixed constant. Let $q_\star$ be the largest nonnegative
solution to the equation
\begin{equation}\label{eq:qstar}
q_\star=\E_{G \sim \cN(0,1)}
\Big[\tanh(\lambda^2 q_\star+\sqrt{\lambda^2q_\star}G)^2\Big].
\end{equation}
Then $\lim_{n \to \infty} \MMSE_n(\lambda)=1-q_\star^2$.
The value $q_\star$ is monotonically increasing in $\lambda$, with
$q_\star=0$ when $\lambda \leq 1$ and $q_\star \in (0,1)$ when $\lambda>1$.
\end{proposition}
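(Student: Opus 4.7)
This MMSE formula is cited from \cite{deshpande2016asymptotic}; my plan would follow the standard Bayes-optimal spin-glass recipe, which reduces the MMSE to the $\lambda^2$-derivative of the limiting normalized free energy $F_n(\lambda)=\frac{1}{n}\E\log Z_n(\lambda,\lambda)$ via the Guo-Shamai-Verd\'u I-MMSE identity. The first task is to compute $\lim_n F_n(\lambda)$ as a scalar variational formula, and the second is to translate this limit into an MMSE statement.

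For the limiting free energy, I would use Guerra-Toninelli interpolation adapted to the Bayes-optimal (Nishimori) regime: interpolate between the two-body Hamiltonian $\frac{\lambda}{2}\langle\bsigma,\bY\bsigma\rangle$ and a decoupled single-site model with effective magnetic field $\lambda^2 q+\sqrt{\lambda^2 q}\,g_i$ for iid $g_i\sim\cN(0,1)$, indexed by a scalar $q\in[0,1]$. The Nishimori identity (which holds at $\beta=\lambda$) makes the derivative along the interpolation non-negative, yielding the upper bound
\[
\lim_{n\to\infty}F_n(\lambda) \le \sup_{q\in[0,1]}\psi(q;\lambda),\qquad \psi(q;\lambda)=\E_{g\sim\cN(0,1)}\log(2\cosh(\lambda^2 q+\sqrt{\lambda^2 q}\,g))-\frac{\lambda^2}{4}(1-q)^2-\log 2.
\]
A matching lower bound can be obtained either via the Aizenman-Sims-Starr cavity scheme or, as in \cite{deshpande2016asymptotic}, from the state-evolution analysis of an AMP algorithm whose iterates concentrate on overlap $q_\star$. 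The stationarity condition $\partial_q\psi(q;\lambda)=0$ rearranges exactly to the fixed-point equation \eqref{eq:qstar}, and the map $T(q)=\E_g[\tanh(\lambda^2 q+\sqrt{\lambda^2 q}\,g)^2]$ is concave and non-decreasing in $q$ with $T(0)=0$ and $T'(0)=\lambda^2$; hence its largest nonnegative fixed point $q_\star$ vanishes when $\lambda\le 1$ and is strictly positive and monotonically increasing in $\lambda$ for $\lambda>1$.

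To pass from the free energy to the MMSE, I would invoke the I-MMSE identity adapted to the matrix channel \eqref{eq:Z2model}, which yields
\[
\frac{\d}{\d\lambda^2}F_n(\lambda)=\frac{1}{4}\big(1-\tfrac{1}{n^2}\E\|\hbX_{\mathrm{Bayes}}\|_F^2\big)+O(1/n)=\frac{1}{4}\MMSE_n(\lambda)+O(1/n),
\]
the $O(1/n)$ collecting diagonal and Gaussian-normalization contributions. Differentiating the limit $\lim_n F_n(\lambda)=\psi(q_\star;\lambda)$ in $\lambda^2$ by the envelope theorem (using $\partial_q\psi=0$ at $q=q_\star$) gives $\frac{1}{4}(1-q_\star^2)$. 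Combining these two expressions, and justifying the exchange of limits via convexity/monotonicity of $\lambda^2\mapsto F_n(\lambda)$, one obtains $\lim_n\MMSE_n(\lambda)=1-q_\star^2$; monotonicity of $q_\star$ in $\lambda$ follows from implicit differentiation of $T(q_\star)=q_\star$.

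The hardest step will be the matching free-energy lower bound. The Guerra upper bound is routine once the Nishimori cancellations are identified, but the lower bound is delicate: \cite{deshpande2016asymptotic} obtains it by constructing an AMP algorithm and proving, via a rigorous state-evolution analysis building on \cite{bayati2011dynamics}, that its iterates attain overlap $q_\star$ and thereby saturate the Guerra upper bound. The phase transition at $\lambda=1$ and the strict positivity of $q_\star$ for $\lambda>1$ then drop out of the concavity of $T$ together with $T'(0)=\lambda^2$.
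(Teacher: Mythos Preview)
The paper does not prove this proposition at all: its ``proof'' is a citation to \cite{deshpande2016asymptotic} (specifically Eqs.~(167), (143) and Lemma~4.2 there), together with a notational dictionary ($\gamma_*/\lambda\leftrightarrow q_\star$, $\lambda\leftrightarrow\lambda^2$) and the remark that the cited result, stated with additional side observations through a binary erasure channel with erasure probability $1-\eps$, extends to $\eps=0$ by continuity. So your sketch is not competing with a proof in the paper; it is a summary of how the cited reference proceeds, and at the architectural level---Guerra-type interpolation on the Nishimori line for one bound, AMP/state-evolution for the matching bound, I-MMSE to convert the free-energy limit to an MMSE statement---it is indeed the right plan and matches \cite{deshpande2016asymptotic}.

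That said, two concrete points in your write-up are incorrect. First, your functional
\[
\psi(q;\lambda)=\E_g\big[\log(2\cosh(\lambda^2 q+\lambda\sqrt{q}\,g))\big]-\tfrac{\lambda^2}{4}(1-q)^2-\log 2
\]
does \emph{not} have stationarity equivalent to \eqref{eq:qstar}: differentiating in $q$ and applying Gaussian integration by parts gives $\partial_q\psi=\lambda^2\E[\tanh]+\tfrac{\lambda^2}{2}(1-\E[\tanh^2])+\tfrac{\lambda^2}{2}(1-q)$, which at $q=q_\star$ (using the Nishimori identity $\E[\tanh]=\E[\tanh^2]=q_\star$) equals $\lambda^2\ne 0$. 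The correct replica-symmetric potential for this Bayes-optimal model carries $+\tfrac{\lambda^2}{4}-\tfrac{\lambda^2 q}{2}-\tfrac{\lambda^2 q^2}{4}$ in place of $-\tfrac{\lambda^2}{4}(1-q)^2$ (compare with $-e_\star$ in \eqref{eq:estar}); with that correction both the stationarity and the envelope computation go through. Relatedly, your I-MMSE line conflates $F_n$ with the (normalized) mutual information; it is the latter whose $\lambda^2$-derivative gives $\tfrac14\MMSE_n$, and the two differ by explicit $\lambda$-dependent terms that must be tracked. Second, the claim that $T(q)=\E_g[\tanh^2(\lambda^2 q+\lambda\sqrt{q}g)]$ is concave in $q$ is not the standard argument and is not obviously true; \cite{deshpande2016asymptotic} (Lemma~4.2) establishes the fixed-point structure and the $\lambda=1$ threshold directly from monotonicity, $T(0)=0$, $T'(0)=\lambda^2$, and $T(q)<1$, without invoking concavity.
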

\begin{proof}[Proof of Proposition \ref{prop:asymptoticMMSE}]
See \cite[Eqs.\ (167) and (143) and Lemma 4.2]{deshpande2016asymptotic}.
(The notational identification with \cite{deshpande2016asymptotic} is
$\gamma_*/\lambda \leftrightarrow q_\star$ and $\lambda \leftrightarrow \lambda^2$.
\cite{deshpande2016asymptotic} establishes this result with additional direct observations under a
binary erasure
channel with erasure probability $1-\eps$, and the
statement for $\eps=0$ follows from continuity in $\eps$.)
\end{proof}



\begin{proof}[Proof of Theorem \ref{thm:TAPBayes}]
Note that $\cC_{\lambda, n}$ is compact. Hence, when $\cC_{\lambda, n} \neq \emptyset$, the maximum value of
$\|\bbm\bbm^\sT-\widehat{\bX}_{\mathrm{Bayes}}\|_F^2$ over $\cC_{\lambda, n}$ can be attained by some
$\bbm \in [-1,1]^n$. We denote $\bbm_\star \in \R^n$ to be
a random element within the set $\argmax_{\bbm \in \cC(\lambda,
n)}\|\bbm\bbm^\sT-\widehat{\bX}_{\mathrm{Bayes}}\|_F^2$ if $\cC_{\lambda, n}
\neq \emptyset$, or $\bbm_\star=\bzero$ if $\cC_{\lambda, n} = \emptyset$.
Since $\cC_{\lambda,n}=\cC_{\lambda,n}(\bY)$ is uniquely defined in terms of $\cF_{\lambda,\beta}$, which itself depends on
a measurable way on $\bY$ (see Eq.~\eqref{eqn:TAP_free_energy}), it follows that
$\bbm_{\star}$ can be taken to be a measurable function of $\bY$. (Measurability follows from  \cite[Theorem 1]{brown1973measurable} using the
fact that $\{(\bm,\bY): \bm\in \cC_{\lambda,n}(\bY)\}$ is Borel.)
In particular, $\bbm_\star$ is independent of $\bx$ conditional on $\bY$. 

By Proposition \ref{prop:mainprop}, for any constants
$\eps,\delta>0$, for sufficiently large $n$, with probability at
least $1-\delta$, we have $\cC_{\lambda, n} \neq \emptyset$, and
\begin{equation}\label{eq:normandalignment}
n^{-1}\|\bbm_\star\|^2 \in [q_\star-\eps,q_\star +\eps], \qquad
n^{-1}|\<\bbm_\star ,\bx\>| \in [q_\star -\eps,q_\star +\eps].
\end{equation}
Denoting the above good event to be $\cE$. Then we have
\begin{align*}
n^{-2}\E\Big[\|\bbm_\star \bbm_\star ^\sT-\bx\bx^\sT\|_F^2 \Big \vert  \cE \Big] =&n^{-2}\E\Big[\|\bx\|^4+\|\bbm_\star \|^4-2|\<\bbm_\star ,\bx\>|^2 \vert \Big \vert \cE \Big] \\
\leq& \Big(1+(q_\star +\eps)^2-2(q_\star -\eps)^2\Big) \leq 1-q_\star ^2+6\eps,
\end{align*}
so that
\begin{align*}
n^{-2}\E\Big[\|\bbm_\star \bbm_\star ^\sT-\bx\bx^\sT\|_F^2 \Big] &\le
n^{-2}\E\Big[\|\bbm_\star \bbm_\star ^\sT-\bx\bx^\sT\|_F^2 \Big \vert  \cE \Big]
\P(\cE) + 2 \cdot \P(\cE^c)\\
&\le 1-q_\star ^2+6\eps +  2 \delta.
\end{align*}
As $\bbm_\star$ is a measurable function $\bY$, orthogonality of
$(\bx\bx^\sT-\widehat{\bX}_{\mathrm{Bayes}})=(\bx\bx^\sT-\E[\bx\bx^{\sT}|\bY])$
with respect to any function measurable on $\bY$, implies
\begin{align*}
n^{-2}\E\left[\|\bbm_\star \bbm_\star ^\sT-\bx\bx^\sT\|_F^2\right]
&=n^{-2}\E\left[\|\bbm_\star \bbm_\star ^\sT-\widehat{\bX}_{\mathrm{Bayes}}\|_F^2\right]
+\MMSE_n(\lambda).
\end{align*}
Then Proposition \ref{prop:asymptoticMMSE} implies, for all $n \geq
n_0(\lambda,\eps,\delta)$,
\[n^{-2}\E\left[\|\bbm_\star \bbm_\star ^\sT-\widehat{\bX}_{\mathrm{Bayes}}\|_F^2\right]
<7\eps+ 2 \delta.\]
Then by the definition of $\bbm_\star$, we have 
\[
 \Big(n^{-2} \sup_{\bbm \in \cC_{\lambda, n}} \|\bbm
\bbm^\sT-\widehat{\bX}_{\mathrm{Bayes}}\|_F^2\Big)\ones\{\cC_{\lambda,n} \neq
\emptyset\}\le n^{-2} \|\bbm_\star \bbm_\star ^\sT-\widehat{\bX}_{\mathrm{Bayes}}\|_F^2, 
\]
so that (\ref{eq:nearBayes}) holds. 
%
\end{proof}

\section{Proof of Proposition \ref{PROP:COMPNEG}}\label{sec:Proof_Prop_Negative_Complexity}

Denote $\xi \sim \cN(\lambda^2 \vphi_\star,\lambda^2q_\star)$. Note that $\varphi_\star = q_\star$ implies $\E \{f(-\xi)\} =\E\{e^{-2\xi}f(\xi)\}$ whence
$\E\{\tanh(\xi)^2\} = \E\{\tanh(\xi)\}$. Applying Gaussian
integration by parts, we may verify from (\ref{eq:qstar}) and (\ref{eq:estar})
that
\begin{equation}\label{eq:starredquantities}
\vphi_\star=\E[\tanh(\xi)],
\qquad q_\star=\E[\tanh^2(\xi)],
\qquad a_\star=\E[\xi \tanh(\xi)],
\end{equation}
\[u(q_\star,a_\star)-e_\star=\E[\log 2\cosh(\xi)].\]
Note that
\[S(q_\star,\vphi_\star,a_\star,e_\star;\mu,\nu,\tau,\gamma)
=-q_\star \mu-\vphi_\star \nu-a_\star\tau-[u(q_\star,a_\star)-e_\star]\gamma+\log I,\]
where
\[I=\E\Big[\exp\Big(\mu \tanh^2(\xi)+\nu\tanh(\xi)+\tau
\xi\tanh(\xi)+\gamma \log 2\cosh(\xi)\Big)\Big].\]
Then $S(q_\star,\vphi_\star,a_\star,e_\star;\mu,\nu,\tau,\gamma)$ is a convex
function of $(\mu,\nu,\tau,\gamma)$, with derivative in
$(\mu,\nu,\tau,\gamma)$ equal to 0 at $(\mu,\nu,\tau,\gamma)=0$. Thus
$S_\star(q_\star,\vphi_\star,a_\star,e_\star)=0$ follows.

In the remainder of this section, we establish
Eq. (\ref{eqn:quantitative_complexity_bound}). Note that $q_\star \to 1$ as $\lambda\to\infty$ (see, e.g. \cite[Lemma 3.2]{deshpande2016asymptotic}).
Throughout the proof, we assume $\lambda_0$ is large enough such that the
conditions of the proposition guarantee
$(q, \vphi) \in [1/(1 + \eps), 1]^2$ for $\eps \equiv 0.01$. 
Denote
\begin{align*}
S_+(q, \vphi, a)&= \frac{1}{4\lambda^2 q^2}[a - \lambda^2 \vphi^2 -
\lambda^2(1-q)q]^2\\
S_-(q, \vphi, a, e; \mu, \nu, \tau, \gamma)&= - q \mu - \vphi \nu - a \tau -
[u(q, a) - e] \gamma + \log I,
\end{align*}
so that $S=S_++S_-$ and 
\[S_\star(q, \vphi, a, e) = S_+(q, \vphi, a) + \inf_{\mu, \nu, \tau,
\gamma}S_-(q, \vphi, a, e;\mu, \nu, \tau, \gamma).\]
Denote $Q(x) = \tanh^2 x$, $M(x) = \tanh x$, $A(x) = x \tanh x$, and $U(x) =
\log [2 \cosh (x)]$.\\

\noindent
{\bf Step 1. Upper bound $S_+$. }
Noting $a_\star = \lambda^2 \vphi_\star^2 + \lambda^2 q_\star(1 - q_\star)$, we have
\[S_+(q, \vphi, a) = \frac{1}{4 \lambda^2 q^2}\{(a - a_\star) - (\lambda^2 \vphi^2 - \lambda^2
\vphi_\star^2) - [\lambda^2 q (1 - q) - \lambda^2 q_\star (1 - q_\star)]\}^2.\]
By the inequality
\[(x_1 + x_2 + x_3)^2 \le (1 + \eps)^2 x_1^2 + (2 + 2/\eps)^{2} x_2^2 + (2 +
2/\eps)^{2} x_3^2,\]
and the conditions $q, \vphi \in [1/(1+\eps), 1]$ and $\eps = 0.01$, we have
\begin{equation}\label{eqn:Bound_for_S_plus}
\begin{aligned}
S_+(q, \vphi, a)
&\le (1 + \eps)^2 (a - a_\star)^2/(4 \lambda^2 q^2)\\
&\hspace{0.1in}+ (2 + 2/\eps)^2 \lambda^4
\{(\vphi^2 - \vphi_\star^2)^2
+ [q(1 - q) - q_\star(1 - q_\star)]^2\}/(4 \lambda^2 q^2)\\
&\le (1 + \eps)^4 (a - a_\star)^2/(4 \lambda^2) + (10/\eps^2) \lambda^2  (\vphi - \vphi_\star)^2 + (10/\eps^2) \lambda^2 (q - q_\star)^2. 
\end{aligned}
\end{equation}

\noindent
{\bf Step 2. Bound the moment generating function of $Q$, $M$, $A$, and $U$. }
Let $x \sim \cN(\lambda^2 \vphi,\lambda^2 q)$. Let $\E$ and $\Var$ be taken with respect to the randomness of $x$. We bound $\E[e^{\mu (Q - \E Q)}]$, $\E[e^{\nu (M - \E M)}]$, $\E[e^{\tau (A - \E A)}]$, and $\E[e^{\gamma (U - \E U)}]$. 

Since $|Q| \leq 1$, we have for $|\mu|<1$
\begin{equation}\label{eqn:MGF_bound_for_Q}
\begin{aligned}
&\E[\exp\{\mu(Q - \E Q)\}]\\
&\leq 1 + \frac{1}{2}\mu^2 \Var(Q)+  \sum^{\infty}_{k=3} \frac{1}{k!}\l
\mu\l^k \E[\l Q - \E Q\l^k] \\
&\le 1 +  \frac{1}{2}\mu^2\Var(Q) \sum_{k=2}^\infty  \l \mu\l ^{k-2}
\le \exp\{\mu^2 \Var(Q)/[2(1 -  \l \mu\l )]\}.
\end{aligned}
\end{equation}
Similarly, we have for $|\nu|<1$
\begin{equation}\label{eqn:MGF_bound_for_M}
\begin{aligned}
\E[\exp\{\nu(M - \E M)\}] &\le \exp\{\nu^2 \Var(M)/[2(1 - \l \nu\l )]\}. 
\end{aligned}
\end{equation}

To bound $\E[e^{\tau (A - \E A)}]$, note that $A(x) = x \tanh(x)$ is
$L_a$-Lipschitz. Indeed, simple calculus shows that $\sup_tA'(t) = A'(A^{-1}(1))= A^{-1}(1)$, whence $L_a = A^{-1}(1) \le 1.2$. Applying Gaussian concentration
of measure, see e.g.\ \cite[Theorem 5.5]{BLM}, we have 
\begin{align}\label{eqn:MGF_bound_for_A}
\E[\exp\{\tau(A - \E A)\}] \le \exp\{ \lambda^2 q \tau^2 L_a^2 /2\} \le \exp\{ \lambda^2 \tau^2 L_a^2 / 2 \}. 
\end{align}

Similarly, to bound $\E[e^{\gamma (U - \E U)}]$, note $U(x) = \log [2 \cosh(x)]$ is $1$-Lipschitz. Hence we have
\begin{align}\label{eqn:MGF_bound_for_U}
\E[\exp\{\gamma(U - \E U)\}] \le \exp\{ \lambda^2 q \gamma^2 /2\} \le \exp\{\lambda^2 \gamma^2/2 \}. 
\end{align}

\noindent
{\bf Step 3. Upper bound $S_-$. }
Set $\alpha = 1 + \eps$ and $\kappa = (3 + 3\eps)/\eps$, where $\eps=0.01$ as above.
By Holder's inequality
\begin{align*}
&\E[e^{\mu(Q - \E Q) + \nu(M - \E M) + \tau(A - \E A) + \gamma(U - \E U)}]\\
 &\le \E[e^{\alpha \tau(A - \E A)}]^{1/\alpha} \E[e^{\kappa \mu(Q - \E
Q)}]^{1/\kappa} \E[e^{\kappa \nu(M - \E M)}]^{1/\kappa} \E[e^{\kappa \gamma (U - \E U)}]^{1/\kappa}. 
\end{align*}
Given estimates (\ref{eqn:MGF_bound_for_Q}), (\ref{eqn:MGF_bound_for_M}),
(\ref{eqn:MGF_bound_for_A}), and (\ref{eqn:MGF_bound_for_U}), we have for
$|\mu|,|\nu|<1/\kappa$
\[
\begin{aligned}
&\E[e^{\mu(Q - \E Q) + \nu(M - \E M) + \tau(A - \E A) + \gamma(U - \E U)}]\\
&\le \exp\Big\{ \frac{ \alpha \tau^2 \lambda^2 L_a^2}{2} +  \frac{\kappa \mu^2 \Var(Q)}{2(1 - \kappa \l \mu\l )} + \frac{\kappa \nu^2 \Var(M)}{2(1 - \kappa \l \nu\l )} + \frac{\kappa \gamma^2 \lambda^2 }{2} \Big\}, \\
\end{aligned}
\]
and hence
\begin{equation}\label{eqn:Bound_for_S_-_1}
\begin{aligned}
&S_-(q, \vphi, a, e; \mu, \nu, \tau, \gamma)\\
&\le  - \tau(a - \E[A]) + \frac{ \alpha \tau^2 \lambda^2 L_a^2}{2}  - \mu(q - \E[Q])  +  \frac{\kappa \mu^2 \Var(Q)}{2(1 - \kappa \l \mu\l )} \\
&\hspace{0.2in}- \nu(\vphi - \E[M]) + \frac{\kappa \nu^2 \Var(M)}{2(1 - \kappa \l \nu\l )} - \gamma (u(q, a) - e - \E[U]) + \frac{\kappa \gamma^2 \lambda^2}{2} .
\end{aligned}
\end{equation}

To bound $\Var(Q)$ and $\Var(M)$, note that $\tanh(x) \geq 1-e^{-x}$. Then
for $(q, \vphi) \in [1/(1+\eps), 1]^2$ with $\eps=0.01$,
there exists a universal constant $\lambda_0$ such that when $\lambda \ge \lambda_0$, we have (denoting $\phi(x) = \P(\l G \l \ge x)$ for $G \sim \cN(0,1)$)
\begin{equation}
\begin{aligned}\label{eqn:Bound_for_1_minus_tanh}
\E[1 - \tanh x] =& \E[(1 - \tanh x) \ones\{ x \ge \lambda^2/2 \}] + \E[(1 - \tanh x) \ones\{ x < \lambda^2/2  \} ]\\
\le& e^{- \lambda^2/2} + 2 \P( x < \lambda^2/2 ) \le e^{- \lambda^2/2} + \phi(\lambda(\vphi - 1/2)) \le e^{- \lambda^2/5},
\end{aligned}
\end{equation}
and hence
\[\Var(M) = \E[\tanh^2 x] - \E[\tanh x]^2 \le (1 + \E[\tanh x]) (1 - \E[\tanh
x]) \le e^{-\lambda^2 / 10}.\]
Similarly, $\Var(Q) \le e^{-\lambda^2 / 10}$.

Now we take $L_q^2 \ge e^{-\lambda^2/10}$ to be determined, and we take $\tau, \mu, \nu, \gamma$ to be 
\[\tau  = \frac{ a - \E[A] }{ \alpha \lambda^2 L_a^2}, ~~~~ \mu = \frac{q -
\E[Q] }{2 \kappa L_q^2}, ~~~~ \nu = \frac{ \vphi - \E[M]}{2 \kappa L_q^2}, ~~~~
\gamma = \frac{u(q, a) - e- \E[U]}{\kappa \lambda^2} \]
Then, as long as
\begin{equation}\label{eqn:region_a_q_phi_1}
\begin{aligned}
\max\{ \l q - \E[Q]\l, \l \vphi - \E[M]\l \} \le& L_q^2, 
\end{aligned}
\end{equation}
we have $\l\mu\l, \l \nu \l \le 1/(2 \kappa)$, and according to Eq. (\ref{eqn:Bound_for_S_-_1}), we have
\begin{align}
&\inf_{\mu, \nu, \tau, \gamma} S_-(q, \vphi, a, e; \mu, \nu, \tau,
\gamma)\nonumber\\
&\le - \frac{(a - \E[A])^2}{2 \alpha \lambda^2 L_a^2  }  - \frac{(q - \E [Q])^2}{4 \kappa L_q^2 } - \frac{(\vphi - \E [M])^2}{4 \kappa L_q^2} - \frac{(u(q, a) - e - \E [U])^2}{2 \kappa \lambda^2}. \label{eq:InfSminus}
\end{align}

\noindent
{\bf Step 4. Bound $\E A - a_\star$, $\E Q - q_\star$, $\E M - \vphi_\star$, and
$\E U - (u(q_\star, a_\star) - e_\star)$. }

{ Recall that the expectations in (\ref{eq:InfSminus}) are with respect to
$x \sim \cN(\lambda^2\vphi,\lambda^2q)$ while the quantities of
(\ref{eq:starredquantities}) are defined with $\xi \sim
\cN(\lambda^2\vphi_\star,\lambda^2q_\star)$. To bound this difference,}
define $D_{F, \#} = \sup_{q, \vphi \in [0.9,1]^2} \l (\d / \d \#) \E_G [F
(\lambda^2 \vphi + \sqrt{\lambda^2 q} G)]) \l $ for $F = A, Q, M, U$ and $\# =
q, \vphi$, where $G \sim \cN(0,1)$. Now we bound $D_{F, \#}$. We denote $x =
\lambda^2 \vphi + \sqrt{\lambda^2 q} G$, where (to simplify notation) in each line below $\vphi,q\ge 0.9$ are chosen to maximize the corresponding expression. 
Applying
(\ref{eqn:Bound_for_1_minus_tanh}) and Gaussian integration by parts,
we obtain
\begin{align*}
D_{A, \vphi} &\le \l (\d/\d \vphi) \E[ x \tanh x] \l =\lambda^2 \l \E[\tanh x +
x(1 - \tanh^2 x)] \l \le 2 \lambda^2,\\
D_{M, \vphi} &\le \l (\d/\d \vphi) \E[ \tanh x] \l \le \lambda^2 \E[1 - \tanh^2
x] \le 2 \lambda^2 e^{-\lambda^2/5}.
\end{align*}
Similar arguments, omitted for brevity, show
\[D_{A, q},D_{U,\varphi} \le \lambda^2,\;
D_{M, q} \le 2 \lambda^2 e^{-\lambda^2/5},\;
D_{Q, \vphi},D_{Q,q} \le 4 \lambda^2 e^{-\lambda^2/5},\;
D_{U, q} \le \lambda^2 e^{-\lambda^2/5}.\]
Moreover, denoting $D_{u, q}$ as the Lipschitz constant of $u(q, a)$ with respect to $q$, we have
\[
D_{u, q} \le \sup_{q\in [0.9,1]} \l (\d/\d q) u(q, a) \l \le \sup_{q\in [0.9,1]} \lambda^2 q / 2 \le \lambda^2/2.
\]

For $(f, F) = (q, Q)$ or $(\vphi, M)$, we have
\[
\begin{aligned}
\l f - \E[F(x)] \l \le& \l f - f_\star\l + D_{F, q} \l  q - q_\star\l + D_{F, \vphi} \l  \vphi - \vphi_\star \l. \\
\end{aligned}
\]
For $\lambda \ge 10$, we have $D_{Q, q} \vee D_{Q, \vphi} \vee D_{M, q} \vee D_{M, \vphi} \le 1/2$, and hence
\begin{align}\label{eqn:region_a_q_phi_2}
\max\{ \l q - q_\star \l, \l \vphi - \vphi_\star \l \} \le  L_q^2/2
\end{align}
implies (\ref{eqn:region_a_q_phi_1}). 

Moreover, for $(f, F) = (q, Q)$, $(\vphi, M)$, or $(a, A)$, we have
\[
\begin{aligned}
(f - \E[F(x)])^2 \ge& (\l f - f_\star\l - D_{F, q} \l q - q_\star\l - D_{F, \vphi} \l \vphi - \vphi_\star\l )_+^2\\
\ge& ( f - f_\star)^2 - 2 \l f - f_\star\l \cdot ( D_{F, q} \l q - q_\star \l + D_{F, \vphi} \l \vphi - \vphi_\star\l )\\
\ge& (1 - \eps)(f - f_\star)^2 - (1/\eps) ( D_{F, q} \l q  - q_\star\l + D_{F, \vphi} \l \vphi - \vphi_\star\l )^2\\
\ge& (1 - \eps)(f - f_\star)^2 - (2/\eps) D_{F, q}^2 (q  - q_\star)^2 - (2/\eps) D_{F, \vphi}^2 (\vphi - \vphi_\star)^2,
\end{aligned}
\]
and 
\[
\begin{aligned}
& (u(q, a) - e - \E[U])^2\\
&\ge (\l (e - e_\star) - (a - a_\star)/2\l - (D_{U, q} + D_{u, q}) \l q - q_\star\l - D_{U, \vphi} \l \vphi - \vphi_\star \l  )_+^2\\
&\ge (1 - \eps)[ (e - e_\star) - (a - a_\star)/2]^2 - (2/\eps)(D_{U, q} + D_{u, q})^2(q - q_\star)^2 - (2/\eps) D_{U, \vphi}^2 (\vphi - \vphi_\star)^2. \\
\end{aligned}
\]

Accordingly, for $\lambda_0$ sufficiently large, using Eq.~(\ref{eq:InfSminus}), we obtain
\begin{equation}\label{eqn:Bound_for_S_minus}
\begin{aligned}
&\inf_{\mu, \nu, \tau, \gamma} S_-(q, \vphi, a, e; \mu, \nu, \tau, \gamma)\\ 
\le& - \frac{1 - \eps}{2 \alpha L_a^2 \lambda^2 } (a - a_\star)^2  \\
&- \Big[\frac{1 - \eps}{4 \kappa L_q^2} - \frac{2 D_{A, q}^2}{2 \eps \alpha L_a^2 \lambda^2} - \frac{2 D_{Q, q}^2}{4 \eps \kappa L_q^2}  - \frac{2 D_{M, q}^2}{4 \eps \kappa L_q^2} - \frac{2(D_{U, q} + D_{u, q})^2}{2 \eps \kappa \lambda^2}\Big] (q - q_\star)^2  \\
& - \Big[ \frac{1 - \eps}{4 \kappa L_q^2}  - \frac{2 D_{A, \vphi}^2}{2\eps \alpha L_a^2 \lambda^2} - \frac{2 D_{Q, \vphi}^2}{4 \eps \kappa L_q^2}  - \frac{2 D_{M, \vphi}^2}{4 \eps \kappa L_q^2}  -\frac{2 D_{U, \vphi}^2}{2 \eps \kappa \lambda^2} \Big] (\vphi - \vphi_\star)^2 \\
& - \frac{1 - \eps}{2 \kappa \lambda^2} [(e - e_\star) - (a - a_\star)/2]^2. 
\end{aligned}
\end{equation}
as long as $(q, \vphi)$ satisfies (\ref{eqn:region_a_q_phi_2}).\\

\noindent
{\bf Step 5. Finish the proof. }
Let us apply
\[S_\star(q, \vphi, a, e)
= S_+(q, \vphi, a) + \inf_{\mu, \nu, \tau, \gamma} S_-(q, \vphi, a, e; \mu,
\nu, \tau, \gamma)\]
and add the upper bounds from
(\ref{eqn:Bound_for_S_plus}) and (\ref{eqn:Bound_for_S_minus}).
As $\eps = 0.01$, $\alpha = 1 + \eps = 1.01$, $\kappa = (3 + 3 \eps) / \eps =
303$, and $L_a \le 1.2$, the coefficient for the $(a-a_\star)^2$ term
\[-\frac{1-\eps}{2\alpha L_a^2\lambda^2}+\frac{(1+\eps)^4}{4\lambda^2}\]
is a negative function of $\lambda$. Now take $L_q^2 = 2 \const_0/\lambda^{2}$ for some small constant
$\const_0>0$. For $\lambda_0$ large enough, this implies $L_q^2 \ge
\exp\{-\lambda^2/10\}$.

Note that $D_{Q, q}, D_{Q, \vphi}, D_{M, q}, D_{M, \vphi}$
are exponentially small in $\lambda$, while $D_{A, q}, D_{A, \vphi}, D_{U, q},
D_{u, q}, D_{U, \vphi} \le 2\lambda^2$. Hence for $\const_0$ small enough and
$\lambda_0$ large enough,  the coefficients for the $(q - q_\star)^2$ and $(\vphi
- \vphi_\star)^2$ terms are also negative functions of $\lambda$; moreover, $\max\{ \l
q - q_\star \l, \l \vphi - \vphi_\star\l\} \le \const_0/\lambda^{2}$ implies
(\ref{eqn:region_a_q_phi_2}). Finally, observe that for any
$c_1(\lambda),c_2(\lambda)>0$, there exists $c(\lambda)>0$ such that
\[-c_1(\lambda)(a-a_\star)^2-c_2(\lambda)[(e-e_\star)-(a-a_\star)/2]^2
\leq -c(\lambda)(a-a_\star)^2-c(\lambda)(e-e_\star)^2.\]
This concludes the proof of (\ref{eqn:quantitative_complexity_bound}).

\appendix

\section{Proof of Proposition \ref{PROP:KACRICE}}\label{sec:Proof_Basic_Kac_Rice}

Fix $n$ and write as shorthand $\bg \equiv \bg_n$ and $\bH \equiv \bH_n$.
For any measurable $T \subseteq (-1,1)^n$, define
\[\Crit(T)=\sum_{\bbm: \bg(\bbm) = \bzero} \ones\{ \bbm \in T \}.\]
We wish to apply the Kac-Rice formula \cite[Theorem 11.2.1]{adlertaylor} for $\E[\Crit(T)]$. The statement of
\cite[Theorem 11.2.1]{adlertaylor} does not directly apply in our setting, as
$\bg(\bbm)$ and $\bH(\bbm)=\nabla \bg(\bbm)$ do not admit a joint density
on $\R^n \times \R^{n(n+1)/2}$. (Conditional on $\bH(\bbm)$, the
gradient $\bg(\bbm)$ is deterministic.) We will instead adapt the proof
presented in \cite{adlertaylor} to handle this technicality. For simplicity, we
prove only the upper bound, which is all that we require for our application.

Let $\ball(\delta)$ be the open ball of radius $\delta$ around $\bzero$.
Proposition \ref{PROP:KACRICE} is an immediate consequence of the following lemma. 

\begin{lemma}\label{lem:Kac_Rice}
Let $p_{\bbm}$ be the Lebesgue density of $\bg(\bbm)$.
Let $\delta \equiv \delta_n > 0$. Then for any Borel measurable set
$T \subseteq V_\delta \equiv [-1 + \delta, 1 - \delta]^n \setminus \ball(\delta)$,
\[\E[\Crit(T)] \leq \int \ones\{\bbm \in T\}\,
\E\Big[\big|\det \bH(\bbm)\big|  \Big| \bg(\bbm)=\bzero\Big]
p_{\bbm}(\bzero) \d\bbm.\]
\end{lemma}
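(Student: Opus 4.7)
The plan is to adapt the standard proof of the Kac-Rice formula from \cite[Theorem 11.2.1]{adlertaylor}, with two modifications tailored to our setting: $(i)$ we only need an upper bound, so Fatou's lemma suffices in place of dominated convergence; $(ii)$ we cannot appeal to a joint Lebesgue density of $(\bg(\bbm), \bH(\bbm))$ on $\R^n \times \R^{n(n+1)/2}$, since both are measurable functions of the same $\GOE$ matrix $\bW$ and hence their joint distribution is singular. The only probabilistic ingredients I need are the Lebesgue density $p_\bbm$ of $\bg(\bbm)$ (which exists by Lemma \ref{lem:density_of_gradient}) and the conditional expectation $\E[|\det \bH(\bbm)| \mid \bg(\bbm) = \bzero]$.

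First I would introduce the smoothed counting function
\[
N_\eps = \frac{1}{|\ball(\eps)|} \int_T \ones\{\bg(\bbm) \in \ball(\eps)\} \, |\det \bH(\bbm)| \, \d \bbm.
\]
On the almost-sure event $\mathcal{E}$ that every zero of $\bg$ in $T$ is non-degenerate and isolated, the inverse function theorem shows that $\bg$ is a diffeomorphism on some open neighborhood of each critical point $\bbm_0$, and a change of variables shows that the contribution to $N_\eps$ from that neighborhood tends to $1$ as $\eps \to 0$. Since $T \subseteq V_\delta$ has compact closure inside $(-1,1)^n\setminus\{\bzero\}$, there are almost surely finitely many critical points of $\bg$ in $T$, and summing their contributions yields $N_\eps \to \Crit(T)$ almost surely.

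Next, to interchange limit and expectation I would apply Fatou's lemma and then Fubini to obtain
\[
\E[\Crit(T)] \leq \liminf_{\eps \to 0}\int_T \frac{1}{|\ball(\eps)|} \int_{\ball(\eps)} \E\bigl[|\det \bH(\bbm)| \,\big|\, \bg(\bbm) = \by\bigr]\, p_\bbm(\by) \, \d \by \, \d \bbm.
\]
By the Lebesgue differentiation theorem applied to $\by \mapsto \E[|\det \bH(\bbm)|\mid \bg(\bbm) = \by]\, p_\bbm(\by)$, the inner integral converges pointwise in $\bbm$ to $\E[|\det \bH(\bbm)|\mid \bg(\bbm) = \bzero]\,p_\bbm(\bzero)$ as $\eps \to 0$. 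One final application of Fatou's lemma to the outer integral over $T$ gives the stated upper bound.

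The hard part will be verifying $\P(\mathcal{E}) = 1$, i.e.\ almost-sure non-degeneracy of the Hessian at every critical point in $T$. The cleanest route uses Lemma \ref{lem:conditional_GOE}: conditional on $\bW\bbm = \beta^{-1}\bu(\bbm)$ (which is exactly the event $\bg(\bbm) = \bzero$ for a fixed $\bbm$), the matrix $\bH(\bbm)$ equals $\bD(\bbm) - \beta\,\Proj_\bbm^\perp \bW \Proj_\bbm^\perp$ plus a rank-$O(1)$ deterministic correction, and the resulting conditional law has an absolutely continuous density on the affine subspace of symmetric matrices compatible with the constraint. In particular $\P(\det \bH(\bbm) = 0 \mid \bg(\bbm) = \bzero) = 0$ for each fixed $\bbm$. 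A covering argument on $T$ by small balls, combined with continuity of $(\bg,\bH)$ in $\bbm$ and a union bound, then upgrades this pointwise statement to the almost-sure statement $\P(\mathcal{E}) = 1$. The restriction $T \subseteq V_\delta$ is precisely what provides uniform smoothness estimates away from the $\arctanh$-singularity at the boundary of $[-1,1]^n$ and from $\bbm = \bzero$.
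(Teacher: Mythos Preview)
Your overall structure matches the paper's closely: smoothed delta function, deterministic Kac-Rice identity on the almost-sure Morse event, Fatou plus Fubini, then identify the $\eps\to 0$ limit of the inner average. The paper uses continuity of $(\bu,\bbm)\mapsto \E[|\det\bH(\bbm)|\mid\bg(\bbm)=\bu]\,p_\bbm(\bu)$ (via the explicit conditional GOE law) together with bounded convergence, rather than Lebesgue differentiation plus a second Fatou, but this is a minor stylistic difference.

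The genuine gap is in your verification of $\P(\mathcal{E})=1$. The pointwise statement $\P(\det\bH(\bbm)=0\mid\bg(\bbm)=\bzero)=0$ for each fixed $\bbm$ is correct, but it does \emph{not} upgrade to the uniform statement by ``a covering argument \ldots\ and a union bound'' as stated. If you cover $T$ by $N\asymp r^{-n}$ balls of radius $r$ centered at $\bbm_i$, Lipschitz continuity on $\{\|\bW\|_{\op}<C_0\}$ gives
\[
\P\bigl[\exists\,\bbm\in T:\bg(\bbm)=\bzero,\ \det\bH(\bbm)=0\bigr]
\le \sum_i \P\bigl[\|\bg(\bbm_i)\|<Lr,\ |\det\bH(\bbm_i)|<Lr\bigr].
\]
The density bound on $\bg(\bbm_i)$ gives $\P[\|\bg(\bbm_i)\|<Lr]\le Cr^n$, which exactly cancels $N$; to make the sum vanish you need the \emph{joint} probability to be $o(r^n)$ uniformly in $\bbm_i$. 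Absolute continuity of the conditional law of $\bH(\bbm_i)$ given $\bg(\bbm_i)=\bzero$ yields no rate, and the lack of a joint density of $(\bg,\bH)$---the very degeneracy you identified at the outset---means you cannot simply factor the joint probability. The paper isolates this as a separate lemma (Lemma~\ref{lemma:KRhelper}) and proves
\[
\P\bigl[\|\bW\bbm+\bv\|<\eps,\ |\det(\bW+\bA)|<\eps,\ \|\bW\|_{\op}<C_0\bigr]<C\,\eps^{\,n+1/(3n)}
\]
via a rotation aligning the first two coordinates with $\bbm$ and $\bA\bbm+\bw$, a cofactor expansion along the first column, and a case split on $|\alpha_1|\lessgtr\eps^{1/3}$ that reduces to an eigenvalue-density bound for the $(n-2)\times(n-2)$ principal submatrix. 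This quantitative estimate is the heart of the argument and is not a routine consequence of continuity.

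A secondary point: the paper also needs no critical points on $\partial T$, and handles general Borel $T$ by first proving the inequality for closed hyperrectangles (where $\partial T$ has Lebesgue measure zero and the boundary condition follows from the density bound on $\bg(\bbm)$ alone), then extending via an outer-measure comparison. Your sketch does not address the boundary condition or this reduction.
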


To prove this lemma,
we apply the following result from \cite{adlertaylor}: Define the
smoothed delta function
\[\delta_\eps(\bbm)=\begin{cases}
\operatorname{Vol}(\ball(\eps))^{-1} & \bbm \in \ball(\eps) \\
0 & \bbm \notin \ball(\eps)\end{cases}\]
which integrates to 1 over $\R^n$.

\begin{lemma}\label{lemma:KRdeterministic}
Suppose $\bg:(-1,1)^n \to \R^n$ is deterministic and continuously
differentiable, and $T \subset (-1,1)^n$ is compact. Suppose furthermore that
there are no points $\bbm \in T$ satisfying both $\bg(\bbm)=\bzero$ and
$\det \nabla \bg(\bbm)=\bzero$, and also no points $\bbm \in \partial T$
satisfying $\bg(\bbm)=\bzero$. Then
\[\Crit(T)=\lim_{\eps \to 0} \int
\ones\{\bbm \in T \} \delta_\eps(\bg(\bbm))\,|\det \nabla \bg(\bbm)|\d\bbm.\]
\end{lemma}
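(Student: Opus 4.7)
The plan is to exploit the inverse function theorem together with a change of variables. First I would argue that $\bg$ has only finitely many zeros in $T$: the hypothesis rules out zeros on $\partial T$ and rules out zeros where $\nabla \bg$ is singular, so every zero $\bbm^\ast \in T$ lies in the interior of $T$ with $\det \nabla \bg(\bbm^\ast) \neq 0$. By the inverse function theorem, each such zero is isolated in $(-1,1)^n$, and combined with compactness of $T$ this forces the zero set $\cC := \{\bbm \in T : \bg(\bbm) = \bzero\}$ to be finite, say $\cC = \{\bbm^{(1)}, \dots, \bbm^{(K)}\}$. Thus $\Crit(T) = K$.

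Next, I would pick disjoint open neighborhoods $U_i \ni \bbm^{(i)}$, contained in the interior of $T$, on which $\bg$ restricts to a $C^1$-diffeomorphism onto an open neighborhood $\bg(U_i)$ of $\bzero \in \R^n$; shrinking the $U_i$ if necessary, we may assume each $\bg(U_i)$ contains a closed ball $\overline{\ball(r)}$ for some common $r > 0$. The set $K := T \setminus \bigcup_i U_i$ is compact and disjoint from the zero set of $\bg$, so by continuity $\inf_{\bbm \in K} \|\bg(\bbm)\|_2 =: \rho > 0$. Therefore, for every $\eps < \min(r,\rho)$, the integrand $\ones\{\bbm \in T\} \delta_\eps(\bg(\bbm))$ is supported in $\bigcup_i U_i$, and
\begin{equation*}
\int \ones\{\bbm \in T\} \delta_\eps(\bg(\bbm)) |\det \nabla \bg(\bbm)|\, \d \bbm
= \sum_{i=1}^K \int_{U_i} \delta_\eps(\bg(\bbm)) |\det \nabla \bg(\bbm)|\, \d \bbm.
\end{equation*}

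For each $i$, I would perform the change of variables $\by = \bg(\bbm)$ on $U_i$. Since $\bg|_{U_i}$ is a $C^1$-diffeomorphism with Jacobian determinant of absolute value $|\det \nabla \bg(\bbm)|$, the $i$-th summand equals
\begin{equation*}
\int_{\bg(U_i)} \delta_\eps(\by)\, \d \by = \int_{\ball(\eps)} \delta_\eps(\by)\, \d\by = 1,
\end{equation*}
where the first equality uses that $\ball(\eps) \subset \ball(r) \subset \bg(U_i)$. Summing over $i$ yields $K = \Crit(T)$ for every sufficiently small $\eps$, so the limit as $\eps \to 0$ exists and equals $\Crit(T)$.

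The main subtlety I would be careful about is the compactness/boundary bookkeeping that guarantees $\rho > 0$: without the hypothesis that $\bg$ has no zeros on $\partial T$, one could have a sequence of near-zeros approaching the boundary, causing mass to escape $\bigcup_i U_i$ and spoiling the clean limit. The nondegeneracy hypothesis $\det \nabla \bg \neq 0$ at zeros is what makes the inverse function theorem applicable and ensures that the change-of-variables Jacobian is well-behaved; without it one would have to argue with a coarea or Sard-type result instead. Beyond these two points, the argument is a routine localization plus change of variables.
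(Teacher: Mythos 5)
Your proof is correct. The paper's own ``proof'' of this lemma is simply a citation to Theorem 11.2.3 of Adler and Taylor, which is precisely the deterministic Kac--Rice counting formula; you have instead supplied a self-contained argument, and it is essentially the standard textbook derivation. You correctly use the hypotheses: nondegeneracy of $\nabla\bg$ at zeros makes each zero isolated and lets the inverse function theorem produce local diffeomorphisms; absence of zeros on $\partial T$, together with compactness of $T\setminus\bigcup_i U_i$, gives the uniform lower bound $\rho>0$ that confines the support of $\delta_\eps(\bg(\cdot))$ to $\bigcup_i U_i$ for small $\eps$; and the change-of-variables formula then evaluates each local integral to exactly $1$. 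The trade-off is that the paper's citation defers to a general-purpose theorem (which handles more pathological situations and is convenient when one already has Adler--Taylor in the bibliography), whereas your argument is elementary and transparent, and makes it visibly clear which hypothesis rules out which failure mode. One small remark: rather than ``shrinking the $U_i$'' to ensure $\bg(U_i)\supset\overline{\ball(r)}$, the common radius $r$ is obtained simply by taking the minimum of the radii $r_i$ with $\ball(r_i)\subset\bg(U_i)$; shrinking is needed only for disjointness and containment in the interior of $T$. This is a matter of phrasing, not a gap.
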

\begin{proof}
See \cite[Theorem 11.2.3]{adlertaylor}.
\end{proof}

The below verifies that the conditions required for Lemma
\ref{lemma:KRdeterministic} hold almost surely.

\begin{lemma}\label{lemma:morse}
Let $\bg \equiv \bg_n$
be defined by (\ref{eq:gn}), and let $T \subset (-1,1)^n$ be compact
with $\bzero \notin T$ and $\partial T$ having Lebesgue measure $0$. Then the conditions of Lemma \ref{lemma:KRdeterministic} hold with
probability 1.
\end{lemma}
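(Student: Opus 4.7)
Both assertions are Bulinskaya-type non-coincidence statements, which I would prove by a covering argument combined with anti-concentration for Gaussian polynomials. Fix a compact neighborhood $T' \Supset T$ with $\bzero \notin T'$, and condition on the high-probability event $\cE_K = \{\|\bW\|_{\op} \le K\}$, on which both $\bg$ and $\det \bH$ are uniformly Lipschitz on $T'$ with a constant $L_K$ depending only on $K$ and $T'$. Since $\P(\cE_K) \to 1$ as $K \to \infty$, it suffices to prove each conclusion on $\cE_K$ for arbitrary $K$.

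For part (a), consider the smooth random field $F(\bbm) = (\bg(\bbm), \det \bH(\bbm)) \in \reals^{n+1}$. Cover $T$ by $\cO(s^{-n})$ cubes of side $s$; on $\cE_K$, each cube contains a zero of $F$ only if $F$ at the cube's center lies in $\ball(\bzero, L_K s \sqrt{n+1})$. The key estimate is the pointwise bound
\[
\P\big(F(\bbm)\in \ball(\bzero,\eps)\big)
\le \P\big(\bg(\bbm)\in \ball(\bzero,\eps)\big)\cdot \sup_{\by}\P\big(|\det\bH(\bbm)|\le \eps \,\big|\,\bg(\bbm)=\by\big)
\le C\eps^{n+1/n},
\]
uniformly in $\bbm \in T'$. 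The first factor is $\le C\eps^n$ since $\bg(\bbm) = \bu(\bbm) - \beta\bW\bbm$ is Gaussian with covariance $\beta^2(Q(\bbm)\id + \bbm\bbm^\sT/n)$, nonsingular on $T'$ as $\bzero \notin T'$. For the conditional factor, Lemma \ref{lem:conditional_GOE} expresses $\bH(\bbm)$ conditionally on $\bg(\bbm) = \by$ as a deterministic matrix plus an affine function of a fresh $\GOE$-distributed matrix supported on $\bbm^\perp$, so $\det \bH(\bbm)$ is a polynomial of degree $n$ in independent standard Gaussians. The Carbery-Wright anti-concentration inequality then yields $\P(|\det\bH(\bbm)| \le \eps \mid \bg(\bbm)=\by) \le C\eps^{1/n}$, with constant uniform on $T'$ once a lower bound on the polynomial's $L^2$ norm is established. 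Summing over cubes gives $\P(F \text{ vanishes in } T,\,\cE_K) \le C s^{1/n} \to 0$ as $s \to 0$, and letting $K \to \infty$ concludes part (a).

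For part (b), the same covering strategy applies directly to $\bg$: since $\partial T$ has Lebesgue measure zero, outer regularity implies that for any $\eta > 0$ and all sufficiently small $s$, $\partial T$ can be covered by at most $\eta s^{-n}$ cubes of side $s$. Each cube contributes probability $\le C(L_K s)^n$ via the Gaussian density bound on $\bg(\bbm)$, so $\P(\bg \text{ vanishes on } \partial T,\,\cE_K) \le C L_K^n \eta \to 0$ as $\eta \to 0$.

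The main obstacle is the Carbery-Wright step in part (a): establishing that the polynomial $\det \bH(\bbm)$ is not identically zero, uniformly in $\bbm$, as a function of the conditional Gaussian parameters. A direct computation using $\bg(\bbm) = \by$ shows that the polynomial fails to be non-degenerate precisely when $\bH(\bbm)\bbm = \bzero$, which on the event $\bg(\bbm)=\bzero$ reduces to the deterministic equation $\bbm/(1-\bbm^2) - \arctanh(\bbm) - 2\beta^2 Q(\bbm)\bbm = \bzero$, defining a discrete exceptional locus $B \subset T'$ (solutions have equal-magnitude nonzero coordinates satisfying a self-consistency condition). For $\bbm \in T \setminus B$, the $L^2$-norm bound is continuous and uniformly positive on compact subsets, so the covering argument applies; for the finitely many $\bbm \in T \cap B$, one uses $\P(\bg(\bbm) = \bzero) = 0$ directly by Gaussian absolute continuity.
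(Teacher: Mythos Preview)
Your overall architecture matches the paper's: condition on $\|\bW\|_{\op}\le K$, cover by small balls, use Gaussian anti-concentration for $\bg$, and handle an exceptional locus separately. The paper's exceptional set $S$ coincides with your $B$, arising from the identity $\bH(\bbm)\bbm=\bg(\bbm)+\bbm/(1-\bbm^2)-\arctanh(\bbm)-2\beta^2Q(\bbm)\bbm$.

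The substantive difference is your use of Carbery--Wright in place of the paper's bespoke Lemma~\ref{lemma:KRhelper}. The paper rotates so that the deterministic first column of the conditioned Hessian is $(\alpha_1,\alpha_2,0,\ldots,0)$, obtains $\det\check\bH=\alpha_1\det\bH^{(1)}-\alpha_2^2\det\bH^{(12)}$, and does a case split on $|\alpha_1|\gtrless\eps^{1/3}$ to get $\eps^{1/(3n)}$. Your Carbery--Wright route is cleaner and gives a better exponent, but you still need to prove that $\bH\bbm\neq\bzero$ implies the conditional polynomial is not identically zero; this is exactly what the paper's rotation and cofactor expansion establish, and you should at least sketch why (e.g., $\det\bH^{(1)}$ has a degree-$(n-1)$ monomial that the other cofactors cannot cancel).

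Two genuine gaps remain. First, in part~(b) your claim that a Lebesgue-null $\partial T$ can be covered by $\eta s^{-n}$ cubes of \emph{fixed} side $s$ is false in general (a countable dense subset of a cube is a counterexample); this is a Minkowski-content statement, not a measure statement. The paper instead covers $\partial T$ by balls of \emph{varying} radii $r_i$ with $\sum_i r_i^n<\eps$, which is exactly the definition of measure zero and suffices since $\P(\|\bg(\bbm_i)\|<Lr_i)\le C r_i^n$. Second, your handling of the exceptional set is incomplete: the $L^2$-norm lower bound for Carbery--Wright is uniform only on compact subsets of $T\setminus B$, not on $T\setminus B$ itself, and saying ``$\P(\bg(\bbm)=\bzero)=0$ at each $\bbm\in B$'' does not control zeros \emph{near} $B$. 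The paper fixes this by running the main argument on $T\cap\{\operatorname{dist}(\cdot,S)\ge\delta\}$, taking a countable union over $\delta\downarrow 0$, and then treating $T\cap S$ by the part-(b) measure-zero argument (using that $S$ is the zero set of a nontrivial real-analytic function, hence null---no finiteness needed). Your asserted finiteness of $B$ is plausible but unproven and unnecessary.
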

\begin{proof}

Let us first verify that with probability 1, no point $\bbm \in \partial T$
satisfies $\bg(\bbm)=\bzero$. Fix $C_0>0$ and consider the event
$\mathcal{E}  = \{ \|\bW\|_\op<C_0 \}$. 
As $T$ is compact and does not contain $\bzero$, it belongs to
$K_{2\delta} \equiv [-1+2\delta,1-2\delta]^n \setminus (-2\delta,2\delta)^n$
for some ($n,T$)-dependent quantity $\delta>0$. Since $\partial T$ has Lebesgue
outer measure $0$, for any $\eps > 0$ there exists a countable collection of
balls $\{\ball(\bbm_i, r_i):i \in \mathcal{I}\}$ such that
\[\sum_{i \in \mathcal{I}} r_i^{n}<\eps, \qquad
\partial T \subset \bigcup_{i \in \mathcal{I}} \ball(\bbm_i, r_i),\]
where each $\ball(\bbm_i, r_i)$ is the open ball of radius $r_i$ around
$\bbm_i$. Taking $\eps<\delta^n$ so that $r_i<\delta$ for each $i$,
we may assume without loss of generality that each center
$\bbm_i$ belongs to $K_\delta$ (because otherwise the ball has empty intersection with $K_{2\delta}$ and therefore with $T$). 
On the event $\mathcal{E}$, $\bg(\bbm)$ is
$L$-Lipschitz over $K_\delta$ for some $(n, \delta,C_0)$-dependent quantity $L>0$. Hence
\[\P\left[\;\mathcal{E} \cap
\{\text{there exists } \bbm \in \partial T:\bg(\bbm)=0\}\right]
\leq \sum_{i \in \mathcal{I}}\P\left[\|\bg(\bbm_i)\|_2<Lr_i\right].\]
Observe that for each fixed $\bbm_i$, the vector $\bg(\bbm_i)$ has a
multivariate normal distribution with covariance
\[\beta^2\left(n^{-1}\|\bbm_i\|_2^2\id+n^{-1}\bbm_i\bbm_i^\sT
\right) \succeq \beta^2 \delta^2 \id.\]
Then the density of $\bg(\bbm_i)$ is bounded as $\varphi_{\bm_i}(\bx) \le (2\pi\beta^2\delta^2)^{-n/2}$, and for some $C=C(\beta)>0$ not depending on $\eps,\delta$, we have
\begin{align*}
\P\left[\|\bg(\bbm_i)\|_2<Lr_i\right] \leq \left(\frac{CLr_i}{\delta}\right)^n.
\end{align*}
Hence
\begin{align*}
\P\left[\;\mathcal{E} \cap
\{\text{there exists } \bbm \in \partial T:\bg(\bbm)=0\}\right]
\leq\sum_{i \in \mathcal{I}} \left(\frac{CLr_i}{\delta}\right)^n\le  \left(\frac{C L}{\delta}\right)^n \eps.
\end{align*}
As $\eps>0$ is arbitrary, the above probability must be 0. Then
\[\P\left[\text{there exists } \bbm \in \partial T:\bg(\bbm)=0\right]
\leq 1-\P[\mathcal{E}].\]
Now taking $C_0 \to \infty$,
\[\P\left[\text{there exists } \bbm \in \partial T:\bg(\bbm)=0\right]=0.\]

Next, let us verify that with probability 1, no point $\bbm \in T$ satisfies
both $\bg(\bbm)=0$ and $\det \bH(\bbm)=0$. Define the set
\[S=\left\{\bbm \in (-1,1)^n:\frac{m_i}{1-m_i^2}
-\frac{2\beta^2}{n}\|\bbm\|_2^2\cdot m_i-\arctanh(m_i)=0 \quad \forall i \in [n]\right\},\]
and suppose first that $T \cap S=\emptyset$. Note that for any $r>0$, we may
construct a maximal packing $\{\ball(\bbm_i, r/2):i \in \mathcal{I}\}$ where
$\bbm_i \in T$ for each $i$. Namely these balls do not intersect, and no additional
ball $\ball(\bbm,r/2)$ with $\bbm \in T$ may be added to the packing. Then
$\{\ball(\bbm_i, r):i \in \mathcal{I}\}$
is a finite cover of $T$, and a volume argument shows $|\mathcal{I}| \leq
C/r^n$ for an $n$-dependent constant $C>0$. As $T$ is compact
and $S$ is closed, $T$ is included in the set
\[U_\delta=\{\bbm \in [-1+\delta,1-\delta]^n:
\operatorname{dist}(\bbm,S) \geq \delta,\;\|\bbm\|_2 \geq \delta\}\]
for some $(n,T)$-dependent $\delta>0$. Fixing $C_0>0$ and defining the event
$\mathcal{E} = \{ \|\bW\|_\op<C_0\}$, we have on $\mathcal{E}$
that $\bg(\cdot)$ and $\det(\bH(\cdot))$ are both $L$-Lipschitz over $\bbm \in U_{\delta}$ for some $L = L(n, \delta, C_0) >0$. Hence
\begin{align*}
&\P\left[\;\mathcal{E} \cap
\{\text{there exists } \bbm \in T:\bg(\bbm)=0,\;\det \bH(\bbm)=0\}\right]\\
&\leq \sum_{i \in \mathcal{I}} \P\left[\mathcal{E},\;\|\bg(\bbm_i)\|_2<Lr,
\;|\det \bH(\bbm_i)|<Lr\right].
\end{align*}
Consider a fixed index $i$, and let $\bv$ and $\bA$ be such that
\[\bg(\bbm_i)=-\beta(\bW\bbm_i+\bv),\qquad
\bH(\bbm_i)=-\beta(\bW+\bA)\]
for $\bg$, $\bH$ as defined in (\ref{eq:gn}) and (\ref{eq:Hn}).
Since $\bbm_i \in U_{\delta}$, the conditions of Lemma \ref{lemma:KRhelper} below
are satisfied for some quantities $c_0 \equiv c_0(n,\delta,\lambda, \beta)>0$
and $C_0 \equiv C_0(n,\delta,\lambda, \beta)>0$.
Then we obtain, for all $r$ sufficiently small and some $C>0$ independent of
$r$,
\[\P\left[\mathcal{E},\;
\|\bg(\bbm_i)\|_2<Lr, \;|\det \bH(\bbm_i)|<Lr\right]<Cr^{n+1/(3n)}.\]
Applying this and $|\mathcal{I}| \leq C/r^n$ to the above, and taking $r \to
0$ followed by $C_0 \to \infty$, we obtain
\[\P\left[\text{there exists } \bbm \in T:\bg(\bbm)=0,\;\det
\bH(\bbm)=0\right]=0.\]

If $T \cap S \neq \emptyset$, this argument holds for the compact set
$T \setminus \{\bbm:\operatorname{dist}(\bbm,S)<\delta\}$
and any $\delta>0$. Taking a union bound over a countable sequence $\delta \to 0$,
and noting that $T \cap S$ is closed, we obtain
\[\P\left[\text{there exists } \bbm \in T \setminus S:\bg(\bbm)=0,\;\det
\bH(\bbm)=0\right]=0.\]

Finally, note that $S$ is the zero set of a non-trivial real analytic function, and thus $S$ has Lebesgue measure $0$ \cite{mityagin2015zero}. The same argument as for
$\partial T$ shows
$\P\left[\text{there exists } \bbm \in T \cap S:\bg(\bbm)=0\right]=0$,
and combining the above,
\[\P\left[\text{there exists } \bbm \in T:\bg(\bbm)=0,\;\det
\bH(\bbm)=0\right]=0.\]
\end{proof}

\begin{lemma}\label{lemma:KRhelper}
For any ($n$-dependent) quantities $c_0,C_0>0$, there exist
$C \equiv C(n,c_0,C_0)>0$ and $\eps_0 \equiv \eps_0(n,c_0,C_0)>0$ such
that the following holds: Let $\bv \in \R^n$, $\bbm \in (-1,1)^n$,
and $\bA \in \R^{n \times n}$ be any deterministic vectors/matrices
such that $\bA$ is symmetric, $\|\bA\|_\op<C_0$,
$\|\bA\bbm-\bv\|_2>c_0$, and $\|\bbm\|_2>c_0$. Let
$\bW \sim \GOE(n)$. Then for all $\eps \in (0,\eps_0)$,
\[\P\Big[\|\bW\bbm+\bv\|_2<\eps \text{ and } |\det(\bW+\bA)|<\eps \text{ and }
\|\bW\|_\op<C_0 \Big]<C\eps^{n+1/(3n)}.\]
\end{lemma}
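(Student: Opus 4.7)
The plan is to decouple the two small-probability constraints by conditioning on the vector $\bg := \bW\bbm$. Since $\bg\sim\cN(\bzero,\,\tfrac{1}{n}(\|\bbm\|_2^2\,\id+\bbm\bbm^\sT))$ has covariance with smallest eigenvalue $\|\bbm\|_2^2/n\ge c_0^2/n$, its Lebesgue density $p_\bg$ is bounded by some constant $K_1(n,c_0)$. The probability in question equals
\[
\int_{\|\by+\bv\|_2<\eps} p_\bg(\by)\,\P\!\bigl[\,|\det(\bW+\bA)|<\eps,\,\|\bW\|_\op<C_0\,\bigm|\,\bg=\by\bigr]\d\by,
\]
and the outer integration over the ball $\|\by+\bv\|_2<\eps$ contributes at most $K_2(n,c_0)\,\eps^n$. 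It therefore suffices to show that the conditional probability is at most $K_3\,\eps^{1/(3n)}$ uniformly over $\by$ with $\|\by+\bv\|_2<\eps$.

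To describe the conditional law, I would apply Lemma~\ref{lem:conditional_GOE} with $\bx=\bbm$: given $\bg=\by$, $\bW$ equals a deterministic rank-$2$ correction plus $\Proj_\bbm^\perp\bW'\Proj_\bbm^\perp$ with $\bW'\sim\GOE(n)$ independent of the conditioning. Choose an orthogonal $\bO$ whose first column is $\hat{\bbm}=\bbm/\|\bbm\|_2$; in this basis,
\[
\bO^\sT(\bW+\bA)\bO \;=\; \begin{pmatrix}\alpha & \bb^\sT\\ \bb & \bC\end{pmatrix},
\]
where $\alpha\in\R$ and $\bb\in\R^{n-1}$ are deterministic functions of $\by$ (and $\bbm,\bA$) satisfying $\alpha^2+\|\bb\|_2^2=\|(\bW+\bA)\hat{\bbm}\|_2^2=\|\by+\bA\bbm\|_2^2/\|\bbm\|_2^2$, and $\bC$ is an $(n-1)\times(n-1)$ symmetric matrix with independent Gaussian entries (variances $1/n$ off-diagonal and $2/n$ diagonal) plus a deterministic shift. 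The triangle inequality together with $\|\by+\bv\|_2<\eps<c_0/2$, $\|\bA\bbm-\bv\|_2>c_0$, and $\|\bbm\|_2\le\sqrt n$ yields the crucial non-degeneracy $\alpha^2+\|\bb\|_2^2\ge c_0^2/(4n)$.

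The plan for the conditional determinant bound is induction on $n$ via Laplace cofactor expansion. Fix all entries of $\bC$ except the $(1,1)$ entry $C_{11}$, which is Gaussian of variance $2/n$ (after shift). Expanding along the second row/column of the block matrix writes
\[
\det(\bW+\bA) \;=\; C_{11}\cdot M' \;+\; R',
\]
where $M'=\det\!\begin{pmatrix}\alpha & \bb_{\neq 1}^\sT\\ \bb_{\neq 1} & \bC_{\neq 1,\neq 1}\end{pmatrix}$ is a minor of the \emph{same structural form} with dimension reduced by one, and $R'$ is independent of $C_{11}$. Gaussian anti-concentration gives $\P[\,|\det(\bW+\bA)|<\delta\mid M',R']\le K\sqrt n\,\delta/|M'|$ whenever $M'\neq 0$; combined with an inductive small-ball bound $\P[|M'|<\delta']\le K_{n-1}(\delta')^{1/(n-1)}$ and the choice $\delta'=\delta^{(n-1)/n}$, this gives $\P[\,|\det(\bW+\bA)|<\eps\mid\bg=\by\,]\le K_n\,\eps^{1/n}\le K_n\,\eps^{1/(3n)}$ for $\eps\le 1$, as required.

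The main obstacle is propagating the non-degeneracy $\alpha^2+\|\bb\|_2^2\ge c_0^2/(4n)$ through the induction, since stripping one row/column may reduce $\bb_{\neq 1}$ to $\bzero$ (e.g., when $\bb=(b_1,\bzero)$), making $M'\equiv\alpha\det(\bC_{\neq 1,\neq 1})$, which vanishes identically if also $\alpha=0$. Such degenerate configurations require a secondary Laplace expansion along the first row: in the case $\alpha=0,\ \bb=(b_1,\bzero)$, direct computation gives $|\det(\bW+\bA)|=b_1^2|\det(\bC_{\neq 1,\neq 1})|$ with $b_1^2\ge c_0^2/(4n)$, reducing the problem to pure Wigner-type determinant anti-concentration, handled by the same induction on a fully random symmetric Gaussian block. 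The operator-norm event $\|\bW\|_\op<C_0$ enters by uniformly bounding the shift terms and the remainder $R'$. The exponent $1/(3n)$ is a generous margin that absorbs all losses from the inductive step and the case analysis of degenerate $(\alpha,\bb)$.
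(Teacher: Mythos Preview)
Your initial reduction---conditioning on $\bW\bbm=\by$, extracting the factor $\eps^n$ from the bounded density of $\bg$, and rotating so that the first column of $\bO$ is $\hat{\bbm}$---matches the paper exactly, as does the observation that $\alpha^2+\|\bb\|_2^2=\|\by+\bA\bbm\|_2^2/\|\bbm\|_2^2$ is bounded below. The divergence comes in how the conditional determinant is handled.

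The paper avoids induction altogether by a sharper choice of basis: it picks the \emph{second} column of $\bO$ so that $\by+\bA\bbm$ lies in the span of the first two columns. Then the first column (and row) of $\bO^\sT(\bW+\bA)\bO$ is exactly $\alpha_1\be_1+\alpha_2\be_2$, i.e.\ your vector $\bb$ has a \emph{single} nonzero entry $\alpha_2$. Cofactor expansion along the first column, then the first row, gives the closed form
\[
\det(\bW+\bA)=\alpha_1\det\bH^{(1)}-\alpha_2^2\det\bH^{(12)},
\]
with $\bH^{(1)},\bH^{(12)}$ the lower-right $(n-1)\times(n-1)$ and $(n-2)\times(n-2)$ blocks. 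From here a direct two-case split on $|\alpha_1|\lessgtr\eps^{1/3}$ suffices: in Case~1 ($|\alpha_1|$ small) one is forced to $|\det\bH^{(12)}|<C\eps^{1/3}$, whose probability is bounded by $C\eps^{1/(3n)}$ via the bounded joint eigenvalue density of the shifted GOE matrix $\bH^{(12)}$ on $\{\|\bH^{(12)}\|_\op<2C_0\}$; in Case~2 one conditions on everything except $\check W_{22}$, uses $\det=\alpha_1\check W_{22}\det\bH^{(12)}+\text{const}$, and gets $C\eps^{1/3}$ from Gaussian anti-concentration when $|\det\bH^{(12)}|\ge\eps^{1/3}$ (else revert to Case~1). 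No recursion, no tracking of constants through dimensions.

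Your inductive scheme, by contrast, has a genuine gap precisely where you flag it. The inductive hypothesis $\P[|M'|<\delta']\le K_{n-1}(\delta')^{1/(n-1)}$ requires $M'$ to satisfy a non-degeneracy bound $\alpha^2+\|\bb_{\neq 1}\|_2^2\ge c'>0$, but $c'$ can be arbitrarily small (or zero) depending on how $\|\bb\|_2^2$ is distributed among its coordinates, and $K_{n-1}$ depends on $c'$. Your fix handles only the single configuration $\alpha=0$, $\bb=(b_1,\bzero)$; it does not cover, say, $\alpha=0$ with $\bb$ having two nonzero entries of very different sizes, where removing either column leaves a tiny but nonzero non-degeneracy constant and the inductive constant blows up. One can rescue the induction by adaptively removing the column $j$ minimizing $|b_j|$ (so that $\alpha^2+\|\bb_{\neq j}\|_2^2\ge(\alpha^2+\|\bb\|_2^2)\cdot\tfrac{n-2}{n-1}$), but this and the base case are missing from your sketch. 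The paper's basis trick sidesteps the whole issue.
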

\begin{proof}
Throughout the proof, $C$ and $c>0$ denote arbitrary
$(n, c_0, C_0)$-dependent constants that may change from line to line.

Note that $\bW\bbm+\bv$ has a multivariate normal distribution with covariance
\[n^{-1}\|\bbm\|_2^2\id+n^{-1}\bbm\bbm^\sT \succeq n^{-1}c_0^2 \id.\]
Then the density of $\bW\bbm+\bv$ is bounded by an $n$-dependent constant,
so $\P[\|\bW\bbm+\bv\|_2<\eps]<C\eps^n$. Hence it suffices to show
\begin{equation}\label{eq:smalldetprob}
\P\Big[|\det(\bW+\bA)|<\eps,\;\|\bW\|_\op<C_0 \Big| \bW\bbm=\bw
\Big]<C\eps^{1/(3n)}
\end{equation}
for any  deterministic $\bw$ satisfying $\|\bw+\bv\|_2<\eps<\eps_0$.

For this, define a deterministic orthogonal matrix $\bO \in \R^{n \times n}$ such that its
first column
is $\bbm/\|\bbm\|_2$, and the span of its first two columns contains $\bbm$ and
$\bA\bbm+\bw$. Set
\[\check{\bW}=\bO^\sT\bW\bO, \qquad
\check{\bw}=\bO^\sT\bw, \qquad \check{\bA}=\bO^\sT\bA\bO.\]
Then, rotating coordinates and denoting by $\be_i$ the $i$th standard basis
vector,
\begin{align*}
&\P\Big[|\det(\bW+\bA)|<\eps,\;\|\bW\|_\op<C_0 \Big|\bW\bbm=\bw\Big]\\
&=\P\Big[|\det(\check{\bW}+\check{\bA})|<\eps,\;\|\check{\bW}\|_\op<C_0 \Big|
\|\bbm\|_2\check{\bW}\be_1=\check{\bw}\Big].
\end{align*}
Conditional on $\|\bbm\|_2\check{\bW}\be_1=\check{\bw}$, the first column (and
also first row) of $\check{\bW}+\check{\bA}$ is deterministic and given by
\[(\check{\bW}+\check{\bA})\be_1
=\frac{\check{\bw}}{\|\bbm\|_2}+\check{\bA}\be_1
=\frac{1}{\|\bbm\|_2}\bO^\sT(\bw+\bA\bbm)=\alpha_1\be_1+\alpha_2 \be_2,\]
where the last equality holds
by construction of $\bO$ for some scalars $\alpha_1,\alpha_2$ which satisfy
$\alpha_1^2+\alpha_2^2=\|\bA\bbm+\bw\|_2^2/\|\bbm\|_2^2$. Then, denoting by
$\bH^{(1)}=\check{\bW}^{(1)}+\check{\bA}^{(1)}$ and
$\bH^{(12)}=\check{\bW}^{(12)}+\check{\bA}^{(12)}$ the lower-right
$(n-1) \times (n-1)$ and
$(n-2) \times (n-2)$ submatrices of $\bW+\bA$, and expanding the determinant
along the first column,
\[\det(\check{\bW}+\check{\bA})
=\alpha_1 \det \bH^{(1)}-\alpha_2^2 \det \bH^{(12)}.\]

For sufficiently small $\eps_0$, the given conditions and
$\|\bw+\bv\|_2<\eps_0$ imply $\alpha_1^2+\alpha_2^2>c$. We consider two cases:
\begin{itemize}
\item Case 1: $|\alpha_1|<\eps^{1/3}$. Then $\alpha_2^2>c$ for a constant
$c>0$. For $\| \bW \|_\op \le C_0$ and $\| \bA \|_\op \le C_0$, we have $\| \bH^{(1)} \|_\op \le 2 C_0$ and hence $\l \det(\bH^{(1)})\l \le (2C_0)^{n-1} $. Combining with $|\det(\check{\bW}+\check{\bA})|<\eps$, we have
$|\det \bH^{(12)}|<C\eps^{1/3}$ for some constant $C>0$. Also,
$\| \bW \|_\op<C_0$ and $\|\bA\|_\op \le C_0$ imply $\|\bH^{(12)}\|<2C_0$. Hence
\begin{align}
&\P\Big[|\det(\check{\bW}+\check{\bA})|<\eps,\;\|\check{\bW}\|_\op<C_0 \Big|
\|\bbm\|_2\check{\bW}\be_1=\check{\bw}\Big]\nonumber\\
&\leq \P\Big[|\det \bH^{(12)}|<C\eps^{1/3},\;\|\bH^{(12)}\|_\op<2C_0\Big]\label{eq:case1}
\end{align}
for a constant $C>0$ and sufficiently small $\eps$. Writing the spectral decomposition
$\bH^{(12)}=\bU\bLambda\bU^\sT$ where
$\Lambda=\diag(\lambda_1,\ldots,\lambda_{n-2})$,
and applying the change of variables
$\d\bH^{(12)}=(1/Z')\prod_{i<j} |\lambda_i-\lambda_j|\,\d\Lambda\,\d\bU$ \cite{AGZbook},
the joint density of ordered eigenvalues $\lambda_1,\ldots,\lambda_{n-2}$
of $\bH^{(12)}$ is given by
\begin{align*}
&(1/Z)\ones\{\lambda_1 \leq \ldots \leq \lambda_{n-2}\}
\prod_{i<j} |\lambda_j-\lambda_i|\\
&\hspace{0.5in}\cdot \int
\exp\left(-\frac{n}{4}\Tr \big[(\bU\bLambda\bU^\sT-\check{\bA}^{(12)})^2\big]\right)
\d\bU,\end{align*}
where the integral is over the orthogonal group of dimension $n-2$. This density
is bounded by an $n$-dependent constant over the set
\[\left\{\lambda_1 \leq \ldots \leq \lambda_{n-2}:\prod_i |\lambda_i|<
C\eps^{1/3}, \;\max_i |\lambda_i|<2C_0\right\},\]
and the Lebesgue volume of this set is bounded by $C\eps^{1/(3n)}$ since at
least one coordinate $|\lambda_i|$ is less than $C\eps^{1/(3n)}$.
Then the right side of (\ref{eq:case1}) is at most $C\eps^{1/(3n)}$ for a
constant $C>0$. 
\item Case 2: $|\alpha_1| \geq \eps^{1/3}$. Considering separately the events
$|\det \bH^{(12)}|<\eps^{1/3}$ and $|\det \bH^{(12)}| \geq \eps^{1/3}$, and
handling the first event by the argument of Case 1 above, we obtain
\begin{align*}
&\P\Big[|\det(\check{\bW}+\check{\bA})|<\eps,\;\|\check{\bW}\|_\op<C_0 \Big|
\|\bbm\|_2\check{\bW}\be_1=\check{\bw}\Big]\\
&\leq C\eps^{1/(3n)}+\P\Big[|\det(\check{\bW}+\check{\bA})|<\eps,\;
|\det \bH^{(12)}| \geq \eps^{1/3} \Big|\|\bbm\|_2\check{\bW}\be_1=\check{\bw}
\Big]\, .
\end{align*}
For the probability on the right side, let us further
condition on all entries of $\check{\bW}$ except $\check{W}_{22}$. We have
\[\det(\check{\bW}+\check{\bA})=\alpha_1 \check{W}_{22}\det
\bH^{(12)}+\text{const}\]
where $\text{const}$ denotes a quantity that does not depend on $\check{W}_{22}$.
Then, when $|\alpha_1| \geq \eps^{1/3}$ and
$|\det \bH^{(12)}| \geq \eps^{1/3}$, the quantity
$\det(\check{\bW}+\check{\bA})/\eps$ is conditionally
normally distributed with variance at least $2\eps^{-2/3}/n$. This normal
distribution has density upper bounded by $C\eps^{1/3}$, so
\[\P\Big[|\det(\check{\bW}+\check{\bA})|<\eps,\;
|\det \bH^{(12)}| \geq \eps^{1/3} \Big|\|\bbm\|_2\check{\bW}\be_1=\check{\bw}
\Big] \leq C\eps^{1/3}.\]
\end{itemize}
Combining these cases yields (\ref{eq:smalldetprob}) as desired.
\end{proof}

\begin{proof}[Proof of Lemma \ref{lem:Kac_Rice}]

First consider $T \subseteq V_\delta \equiv [-1 + \delta, 1 - \delta]^n
\setminus \ball(\delta)$ to be a closed hyperrectangle. By Lemma \ref{lemma:KRdeterministic}, Lemma \ref{lemma:morse},
Fatou's Lemma, and Fubini's Theorem,
\[\E[\Crit(T)] \leq \liminf_{\eps \to 0}
\int \ones\{\bbm \in T\} \E\Big[\delta_\eps(\bg(\bbm))\,|\det
\bH(\bbm)|\Big]\d\bbm.\]
Denoting by $p_{\bbm}$ the density of $\bg(\bbm)$, we have
\begin{align*}
&\E\Big[\delta_\eps(\bg(\bbm))\,|\det \bH(\bbm)|\Big]\\
&=\operatorname{Vol}(\ball(\eps))^{-1}
\int_{\bu \in \ball(\eps)}\E\Big[|\det \bH(\bbm)|\Big|\bg(\bbm)=\bu\Big]
p_{\bbm}(\bu)\d\bu.
\end{align*}
Define 
\[D(\bu, \bbm) \equiv \E\Big[|\det \bH(\bbm)|\Big|\bg(\bbm)=\bu\Big]p_{\bbm}(\bu).\]
For any fixed $\bbm \in V_\delta$, the vector $\bg(\bbm)$ is a Gaussian random
vector, and $\E[\bg(\bbm)]$ and $\E[\bg(\bbm) \bg(\bbm)^\sT]$ are continuous
functions in $\bbm$. Hence, the density $p_{\bbm}(\bu)$ is a continuous
function of $(\bu, \bbm) \in \overline{\ball(\eps)} \times T$. Moreover, by Lemma \ref{lem:conditional_GOE}, we have
$[\bH(\bbm) \l \bg(\bbm) = \bu] \stackrel{d}{=} -\beta \Proj_\bbm^\perp \bW
\Proj_{\bbm}^\perp + A(\bu, \bbm)$, where $\bW \sim \GOE(n)$, $\Proj_\bbm^\perp$
is the projection orthogonal to $\bbm$, and $A(\bu, \bbm)$ is continuous in $(\bu, \bbm) \in \overline{\ball(\eps)} \times T$. Therefore, $D(\bu, \bbm)$ is continuous in $(\bu, \bbm) \in \overline{\ball(\eps)} \times T$, and hence it is bounded on $\overline{\ball(\eps)} \times T$. The bounded convergence theorem and the continuity of $D(\bu, \bbm)$ yield
\begin{align*}
\E[\Crit(T)] &\leq \int \ones\{\bbm \in T\}
\liminf_{\eps \to 0} \E\Big[\delta_\eps(\bg(\bbm))\,|\det
\bH(\bbm)|\Big]\d\bbm\\
&=\int \ones\{\bbm \in T\} D(\bzero, \bbm)\d\bbm. 
\end{align*}

Finally, we generalize the above inequality to a general Lebesgue measurable set $E \subseteq V_\delta$. This follows by standard measure theory machinery, and we sketch the proof in the following.

For any Lebesgue measurable set $E \subseteq V_\delta$, define $\nu_0(E) = \E[\Crit(E)]$ and
$\nu(E) = \int \ones\{\bbm \in E\} D(\bzero, \bbm)\d\bbm$. Define
\[\cT \equiv \{ T \subseteq V_\delta: T \text{ is closed hyperrectangle} \}.\]
Let $\nu_\star$
be the outer measure generated by the set function $\nu: \cT \rightarrow \R$, i.e. for any $E \subset V_\delta$, 
\begin{align}
\nu_\star(E) \equiv \inf\left\{\sum_{i=1}^\infty \nu(T_i):\; E \subseteq \bigcup_i T_i,\,\, T_i \in \cT \right\}.  
\end{align}

Note that we have shown $\nu_0(T) \le
\nu(T)$ for any closed hyperrectangle $T \in \cT$. Then for
any Lebesgue measurable set $E \subseteq V_\delta$,
\begin{align*}
\nu_0(E) &\stackrel{(i)}{\le} \inf\left\{\sum_{i=1}^\infty \nu_0 (T_i):\; E
\subseteq \bigcup_i T_i,\,\, T_i \in \cT \right\}\\
&\stackrel{(ii)}{\le} \inf\left\{\sum_{i=1}^\infty \nu(T_i):\; E \subseteq \bigcup_i T_i,\,\, T_i \in \cT \right\} \stackrel{(iii)}{=} \nu_\star(E).
\end{align*}
The inequality $(i)$ is given by the definition $\nu_0(E) = \E[\Crit(E)]$, the nonnegativity and additivity of $\Crit(\, \cdot \,)$, and the linearity of expectation. The inequality $(ii)$ is given by $\nu_0(T_i) \le \nu(T_i)$ for each $T_i \in \cT$. The equality $(iii)$ is given by the definition $\nu_\star$. 

Since $D(\bzero, \bbm)$ is bounded over $\bbm
\in V_\delta$, we have that $\nu$ is absolutely continuous with respect to
Lebesgue measure,
and $\nu_\star(E)=\nu(E)$ for any Lebesgue measurable set $E \subseteq
V_\delta$ by a standard argument. This concludes the proof.
\end{proof}

\section{Proof of Proposition \ref{PROP:ABSTRACT}}\label{sec:Proof_Kac_Rice_Abstract}

The proof applies the following version of Varadhan Lemma's upper bound, which
is a small extension of \cite[Lemma 4.3.6]{dembozeitouni}.

\begin{lemma}\label{lemma:varadhan}
Let $\{Z_n\}_{n=1}^\infty$ be random variables taking values in a
regular topological space $\mathcal{X}$.
Let $I:\mathcal{X} \to [0,\infty]$ be a lower semi-continuous rate function
with compact level sets $\Psi(\alpha) \equiv \{x:I(x) \leq \alpha\}$.
For all closed sets $A \subseteq \mathcal{X}$,
suppose $Z_n$ satisfies the large deviations upper bound
\begin{equation}\label{eq:LDPupper}
\limsup_{n \to \infty} \frac{1}{n}\log \P[Z_n \in A] \leq -\inf_{x \in A}
I(x).
\end{equation}
Consider an upper semi-continuous function $\phi:\mathcal{X} \to [-\infty,\infty)$
and a Borel set $A \subseteq \mathcal{X}$ such that
$A \cap \{x:\phi(x) \geq -\alpha\}$ is closed for each $\alpha<\infty$, and
\begin{equation}\label{eq:LDPtailcondition}
C(\gamma) \equiv \limsup_{n \to \infty} \frac{1}{n} \log \E[e^{\gamma n\phi(Z_n)}\ones\{Z_n \in
A\}]<\infty
\end{equation}
for some $\gamma>1$. Then
\[\limsup_{n \to \infty} \frac{1}{n} \log \E[e^{n\phi(Z_n)}\ones\{Z_n \in A\}] \leq
\sup_{x \in A} (\phi(x)-I(x)).\]
\end{lemma}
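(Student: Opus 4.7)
The plan is to adapt the standard proof of Varadhan's lemma upper bound, as in \cite[Lemma 4.3.6]{dembozeitouni}, splitting the expectation into three regimes of $\phi(Z_n)$ and invoking the closedness hypothesis on $A \cap \{\phi \geq c\}$ to apply the LDP upper bound slice by slice.

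First I would truncate the upper tail $\{\phi(Z_n) \geq M\}$ for large $M$. Since $\phi(Z_n) - M \geq 0$ on this event and $\gamma > 1$, we have the pointwise bound $e^{n\phi(Z_n)} \leq e^{-n(\gamma-1)M} e^{\gamma n \phi(Z_n)}$, which combined with the moment hypothesis (\ref{eq:LDPtailcondition}) yields
\[
\limsup_n \frac{1}{n} \log \E\big[e^{n\phi(Z_n)} \ones\{Z_n \in A,\, \phi(Z_n) \geq M\}\big] \leq C(\gamma) - (\gamma-1) M,
\]
which tends to $-\infty$ as $M \to \infty$. The lower tail $\{\phi(Z_n) \leq -L\}$ contributes at most $e^{-nL}$, also negligible.

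For the middle range $-L \leq \phi(Z_n) < M$, I would slice. Fix $\delta > 0$ and set $B_k = A \cap \{x : \phi(x) \geq k\delta\}$; by the hypothesis of the lemma, each $B_k$ is closed, so the LDP upper bound (\ref{eq:LDPupper}) applies. On $\{k\delta \leq \phi(Z_n) < (k+1)\delta\} \cap \{Z_n \in A\} \subseteq \{Z_n \in B_k\}$, bound $e^{n\phi(Z_n)} \leq e^{n(k+1)\delta}$ to obtain
\[
\limsup_n \frac{1}{n} \log \E\big[e^{n\phi(Z_n)} \ones\{Z_n \in B_k \setminus B_{k+1}\}\big] \leq (k+1)\delta - \inf_{x \in B_k} I(x) \leq \delta + \sup_{x \in A}[\phi(x) - I(x)],
\]
where the final step uses $\phi(x) \geq k\delta$ on $B_k$. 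There are at most $\lceil (M+L)/\delta \rceil + 1$ such slices, and since a finite sum of exponentials has the same exponential rate as its largest term, summing and then sending $\delta \to 0$, $L \to \infty$, $M \to \infty$ gives the claim.

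The main technical point is exactly the closedness of each $B_k$: if $A$ is merely Borel, then $A \cap \{\phi \geq k\delta\}$ need not be closed, and the LDP upper bound cannot be invoked directly. The topological hypothesis that $A \cap \{\phi \geq -\alpha\}$ is closed for every $\alpha < \infty$ is inserted precisely to make the slicing argument go through, and is the only extension beyond the classical statement of Varadhan's lemma.
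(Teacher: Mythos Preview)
Your argument is correct. It differs from the paper's in the way the middle range is handled: you slice by sublevel sets of $\phi$, writing $B_k=A\cap\{\phi\geq k\delta\}$ and applying the LDP upper bound directly to each closed $B_k$; the paper instead intersects $A\cap\{\phi\geq -\alpha\}$ with the compact level set $\{I\leq\alpha\}$, covers that compact set by finitely many small open balls on which both $\phi$ and $I$ are nearly constant, and handles the leftover region $\{I>\alpha\}$ via H\"older's inequality and the moment condition~(\ref{eq:LDPtailcondition}). Your route is slightly more elementary in that it never uses the hypothesis that $I$ has compact level sets---the LDP upper bound~(\ref{eq:LDPupper}) and the closedness of each $B_k$ suffice---whereas the paper's covering argument needs compactness of $\Psi(\alpha)$ to extract a finite subcover. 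Conversely, the paper's decomposition makes the role of the moment condition more transparent as a device for controlling the region where $I$ is large rather than where $\phi$ is large. Both decompositions are standard variants of the Varadhan argument and both correctly exploit the key extra hypothesis that $A\cap\{\phi\geq -\alpha\}$ is closed.
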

\begin{proof}
The proof follows \cite[Lemmas 4.3.6 and 4.3.8]{dembozeitouni} with minor
modifications. Fixing $\alpha<\infty$ and $\delta>0$,
by compactness and semi-continuity,
we may take a finite open cover $\{A_{x_1},\ldots,A_{x_N}\}$ 
of $A \cap \{x:\phi(x) \geq -\alpha\} \cap \{x:I(x) \leq \alpha\}$ such that
\begin{equation}\label{eq:cover}
\inf_{y \in \overline{A_{x_i}}} I(y) \geq I(x_i)-\delta, \qquad
\sup_{y \in \overline{A_{x_i}}} \phi(y) \leq \phi(x_i)+\delta.
\end{equation}
Then
\[\limsup_{n \to \infty} \frac{1}{n}\log \E[e^{n\phi(Z_n)}\ones\{Z_n \in A\}]
\leq \max(E_1,\ldots,E_N,R_1,R_2)\]
where
\begin{align*}
E_i&=\limsup_{n \to \infty} \frac{1}{n}\log \E[e^{n\phi(Z_n)}\ones\{Z_n \in
\overline{A_{x_i}}\}]\\
R_1&=\limsup_{n \to \infty} \frac{1}{n}\log
\E[e^{n\phi(Z_n)}\ones\{Z_n \in \{x:\phi(x)<-\alpha\}\}]\\
R_2&=\limsup_{n \to \infty} \frac{1}{n}\log
\E[e^{n\phi(Z_n)}\ones\{Z_n \in A \cap \{x:\phi(x) \geq -\alpha\} \cap
A_{x_1}^c \cap \ldots \cap A_{x_N}^c\}].
\end{align*}
Together, (\ref{eq:LDPupper}) and (\ref{eq:cover}) imply
$E_i \leq \phi(x_i)-I(x_i)+2\delta$. Also by definition, $R_1 \leq -\alpha$.
For $R_2$, denote $B=A \cap \{x:\phi(x) \geq -\alpha\} \cap
A_{x_1}^c \cap \ldots \cap A_{x_N}^c$. Note that $B$ is closed and $I(x)>\alpha$
on $B$. Then by Holder's inequality and (\ref{eq:LDPupper}) and
(\ref{eq:LDPtailcondition}),
\begin{align*}
R_2 &\leq \limsup_{n \to \infty} \frac{1}{n}\log
\left(\E\left[e^{\gamma n\phi(Z_n)}\ones\{Z_n \in A\}\right]^{1/\gamma}
\P\left[Z_n \in B\right]^{1-1/\gamma}\right)\\
&\leq \frac{1}{\gamma}C(\gamma)+\left(1-\frac{1}{\gamma}\right)\alpha.
\end{align*}
The result follows from combining these bounds and then
taking $\alpha \to \infty$ and $\delta \to 0$.
\end{proof}

We now apply this to our setting. We equip $\cP$ with
the topology of weak convergence and the corresponding Borel $\sigma$-algebra.
The following lemmas verify the conditions needed in Lemma \ref{lemma:varadhan}.

\begin{lemma}\label{lemma:Jproperties}
\hspace{1in}
\begin{enumerate}
\item[(a)] For any constants $\eta>0$ and $\beta > 0$, there exists a constant
$C \equiv C(\beta,\eta)<\infty$ such that $J(\rho,y) \leq
C +y^2/(4\beta^2)$ for all $\rho \in \cP$ with $Q(\rho)  \geq \eta$.
\item[(b)] $J:\{\rho \in \cP:Q(\rho)>0\} \times \R \to [-\infty,\infty)$
is upper semi-continuous.
\end{enumerate}
\end{lemma}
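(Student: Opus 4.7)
My plan for part (a) is to bound each of the three summands in the definition of $J(\rho,y)$. The first summand is at most $\beta^2/2$ since $Q(\rho) \in [0,1]$, and the second is at most $-\tfrac{1}{2}\log(2\pi\beta^2 \eta)$ by the hypothesis $Q(\rho) \ge \eta$. For the integral term, I plan to bound the integrand pointwise. Writing $q = Q(\rho)$ and $c(x,y) = \beta\lambda M(\rho) - \beta^2(1-q)x + yx$, one has $|c(x,y)| \le \beta\lambda + \beta^2 + |y|$ for $x \in (-1,1)$. Substituting $u = \arctanh x$ gives $-\log(1-x^2) = 2\log\cosh u \le 2|u|$, and a one-variable optimization yields
\[\sup_{u \in \R}\Big[2|u| - \frac{(u-c)^2}{2\beta^2 q}\Big] = 2|c| + 2\beta^2 q.\]
Hence $\log g(x;M(\rho),Q(\rho),y) \le 2|c(x,y)| + 2\beta^2$ for every $x \in (-1,1)$. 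Integrating against $\rho$ and applying the AM--GM inequality $2|y| \le y^2/(4\beta^2) + 4\beta^2$ delivers the claimed bound $J(\rho,y) \le C + y^2/(4\beta^2)$ with $C$ depending only on $\beta, \eta$ (and the implicit $\lambda$).

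For part (b), I first observe that $Q(\rho) = \int x^2\,\rho(\d x)$ and $M(\rho) = \int x\,\rho(\d x)$ are continuous in $\rho$ under weak convergence since $x$ and $x^2$ lie in $C_b((-1,1))$; thus the first two summands of $J$ are continuous on $\{Q(\rho)>0\}$, and it suffices to handle the integral term. Fix $(\rho^*,y^*)$ with $Q(\rho^*)>0$ and let $(\rho_n,y_n) \to (\rho^*,y^*)$. With $h_n(x) = \log g(x;M(\rho_n),Q(\rho_n),y_n)$ and $h(x) = \log g(x;M(\rho^*),Q(\rho^*),y^*)$, the pointwise bound from part (a) supplies a uniform ceiling $h_n \le M_0 < \infty$, and since $Q(\rho_n) \to Q(\rho^*) > 0$ the quadratic penalty $-(u-c)^2/(2\beta^2 q_n)$ dominates the logarithmic singularity of $-\log(1-x^2)$, forcing $h_n(x) \to -\infty$ as $|x| \to 1$ at a rate uniform in $n$. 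With these two ingredients, I run a standard truncation argument: for each $K>0$ the truncated functions $h_n^K := \max(h_n,-K)$ and $h^K := \max(h,-K)$ are bounded continuous on $(-1,1)$, agree with $-K$ outside a compact set independent of $n$ (for large $n$), and therefore converge uniformly on $(-1,1)$. Weak convergence of $\rho_n$ then gives $\int h_n^K\,\d\rho_n \to \int h^K\,\d\rho^*$, and since $h_n \le h_n^K$,
\[\limsup_n \int h_n\,\d\rho_n \le \int h^K\,\d\rho^*.\]
Letting $K \to \infty$ via monotone convergence ($h^K \downarrow h$, bounded above by $M_0$) yields $\int h^K\,\d\rho^* \to \int h\,\d\rho^*$, which is the desired upper semi-continuity.

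The main subtlety in part (a) is matching the precise coefficient $1/(4\beta^2)$ on $y^2$: a naive Cauchy--Schwarz or completing-the-square treatment of the full $c(x,y)$ produces $y^2/(2\beta^2)$ or worse, and one has to isolate the linear-in-$y$ piece before applying AM--GM to land on the sharp constant. In part (b), the only delicate issue is the uniformity-in-$n$ of the boundary decay of $h_n$, which is exactly why the domain of semi-continuity must be restricted to $\{Q(\rho)>0\}$.
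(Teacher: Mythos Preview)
Your proof is correct, and your treatment of part (b) is essentially identical to the paper's: both truncate $\log g$ from below at a level $-K$ (the paper writes $\alpha$), use the uniform decay of $\log g$ near $|x|=1$ (which requires $Q(\rho_n)$ bounded away from $0$) to confine the non-constant region to a compact set, argue uniform convergence of the truncated functions there, pass to the limit via weak convergence, and finally send the truncation level to $-\infty$ by monotone convergence.

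For part (a) your route differs from the paper's. The paper applies the elementary inequality $(c+1)a^2+(c^{-1}+1)b^2 \geq (a+b)^2$ with $c=2$, $b=yx$, and $a$ the full argument of the square in $\log g$; this peels off exactly $(yx)^2/(4\beta^2 q)$, which integrates against $\rho$ to $y^2/(4\beta^2)$ directly, while the remaining term $\log\frac{1}{1-x^2}-\frac{1}{6\beta^2 q}(\arctanh x-\beta\lambda\varphi+\beta^2(1-q)x)^2$ is uniformly bounded above by comparison of $\log\frac{1}{1-x^2}$ with $\arctanh(x)^2$. You instead substitute $u=\arctanh x$, bound $-\log(1-x^2)\le 2|u|$, optimize $2|u|-(u-c)^2/(2\beta^2 q)$ over $u$ in closed form to get $2|c|+2\beta^2 q$, and only then handle the $y$-dependence via $2|y|\le y^2/(4\beta^2)+4\beta^2$. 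Both arguments have slack in the coefficient of $y^2$ (the paper's $c$ and your AM--GM parameter are both free), so your remark that matching $1/(4\beta^2)$ is subtle is a slight overstatement---once you have a linear-in-$|y|$ bound, any coefficient is achievable. Your approach is arguably more elementary; the paper's has the minor advantage of bounding the $y$-independent part by a quadratic in $\arctanh x$, which it reuses later in Lemma~\ref{lemma:Aclosed} to show $\int\arctanh(x)^2\,\rho(\d x)<\infty$ on the super-level sets of $J$.
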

\begin{proof}
For $(a)$, we apply the inequality $(c+1)a^2+(c^{-1}+1)b^2 \geq
(a+b)^2$ with $c=2$, $a=\arctanh(x)- \beta \lambda \vphi+\beta^2(1-q)x-yx$, and
$b=yx$. Recalling $g(x;\vphi,q,y)$ from Eq.\ (\ref{eq:gx}), this yields
\[\log g(x;\vphi,q,y) \leq \log \frac{1}{1-x^2}
-\frac{(\arctanh(x)-\beta \lambda\vphi+\beta^2(1-q)x)^2}{6\beta^2q}
+\frac{(yx)^2}{4\beta^2q}.\]
By the boundedness of $M(\rho)$ and $Q(\rho)$ and the comparison of
$\log 1/(1-x^2)$ and $\arctanh(x)^2$ as $x \to \pm 1$, we see that for
$\vphi =M(\rho)$ and $q=Q(\rho)$, the first two terms above are together
upper-bounded by an $\eta$-dependent constant. Then
\[J(\rho,y) \leq C +\int \frac{(yx)^2}{4\beta^2Q(\rho)} \rho(\d x)
=C +\frac{y^2}{4\beta^2}\]
for a constant $C \equiv C(\beta,\eta)<\infty$.

For $(b)$, fix $y,y_1,y_2,\ldots \in \R$ such that $y_i \to
y$ and $\rho,\rho_1,\rho_2,\ldots \in \cP$ such that $\rho_i \to \rho$ weakly,
$Q(\rho)>0$, and $Q(\rho_i)>0$ for all $i$. Then $Q(\rho_i) \to Q(\rho)$, so
there is a lower bound $\eta>0$ on all $Q(\rho_i)$, as well as a finite upper
bound on all $y_i^2$. Fix a constant $\alpha \in \R$ and define
\[f_{i,\alpha}(x)=\max(\alpha,\log g(x;M(\rho_i),Q(\rho_i),y_i)),\]
\[f_\alpha(x)=\max(\alpha,\log g(x;M(\rho),Q(\rho),y)).\]
Then
$f_{i,\alpha}$ and $f_\alpha$ are uniformly bounded above and below over all
$i$. Furthermore, there is a value $\delta>0$ such that
$f_{i,\alpha}(x)=f_\alpha(x)=\alpha$ for all $x<-1+\delta$ and $x>1-\delta$ and 
all $i$. As $f_{i,\alpha}(x) \to f_\alpha(x)$ uniformly over
$x \in [-1+\delta,1-\delta]$, this implies $f_{i,\alpha}(x) \to f_\alpha(x)$
uniformly over $x \in (-1,1)$. Then
\begin{align*}
&\left\l \int f_{i,\alpha}(x)\rho_i(\d x)-\int f_\alpha(x)\rho(\d x)\right\l \\
&\leq \int \l f_{i,\alpha}(x)-f_\alpha(x)\l \rho_i(\d x)
+\left\l \int f_\alpha(x)(\rho_i-\rho)(\d x)\right\l  \to 0.
\end{align*}
Hence
\begin{align*}
\limsup_i J(\rho_i,y_i) &\leq
\lim_i \left(\frac{\beta^2(1-Q(\rho_i))^2}{2}-\frac{1}{2}\log (2\pi
\beta^2Q(\rho_i))+\int f_{i,\alpha}(x)
\rho_i(\d x)\right)\\
&=\frac{\beta^2(1-Q(\rho))^2}{2}-\frac{1}{2}\log (2\pi \beta^2Q(\rho))
+\int f_\alpha(x)\rho(\d x).
\end{align*}
The left side is independent of $\alpha$. As $x \mapsto g(x;M(\rho),Q(\rho),y)$
is bounded above, the monotone convergence theorem yields for the right side
\[\lim_{\alpha \to -\infty} \int f_\alpha(x)\rho(\d x)
=\int \log g(x;M(\rho),Q(\rho),y)\,\rho(\d x).\]
Then 
$\limsup_i J(\rho_i,y_i) \leq J(\rho,y)$, so $J$ is upper semi-continuous.
\end{proof}

\begin{lemma}\label{lemma:Econtinuous}
Fix any $\alpha \in (0,\infty)$, and suppose $\rho,\rho_1,\rho_2,\ldots \in \cP$
are such that $\rho_i \to \rho$ weakly and
\[\int_{-1}^1 \arctanh(x)^2\,\rho(\d x) \leq
\alpha, \qquad \int_{-1}^1 \arctanh(x)^2\,\rho_i(\d x) \leq \alpha\]
for all $i$. Then $A(\rho_i) \to A(\rho)$ and $E(\rho_i) \to E(\rho)$.
\end{lemma}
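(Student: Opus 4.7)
\bigskip

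\noindent\textbf{Proof plan for Lemma \ref{lemma:Econtinuous}.}

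The difficulty is that the integrands defining $A$ and $E$ contain the factor $x\arctanh(x)$, which is unbounded on $(-1,1)$, so weak convergence of $\rho_i\to\rho$ does not directly yield convergence of these integrals. The hypothesis $\int\arctanh(x)^2\,\rho_i(\d x)\le\alpha$ (and the same for $\rho$) will provide uniform integrability. The binary entropy $h(x)$ and $x^2$ are bounded and continuous on $[-1,1]$, so the remaining pieces of $E$ will be handled directly by weak convergence. First I would therefore reduce everything to proving $A(\rho_i)\to A(\rho)$; the rest of $E(\rho_i)\to E(\rho)$ will then be immediate.

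To prove $A(\rho_i)\to A(\rho)$, the plan is a standard truncation plus $3\varepsilon$ argument. For $K\ge 1$ I would introduce the bounded continuous function
\[
\phi_K(x)=x\cdot\max\bigl(-K,\min(K,\arctanh(x))\bigr)\,,
\]
which extends continuously to $[-1,1]$ with $\phi_K(\pm 1)=\pm K$, and which satisfies $\phi_K(x)=x\arctanh(x)$ whenever $|\arctanh(x)|\le K$. Since $\phi_K$ is bounded and continuous on $[-1,1]$ (and $\rho_i,\rho$ assign zero mass to $\{\pm 1\}$ by the $\arctanh^2$ hypothesis), weak convergence gives $\int\phi_K\,\d\rho_i\to\int\phi_K\,\d\rho$ for each fixed $K$. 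The key estimate is then the tail bound
\[
\left|\int x\arctanh(x)\,\rho_i(\d x)-\int\phi_K\,\d\rho_i\right|
\le \int_{|\arctanh(x)|>K}2|\arctanh(x)|\,\rho_i(\d x)
\le \frac{2}{K}\int \arctanh(x)^2\,\rho_i(\d x)\le \frac{2\alpha}{K},
\]
using $|x|\le 1$ on the support and $|\arctanh(x)|\le \arctanh(x)^2/K$ on the truncation set. The same bound holds with $\rho$ in place of $\rho_i$. Choosing $K$ large (uniformly in $i$) and then letting $i\to\infty$ yields $A(\rho_i)\to A(\rho)$.

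Given $A(\rho_i)\to A(\rho)$, the convergence $E(\rho_i)\to E(\rho)$ follows from three observations: (i) $h:[-1,1]\to\R$ is bounded and continuous (with $h(\pm1)=0$), so $\int h\,\d\rho_i\to\int h\,\d\rho$ by weak convergence; (ii) the term $\int x\arctanh(x)\,\d\rho_i$ is handled by the previous step; (iii) $x\mapsto x^2$ is bounded and continuous, so $Q(\rho_i)\to Q(\rho)$ and hence $(1-Q(\rho_i)^2)\to(1-Q(\rho)^2)$. Adding the three contributions (with the appropriate coefficients) gives the result.

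The only step with any subtlety is the tail estimate in the second paragraph; everything else is a routine application of weak convergence against bounded continuous test functions. The role of the $\arctanh^2$ hypothesis is precisely to force uniform integrability of $\arctanh$, which in turn controls the boundary behaviour of $x\arctanh(x)$.
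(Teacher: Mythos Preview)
Your proof is correct and follows essentially the same truncation--plus--uniform--integrability strategy as the paper; the paper truncates the domain to $[-1+\delta,1-\delta]$ and bounds the tail via Cauchy--Schwarz against $\int_{I_\delta}x^2\,\rho(\d x)$, whereas you truncate the range of $\arctanh$ at level $K$ and bound the tail by $2\alpha/K$ via a Markov-type inequality, but the structure is the same. One harmless slip: since $x\arctanh(x)$ is even, your extension satisfies $\phi_K(\pm 1)=K$ (not $\pm K$), but this does not affect the argument.
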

\begin{proof}
As $x^2$ and $h(x)$ are continuous and bounded over $(-1,1)$, it suffices to
show
\[\int x \arctanh(x)\,\rho_i(\d x) \to \int x \arctanh(x)\,\rho(\d x).\]
For any $\delta \in (0,1)$ such that $-1+\delta$ and $1-\delta$ are continuity
points of $\rho$, we have by weak convergence
\[\int_{[-1+\delta,1-\delta]} x\arctanh(x)\,\rho_i(\d x) \to
\int_{[-1+\delta,1-\delta]} x\arctanh(x)\,\rho(\d x).\]
Denoting $I_\delta=(-1,-1+\delta) \cup (1-\delta,1)$, by Cauchy-Schwarz
\[\left(\int_{I_\delta} x\arctanh(x)\,\rho(\d x)\right)^2
\leq \int_{I_\delta} x^2\rho(\d x) \cdot \int_{I_\delta}
\arctanh(x)^2\,\rho(\d x) \leq \alpha\int_{I_\delta} x^2\rho(\d x),\]
and similarly for $\rho_i$. As
$\int_{I_\delta} x^2\rho_i(\d x) \to \int_{I_\delta} x^2\rho(\d x)$
also by weak convergence, this yields
\[\limsup_i \left\l \int x\arctanh(x)\rho_i(\d x)
-\int x\arctanh(x) \rho(\d x)\right\l  \leq 2\alpha \int_{I_\delta}
x^2\rho(\d x).\]
Taking $\delta \to 0$ yields the claim.
\end{proof}

\begin{lemma}\label{lemma:Aclosed}
For each $\alpha<\infty$, the set
\[\Xi(\alpha) \equiv
\{(\rho,y) \in \cP \times \R:\,(Q(\rho),M(\rho),A(\rho),E(\rho)) \in
U,\; J(\rho,y) \geq -\alpha\}\]
is closed.
\end{lemma}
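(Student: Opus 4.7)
The plan is to take a sequence $(\rho_i, y_i) \in \Xi(\alpha)$ converging to $(\rho, y)$ in the product topology on $\cP \times \R$ and verify both defining conditions of $\Xi(\alpha)$ in the limit. Since $x$ and $x^2$ are bounded and continuous on $(-1,1)$, weak convergence gives $M(\rho_i) \to M(\rho)$ and $Q(\rho_i) \to Q(\rho)$; in particular $Q(\rho) \geq \eta > 0$, so $(\rho, y)$ lies in the domain of $J$ and Lemma \ref{lemma:Jproperties}(b) yields $J(\rho, y) \geq \limsup_i J(\rho_i, y_i) \geq -\alpha$.

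It remains to show $(Q(\rho), M(\rho), A(\rho), E(\rho)) \in U$, using that $U$ is closed. Since $A$ and $E$ involve the unbounded function $\arctanh$, continuity along $\rho_i$ is not automatic; Lemma \ref{lemma:Econtinuous} delivers it provided one has a uniform bound $\sup_i \int_{-1}^1 \arctanh(x)^2\, \rho_i(\d x) < \infty$. Extracting this uniform bound from the hypothesis $J(\rho_i, y_i) \geq -\alpha$ is the main obstacle of the proof.

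The bound will be obtained from a uniform pointwise estimate of the form
\[\log g(x;\vphi,q,y) \leq -c(\beta)\arctanh(x)^2 + C(\beta,\lambda,\eta)(1+y^2)\]
valid for $\vphi \in [-1,1]$ and $q \in [\eta,1]$. This in turn follows from the inequality $(\arctanh(x) - v)^2 \geq \arctanh(x)^2/2 - v^2$ applied with $v = \beta\lambda\vphi + (y - \beta^2(1-q))x$ (which satisfies $v^2 \leq C(\beta,\lambda)(1+y^2)$), combined with the elementary bound $-\log(1-x^2) \leq \eps\, \arctanh(x)^2 + C_\eps$ valid for any $\eps > 0$ (since $\log 1/(1-x^2) = o(\arctanh(x)^2)$ as $\l x \l \to 1$), applied with $\eps$ sufficiently small relative to $1/(4\beta^2)$. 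Integrating the pointwise estimate against $\rho_i$ and noting that the non-integral terms of $J(\rho_i, y_i)$ are uniformly bounded in $i$ (because $Q(\rho_i) \in [\eta, 1]$), the hypothesis $J(\rho_i, y_i) \geq -\alpha$ yields $\int \arctanh(x)^2 \, \rho_i(\d x) \leq C_0(\alpha + 1 + y_i^2)$, which is bounded uniformly in $i$ since $y_i \to y$. Fatou's lemma transfers the bound to the limit $\rho$, so Lemma \ref{lemma:Econtinuous} applies and gives $A(\rho_i) \to A(\rho)$ and $E(\rho_i) \to E(\rho)$. Closedness of $U$ then yields $(Q(\rho), M(\rho), A(\rho), E(\rho)) \in U$, completing the proof that $(\rho,y)\in\Xi(\alpha)$.
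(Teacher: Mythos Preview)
Your proof is correct and follows essentially the same approach as the paper: both use weak convergence for $M,Q$, upper semi-continuity of $J$ (Lemma \ref{lemma:Jproperties}(b)) for the constraint $J \geq -\alpha$, the same pointwise inequality $(\arctanh(x)-v)^2 \geq \arctanh(x)^2/2 - v^2$ together with $\log 1/(1-x^2) \leq \eps\,\arctanh(x)^2 + C_\eps$ to extract a uniform bound on $\int \arctanh(x)^2\,\rho_i(\d x)$, and then Lemma \ref{lemma:Econtinuous} for $A,E$. The only cosmetic difference is that the paper transfers the $\arctanh^2$-bound to $\rho$ directly from the already-established inequality $J(\rho,y) \geq -\alpha$, whereas you invoke Fatou's lemma; both are valid.
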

\begin{proof}
Suppose $(\rho,y),(\rho_1,y_1),(\rho_2,y_2),\ldots$ are such that
$(\rho_i,y_i) \to (\rho,y)$ and $(\rho_i,y_i) \in \Xi(\alpha)$ for all $i$.
Then $M(\rho_i) \to M(\rho)$ and $Q(\rho_i) \to Q(\rho)$ by weak convergence. In
particular $Q(\rho) \geq \eta$, so $J$ is well-defined at $\rho$ and
$J(\rho,y) \geq \limsup_i J(\rho_i,y_i) \geq -\alpha$ by the upper
semi-continuity established in Lemma \ref{lemma:Jproperties}(b).
Applying $(a-b)^2 \geq (a^2/2)-b^2$ and $\log
1/(1-x^2) \leq \arctanh(x)^2/(8\beta^2)+c$ for a constant
$c \equiv c(\beta)<\infty$, we have
\[\log g(x;\vphi,q,y) \leq c+\arctanh(x)^2\left(\frac{1}{8\beta^2}
-\frac{1}{4\beta^2q}\right)
+\frac{(\beta \lambda \vphi -\beta^2(1-q)x+yx)^2}{2\beta^2q}.\]
Since $y_i \to y<\infty$, the above bound and the
conditions $J(\rho_i,y_i) \geq -\alpha$,
$J(\rho,y) \geq -\alpha$, $Q(\rho_i) \in [\eta,1]$, and $Q(\rho) \in
[\eta,1]$ imply
$\int_{-1}^1 \arctanh(x)^2\,\rho_i(\d x) \leq \kappa$ and
$\int_{-1}^1 \arctanh(x)^2\,\rho(\d x) \leq \kappa$ for all $i$ and some
$\kappa<\infty$. Then Lemma \ref{lemma:Econtinuous} implies
$A(\rho_i) \to A(\rho)$ and $E(\rho_i) \to E(\rho)$. As $U$ is closed, this implies
$(Q(\rho),M(\rho),A(\rho),E(\rho)) \in U$, so $\Xi(\alpha)$ is closed as desired.
\end{proof}

\begin{proof}[Proof of Proposition \ref{PROP:ABSTRACT}]
Let $\rho_n$ denote the empirical measure of $m_1,\ldots,m_n \overset{iid}{\sim}
\pi_0$, where $\pi_0$ is the uniform distribution on $[-1, 1]$, and let $Y_n \sim \cN(0,\beta^2/n)$ be independent of $\rho_n$. Then
\begin{align}
T(U,V_n)&=2^n\cdot
\E\bigg[\exp(nJ(\rho_n,Y_n)+ C_0 n^{\max(0.9,b)})\nonumber\\
&\hspace{1in}\ones\{(Q(\rho_n), M(\rho_n), A(\rho_n), E(\rho_n)) \in U\}\bigg].\label{eq:kacricebound}
\end{align}
We apply Lemma \ref{lemma:varadhan} with $\mathcal{P} \times \R$ as
$\mathcal{X}$, $J(\rho,y)$ as $\phi(x)$, and
$\{(\rho,y) \in \cP \times \R:\,(Q(\rho),M(\rho),A(\rho),E(\rho)) \in U\}$ as $A$.
By Sanov's Theorem, $\rho_n$ satisfies a large deviation
principle (LDP) with rate $n$ and rate function $\rho \mapsto H(\rho\l \pi_0)$.
Then combining with the form of the normal density for $Y_n$,
$(\rho_n,Y_n)$ satisfies the LDP with rate function
$I(\rho,y)=H(\rho\l \pi_0)+y^2/(2\beta^2)$. This rate function is
lower semi-continuous with compact level sets.
Lemmas \ref{lemma:Jproperties}(b) and \ref{lemma:Aclosed} verify the
semi-continuity and closedness conditions required for $\phi(x)$ and $A$ in Lemma
\ref{lemma:varadhan}. Note further that by Lemma \ref{lemma:Jproperties}(a), for any
$\gamma \in (1,2)$ and some $C \equiv C(\beta,\eta)<\infty$,
\begin{align*}
&\limsup_{n \to \infty} \frac{1}{n}\log \E[e^{\gamma n
J(\rho_n,Y_n)}\ones\{Q(\rho_n) \geq \eta\}\\
&\leq \limsup_{n \to \infty}
\frac{1}{n}\log \E\left[e^{\gamma n(C+Y_n^2/(4\beta^2))}\right]=C.
\end{align*}
This verifies (\ref{eq:LDPtailcondition}), so the result follows by Lemma
\ref{lemma:varadhan}.
\end{proof}

\section{Proof of Proposition \ref{PROP:LOCALIZATION}}\label{sec:Proof_Localization}

\begin{lemma}\label{lem:initial_global_bound}
Fix any $\alpha \in (1/2,1)$.
Then there exist constants $\lambda_0$, $n_0$, and $\const_0 > 0$ so that for all
$\lambda \ge \lambda_0$ and $n \ge n_0$, with probability
at least $1 - e^{-\const_0 n}$, every $\bbm \in (-1, 1)^n$ with $\cF(\bbm) \le -
\alpha \lambda^2/2$ satisfies
\[
\begin{aligned}
Q(\bbm) \ge& (2 \alpha - 1 - 6/\lambda - 4 / \lambda^2)^{1/2}, \\
{ \l M(\bbm)\l} \ge& (2 \alpha - 1 - 6/\lambda - 4 / \lambda^2)^{1/4}.
\end{aligned}
\]
\end{lemma}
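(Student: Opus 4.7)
The plan is to reduce the lemma to a direct algebraic manipulation of $\cF(\bm) \le -\alpha\lambda^2/2$. Using the decomposition $\bY = (\lambda/n)\bx\bx^\sT + \bW$, the TAP free energy splits as
\[
\cF(\bm) = -\frac{1}{n}\sum_{i=1}^n h(m_i) - \frac{\lambda^2}{2} M(\bm)^2 - \frac{\lambda}{2n}\langle \bm, \bW\bm\rangle - \frac{\lambda^2}{4}(1 - Q(\bm))^2.
\]
On the high-probability event $\{\|\bW\|_\op \le 3\}$ (which has probability $\ge 1 - e^{-c_0 n}$ by standard GOE tail estimates, e.g.\ as already used in Lemma~\ref{lem:infinity_norm_high_probability_bound}), I apply $h(m_i) \le \log 2$ and $|\langle\bm,\bW\bm\rangle|/n \le \|\bW\|_\op Q(\bm) \le 3 Q(\bm)$. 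The hypothesis $\cF(\bm) \le -\alpha\lambda^2/2$, after multiplying by $-2/\lambda^2$, then becomes the single key inequality
\[
M(\bm)^2 + \frac{3 Q(\bm)}{\lambda} + \frac{(1-Q(\bm))^2}{2} \;\ge\; \alpha - \frac{2\log 2}{\lambda^2}. \qquad (\star)
\]
The two conclusions of the lemma both flow from $(\star)$ combined with the elementary bound $M(\bm)^2 \le Q(\bm)$ (Cauchy–Schwarz with $\|\bx\|_2^2 = n$) and $Q(\bm) \le 1$.

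For the bound on $Q$, I use $M^2 \le Q$ to replace $M^2$ on the left of $(\star)$, giving $Q + 3Q/\lambda + (1-Q)^2/2 \ge \alpha - 2\log 2/\lambda^2$. Expanding $(1-Q)^2/2 = 1/2 - Q + Q^2/2$, the linear-in-$Q$ pieces cancel to leave $Q^2/2 + 3Q/\lambda \ge \alpha - 1/2 - 2\log 2/\lambda^2$; multiplying by $2$ and using $Q \le 1$ on the middle term and $\log 2 < 1$ yields $Q(\bm)^2 \ge 2\alpha - 1 - 6/\lambda - 4/\lambda^2$, which is the first claim.

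For the bound on $|M|$, I instead absorb $3Q(\bm)/\lambda \le 3/\lambda$ in $(\star)$ to get $(1-Q(\bm))^2 \ge 2\alpha - 6/\lambda - 4\log 2/\lambda^2 - 2 M(\bm)^2$. Since $0 \le 1-Q(\bm) \le 1 - M(\bm)^2$, we have $(1-M(\bm)^2)^2 \ge (1-Q(\bm))^2$, and combining gives
\[
(1 - M(\bm)^2)^2 \;\ge\; 2\alpha - \tfrac{6}{\lambda} - \tfrac{4\log 2}{\lambda^2} - 2M(\bm)^2.
\]
Expanding the left side and cancelling the $-2M(\bm)^2$ on both sides produces $M(\bm)^4 \ge 2\alpha - 1 - 6/\lambda - 4\log 2/\lambda^2 \ge 2\alpha - 1 - 6/\lambda - 4/\lambda^2$, which is the second claim. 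The main (mild) subtlety is justifying the use of $(1-M^2)^2 \ge (1-Q)^2$ when the right-hand side happens to be negative; in that degenerate regime one instead has the stronger lower bound $M(\bm)^2 > \alpha - 3/\lambda - 2\log 2/\lambda^2$, and the inequality $M(\bm)^4 \ge 2\alpha - 1 - 6/\lambda - 4/\lambda^2$ follows from the identity $(\alpha-1)^2 \ge 0$ for all $\lambda_0$ sufficiently large. Apart from this short case check, the proof is a chain of routine algebraic rearrangements; the only random input is the GOE operator-norm bound.
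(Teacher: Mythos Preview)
Your proof is correct and follows essentially the same approach as the paper: both arguments bound $\cF$ from below using $h(m_i)\le \log 2$ (the paper uses the looser $\le 1$) and the high-probability GOE operator-norm bound $\|\bW\|_{\op}\le 3$, then combine the resulting inequality with $M^2\le Q$ and routine algebra. The only cosmetic differences are that the paper drops the factor $Q(\bbm)$ in the $\bW$-term immediately and, for the $|M|$ bound, first plugs in the established lower bound on $Q$ rather than using $(1-M^2)^2\ge(1-Q)^2$; also, the ``subtlety'' you flag is in fact a non-issue, since $0\le 1-Q\le 1-M^2$ makes $(1-M^2)^2\ge(1-Q)^2$ hold unconditionally and the chain of inequalities goes through regardless of the sign of the right-hand side.
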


\begin{proof}

Note that we have
\[
\begin{aligned}
\cF(\bbm) \ge& -1   - \lambda/(2n) \cdot \< \bbm, \bW \bbm \> - (\lambda^2 / 2) M(\bbm)^2 - (\lambda^2/4)(1 - Q(\bbm))^2\\
\ge & -1 - \lambda \Vert \bW \Vert_{\op}/2 - (\lambda^2/2) M(\bbm)^2 - (\lambda^2 / 4) (1 - Q(\bbm))^2.
\end{aligned}
\]
With probability at least $1-e^{-c_0n}$ for $n \geq n_0$ and some $n_0,c_0>0$,
we have $\|\bW\|_\op<3$. On this event, for
any $\bbm$ such that $\cF(\bbm) \le -\alpha \lambda^2/2$, we have
\[
M(\bbm)^2 + 1 / 2 \cdot (1 - Q(\bbm))^2 \ge \alpha - 2/\lambda^2 - \Vert \bW
\Vert_{\op} / \lambda>\alpha - \eps,
\]
where $\eps = 2/\lambda^2 + 3 / \lambda $. By
Cauchy-Schwarz, $Q(\bbm) \ge M(\bbm)^2$. Then
\[
Q(\bbm) + 1/2 \cdot (1 - Q(\bbm))^2 \ge \alpha - \eps.
\]
As $2\alpha-1-2\eps>0$ for any fixed $\alpha>1/2$ and for
sufficiently large $\lambda_0$, we obtain
$Q(\bbm) \ge (2 \alpha - 1 - 2 \eps)^{1/2}$.
Then also $(1-Q(\bbm))^2 \leq (1-(2\alpha-1-2\eps)^{1/2})^2$, so
\[
M(\bbm)^2 \ge \alpha - \eps - 1/2 \cdot (1 - Q(\bbm))^2 \ge (2 \alpha -1 - 2 \eps)^{1/2}. 
\]
\end{proof}

\begin{lemma}\label{lem:Fraction_bound}
There exist constants $\lambda_0,n_0,\Const_0,\const_0>0$ such that
for any $\lambda \ge \lambda_0$ and $n \ge n_0$, we have
\begin{align}\label{eqn:Fraction_bound_final_rescaled}
\P\Big(\sup_{\bu \in \ball^n(\bzero,1)} \Big[\frac{1}{n}\sum_{i=1}^n \ones\{\l
\< \bg_i, \bu \>\l \ge \lambda \}  \Big] \ge \Const_0 /\lambda^2 \Big) \le \exp\{-\const_0 n\},
\end{align}
where $\{\bg_i\}_{i\in[n]} \simiid \cN(0, \id_n)$ and $\ball^n(\bzero,1)$ is
the $n$-dimensional unit ball centered at $\bzero$.
\end{lemma}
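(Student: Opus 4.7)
The plan is to replace the indicator by its natural Markov bound, to reduce the uniform-in-$\bu$ control to a single operator norm bound on the Gaussian matrix, and then to invoke standard concentration.

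First, for any fixed $\bu$ with $\|\bu\|_2 \le 1$,
\[
\frac{1}{n}\sum_{i=1}^n \ones\{|\<\bg_i, \bu\>| \ge \lambda\}
\;\leq\; \frac{1}{n\lambda^2} \sum_{i=1}^n \<\bg_i, \bu\>^2.
\]
Let $\bG \in \R^{n\times n}$ have rows $\bg_i^\sT$, so that $\bG$ has i.i.d.\ $\cN(0,1)$ entries. Then $\sum_{i} \<\bg_i,\bu\>^2 = \|\bG\bu\|_2^2 \le \|\bG\|_\op^2 \|\bu\|_2^2 \le \|\bG\|_\op^2$. Crucially this bound is uniform in $\bu \in \ball^n(\bzero,1)$, so
\[
\sup_{\bu \in \ball^n(\bzero,1)} \frac{1}{n}\sum_{i=1}^n \ones\{|\<\bg_i, \bu\>| \ge \lambda\}
\;\leq\; \frac{\|\bG\|_\op^2}{n\lambda^2}.
\]

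Next, I would use standard tail bounds for the operator norm of a square Gaussian matrix: $\E\|\bG\|_\op \le 3\sqrt{n}$ (for $n$ large), and by Gaussian concentration (see e.g.\ \cite[Theorem II.11]{davidsonszarek}), $\P(\|\bG\|_\op \ge 4\sqrt{n}) \le 2\exp(-n/2)$ for $n \ge n_0$. On this event, the right-hand side above is at most $16/\lambda^2$, so taking $\Const_0 = 16$ yields
\[
\P\Big(\sup_{\bu \in \ball^n(\bzero,1)} \frac{1}{n}\sum_{i=1}^n \ones\{|\<\bg_i, \bu\>| \ge \lambda\} \ge \Const_0/\lambda^2\Big) \le 2e^{-n/2}
\]
for all $\lambda>0$ and all $n$ large enough, which is the claim (with $\const_0=1/2$ and appropriate $\lambda_0$ and $n_0$).

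There is no real obstacle here; the main simplification is recognizing that Markov's inequality converts the $\bu$-dependent indicator sum, whose supremum over $\bu$ could \emph{a priori} be delicate to control (e.g.\ by a covering/union-bound argument), into a quadratic form whose supremum is exactly $\|\bG\|_\op^2$. This avoids the need for any $\eps$-net argument over the sphere.
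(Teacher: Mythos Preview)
Your proof is correct and takes a genuinely different, more elementary route than the paper. The paper proceeds by a chaining argument: it fixes an $\eps$-net $N(\eps)$ of the unit ball, decomposes any $\bu$ as $\sum_{j\ge 0}\eps^j\bu_j$ with $\bu_j\in N(\eps)$, bounds the indicator $\ones\{|\<\bg,\bu\>|\ge\lambda\}$ by a sum of indicators at geometrically growing thresholds, and then at each scale applies a Chernoff--Hoeffding bound together with a union bound over the net; finally it sums over scales. Your approach sidesteps all of this by observing that the pointwise Chebyshev-type bound $\ones\{|\<\bg_i,\bu\>|\ge\lambda\}\le\<\bg_i,\bu\>^2/\lambda^2$ turns the supremum over $\bu$ into the single quantity $\|\bG\|_{\op}^2/(n\lambda^2)$, which is controlled by one standard Gaussian operator-norm tail bound. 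This is strictly simpler and in fact yields the conclusion for all $\lambda>0$ (not just $\lambda\ge\lambda_0$). The paper's chaining argument is more flexible if one later needs control at thresholds not of Chebyshev type, but for the lemma as stated your reduction to $\|\bG\|_{\op}$ is the cleaner proof. One cosmetic point: to match the stated form $\le e^{-c_0 n}$ rather than $2e^{-n/2}$, just take $c_0<1/2$ and absorb the factor $2$ by enlarging $n_0$.
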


\begin{proof}
Let $N(\eps) = \{\bv_1, \ldots, \bv_{|N(\eps)|} \}$ be an $\eps$-net of
$\ball^n(\bzero, 1)$, with $|N(\eps)| \le (3/\eps)^n$. That is, for any $\bv \in \ball^n(\bzero, 1)$, there exists $\bv_\star \in N(\eps)$, such that $\| \bv - \bv_\star \|_2 \le \eps$. Then, for any $\bu \in \ball^n(\bzero, 1)$, there exists a sequence $\{ \bu_j \}_{j \ge 0} \subseteq N(\eps)$, such that $\bu = \sum_{j=0}^\infty \eps^j \bu_j$. As a consequence, for any vector $\bg$, we have
\begin{align}
\ones\{ \l \< \bg, \bu \> \l \ge \lambda \} \le \sum_{j = 0}^\infty \ones\{ \eps^j \l \< \bg, \bu_j \>\l \ge \lambda/2^{j+1} \},
\end{align}
and hence 
\begin{align}\label{eqn:Fraction_bound_Chaining}
\sup_{\bu \in \ball^n(\bzero,1)} \frac{1}{n}\sum_{i=1}^n \ones\{\l \< \bg_i, \bu \>\l \ge \lambda \}  \le \sum_{j = 0}^\infty \sup_{\bu \in N(\eps)} \frac{1}{n}\sum_{i=1}^n \ones\{\l \< \bg_i, \bu \>\l \ge \lambda/[2(2 \eps)^j] \}.
\end{align}
We fix $\eps = 1/3$ throughout the proof. 

First, we show that for any $\chi \ge 4$ and $\delta \le 1$, we have (for $\eps = 1/3$)
\begin{align}\label{eqn:Fraction_bound_on_eps_net}
\P\Big( \sup_{\bu \in N(\eps)} \Big[\frac{1}{n}\sum_{i=1}^n \ones\{\l \< \bg_i, \bu \>\l \ge \chi \} \Big] \ge 16 / (\delta \chi^2) \Big) \le \exp\{ - n/\delta \}. 
\end{align}
For a fixed $\bu$ with $\|\bu\|_2=1$, applying the
Chernoff-Hoeffding inequality and denoting $\phi(\chi) = \P(\l G \l \ge \chi)$
for $G \sim \cN(0,1)$, we have for all $t>\phi(\chi)$
\[\P\Big( \frac{1}{n}\sum_{i=1}^n \ones\{\l \< \bg_i, \bu \>\l \ge \chi \}  \ge
t  \Big) \le \exp\{ - n \sD_{\kl}( t \| \phi(\chi)) \},\]
where for $a, b \in (0, 1)$, we define $\sD_{\kl}(a \| b) = a \log(a/b) + (1-a) \log((1-a)/(1-b))$ to be the relative entropy of Bernoulli distribution with parameters $a$ and $b$. 
Taking union bound over $\bu \in N(\eps)$ and applying $\phi(\chi) \geq
\phi(\chi/\|\bu\|_2)$,
\[\begin{aligned}
&\P\Big(\sup_{\bu \in N(\eps)} \Big[\frac{1}{n}\sum_{i=1}^n \ones\{\l \< \bg_i, \bu \>\l \ge \chi \}  \Big] \ge t  \Big) \\
\le& \sum_{\bu \in N(\eps)} \exp\{ - n \sD_{\kl}( t \| \phi(\chi/\| \bu \|_2))
\} \le \exp\{ n [-\sD_{\kl}( t \| \phi(\chi)) + \log(3/\eps)] \}.
\end{aligned}\]
Applying $\eps = 1/3$ and $\phi(\chi) \le 2 \exp\{-\chi^2/2\}$, we have
\begin{align*}
&-\sD_{\kl}(t \| \phi(\chi)) + \log (3/\eps) \\
&= t \log(\phi(\chi)) - t \log t + (1 - t) \log[1 + (t - \phi(\chi)) / (1-t)] + \log 9\\
&\le t \log (\phi(\chi)) - t\log t + (t - \phi(\chi)) + \log9 \\
&\le t (2 - \chi^2/2) + 3.
\end{align*}
Now take $t = 16/(\delta \chi^2) \ge \phi(\chi)$ for $\chi \ge 4$ and $\delta
\le 1$. Then $-\sD_{\kl}(t \| \phi(\chi)) + \log(3/\eps)  \le - 1/\delta$. This
proves Eq. (\ref{eqn:Fraction_bound_on_eps_net}). Applying Eq. (\ref{eqn:Fraction_bound_on_eps_net}) with $\chi_j = \lambda/[2(2\eps)^j]$ (requiring $\lambda \ge 8$ so that $\chi_j \ge 4$) and $\delta_j = (2 \eps)^j \le 1$ for $j \ge 0$, we have 
\[\P\Big( \sup_{\bu \in N(\eps)} \Big[\frac{1}{n}\sum_{i=1}^n \ones\{\l \<
\bg_i, \bu \>\l \ge \lambda/[2(2\eps)^j] \} \Big] \ge 64 (2 \eps)^j / \lambda^2
\Big) \le \exp\{ - n/(2\eps)^j  \}. \]
Finally, taking a union bound over $j \geq 0$ and applying
\[\sum_{j = 0}^\infty \exp\{ - n/(2\eps)^j \} \le \exp\{ -\const_0 n \},
\qquad
\sum_{j=0}^\infty 64 (2 \eps)^j / \lambda^2 \le \Const_0 / \lambda^2\]
for $n \ge n_0$ with some $\Const_0,\const_0, n_0>0$ concludes the proof.
\end{proof}

\begin{lemma}\label{lem:final_global_bound}
Fix any positive integer $k$. Then
there exist $\lambda_0,\Const_0>0$ and functions
$\const_0(\lambda),n_0(\lambda)>0$ such that for all
$\lambda \geq \lambda_0$ and $n \geq n_0(\lambda)$,
with probability at least $1 - e^{-\const_0(\lambda) n}$,
all points $\bbm \in (-1, 1)^n$ which satisfy $M(\bbm) + Q(\bbm) \ge 1.01$ and
$\nabla \cF(\bbm) = \bzero$ also satisfy $M(\bbm), Q(\bbm) \ge 1 - \Const_0 / \lambda^k$. 
\end{lemma}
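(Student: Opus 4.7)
The plan is to establish a base-case bound $M,Q \ge 1 - C/\lambda^2$ directly from $M(\bbm) + Q(\bbm) \ge 1.01$, and then bootstrap it iteratively to $M, Q \ge 1 - \Const_0/\lambda^k$ for the target $k$, each iteration exploiting that once $\bbm$ is close to $\bx$, the residual noise $\bW(\bbm - \bx)$ becomes quantitatively small and Lemma \ref{lem:Fraction_bound} yields a correspondingly sharper fraction bound.

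Setup: by the symmetry $(\bx, \bbm) \mapsto (-\bx, -\bbm)$ I assume $\bx = \ones$. At any critical point, the TAP equation gives $m_i = \tanh(u_i)$ with $u_i = \lambda^2 M + \lambda (\bW\bbm)_i - \lambda^2(1 - Q)m_i$. Since $|m_i| \le 1$, the bound $u_i \ge \lambda^2(M+Q-1) - \lambda|(\bW\bbm)_i|$ holds regardless of the sign of $m_i$, and when this is positive we have $1 - m_i \le 2 e^{-2u_i}$ and $1 - m_i^2 \le 4 e^{-2 u_i}$. Hence controlling $n^{-1}\sum (1 - m_i)$ and $n^{-1}\sum (1-m_i^2)$ reduces to bounding the fraction of indices with large $|(\bW\bbm)_i|$. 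Writing $\bW = (\bG + \bG^\sT)/\sqrt{2n}$ for $\bG$ with i.i.d.\ $\cN(0,1)$ entries and applying Lemma \ref{lem:Fraction_bound} to the rows and columns of $\bG$ separately, I obtain that uniformly over $\bbm$ with $Q(\bbm) \le 1$, the fraction of $i$ with $|(\bW\bbm)_i| \ge T$ is at most $C_0/T^2$ with probability at least $1 - e^{-\const_0 n}$. Taking $T = 0.005\lambda$ and using $M + Q - 1 \ge 0.01$, good indices have $u_i \ge 0.005\lambda^2$ (so $1 - m_i \le 2 e^{-0.01 \lambda^2}$), while the $C/\lambda^2$-fraction of bad indices contribute at most $2$ each; summing yields the base case $1 - M, 1 - Q \le C_1/\lambda^2$.

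For the inductive step, suppose $M, Q \ge 1 - C_j/\lambda^{k_j}$. Then the residual $\bz \equiv \bbm - \bx$ satisfies $\|\bz\|_2^2 = n(1 + Q - 2M) \le 2n(1 - M) \le 2n C_j/\lambda^{k_j}$. I decompose $(\bW\bbm)_i = (\bW\bx)_i + (\bW\bz)_i$ and control each term separately. For the deterministic $\bx$, the vector $\bW\bx$ is Gaussian with nearly-diagonal covariance $\id + n^{-1}\ones\ones^\sT$, and a Chernoff-type concentration of the empirical fraction gives that for $n \ge n_0(\lambda, k)$, the fraction of $i$ with $|(\bW\bx)_i| \ge \sqrt{2(k+1)\log\lambda}$ is at most $\lambda^{-(k+1)}$ with probability at least $1 - e^{-\const_0(\lambda) n}$. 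For the data-dependent $\bz$, Lemma \ref{lem:Fraction_bound} applied to the unit vector $\bz/\|\bz\|_2$ yields uniformly over $\bz$ that the fraction of $i$ with $|(\bW\bz)_i| \ge \lambda^{1-\eps}$ is at most $C\|\bz\|_2^2/(n\lambda^{2-2\eps}) \le C'/\lambda^{k_j + 2 - 2\eps}$ for any fixed small $\eps > 0$. On the good set where both noises are controlled, $|(\bW\bbm)_i| \le 2\lambda^{1-\eps}$ for $\lambda$ large, and using $M + Q - 1 \ge 1/2$ (which holds after the base case whenever $\lambda \ge \lambda_0$) one obtains $u_i \ge \lambda^2/2 - 2\lambda^{2-\eps} \ge \lambda^2/4$, so $1 - m_i \le 2 e^{-\lambda^2/2}$; on the bad set, $1 - m_i \le 2$. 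Combining the contributions gives $1 - M, 1 - Q \le C/\lambda^{k_{j+1}}$ with $k_{j+1} = k_j + 2 - 2\eps$. Starting from $k_1 = 2$, a finite number of iterations (depending only on $k$ and on the fixed $\eps$) drives $k_j \ge k$.

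The main obstacle is the \emph{uniform-in-$\bbm$} control of $(\bW\bz)_i$ when $\bz$ itself depends on $\bbm$ and hence on $\bW$; this uniformity is precisely what the supremum over $\ball^n(\bzero, 1)$ in Lemma \ref{lem:Fraction_bound} delivers, and is what enables the bootstrap to apply simultaneously to all critical points with $M + Q \ge 1.01$. A secondary technical issue is showing the concentration of the empirical fraction of $|(\bW\bx)_i|$ despite the $n^{-1}$ off-diagonal correlations of its covariance; the small correlation makes this essentially an iid problem handled by Chernoff. Finally, the constants $C_j$ remain bounded across iterations because the number of iterations depends only on $k$.
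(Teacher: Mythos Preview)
Your proposal is correct and follows essentially the same bootstrap as the paper: write the TAP equation as $m_i=\tanh(u_i)$, decompose $\bW\bbm=\bW\bx+\bW(\bbm-\bx)$, use a Chernoff bound on the empirical fraction of large $|(\bW\bx)_i|$ and Lemma~\ref{lem:Fraction_bound} uniformly over the unit ball for $\bW(\bbm-\bx)$, then iterate using that $\|\bbm-\bx\|_2$ shrinks. The only differences are cosmetic threshold choices---the paper keeps the threshold $0.001\lambda$ for both pieces throughout (yielding per-step gain $k_j\to k_j+2$ in the exponent of $1-M$), whereas you take $\sqrt{2(k+1)\log\lambda}$ for $\bW\bx$ and $\lambda^{1-\eps}$ for $\bW\bz$ (yielding $k_j\to k_j+2-2\eps$); both reach the target $k$ in finitely many steps.
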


\begin{proof}

Since $\bW \sim \GOE(n)$,
we can write $\bW = (\bG + \bG^\sT)/\sqrt{2}$, where $\bG = \{ G_{ij}\}_{i, j
\in [n]}$ with $G_{ij} \simiid \cN(0, 1/n)$. Note for any $\bbm \in (-1, 1)^n$,
we have $\| \bbm - \ones\|_2/\sqrt n \le 2$. According to Lemma
\ref{lem:Fraction_bound}, there exist constants $\Const_0$, $\const_0$,
$\lambda_0$, and $n_0$ such that for any $n \ge n_0$ and $\lambda \ge \lambda_0$, we have
\begin{equation}\label{eqn:Fraction_difference_of_m_ones}
\P\Big( \sup_{ \bbm \in (-1, 1)^n} \Big[ \frac{1}{n} \sum_{i=1}^n \ones\{ \l[
\bG (\bbm - \ones)]_i \l \ge 0.001 \lambda\} \Big] \ge \Const_0 / \lambda^2
\Big) \le \exp\{ - \const_0 n \}.
\end{equation}
Moreover, by a simple Chernoff bound, we have (denoting $\phi(\lambda) = \P(\l G
\l \ge \lambda) \le 2 \exp\{ - \lambda^2/2 \}$ for $G \sim \cN(0,1)$)
\begin{equation}\label{eqn:Infinity_norm_bound_W_ones}
\P\Big( \frac{1}{n} \sum_{i=1}^n \ones\{ \l [ \bG \ones]_i \l \ge 0.001
\lambda\}  \ge 3 \phi(0.001 \lambda) \Big) \le \exp\{ - n \phi(0.001 \lambda)
\}.
\end{equation}
The same bounds hold fir $\bG^\top$ in place of $\bG$.
For any $\bbm \in (-1, 1)^n$, note that $\nabla \cF(\bbm)=\bzero$ implies
\begin{align}\label{eqn:stationery_condition_reformulation}
\bbm = \tanh(\lambda^2 M(\bbm) \ones + \lambda\cdot \bW \ones + \lambda \cdot \bW(\bbm -
\ones) - \lambda^2 [1 - Q(\bbm)] \bbm).
\end{align}
When the bad events in (\ref{eqn:Infinity_norm_bound_W_ones}) and
(\ref{eqn:Fraction_difference_of_m_ones}) do not happen, and $M(\bbm)+Q(\bbm)
\geq 1.01$, then
at least $1 - 2 \Const_0 /\lambda^2 - 6 \phi(0.001 \lambda)$ fraction of
coordinates of $\bbm$ satisfy
\begin{equation}\label{eqn:fraction_of_coordinate_bound}
\begin{aligned}
m_j =& \tanh(\lambda^2 M(\bbm) + \lambda[\bW \ones]_j + \lambda [\bW(\bbm - \ones)]_j - \lambda^2 (1 - Q(\bbm)) m_j)\\
\ge& \tanh((M(\bbm) + Q(\bbm) - 1) \lambda^2 - 0.004 \lambda^2) \ge \tanh(0.006 \lambda^2)\\
\ge& 1 - e^{-0.006 \lambda^2},
\end{aligned}
\end{equation}
and the remaining coordinates of $\bbm$ satisfy $m_j \ge -1$. 
Therefore, for sufficiently large $\lambda_0$, we have 
\[
\begin{aligned}
&M(\bbm)=\< \bbm, \ones\> /n\\
&\ge (1 - 2 \Const_0 /\lambda^2 - 6 \phi(0.001 \lambda)) (1 - e^{-0.006 \lambda^2}) - 2\Const_0/\lambda^2 - 6 \phi(0.001 \lambda)\\
&\ge 1 - 6 \Const_0 /\lambda^2.
\end{aligned}
\]
Hence we also have
\[\| \bbm - \ones \|_2/\sqrt n \le ( 2 - 2 \<\bbm, \ones \>/n)^{1/2} \le (12
\Const_0)^{1/2} / \lambda,\]
with probability at least $1 - e^{-c_1(\lambda)n}$ for all $n \geq
n_1(\lambda)$.

In the following, we apply the above argument recursively to prove the lemma.
Suppose we already know that $\| \bbm - \ones \|_2 /\sqrt n \le K/\lambda^k$
for some constants $K > 0$ and $k \ge 1$, with probability $1-e^{-c_k(\lambda)n}$ for $n \geq n_k(\lambda)$. Applying again Lemma
\ref{lem:Fraction_bound}, for $\lambda_0$ sufficiently large such that
$\eps(\lambda) \equiv K/\lambda^k \le 2$ for all $\lambda \geq \lambda_0$,
we have with probability at most $\exp(-c_0n)$ that
\begin{equation}\label{eqn:Fraction_difference_of_m_ones_new}
\sup_{\| \bbm - \ones\|_2/\sqrt n \le \eps(\lambda)} \Big[ \frac{1}{n}
\sum_{i=1}^n \ones\{ [ \l \bG (\bbm - \ones)]_i \l \ge 0.001 \lambda\} \Big] \ge \Const_0 \eps(\lambda)^2 / \lambda^2,
\end{equation}
and similarly for $\bG^\top$.
When the bad events in (\ref{eqn:Infinity_norm_bound_W_ones}) and
(\ref{eqn:Fraction_difference_of_m_ones_new}) do not happen, for any $\bbm$ such
that $\| \bbm  - \ones\|_2/\sqrt n \le \eps(\lambda)$, by
(\ref{eqn:stationery_condition_reformulation}) at least $1 - 2 \Const_0
\eps(\lambda)^2/\lambda^2 - 6 \phi(0.001 \lambda)$ fraction of coordinates of
$\bbm$ satisfy (\ref{eqn:fraction_of_coordinate_bound}). Therefore, for
sufficiently large $\lambda_0$ (depending on $K$ and $k$), we have
\[M(\bbm)=\< \bbm, \ones\> /n \ge 1 - 6 \Const_0 \eps(\lambda)^2/\lambda^2.\]
Hence we have
\begin{align}
\| \bbm - \ones \|_2/\sqrt n \le ( 2 - 2 \<\bbm, \ones \>/n)^{1/2} \le (12
\Const_0)^{1/2} \eps(\lambda) / \lambda = (12 \Const_0)^{1/2}K/\lambda^{k+1}.
\end{align}
This holds with probability at least $1 - e^{-c_{k+1}(\lambda) n}$ for $n \geq
n_{k+1}(\lambda)$ for some $0 < c_{k+1}(\lambda) \le c_k(\lambda)$ and $n_{k+1}(\lambda) \ge n_k(\lambda)$. 

Thus for any fixed $k$, there are $k$-dependent constants $\lambda_0,C_0>0$ and functions $c_0(\lambda),n_0(\lambda)$ such
that $M(\bbm) \geq 1-C_0/\lambda^k$
with probability at least $1-e^{-c_0(\lambda)n}$ for $\lambda \geq \lambda_0$
and $n \geq n_0(\lambda)$.
Applying $Q(\bbm) \geq M(\bbm)^2$ by Cauchy-Schwarz, we obtain also the
statement for $Q(\bbm)$.
\end{proof}

\begin{proof}[Proof of Proposition \ref{PROP:LOCALIZATION}]

Applying Lemma \ref{lem:initial_global_bound} with $\alpha=2/3$, for any
$\lambda \ge \lambda_0$ and $n \ge n_0$, with probability at least $1 -
e^{-\const_0 n}$, all $\bbm \in (-1, 1)^n$ with $\cF(\bbm) \le -\lambda^2/3$
satisfy $Q(\bbm) \ge M(\bbm)^2 \ge (1/3 - 6/\lambda - 4/\lambda^2)^{1/2}$. For
$\lambda_0$ large enough and $M(\bbm) \geq 0$, this implies $Q(\bbm) + M(\bbm) \ge 1.01$. Hence the proposition holds by Lemma \ref{lem:final_global_bound}. 

\end{proof}

\section{Proof of Theorem \ref{THM:NAIVE_FAIL}}\label{sec:proof_thm_naive_fail}

\begin{proof}[Proof of Theorem \ref{THM:NAIVE_FAIL}]
Let $\bbm_\star $ be any critical point of the TAP free energy $\cF = \cF_{\lambda, \lambda}$, and
let $\bbm$ be any critical point of $\cF_\mathrm{MF}$. By the conditions
$\bzero = \nabla \cF(\bbm_\star )$ and $\bzero = \nabla \cF_\mathrm{MF}(\bbm)$, we have
\begin{align}
\bbm_\star &=\tanh(\lambda \cdot \bY\bbm_\star -\lambda^2[1-Q(\bbm_\star )]\bbm_\star ),
\label{eq:mstarcondition}\\
\bbm&=\tanh(\lambda \cdot \bY\bbm)\label{eq:mcondition}.
\end{align}

For
constants $C_0,t>0$ to be chosen later, consider the event $\mathcal{E}$ where 
\[\mathcal{E} = \Big\{ \sup_{\bu \in \ball^n(\bzero, 1)} \Big[\frac{1}{n}\sum_{i=1}^n
\ones\{|(\bW\bu)_i| \ge t/\sqrt{n} \}  \Big]<C_0/t^2 \Big\},\]
and $B^n(\bzero,1)$ denotes the unit ball around $\bzero$.
Note that $\bW = (\bG + \bG^\sT)/\sqrt{2n}$ with $G_{ij} \simiid \cN(0, 1)$ for $1 \le i, j \le n$, so that
\[
\begin{aligned}
&\sup_{\bu \in \ball^n(\bzero, 1)} \Big[\frac{1}{n}\sum_{i=1}^n
\ones\{|(\bW\bu)_i| \ge \frac{t}{\sqrt{n}} \}  \Big] \\
\le& \sup_{\bu \in \ball^n(\bzero, 1)} \Big[\frac{1}{n}\sum_{i=1}^n
\ones\{|(\bG \bu)_i| \ge \frac{t}{ \sqrt 2}\}  \Big] + \sup_{\bu \in \ball^n(\bzero, 1)} \Big[\frac{1}{n}\sum_{i=1}^n
\ones\{|(\bG^\sT \bu)_i| \ge \frac{t}{\sqrt 2} \}  \Big].
\end{aligned}
\]
Then by Lemma
\ref{lem:Fraction_bound},  for some constant $c_0, C_0>0$ and for any $t$
sufficiently large, $\mathcal{E}$ holds with probability at least
$1-e^{-c_0 n}$. Define the (random) index set
\[\mathcal{I}=\Big\{i \in \{1,\ldots,n\}:
|(\bW\bbm)_i|<t \text{ and } |(\bW\bbm_\star )_i|<t\Big\}.\]
As $\bbm/\sqrt{n} \in \ball^n(\bzero,1)$ and similarly for $\bbm_\star $,
we have on $\mathcal{E}$ that $|\mathcal{I}|/n \geq 1-2C_0/t^2$.
Applying $|(\bY\bbm)_i| \leq |\lambda|+|(\bW\bbm)_i|$ by (\ref{eq:Z2model}),
for $i \in \mathcal{I}$ we have
\begin{align*}
|\lambda \cdot (\bY\bbm_\star )_i-\lambda^2[1-Q(\bbm_\star )]m_{\star,i}|
&\leq 2\lambda^2+\lambda t,\\
|\lambda \cdot (\bY\bbm)_i|&\leq \lambda^2+\lambda t.
\end{align*}
Then taking the difference of (\ref{eq:mstarcondition}) and
(\ref{eq:mcondition}) and applying the lower bound $\tanh'(x) \geq
c(\lambda,t)$ for all $|x| \leq 2\lambda^2+\lambda t$ and a constant
$c(\lambda,t)>0$, we have
\[|m_{\star,i}-m_i| \geq c(\lambda,t)\Big|\lambda \cdot (\bY\bbm_\star -\bY\bbm)_i
-\lambda^2[1-Q(\bbm_\star )]m_{\star,i}\Big|.\]
Then
\[\lambda^4[1-Q(\bbm_\star )]^2m_{\star,i}^2 \leq 2c(\lambda,t)^{-2}|m_{\star,i}-m_i|^2
+2\lambda^2 \cdot (\bY\bbm_\star -\bY \bbm)_i^2.\]
Summing over $i \in \{1,\ldots,n\}$ and using the trivial bound $m_{\star,i}^2 \leq
1$ for $i \notin \mathcal{I}$, we obtain
\begin{align*}
&\lambda^4[1-Q(\bbm_\star )]^2\|\bbm_\star \|_2^2\\
&\leq \lambda^4[1-Q(\bbm_\star )]^2(n-|\mathcal{I}|)
+2c(\lambda,t)^{-2}\|\bbm_\star -\bbm\|_2^2+2\lambda^2\|\bY\|_{\mathrm{op}}
^2\|\bbm_\star -\bbm\|_2^2.
\end{align*}
Suppose $\lambda$ is sufficiently large such that the conclusion
$|Q(\bbm_\star )-q_\star |<q_\star /4$ from \ref{prop:mainprop}
holds with probability $1-\delta/4$.
Choose $t$ large enough such that $2C_0/t^2<q_\star /2$, so $|\mathcal{I}|/n>1-q_\star /2$
on the event $\mathcal{E}$. Note also that with probability $1-e^{-c_0 n}$ for some constant $c_0 >0$, we have $\|\bY\|_\mathrm{op} \leq \lambda+3$. Then on a
combined event of probability $1-\delta/2$, for any critical points $\bbm_\star $
and $\bbm$ as above, we obtain
\[\frac{1}{n}\|\bbm_\star -\bbm\|_2^2 \geq c(\lambda)\]
for some constant $c(\lambda)>0$.

Consider now any global minimizer $\bbm_\star $ of $\cF$. Note that $-\bbm_\star $ is
also a global minimizer of $\cF$, so on this combined event, we also have
$n^{-1}\|\bbm_\star +\bbm\|_2^2 \geq c(\lambda)$. Then 
\begin{align}
\frac{1}{n^2}\|\bbm_\star \bbm_\star ^\sT-\bbm\bbm^\sT\|_F^2
&=\frac{1}{n^2}\left(\|\bbm_\star \|_2^4+\|\bbm\|_2^4-2|\< \bbm,\bbm_\star  \>|^2\right)
\nonumber\\
&\geq \frac{1}{2n^2}\|\bbm_\star -\bbm\|_2^2\|\bbm_\star +\bbm\|_2^2 \geq c(\lambda)^2/2.
\label{eq:mstarmclose}
\end{align}
On the other hand, note that
\[\cF(\bbm_\star ) \leq
\cF(\bx)=-\frac{\lambda^2}{2}-\frac{\lambda}{2n}\<\bW,\bx\bx^\sT \>
\leq -\frac{\lambda^2}{2}+\lambda ,\]
where the last inequality holds with probability at least $1-e^{-n}$ by the fact
that $\<\bW,\bx\bx^\sT\> \sim \cN(0,2n)$. For $\lambda$ sufficiently large, we have $-\lambda^2/2 + \lambda \le -\lambda^2/3$. Then Theorem \ref{thm:TAPBayes}
implies, with probability at least $1-\delta/2$,
\[\frac{1}{n^2}\|\bbm_\star \bbm_\star ^\sT-\widehat{\bX}_\mathrm{Bayes}\|_F^2
<c(\lambda)^2/8.\]
Combining this with (\ref{eq:mstarmclose}) and setting
$\eps(\lambda)=c(\lambda)^2/8$, with probability at least $1-\delta$, for any
critical point $\bbm$ of $\cF_{\mathrm{MF}}$ we obtain
\[\frac{1}{n^2}\|\bbm\bbm^\sT-\widehat{\bX}_\mathrm{Bayes}\|_F^2
\geq \frac{1}{2n^2}\|\bbm_\star \bbm_\star ^\sT-\bbm\bbm^\sT\|_F^2
-\frac{1}{n^2}\|\bbm_\star \bbm_\star ^\sT-\widehat{\bX}_\mathrm{Bayes}\|_F^2
>\eps(\lambda).\]
\end{proof}

\section*{Acknowledgments}
This work was partially supported by grants NSF DMS-1613091, NSF CCF-1714305,
NSF IIS-1741162, ONR N00014-18-1-2729, and NSF DMS-1916198.
ZF was partially supported by a Hertz Foundation Fellowship. SM was partially supported by Office of Technology Licensing Stanford Graduate Fellowship.

\bibliography{references}{}
\bibliographystyle{alpha}

\end{document}